\numberwithin{equation}{section}
\numberwithin{figure}{section}
\let \pa \partial
\let \eps \varepsilon
\newtheorem{theorem}{Theorem}
\newtheorem{lemma}[theorem]{Lemma}
\newtheorem{proposition}[theorem]{Proposition}
\newtheorem{remark}[theorem]{Remark}
\let \de=\delta
\let \eps=\varepsilon
\let \pa=\partial
\begin{document}
\title[Boltzmann equation]{Space-time behavior of the solution to the
Boltzmann equation with soft potentials}
\author{Yu-Chu Lin}
\address{Yu-Chu Lin, Department of Mathematics, National Cheng Kung
University, Tainan, Taiwan}
\email{yuchu@mail.ncku.edu.tw }
\author{Ming-Jiea Lyu}
\address{Ming-Jiea Lyu, Department of Mathematics, National Cheng Kung
University, Tainan, Taiwan}
\email{mingjiealyu@gmail.com}
\author{Haitao Wang}
\address{Haitao Wang, Institute of Natural Sciences and School of
Mathematical Sciences, LSC-MOE, Shanghai Jiao Tong University, Shanghai,
China}
\email{haitallica@sjtu.edu.cn}
\author{Kung-Chien Wu}
\address{Kung-Chien Wu, Department of Mathematics, National Cheng Kung
University, Tainan, Taiwan and National Center for Theoretical Sciences,
National Taiwan University, Taipei, Taiwan}
\email{kungchienwu@gmail.com}
\date{\today }

\begin{abstract}
In this paper, we get the quantitative space-time behavior of the full
Boltzmann equation with soft potentials ($-2<\gamma <0$) in the close to
equilibrium setting, under some velocity decay assumption, but without any
Sobolev regularity assumption on the initial data. {We find that both the
large time and spatial behaviors depend on the velocity decay of the initial
data and the exponent $\gamma $.}
The key step in our strategy is to obtain the $L^{\infty }$ bound of a
suitable weighted full Boltzmann equation directly, rather than using  Green's function and Duhamel's principle to construct the pointwise structure of the solution
as in \cite{[LiuYu]}. This provides a new thinking in the related study.
\end{abstract}

\keywords{Boltzmann equation, Maxwellian states, pointwise estimate.}
\subjclass[2000]{35Q20; 82C40.}
\maketitle





\section{Introduction}

\subsection{The models}

Consider the following Boltzmann equation:
\begin{equation}
\left \{
\begin{array}{l}
\displaystyle \partial _{t}F+\xi \cdot \nabla _{x}F=Q(F,F)\text{,} \\[4mm]
\displaystyle F(0,x,\xi )=F_{0}(x,\xi )\text{,}%
\end{array}%
\right.  \label{bot.1.a}
\end{equation}%
where $F(t,x,\xi )$ is the distribution function of the particles at time $%
t>0$, position $x=(x_{1},x_{2},x_{3})\in {\mathbb{R}}^{3}$ and microscopic
velocity $\xi =(\xi _{1},\xi _{2},\xi _{3})\in {\mathbb{R}}^{3}$. The
left-hand side of this equation models the transport of particles and the
operator on the right-hand side models the effect of collisions on the
transport with
\begin{equation*}
Q(F,G)=\frac{1}{2}\int_{{\mathbb{R}}^{3}\times S^{2}}|\xi -\xi _{\ast
}|^{\gamma }B(\vartheta )\left \{ F_{\ast }^{\prime }G^{\prime }+G_{\ast
}^{\prime }F^{\prime }-F_{\ast }G-G_{\ast }F\right \} d\xi _{\ast }d\omega
\text{.}
\end{equation*}%
Here the usual conventions, i.e., $F=F(t,x,\xi )$, $F_{\ast }=F(t,x,\xi
_{\ast })$,$\ F^{\prime }=F(t,x,\xi ^{\prime })$ and $F_{\ast }^{\prime
}=F(t,x,\xi _{\ast }^{\prime })$, are used.

In this paper, we consider the soft potentials ($-2<\gamma <0$); and $%
B(\vartheta )$ satisfies the Grad's angular cutoff assumption
\begin{equation}
0<B(\vartheta )\leq C \left|\cos \vartheta \right|,  \notag
\end{equation}%
for some constant $C>0$. Moreover, the post-collisional velocities satisfy
\begin{equation*}
\xi ^{\prime }=\xi -[(\xi -\xi _{\ast })\cdot \omega ]\omega \text{,}\quad
\xi _{\ast }^{\prime }=\xi +[(\xi -\xi _{\ast })\cdot \omega ]\omega \text{,}
\end{equation*}%
and $\vartheta $ is defined by
\begin{equation*}
\cos \vartheta =\frac{|(\xi -\xi _{\ast })\cdot \omega |}{|\xi -\xi _{\ast }|%
}\text{.}
\end{equation*}

It is well known that the global Maxwellians are steady-state solutions to
the Boltzmann equation (\ref{bot.1.a}). Therefore, it is natural to consider
the Boltzmann equation (\ref{bot.1.a}) around a global Maxwellian
\begin{equation*}
\mathcal{M}(\xi )=\frac{1}{(2\pi )^{3/2}}\exp \left( \frac{-|\xi |^{2}}{2}%
\right) \text{,}
\end{equation*}%
with the standard perturbation $f(t,x,\xi )$ to $\mathcal{M}$ as
\begin{equation*}
F=\mathcal{M}+\mathcal{M}^{1/2}f\text{,}\quad F_{0}=\mathcal{M}+\eta
\mathcal{M}^{1/2}f_{0}\text{,}
\end{equation*}%
where $\eta >0$ is sufficiently small. After substituting $F$ and $F_{0}$
into (\ref{bot.1.a}), the equation for the perturbation $f$ is
\begin{equation}
\left \{
\begin{array}{l}
\displaystyle \partial _{t}f+\xi \cdot \nabla _{x}f=Lf+\Gamma (f,f)\text{,}%
\vspace {3mm}
\\[4mm]
\displaystyle f(0,x,\xi )=\eta f_{0}(x,\xi )=\frac{F_{0}-\mathcal{M}}{\sqrt{%
\mathcal{M}}}\text{,}%
\end{array}%
\right.  \label{bot.1.d}
\end{equation}%
where $L=-\nu \left( \xi \right)+K$ is the linearized collision operator
defined as
\begin{equation*}
Lf=\mathcal{M}^{-1/2}\left[ Q(\mathcal{M},\mathcal{M}^{1/2}f)+Q(\mathcal{M}%
^{1/2}f,\mathcal{M})\right] \, \text{,}
\end{equation*}%
and $\Gamma $ is the nonlinear operator defined as
\begin{equation*}
\Gamma (f,f)=\mathcal{M}^{-1/2}Q(\mathcal{M}^{1/2}f,\mathcal{M}^{1/2}f)\text{%
.}
\end{equation*}%
It is well-known that the null space of $L$ is a five-dimensional vector
space with the orthonormal basis $\{ \chi _{i}\}_{i=0}^{4}$, where
\begin{equation*}
Ker(L)=\left \{ \chi _{0},\chi _{i},\chi _{4}\right \} =\left \{ \mathcal{M}%
^{1/2},\  \xi _{i}\mathcal{M}^{1/2},\  \frac{1}{\sqrt{6}}(|\xi |^{2}-3)%
\mathcal{M}^{1/2}\right \} \text{,}\quad i=1\text{,}\ 2\text{,}\ 3\text{.}
\end{equation*}%
Based on this property, we can introduce the macro-micro decomposition: let $%
\mathrm{P}_{0}$ be the orthogonal projection with respect to the $L_{\xi
}^{2}$ inner product onto $\mathrm{Ker}(L)$, and $\mathrm{P}_{1}\equiv
\mathrm{Id}-\mathrm{P}_{0}$.

\subsection{Notation}

Before the presentation of the main theorem, let us define some notations used in
this paper. We denote $\left \langle \xi \right \rangle ^{s}=(1+|\xi
|^{2})^{s/2}$ and $\left \langle \xi \right \rangle _{D}^{s}=(D^{2}+|\xi
|^{2})^{s/2}$, where $D>0$, $s\in {\mathbb{R}}$. For the microscopic
variable $\xi $, we denote
\begin{equation*}
|g|_{L_{\xi }^{q}}=\Big(\int_{{\mathbb{R}}^{3}}|g|^{q}d\xi \Big)^{1/q}\text{
if }1\leq q<\infty \text{,}\quad \quad |g|_{L_{\xi }^{\infty }}=\sup_{\xi
\in {\mathbb{R}}^{3}}|g(\xi )|\text{,}
\end{equation*}%
and the weighted norms can be defined by
\begin{equation*}
|g|_{L_{\xi ,\beta }^{q}}=\Big(\int_{{\mathbb{R}}^{3}}\left \vert \left
\langle \xi \right \rangle ^{\beta }g\right \vert ^{q}d\xi \Big)^{1/q}\text{
if }1\leq q<\infty \text{,}\quad \quad |g|_{L_{\xi ,\beta }^{\infty
}}=\sup_{\xi \in {\mathbb{R}}^{3}}\left \vert \left \langle \xi \right
\rangle ^{\beta }g(\xi )\right \vert \text{,}
\end{equation*}%
and
\begin{equation*}
|g|_{L_{\xi }^{\infty }(m)}=\sup_{\xi \in {\mathbb{R}}^{3}}\left \{ |g(\xi
)|m(\xi)\right \} \text{,}
\end{equation*}%
where $\beta \in {\mathbb{R}}$ and $m$ is a weight function. The $L_{\xi
}^{2}$ inner product in ${\mathbb{R}}^{3}$ will be denoted by $\big<\cdot
,\cdot \big>_{\xi }$, i.e.,
\begin{equation*}
\left \langle f,g\right \rangle _{\xi }=\int f(\xi )\overline{g(\xi )}d\xi
\text{.}
\end{equation*}%
For the Boltzmann equation, the natural norm in $\xi $ is $|\cdot
|_{L_{\sigma }^{2}}$, which is defined as
\begin{equation*}
|g|_{L_{\sigma }^{2}}^{2}=\left|\left \langle \xi \right \rangle ^{\frac{%
\gamma }{2}}g\right|_{L_{\xi }^{2}}^{2}\text{.}
\end{equation*}%
For the space variable $x$, we have similar notations, namely,
\begin{equation*}
|g|_{L_{x}^{q}}=\left( \int_{{\mathbb{R}^{3}}}|g|^{q}dx\right) ^{1/q}\text{
if }1\leq q<\infty \text{,}\quad \quad |g|_{L_{x}^{\infty }}=\sup_{x\in {%
\mathbb{R}^{3}}}|g(x)|\text{.}
\end{equation*}%
Furthermore, we define the high order Sobolev norm: let $s\in {\mathbb{N}}$
and define
\begin{equation*}
\left \vert g\right \vert _{H_{\xi }^{s}}=\sum_{|\alpha |\leq s}\left \vert
\partial _{\xi }^{\alpha }g\right \vert _{L_{\xi }^{2}}\text{,\  \  \  \  \  \ }%
\left \vert g\right \vert _{H_{x}^{s}}=\sum_{|\alpha |\leq s}\left \vert
\partial _{x}^{\alpha }g\right \vert _{L_{x}^{2}}\text{,}
\end{equation*}%
where $\alpha $ is any multi-index with $|\alpha |\leq s$.

Finally, with $\mathcal{X}$ and $\mathcal{Y}$ being normed spaces, we define
\begin{equation*}
\left \Vert g\right \Vert _{\mathcal{XY}}=\left \vert \left \vert g\right
\vert _{\mathcal{Y}}\right \vert _{\mathcal{X}}\text{,}
\end{equation*}%
and for simplicity, we denote
\begin{equation*}
\Vert g\Vert _{L^{2}}=\Vert g\Vert _{L_{\xi }^{2}L_{x}^{2}}=\left( \int_{{%
\mathbb{R}^{3}}}|g|_{L_{x}^{2}}^{2}d\xi \right) ^{1/2}\text{.}
\end{equation*}

The domain decomposition plays an important role in our analysis, so we
introduce a cut-off function $\chi :{\mathbb{R}}\rightarrow {\mathbb{R}}$,
which is a smooth non-increasing function, $\chi (s)=1$ for $s\leq 1$, $\chi
(s)=0$ for $s\geq 2$ and $0\leq \chi \leq 1$. Moreover, we define $\chi
_{R}(s)=\chi (s/R)$ for positive $R$.

For simplicity of notations, hereafter, we abbreviate \textquotedblleft {\ $%
\leq C$} \textquotedblright \ to \textquotedblleft {\ $\lesssim $ }%
\textquotedblright , where $C$ is a positive constant depending only on
fixed numbers.

\subsection{Review of previous works and main result}

In the literature, there are a lot of works concerning the large time
behavior of the solution for various models of the Boltzmann equation, such
as the hard sphere, hard potentials and soft potentials.

In the literature, there
are several energy methods for the study of the Boltzmann equations near
Maxwellian in the whole space. The direct energy method through the
micro-macro decomposition was initiated by Liu-Yu \cite{[LiuYu3]} and
developed by Liu-Yang-Yu \cite{[LiuYangYu]} and Guo \cite{[Guo]}
independently in two different ways. In between there is another energy
method introduced by Kawashima \cite{[Kawashima]}, which is based on
constructing compensating function for the thirteen moments of Boltzmann
equation. Under some suitable Sobolev regularity assumptions on the initial
condition, combining energy estimate with the spectrum method \cite%
{[Duan-Ukai--],[Ellis],[Ukai]} or compensating function method \cite{[Chai],
[Kawashima],[Yu]}, one can get the time decay rate. For more details, the
reader is referred to the reference therein. In addition, people are aware
that the large time behavior is governed by the long wave part in terms of
the Fourier variables of the linearized equation, no matter for the hard
sphere, hard potential or soft potential.

For Boltzmann equation in a bounded domain, an important $L^{2}-L^{\infty}$ theory was developed in \cite{[Guo1]} to obtain the global existence and the exponential decay rate of the solution around a global Maxwellian for  hard potentials associated with appropriate boundary conditions.
See also \cite{[Liu-Yang]} for its extension to soft potential in a bounded domain, where a sub-exponential decay rate is obtained. One is also referred to \cite{[EGKM],[GKTT],[KimLee]} for the recent advancements of this theory.

On the other hand, it is noted that the inter-molecular potential can
influence the spatially asymptotic behavior for the stationary linearized
Boltzmann half space problem (i.e., the Milne problem). Indeed, \cite{[BCN]}
obtained exponential decay for the hard sphere case, \cite{[ALW],
[GolsePoupand]} obtained arbitrary polynomial decay for the hard potential
upon assuming corresponding velocity weights on boundary data, and \cite%
{[cc]} obtained sub-exponential decay for the hard potential upon assuming
Gaussian weight.
{Thus, it would be interesting to investigate the space-time behaviors of
the solutions for different potentials.
To this end, the pointwise approach has been initiated by \cite{[LiuYu],
[LiuYu2], [LiuYu1]} for the full nonlinear hard sphere case, and then
generalized by \cite{[LeeLiuYu], [LinWangWu-2], [LinWangWu]} to hard and
soft potential cases on the linear level. }

However, the nonlinear problems for hard potential and soft potential have
not been settled. In this paper, the spatially asymptotic behavior and
uniform time decay for fully nonlinear Boltzmann equation with soft
potential are established. The similar result for hard potential is also
stated without proof, which is actually easier. It is worth  mentioning that
our results do not require any Sobolev regularity of the initial data. The
main results are stated as follows.

\begin{theorem}[The large time behavior for $-2<\protect \gamma <0$]
\label{prop: nonlinear}Let $-2<\gamma <0$, $0<p_{1}\leq 2$, $p_{2}>3/2$, $%
\hat{\varepsilon}\geq 0$ sufficiently small, and $j>0$ sufficiently large.
Assume that the initial data $\eta f_{0}$ satisfies $f_{w_{3}0}=w_{3}f_{0}%
\in L_{\xi ,p_{2}+3j}^{\infty }(L_{x}^{1}\cap L_{x}^{\infty })$ where $%
w_{3}=e^{\hat{\varepsilon}\left \langle \xi \right \rangle ^{p_{1}}}\,(\hat{\varepsilon}\geq0)$, and $%
\eta >0$ is sufficiently small. Then there is a unique solution $f$ to $(\ref%
{bot.1.d})$ in $L_{\xi ,p_{2}+2j}^{\infty }(e^{\hat{\varepsilon}\left \langle
\xi \right \rangle ^{p_{1}}})L_{x}^{2}\cap L_{\xi ,p_{2}+2j}^{\infty }(e^{%
\hat{\varepsilon}\left \langle \xi \right \rangle ^{p_{1}}})L_{x}^{\infty }$
with
\begin{eqnarray}
\left \Vert w_{3}f(t)\right \Vert _{L_{\xi ,p_{2}}^{\infty }L_{x}^{2}} &\leq
&\eta C_{1}(1+t)^{-\frac{3}{4}}\left( \left \Vert w_{3}f_{0}\right \Vert
_{L_{\xi ,p_{2}+2j}^{\infty }L_{x}^{1}}+\left \Vert w_{3}f_{0}\right \Vert
_{L_{\xi ,p_{2}+2j}^{\infty }L_{x}^{\infty }}\right) \text{,}%
\vspace {3mm}
\label{eq: N. 1. aa} \\
\left \Vert w_{3}f(t)\right \Vert _{L_{\xi ,p_{2}}^{\infty }L_{x}^{\infty }}
&\leq &\eta C_{2}(1+t)^{-\frac{3}{2}}\left( \left \Vert w_{3}f_{0}\right \Vert
_{L_{\xi ,p_{2}+3j}^{\infty }L_{x}^{1}}+\left \Vert w_{3}f_{0}\right \Vert
_{L_{\xi ,p_{2}+3j}^{\infty }L_{x}^{\infty }}\right) \text{,}
\label{eq: N. 1. a}
\end{eqnarray}%
\begin{eqnarray}
\left \Vert w_{3}f(t)\right \Vert _{L_{\xi ,p_{2}+2j}^{\infty }L_{x}^{2}}
&\leq &\eta \bar{C}_{1}\left( \left \Vert w_{3}f_{0}\right \Vert _{L_{\xi
,p_{2}+2j}^{\infty }L_{x}^{1}}+\left \Vert w_{3}f_{0}\right \Vert _{L_{\xi
,p_{2}+2j}^{\infty }L_{x}^{\infty }}\right) \text{,}%
\vspace {3mm}
\label{eq: N. 1. b} \\
\left \Vert w_{3}f(t)\right \Vert _{L_{\xi ,p_{2}+2j}^{\infty }L_{x}^{\infty
}} &\leq &\eta \bar{C}_{2}\left( \left \Vert w_{3}f_{0}\right \Vert _{L_{\xi
,p_{2}+2j}^{\infty }L_{x}^{1}}+\left \Vert w_{3}f_{0}\right \Vert _{L_{\xi
,p_{2}+2j}^{\infty }L_{x}^{\infty }}\right) \text{,}  \label{eq: N. 1. bb}
\end{eqnarray}%
for some positive constants $C_{1}$,$C_{2}$, $\bar{C}_{1}$, $\bar{C}_{2}$
depending on $\gamma $, $\hat{\varepsilon}$, $p_{1}$, $p_{2}$, and $j$.
\end{theorem}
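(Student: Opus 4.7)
The plan is a bootstrap argument based on the Duhamel representation
\begin{equation*}
f(t) = e^{t\mathcal{B}}(\eta f_{0}) + \int_{0}^{t} e^{(t-s)\mathcal{B}}\,\Gamma(f,f)(s)\,ds,
\end{equation*}
where $\mathcal{B}:=L-\xi\cdot\nabla_{x}$ generates the linearized flow. The four estimates in the theorem split into two groups: the decaying bounds \eqref{eq: N. 1. aa}--\eqref{eq: N. 1. a} in the ``low-weight'' space $L^{\infty}_{\xi,p_{2}}$, and the uniform-in-time bounds \eqref{eq: N. 1. b}--\eqref{eq: N. 1. bb} in the ``high-weight'' space $L^{\infty}_{\xi,p_{2}+2j}$. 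The two groups must be closed simultaneously, since for soft potentials the bilinear term $\Gamma(f,f)$ loses $\langle\xi\rangle^{|\gamma|}$ of velocity decay and can only be estimated in a low-weight norm when a high-weight bound on $f$ is available; the two scales of norms therefore have to be bootstrapped together.

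The first ingredient, which requires a separate linear analysis as a preliminary, is a weighted time-decay estimate of the form
\begin{equation*}
\bigl\|w_{3}\,e^{t\mathcal{B}}g\bigr\|_{L^{\infty}_{\xi,p_{2}}L^{\infty}_{x}}\lesssim (1+t)^{-3/2}\Bigl(\|w_{3}g\|_{L^{\infty}_{\xi,p_{2}+3j}L^{1}_{x}}+\|w_{3}g\|_{L^{\infty}_{\xi,p_{2}+3j}L^{\infty}_{x}}\Bigr),
\end{equation*}
together with the analogous $(1+t)^{-3/4}$ bound at the $L^{\infty}_{\xi,p_{2}}L^{2}_{x}$ endpoint, losing only $2j$ in weight. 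The heat-kernel rates reflect the fluid (long-wave) modes of the linearized operator, while the polynomial weight losses $2j$ and $3j$ are the standard price for trading velocity weight against the degenerate collision frequency $\nu(\xi)\sim\langle\xi\rangle^{\gamma}$ with $\gamma<0$. The second ingredient is a pointwise weighted bilinear estimate roughly of the form
\begin{equation*}
\bigl|w_{3}\,\Gamma(f,g)(\xi)\bigr|\lesssim \nu(\xi)\,|w_{3}f|_{L^{\infty}_{\xi,p_{2}}}\,|w_{3}g|_{L^{\infty}_{\xi,p_{2}}},
\end{equation*}
a standard control of $\Gamma$ in weighted $L^{\infty}_{\xi}$ spaces; the exponential factor in $w_{3}$ is compatible with the collisional substitution for $0<p_{1}\le 2$ via the subadditivity of $s\mapsto s^{p_{1}/2}$ combined with the energy conservation $|\xi|^{2}+|\xi_{\ast}|^{2}=|\xi'|^{2}+|\xi_{\ast}'|^{2}$.

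Given these two ingredients, I introduce the bootstrap functional
\begin{equation*}
M(t):=\sup_{0\le s\le t}\Bigl[(1+s)^{3/4}\|w_{3}f(s)\|_{L^{\infty}_{\xi,p_{2}}L^{2}_{x}}+(1+s)^{3/2}\|w_{3}f(s)\|_{L^{\infty}_{\xi,p_{2}}L^{\infty}_{x}}+\|w_{3}f(s)\|_{L^{\infty}_{\xi,p_{2}+2j}(L^{2}_{x}\cap L^{\infty}_{x})}\Bigr].
\end{equation*}
Plugging the bilinear bound and the linear decay into the Duhamel formula and splitting $\int_{0}^{t}=\int_{0}^{t/2}+\int_{t/2}^{t}$, so that on each half one of the factors $(1+t-s)^{-\alpha}$, $(1+s)^{-\alpha}$ is frozen at its endpoint while the other is integrated, the self-convolution of $(1+\cdot)^{-3/2}$ produces an output of order $(1+t)^{-3/2}$, and analogously $(1+t)^{-3/4}$ for the $L^{2}_{x}$ component. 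The uniform high-weight part of $M(t)$ is closed in parallel through the characteristic mild formulation
\begin{equation*}
f(t,x,\xi)=e^{-\nu(\xi)t}(\eta f_{0})(x-\xi t,\xi)+\int_{0}^{t}e^{-\nu(\xi)(t-s)}\bigl[Kf+\Gamma(f,f)\bigr](s,x-\xi(t-s),\xi)\,ds,
\end{equation*}
together with a Guo-type $L^{2}$-$L^{\infty}$ absorption of the $Kf$ term, the required $L^{2}_{x}$ input being furnished by the decaying piece of $M(t)$. One obtains a self-improving inequality $M(t)\lesssim \eta\,\mathcal{E}_{0}+M(t)^{2}$ with $\mathcal{E}_{0}$ the initial-data norm on the right-hand side of the theorem; smallness of $\eta$ then yields $M(t)\lesssim \eta\,\mathcal{E}_{0}$, which is exactly \eqref{eq: N. 1. aa}--\eqref{eq: N. 1. bb}. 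Existence and uniqueness follow by running the same bounds through a standard Picard iteration.

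The principal obstacle is the simultaneous bookkeeping of time decay and velocity weight. Each invocation of the linear decay consumes $2j$ or $3j$ of weight and each invocation of $\Gamma$ consumes an additional $\langle\xi\rangle^{|\gamma|}$, so the initial budget $p_{2}+3j$ is tight and has to be allocated carefully so that the Duhamel output still sits in $L^{\infty}_{\xi,p_{2}}$. A related subtlety is that in $\mathbb{R}^{3}$ the self-convolution of $(1+t)^{-3/2}$ lies exactly at the integrability threshold for the $L^{\infty}_{x}$ rate, so the time-splitting must use the sharp form of the linear decay with no margin; this is what forces the high-weight uniform bound into the same bootstrap rather than permitting a clean two-step argument. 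Crucially, none of these steps invokes Sobolev regularity of $f_{0}$: everything runs through $L^{\infty}_{\xi}(L^{1}_{x}\cap L^{\infty}_{x})$ norms, which is the novelty of the approach advertised in the abstract.
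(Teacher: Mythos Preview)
Your overall architecture---Duhamel representation, weighted bilinear control of $\Gamma$, linear decay of $e^{t\mathcal{B}}$, and a bootstrap functional mixing low-weight decaying norms with high-weight uniform norms---matches the paper's approach. But the bootstrap functional you write down cannot be closed as stated, and the gap is exactly the one the paper has to work around.

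The issue is the $(1+t)^{3/2}$ weight on the $L^{\infty}_{x}$ piece of $M(t)$. To reproduce that rate through Duhamel you must feed the linear semigroup an $L^{1}_{x}$ norm of $\Gamma(f,f)$ at velocity weight $p_{2}+j$ (this is the input the linear estimate needs to output $(1+t)^{-3/2}$ in $L^{\infty}_{\xi,p_{2}}L^{\infty}_{x}$). The $L^{1}_{x}$ norm of $\Gamma$ comes from $L^{2}_{x}\times L^{2}_{x}$, and at weight $p_{2}+j$ you only control $\|w_{3}f\|_{L^{\infty}_{\xi,p_{2}+j}L^{2}_{x}}$ by interpolating between the $L^{2}_{x}$ pieces of $M(t)$, namely $(1+s)^{-3/4}$ at weight $p_{2}$ and a uniform bound at weight $p_{2}+2j$. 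That interpolation gives only $(1+s)^{-3/8}$, so $\|w_{3}\Gamma(f,f)(s)\|_{L^{\infty}_{\xi,p_{2}+j}L^{1}_{x}}\lesssim (1+s)^{-3/4}M(t)^{2}$, and then
\[
(1+t)^{3/2}\int_{0}^{t}(1+t-s)^{-3/2}(1+s)^{-3/4}\,ds\;M(t)^{2}\sim (1+t)^{3/4}M(t)^{2},
\]
which is unbounded. Your claimed inequality $M(t)\lesssim \eta\,\mathcal{E}_{0}+M(t)^{2}$ therefore does not follow.

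The paper deals with this by a two-step procedure. First it closes an iteration in the norm
\[
|||h|||=\sup_{t}\Bigl\{(1+t)^{3/4}\|w_{3}h\|_{L^{\infty}_{\xi,p_{2}}L^{2}_{x}},\ (1+t)^{3/4}\|w_{3}h\|_{L^{\infty}_{\xi,p_{2}}L^{\infty}_{x}},\ \|w_{3}h\|_{L^{\infty}_{\xi,p_{2}+2j}(L^{2}_{x}\cap L^{\infty}_{x})}\Bigr\},
\]
with only $(1+t)^{-3/4}$ on the $L^{\infty}_{x}$ piece. The key device is an interpolation lemma applied to $\Gamma$ itself: from $\|\Gamma\|_{L^{\infty}_{\xi,p_{2}}L^{1}_{x}}\lesssim (1+s)^{-3/2}b_{1}b_{2}$ (both factors at low weight) and $\|\Gamma\|_{L^{\infty}_{\xi,p_{2}+2j}L^{1}_{x}}\lesssim b_{1}b_{2}$ (both at high weight) one gets $\|\Gamma\|_{L^{\infty}_{\xi,p_{2}+j}L^{1}_{x}}\lesssim (1+s)^{-3/4}b_{1}b_{2}$, which is precisely enough to output $(1+t)^{-3/4}$ through the linear $(1+t-s)^{-3/2}$ kernel. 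Only \emph{after} this closure does the paper upgrade to $(1+t)^{-3/2}$ in $L^{\infty}_{x}$, by re-running the Duhamel formula and invoking the already-proved $(1+t)^{-3/4}$ bound at the shifted weight $p_{2}+j$ (this shift is why the final estimate \eqref{eq: N. 1. a} requires $p_{2}+3j$ rather than $p_{2}+2j$ on the data). You should restructure your argument along these lines: drop the $(1+t)^{3/2}$ from the bootstrap functional, add the interpolation step for $\Gamma$, and recover the sharp $L^{\infty}_{x}$ rate a posteriori.
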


We here mention that whenever $\hat{\varepsilon}=0$, $f_{w_{3}}=f$ is the
solution to the equation (\ref{bot.1.d}).

\begin{theorem}[The spatially asymptotic behavior for $-2<\protect \gamma <0$]

\label{thm:main}Let $-2<\gamma <0$ and let $f$ be a solution to the
Boltzmann equation $(\ref{bot.1.d})$ with initial data $\eta f_{0}$, where $%
f_{0}$ is compactly supported in the $x$-variable for all $\xi $:%
\begin{equation*}
f_{0}\left( x,\xi \right) \equiv 0\text{ for }\left \vert x\right \vert \geq 1%
\text{, }\xi \in \mathbb{R}^{3}\text{,}
\end{equation*}%
and $\eta >0$ is sufficiently small.$%
\vspace {3mm}%
$\newline
(i) Let $0<\varsigma \ll 1$. Suppose that $\left \vert f_{0}\right \vert
_{L_{x}^{\infty }}\in L_{\xi ,p+\beta +3j}^{\infty }$ for some $p\geq 1$, $%
\beta >3/2$, and $j>0$ large enough. Then:$%
\vspace {3mm}%
$\newline
If $-1<\gamma <0$, there exists $M>0$ such that for $\left \langle
x\right \rangle >2Mt$,
\begin{equation*}
\left \vert f(t,x)\right \vert _{L_{\xi ,\beta }^{\infty }}\lesssim \eta
(1+t)^{2}(\left \langle x\right \rangle +Mt)^{\frac{-p}{1-\gamma }}\left \Vert
\left \langle \xi \right \rangle ^{p+\beta +3j}f_{0}\right \Vert _{L_{\xi
}^{\infty }L_{x}^{\infty }}\text{.}
\end{equation*}%
If $\gamma =-1$, there exists $M>0$ such that for $\left \langle
x\right \rangle >2Mt$,
\begin{equation*}
\left \vert f(t,x)\right \vert _{L_{\xi ,\beta }^{\infty }}\lesssim \eta
(1+t)^{2+\varsigma }(\left \langle x\right \rangle +Mt)^{\frac{-p}{1-\gamma }%
}\left \Vert \left \langle \xi \right \rangle ^{p+\beta +3j}f_{0}\right \Vert
_{L_{\xi }^{\infty }L_{x}^{\infty }}\text{.}
\end{equation*}%
If $-2<\gamma <-1$, there exists $M>0$ such that for $\left \langle
x\right \rangle >2Mt$,%
\begin{equation*}
\left \vert f(t,x)\right \vert _{L_{\xi ,\beta }^{\infty }}\lesssim \eta
(1+t)^{7+\frac{5}{\gamma }}(\left \langle x\right \rangle +Mt)^{\frac{-p}{%
1-\gamma }}\left \Vert \left \langle \xi \right \rangle ^{p+\beta
+3j}f_{0}\right \Vert _{L_{\xi }^{\infty }L_{x}^{\infty }}\text{.}
\end{equation*}%
\newline
\newline
(ii) Let $0<\varsigma \ll 1$. Suppose that $\left \vert f_{0}\right \vert
_{L_{x}^{\infty }}\in L_{\xi }^{\infty }(e^{\hat{\eps}\left \langle \xi
\right \rangle ^{p}}\left \langle \xi \right \rangle ^{p+\beta +3j})$ for some $%
0<p\leq 2$, $\beta >3/2$, $\hat{\eps}>0$ sufficiently small, and $j>0$ large
enough. Then:$%
\vspace {3mm}%
$\newline
If $-1<\gamma <0$, there exist $M>0$ and $0<\eps<\hat{\eps}$ such that for $%
\left \langle x\right \rangle >2Mt$,%
\begin{equation*}
\left \vert f(t,x)\right \vert _{L_{\xi ,\beta }^{\infty }}\lesssim \eta
(1+t)^{2}e^{-\eps(\left \langle x\right \rangle +Mt)^{\frac{p}{p+1-\gamma }%
}}\Vert e^{\hat{\varepsilon}\left \langle \xi \right \rangle ^{p}}\left \langle
\xi \right \rangle ^{p+\beta +3j}f_{0}\Vert _{L_{\xi }^{\infty
}L_{x}^{\infty }}\text{.}
\end{equation*}%
If $\gamma =-1$, there exist $M>0$ and $0<\eps<\hat{\eps}$ such that for $%
\left \langle x\right \rangle >2Mt$,%
\begin{equation*}
\left \vert f(t,x)\right \vert _{L_{\xi ,\beta }^{\infty }}\lesssim \eta
(1+t)^{2+\varsigma }e^{-\eps(\left \langle x\right \rangle +Mt)^{\frac{p}{%
p+1-\gamma }}}\Vert e^{\hat{\varepsilon}\left \langle \xi \right \rangle
^{p}}\left \langle \xi \right \rangle ^{p+\beta +3j}f_{0}\Vert _{L_{\xi
}^{\infty }L_{x}^{\infty }}\text{.}
\end{equation*}%
If $-2<\gamma <-1$, there exist $M>0$ and $0<\eps<\hat{\eps}$ such that for $%
\left \langle x\right \rangle >2Mt$,%
\begin{equation*}
\left \vert f(t,x)\right \vert _{L_{\xi ,\beta }^{\infty }}\lesssim \eta
(1+t)^{7+\frac{5}{\gamma }}e^{-\eps(\left \langle x\right \rangle +Mt)^{\frac{p%
}{p+1-\gamma }}}\Vert e^{\hat{\varepsilon}\left \langle \xi \right \rangle
^{p}}\left \langle \xi \right \rangle ^{p+\beta +3j}f_{0}\Vert _{L_{\xi
}^{\infty }L_{x}^{\infty }}\text{.}
\end{equation*}
\end{theorem}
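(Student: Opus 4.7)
The plan is to work directly with the Duhamel representation along characteristics,
\[
f(t,x,\xi)=e^{-\nu(\xi)t}f_{0}(x-\xi t,\xi)+\int_{0}^{t}e^{-\nu(\xi)(t-s)}\bigl[Kf+\Gamma(f,f)\bigr](s,x-\xi(t-s),\xi)\,ds,
\]
and to control $|f(t,x,\cdot)|_{L_{\xi,\beta}^{\infty}}$ pointwise in $(t,x)$ inside the far-field region $\langle x\rangle>2Mt$. The natural device is a combined space--velocity weight $W(t,x,\xi)$ which, on that region, behaves like $(\langle x\rangle+Mt)^{p/(1-\gamma)}$ in part (i) (respectively $\exp\bigl\{\varepsilon(\langle x\rangle+Mt)^{p/(p+1-\gamma)}\bigr\}$ in part (ii)) for moderate $|\xi|$, and saturates at the pure velocity weight $\langle\xi\rangle^{p}$ (respectively $\exp\{\varepsilon\langle\xi\rangle^{p}\}$) once $|\xi|$ is large. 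The exponent $p/(1-\gamma)$ (respectively $p/(p+1-\gamma)$) is forced by requiring that the transport commutator $(\partial_{t}+\xi\cdot\nabla_{x})W/W$ be absorbed by a fraction of the damping $\nu(\xi)\sim\langle\xi\rangle^{\gamma}$ precisely where the space weight is active; this matching pins down the velocity saturation threshold $\langle\xi\rangle^{1-\gamma}\sim\langle x\rangle+Mt$.

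\textbf{Data and source terms.} The compact support of $f_{0}$ forces $|\xi|\geq(\langle x\rangle-1)/t\geq M$ whenever the streaming term $f_{0}(x-\xi t,\xi)$ is nonzero and $\langle x\rangle>2Mt$; combining this with the high-order velocity weight $\langle\xi\rangle^{p+\beta+3j}$ (or the sub-exponential analogue) on $f_{0}$ and the damping $e^{-\nu(\xi)t}$ yields the asserted spatial decay of the initial-data contribution directly. For the Duhamel source, the velocity integration is split, via the cut-offs $\chi_{R}$ already introduced in the paper, into $|\xi|\lesssim(\langle x\rangle+M(t-s))^{1/(1-\gamma)}$ and its complement. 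In the first regime the shifted argument $x-\xi(t-s)$ still lies in the far-field cone with a good margin, so one can bootstrap a spatial-decay bound on $Kf$ and $\Gamma(f,f)$ from the same bound hypothesized on $f$; in the second the excess velocity is paid for by the extra velocity decay of $f$ (supplied by $\langle\xi\rangle^{-3j}$ or the sub-exponential factor inherited from the initial data) together with $e^{-\nu(\xi)(t-s)}$.

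\textbf{Bootstrap, main obstacle, and variants.} One closes with a continuity argument on a norm of the shape
\[
N(T)=\sup_{0\leq t\leq T}\sup_{\langle x\rangle>2Mt}(1+t)^{-\alpha(\gamma)}(\langle x\rangle+Mt)^{p/(1-\gamma)}\bigl|f(t,x,\cdot)\bigr|_{L_{\xi,\beta}^{\infty}},
\]
with $\alpha(\gamma)\in\{2,\,2+\varsigma,\,7+5/\gamma\}$ chosen case by case; the estimates of the previous paragraph give $N(T)\leq C(\mathrm{data})+C\eta\,N(T)$, which closes for $\eta$ small. The main obstacle is the nonlocality in $\xi$ of the collision term $Kf$: the $\xi_{*}$-integral defining $(Kf)(s,y,\xi)$ requires control of $f(s,y,\xi_{*})$ for \emph{all} $\xi_{*}$, and in particular for those making $y=x-\xi(t-s)$ fall \emph{inside} the cone $\langle y\rangle\leq 2Ms$, where the bootstrap norm $N$ provides no information. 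This gap is bridged by the uniform-in-time, uniform-in-$x$ weighted $L_{\xi,p_{2}}^{\infty}L_{x}^{\infty}$ decay from Theorem~\ref{prop: nonlinear}, which substitutes for the missing spatial decay and dictates the requirement that $j$ be taken sufficiently large to absorb the auxiliary velocity weights. The deterioration of $\alpha(\gamma)$ as $\gamma\to-2$ reflects the weakening of $\nu(\xi)\sim\langle\xi\rangle^{\gamma}$ and the resulting loss in the Duhamel time integral, with a borderline $\varsigma$-loss at $\gamma=-1$; part (ii) follows by the same scheme with the polynomial weight replaced by the sub-exponential one, the change of saturation exponent from $p/(1-\gamma)$ to $p/(p+1-\gamma)$ coming from the additional factor $p\langle\xi\rangle^{p-1}$ produced when differentiating $\exp\{\varepsilon\langle\xi\rangle^{p}\}$ in the commutator computation.
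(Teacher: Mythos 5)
Your outline correctly identifies the weight functions and the exponent-matching that produce the thresholds $p/(1-\gamma)$ and $p/(p+1-\gamma)$, and you correctly observe that the characteristic of $K$ exiting the far-field cone is bridged by the uniform decay from Theorem~\ref{prop: nonlinear}. However, the central assertion of your bootstrap --- that the Duhamel representation yields $N(T)\leq C(\text{data})+C\eta\,N(T)$ --- does not hold. The linear Duhamel term $\int_{0}^{t}e^{-\nu(\xi)(t-s)}(Kf)(s,x-\xi(t-s),\xi)\,ds$ carries an $O(1)$ constant, not an $O(\eta)$ one. Conjugating $K$ by the weight $W$ produces $K_{W}$, which by Lemma~\ref{lemma: estimate of weighted-kw} maps $L^{\infty}_{\xi,\beta}\to L^{\infty}_{\xi,\beta+2-\gamma}$ with a fixed (non-small) constant; converting the gained velocity decay into time decay through $e^{-\nu(\xi)(t-s)}$ yields a bounded but $O(1)$ time integral $\int_{0}^{t}(1+t-s)^{(2-\gamma)/\gamma}\,ds$. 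So the best you get is $N(T)\leq C_{0}+C_{K}\,N(T)+C\eta\,N(T)$ with $C_{K}$ not under your control, and the continuity argument does not close. Iterating Duhamel finitely many times compounds rather than contracts the constant, and unlike the hard-sphere case there is no Vidav-type smallness for soft potentials where $\nu(\xi)\to 0$ at large $|\xi|$.

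The paper's actual route is genuinely different at precisely this point. Instead of a pointwise $L^{\infty}_{x}$ contraction, the paper (Section~\ref{WR}) performs a finite Picard iteration to peel off a wave part $W^{(m)}_{w_{1}}=\sum_{i=0}^{m}u^{(i)}$ controlled directly in $L^{\infty}_{\xi,\beta}L^{\infty}_{x}$ by Lemma~\ref{pointwise-u} (a finite sum, so no convergence issue), and then handles the remainder $\mathcal{R}^{(m)}_{w_{1}}$ via weighted energy estimates exploiting the coercivity \eqref{coercivity} of $L$ in $L^{2}$, a regularization estimate (Lemma~\ref{lemma: H2 estimate of h6}) built on the commuting vector field $\mathcal{D}_{t}=t\nabla_{x}+\nabla_{\xi}$ to lift $L^{2}_{\xi}L^{2}_{x}$ bounds to $L^{2}_{\xi}H^{2}_{x}$, Sobolev embedding to reach $L^{2}_{\xi}L^{\infty}_{x}$, and finally a velocity bootstrap to $L^{\infty}_{\xi,\beta}$. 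These steps are exactly what substitutes for the failed smallness of $K$ in your scheme, and they account for the $(1+t)^{\alpha(\gamma)}$ growth factors --- the exponent $\alpha(\gamma)$ is produced by the $s$-integrals in the regularization estimate (Lemmas~\ref{lemma: H2 estimate of h6} and~\ref{Regularization estimate on R(6)}), not by the Duhamel time integral in your sketch. Section~\ref{Proof: thm:main} then closes by feeding the weighted $L^{2}$ energy estimate for $f_{w_{i}}$ (together with Theorem~\ref{prop: nonlinear}) into Theorems~\ref{theorem-linear} and~\ref{theorem-linear-exp}, a structure your proposal does not reproduce.
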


In fact, we have also established the corresponding results for
the full nonlinear Boltzmann equation with hard potential cases (i.e., $%
0\leq \gamma <1$). The proof in that case is almost the same as in the soft
potential one and most of the parallel lemmas can be obtained more easily.
To avoid a lengthy discussion, we focus on the soft potential case in this
paper and just state the results for the hard potential as below.

\begin{theorem}[The large time behavior for $0\leq \protect \gamma <1$]
Let $0\leq \gamma <1$, $0<p_{1}\leq 2$, $p_{2}>3/2$, and let $\hat{%
\varepsilon}\geq 0$ be sufficiently small. Assume that the initial $f_{0}$
satisfies $f_{w_{3}0}=w_{3}f_{0}\in L_{\xi ,p_{2}+\gamma }^{\infty
}(L_{x}^{1}\cap L_{x}^{\infty })$ where $w_{3}=e^{\hat{\varepsilon}%
\left
\langle \xi \right \rangle ^{p_{1}}}$, and $\eta >0$ is sufficiently
small. Then there exists a unique solution $f$ to $(\ref{bot.1.d})$ in $%
L_{\xi ,p_{2}+\gamma }^{\infty }(e^{\hat{\varepsilon}\left \langle \xi
\right
\rangle ^{p_{1}}})L_{x}^{2}\cap L_{\xi ,p_{2}+\gamma }^{\infty }(e^{%
\hat{\varepsilon}\left \langle \xi \right \rangle ^{p_{1}}})L_{x}^{\infty }$
with
\begin{equation}
\Vert f_{w_{3}}\Vert _{L_{\xi ,p_{2}+\gamma }^{\infty }L_{x}^{\infty
}}\lesssim \eta (1+t)^{-3/2}\left( \Vert f_{w_{3}0}\Vert _{L_{\xi
,p_{2}+\gamma }^{\infty }L_{x}^{\infty }}+\Vert f_{w_{3}0}\Vert _{L_{\xi
,p_{2}}^{\infty }L_{x}^{1}}\right) \text{,}
\end{equation}%
and
\begin{equation}
\Vert f_{w_{3}}\Vert _{L_{\xi ,p_{2}+\gamma }^{\infty }L_{x}^{2}}\lesssim
\eta (1+t)^{-3/4}\left( \Vert f_{w_{3}0}\Vert _{L_{\xi ,p_{2}+\gamma
}^{\infty }L_{x}^{\infty }}+\Vert f_{w_{3}0}\Vert _{L_{\xi ,p_{2}}^{\infty
}L_{x}^{1}}\right) \text{.}\,
\end{equation}
\end{theorem}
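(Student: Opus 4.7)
The plan is to adapt the $L^{2}$--$L^{\infty}$ framework already used for the soft potential case in Theorem~\ref{prop: nonlinear} to the hard potential regime $0 \leq \gamma < 1$. The crucial simplification is that the collision frequency now satisfies $\nu(\xi) \gtrsim \langle \xi \rangle^{\gamma} \geq \nu_{0} > 0$, so the natural dissipation already controls the extra factor $\langle \xi \rangle^{\gamma}$ appearing in the target norm, the linearized semigroup enjoys a genuine spectral gap on the microscopic part, and there is no need to introduce the large polynomial weights $\langle \xi \rangle^{2j}$, $\langle \xi \rangle^{3j}$ used in the soft case to compensate for the degeneracy of $\nu$ near $\xi = 0$.

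First I would derive the equation for $f_{w_{3}} = w_{3} f$ with $w_{3} = e^{\hat{\varepsilon} \langle \xi \rangle^{p_{1}}}$, namely
\begin{equation*}
\partial_{t} f_{w_{3}} + \xi \cdot \nabla_{x} f_{w_{3}} = -\nu(\xi) f_{w_{3}} + K_{w_{3}} f_{w_{3}} + \Gamma_{w_{3}}(f_{w_{3}}, f_{w_{3}}),
\end{equation*}
where $K_{w_{3}} g := w_{3} K(w_{3}^{-1} g)$ and $\Gamma_{w_{3}}(g,h) := w_{3} \Gamma(w_{3}^{-1} g, w_{3}^{-1} h)$. For $\hat{\varepsilon}$ small, $K_{w_{3}}$ remains a bounded perturbation of $-\nu$ on $L^{\infty}_{\xi, p_{2}+\gamma}$, and the weighted bilinear form admits the pointwise estimate $|\Gamma_{w_{3}}(g,h)|_{L^{\infty}_{\xi, p_{2}+\gamma}} \lesssim |g|_{L^{\infty}_{\xi, p_{2}+\gamma}} |h|_{L^{\infty}_{\xi, p_{2}+\gamma}}$, which holds without any loss precisely because $\gamma \geq 0$.

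Next I would establish the linear decay for the weighted semigroup $\mathbb{S}(t)$ generated by $-\xi \cdot \nabla_{x} - \nu + K_{w_{3}}$. A standard Fourier splitting into long and short waves, combined with the spectral gap of $L$, yields the long-wave heat-kernel rate
\begin{equation*}
\|\mathbb{S}(t) g\|_{L^{\infty}_{\xi, p_{2}+\gamma} L^{2}_{x}} \lesssim (1+t)^{-3/4} \bigl( \|g\|_{L^{\infty}_{\xi, p_{2}} L^{1}_{x}} + \|g\|_{L^{\infty}_{\xi, p_{2}+\gamma} L^{\infty}_{x}} \bigr),
\end{equation*}
together with the $(1+t)^{-3/2}$ rate in $L^{\infty}_{x}$, obtained from a Vidav-type iteration of Duhamel's formula along characteristics that converts $L^{2}_{x}$ control of $K_{w_{3}} f_{w_{3}}$ into $L^{\infty}_{x}$ control. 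The hard-potential spectral gap absorbs the short-wave contribution into an exponentially small remainder, so the algebraic rates above govern the large-time behavior.

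Finally I would close the nonlinear problem by a bootstrap on
\begin{equation*}
\mathcal{N}(t) := \sup_{0 \leq s \leq t} \Bigl[ (1+s)^{3/4} \|f_{w_{3}}(s)\|_{L^{\infty}_{\xi, p_{2}+\gamma} L^{2}_{x}} + (1+s)^{3/2} \|f_{w_{3}}(s)\|_{L^{\infty}_{\xi, p_{2}+\gamma} L^{\infty}_{x}} \Bigr].
\end{equation*}
Inserting the bilinear estimate into Duhamel's formula for $\mathbb{S}(t)$, the time convolutions $\int_{0}^{t} (1+t-s)^{-3/2}(1+s)^{-3} ds \lesssim (1+t)^{-3/2}$ (and analogously for the $L^{2}_{x}$ rate) yield $\mathcal{N}(t) \lesssim \eta \mathcal{I}_{0} + \mathcal{N}(t)^{2}$, where $\mathcal{I}_{0}$ is the sum of the two initial norms appearing in the statement; smallness of $\eta$ then closes the bootstrap globally and delivers the claimed bounds, with uniqueness coming from a standard difference estimate in the same norm. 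The main obstacle, as in the soft case, is the transfer from $L^{2}_{x}$ to $L^{\infty}_{x}$ decay at the sharp rate $(1+t)^{-3/2}$; the uniform ellipticity of $\nu$, however, makes the Vidav iteration converge without loss of velocity moments, which is exactly why the hard potential statement suffices with the finite weight $\langle \xi \rangle^{p_{2}+\gamma}$ in place of the much stronger $\langle \xi \rangle^{p_{2}+3j}$ required for soft potentials.
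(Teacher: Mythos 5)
Your outline follows the same architecture the paper implies for the hard-potential case (it is explicitly stated but not proved, with the remark that the proof is ``almost the same as in the soft potential one''): derive the $w_{3}$-weighted equation, establish linear decay at rates $(1+t)^{-3/4}$ in $L^{2}_{x}$ and $(1+t)^{-3/2}$ in $L^{\infty}_{x}$ for the solution operator, and close by Duhamel plus a bootstrap in a weighted norm — precisely the plan of Section~\ref{proof: nonlinear}. You correctly identify the key simplification over the soft case: since $\nu(\xi)\geq\nu_{0}>0$, the damped transport decays exponentially, the polynomial weight loss per iteration is absorbed, and the interpolation argument (Lemma~\ref{lemma: eps2^n}) together with the final $(1+t)^{-3/4}\to(1+t)^{-3/2}$ bootstrap becomes unnecessary, which is why $\langle\xi\rangle^{p_{2}+\gamma}$ suffices in place of $\langle\xi\rangle^{p_{2}+3j}$.

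One inaccuracy worth flagging: the bilinear estimate you state, $|\Gamma_{w_{3}}(g,h)|_{L^{\infty}_{\xi,p_{2}+\gamma}}\lesssim|g|_{L^{\infty}_{\xi,p_{2}+\gamma}}|h|_{L^{\infty}_{\xi,p_{2}+\gamma}}$ ``without any loss,'' is not correct as written: even for $\gamma\geq 0$ the loss term carries a factor $\nu(\xi)\sim\langle\xi\rangle^{\gamma}$, so the clean product estimate is for $\nu^{-1}\Gamma$ (as in \eqref{Gamma-sup}), not for $\Gamma$ itself. This does not break your argument — as you yourself note at the end, the factor $\nu$ is absorbed by the $e^{-\nu(\xi)(t-s)}$ in the Vidav/Duhamel iteration via $\int_{0}^{t}\nu e^{-\nu(t-s)}(1+s)^{-a}\,ds\lesssim(1+t)^{-a}$ for $a>1$ — but the intermediate claim should be stated with the $\nu^{-1}$ in place. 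Relatedly, you should track the asymmetric weights in the statement ($p_{2}$ for $L^{1}_{x}$, $p_{2}+\gamma$ for $L^{\infty}_{x}$): the extra $\gamma$ on the $L^{\infty}_{x}$ side is exactly what the loss term demands once the $\nu$ factor is correctly accounted for.
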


We here mention again that $f_{w_{3}}=f$ is the solution to the equation (%
\ref{bot.1.d}) whenever $\hat{\varepsilon}=0$.

\begin{theorem}[The spatially asymptotic behavior for $0\leq \protect \gamma %
<1$]
\label{thm:main copy(1)}Let $0\leq \gamma <1$ and let $f$ be a solution to
the Boltzmann equation $(\ref{bot.1.d})$ with initial data $\eta f_{0}$,
where $f_{0}$ is compactly supported in the $x$-variable for all $\xi $:%
\begin{equation*}
f_{0}\left( x,\xi \right) \equiv 0\text{ for }\left \vert x\right \vert \geq
1\text{, }\xi \in \mathbb{R}^{3}\text{,}
\end{equation*}%
and $\eta >0$ is sufficiently small.$%
\vspace {3mm}%
$\newline
(i) Suppose that $\left \vert f_{0}\right \vert _{L_{x}^{\infty }}\in $ $%
L_{\xi ,p+\beta +\gamma /2}^{\infty }$ for some $p\geq 1$ and $\beta >3/2$.
Then there exists $M>0$ such that for $\left \langle x\right \rangle >2Mt$,
\begin{equation*}
\left \vert f(t,x)\right \vert _{L_{\xi ,\beta }^{\infty }}\lesssim \eta
(1+t)^{1/2}(\left \langle x\right \rangle +Mt)^{\frac{-p}{1-\gamma }}\Vert
f_{0}\Vert _{L_{\xi ,p+\beta +\gamma /2}^{\infty }L_{x}^{\infty }}\text{.}
\end{equation*}%
(ii) Suppose that $\left \vert f_{0}\right \vert _{L_{x}^{\infty }}\in
L_{\xi }^{\infty }(e^{\hat{\varepsilon}\left \langle \xi \right \rangle
^{p}}\left \langle \xi \right \rangle ^{p+\beta +\gamma /2})$ for some $%
0<p\leq 2$, $\beta >3/2$, $\hat{\varepsilon}>0$ sufficiently small. Then
there exist $M>0$ and $0<\varepsilon <\hat{\varepsilon}$ such that for $%
\left \langle x\right \rangle >2Mt$,
\begin{equation*}
\left \vert f(t,x)\right \vert _{L_{\xi ,\beta }^{\infty }}\lesssim \eta
(1+t)^{1/2}e^{-\varepsilon (\left \langle x\right \rangle +Mt)^{\frac{p}{%
p+1-\gamma }}}\Vert e^{\hat{\varepsilon}\left \langle \xi \right \rangle
^{p}}\left \langle \xi \right \rangle ^{p+\beta +\gamma /2}f_{0}\Vert
_{L_{\xi }^{\infty }L_{x}^{\infty }}\text{.}
\end{equation*}
\end{theorem}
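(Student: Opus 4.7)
My plan is to follow the direct weighted $L^{\infty}$ approach advertised in the abstract and already executed for soft potentials in Theorem~\ref{thm:main}. The hard potential case is in fact simpler because $\nu(\xi)\gtrsim\langle\xi\rangle^{\gamma}\ge\nu_{0}>0$ uniformly, so the damping term never degenerates and no loss in the $\gamma$-exponent of the prefactor is needed.

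I would introduce the combined velocity--spatial weight
\begin{equation*}
W(t,x,\xi)=\langle\xi\rangle^{p+\beta+\gamma/2}(\langle x\rangle+Mt)^{p/(1-\gamma)}
\end{equation*}
in part (i), and its sub-exponential counterpart $W=e^{\hat{\varepsilon}\langle\xi\rangle^{p}}\langle\xi\rangle^{p+\beta+\gamma/2}e^{\varepsilon(\langle x\rangle+Mt)^{p/(p+1-\gamma)}}$ with $0<\varepsilon<\hat{\varepsilon}$ taken small in part (ii). Setting $h=Wf$, the equation (\ref{bot.1.d}) transforms into
\begin{equation*}
(\partial_{t}+\xi\cdot\nabla_{x}+\nu(\xi))h=\mathcal{A}\,h+W\,Kf+W\,\Gamma(f,f),\qquad \mathcal{A}:=W^{-1}(\partial_{t}+\xi\cdot\nabla_{x})W.
\end{equation*}
The central technical observation is that $\mathcal{A}$, which is bounded by $C(M+|\xi|)/(\langle x\rangle+Mt)$ in case (i), can be absorbed in two complementary regimes. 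On moderate velocities $\langle\xi\rangle\le C(\langle x\rangle+Mt)^{1/(1-\gamma)}$ (resp.~$\langle\xi\rangle\le C(\langle x\rangle+Mt)^{1/(p+1-\gamma)}$), a direct comparison gives $\mathcal{A}\le\tfrac12\nu(\xi)$ once $M$ is large (resp.~$\varepsilon$ is small), so $\mathcal{A}h$ is absorbed into the left-hand-side damping. On the complementary high-velocity regime one simply trades the spatial weight for the velocity weight via $(\langle x\rangle+Mt)^{p/(1-\gamma)}\lesssim\langle\xi\rangle^{p}$ (resp.~$e^{\varepsilon(\langle x\rangle+Mt)^{p/(p+1-\gamma)}}\lesssim e^{\hat{\varepsilon}\langle\xi\rangle^{p}}$), which is paid for by the extra $\langle\xi\rangle^{p}$ (resp.~$e^{\hat{\varepsilon}\langle\xi\rangle^{p}}$) that the initial data carries.

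Having absorbed $\mathcal{A}$, I would integrate along characteristics. The initial contribution $e^{-\nu(\xi)t}(Wf_{0})(x-\xi t,\xi)$ vanishes unless $|x-\xi t|\le1$; combined with the assumption $\langle x\rangle>2Mt$, this forces $|\xi|\gtrsim M$, and the $\langle\xi\rangle^{p+\beta+\gamma/2}$ weight on $f_{0}$ then pays for the $(\langle x\rangle+Mt)^{p/(1-\gamma)}$ factor, losing at most the advertised $(1+t)^{1/2}$ prefactor. For the linear term $Kf$ I would use the Grad/Vidav decomposition $K=K^{(R)}+(K-K^{(R)})$ familiar from the Guo-type $L^{\infty}$--$L^{2}$ theory: the remainder is a contraction in the weighted $L^{\infty}$ norm for $R$ large, and the smooth part is controlled through the hard-potential $L^{2}_{x}$ decay $(1+t)^{-3/4}$ of the preceding large-time theorem. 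For the quadratic term $\Gamma(f,f)$ I would place one copy of $f$ in the weighted norm under construction and use the $L^{\infty}_{\xi,p_{2}+\gamma}L^{\infty}_{x}$ time decay $(1+t)^{-3/2}$ of the other copy, again supplied by the preceding theorem.

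The principal obstacle is the self-referential nature of this estimate, since the spatial weight appears on both sides of the bilinear bound and the argument cannot be closed in a single pass. I would resolve this by a standard continuity bootstrap on a maximal time interval $[0,T]$: assuming the a priori bound $\|Wf\|_{L^{\infty}_{t,x,\xi}([0,T])}\le 2C_{\ast}$, the Duhamel analysis above reduces the right-hand side to $C_{\ast}+C\eta\|Wf\|_{L^{\infty}_{t,x,\xi}([0,T])}$, which strictly improves the a priori bound once $\eta$ is small, so $T$ can be extended to $+\infty$. The only additional care required for part (ii) is the quantitative smallness of $\varepsilon$, and hence of $\hat{\varepsilon}$, needed to guarantee $\mathcal{A}\le\tfrac12\nu(\xi)$ on the moderate regime; this is precisely what forces the smallness hypothesis on $\hat{\varepsilon}$ in the statement.
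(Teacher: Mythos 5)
There is a genuine gap in the weight construction, and it is the central technical point of the whole argument. You propose the pure product weight $W(t,x,\xi)=\langle\xi\rangle^{p+\beta+\gamma/2}(\langle x\rangle+Mt)^{p/(1-\gamma)}$, whose commutator is
\begin{equation*}
\mathcal{A}=W^{-1}(\partial_{t}+\xi\cdot\nabla_{x})W=\frac{p}{1-\gamma}\cdot\frac{M+\xi\cdot x/\langle x\rangle}{\langle x\rangle+Mt}\text{.}
\end{equation*}
Near $x=0$, $t=0$ and for small $\xi$ one has $\mathcal{A}\approx pM/(1-\gamma)$, whereas $\nu(\xi)\approx\nu_{0}$; so the claim that $\mathcal{A}\le\tfrac{1}{2}\nu(\xi)$ "once $M$ is large" is actually backwards --- enlarging $M$ makes the inequality fail. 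The Duhamel representation integrates through precisely this region (the initial data is supported in $|x|\le 1$), so you cannot simply restrict attention to $\langle x\rangle>2Mt$. Also note the sign: with $+Mt$ the weight grows in time, so $\mathcal{A}$ is not helping you; in the paper the spatial argument is $\langle x\rangle-Mt$, which gives $\partial_{t}w_{1}\le 0$, a favourable sign in the energy estimate. The paper resolves the commutator problem by using the cut-off weight
\begin{equation*}
w_{1}=5\big(\delta(\langle x\rangle-Mt)\big)^{p/(1-\gamma)}\left(1-\chi\left(\tfrac{\delta(\langle x\rangle-Mt)}{\langle\xi\rangle_{D}^{1-\gamma}}\right)\right)+3\langle\xi\rangle_{D}^{p}\,\chi\left(\tfrac{\delta(\langle x\rangle-Mt)}{\langle\xi\rangle_{D}^{1-\gamma}}\right)\text{,}
\end{equation*}
which is \emph{purely} a velocity weight when $\delta(\langle x\rangle-Mt)\lesssim\langle\xi\rangle_{D}^{1-\gamma}$, so the problematic commutator essentially vanishes there; only where $\langle x\rangle-Mt$ is large does the spatial factor appear, and there $|w_{1}^{-1}(\partial_{t}+\xi\cdot\nabla_{x})w_{1}|\lesssim\delta M\langle\xi\rangle_{D}^{\gamma-1}+\delta\langle\xi\rangle_{D}^{\gamma}$, which is absorbed by $\tfrac{1}{2}\nu(\xi)$ for $\delta M$ small and $D$ large (Lemma~\ref{Derivative of wi}). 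The passage to $(\langle x\rangle+Mt)^{-p/(1-\gamma)}$ in the final estimate happens only a posteriori, using $\langle x\rangle-Mt>\tfrac{1}{3}(\langle x\rangle+Mt)$ for $\langle x\rangle>2Mt$. Without the cut-off interpolation between the two weights --- which is not an optional refinement but the mechanism making absorption possible at all --- your scheme cannot close.

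A secondary gap: the prefactor $(1+t)^{1/2}$ is asserted ("losing at most the advertised $(1+t)^{1/2}$ prefactor") but never derived. In the paper this exponent arises from the $H_{x}^{2}$ regularization estimate on $u^{(6)}$ (built via the commuting operator $\mathcal{D}_{t}=t\nabla_{x}+\nabla_{\xi}$) combined with the energy estimate and Sobolev embedding on the remainder $\mathcal{R}_{w_{1}}^{(6)}$; none of this machinery appears in your proposal. The Grad/Vidav $L^{\infty}$--$L^{2}$ route you suggest instead is genuinely different from the paper's wave--remainder decomposition, and might in principle be workable, but you would still need to track precisely where the growth in $t$ enters, which is the whole substance of the quantitative statement.
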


\subsection{Method of proof and plan of the paper}

In order to study the spatially asymptotic behavior of the solution $f$ to
the full nonlinear Boltzmann equation (\ref{bot.1.d}), the following weight
functions will be taken into account (which are motivated by the linear
results \cite{[LinWangWu-2],[LinWangWu]}): \newline
\newline
\textbf{Weight function $w_{1}$.} Let $\delta >0$ be sufficiently small, $D$%
, $M\geq 1$ sufficiently large and $p\geq 1$. Define $w_{1}$ as
\begin{equation}
w_{1}\left( t,x,\xi \right) =5\left( \delta \left( \left \langle x\right
\rangle -Mt\right) \right) ^{\frac{p}{1-\gamma }}\left( 1-\chi \left( \frac{%
\delta \left( \left \langle x\right \rangle -Mt\right) }{\left \langle \xi
\right \rangle _{D}^{1-\gamma }}\right) \right) +3\left \langle \xi \right
\rangle _{D}^{p}\chi \left( \frac{\delta \left( \left \langle x\right
\rangle -Mt\right) }{\left \langle \xi \right \rangle _{D}^{1-\gamma }}%
\right) \text{.}
\end{equation}%
\textbf{Weight function $w_{2}$.} Let $\epsilon $, $\delta >0$ be
sufficiently small, $M>0$ sufficiently large and $0<p\leq 2$. Define $w_{2}$
as
\begin{equation}
w_{2}(t,x,\xi )=e^{\epsilon \rho (t,x,\xi )}  \label{eq: w}
\end{equation}%
$\,$ with
\begin{equation*}
\rho \left( t,x,\xi \right) =5\left( \delta \left( \left \langle x\right
\rangle -Mt\right) \right) ^{\frac{p}{p+1-\gamma }}\left( 1-\chi \left(
\frac{\delta \left( \left \langle x\right \rangle -Mt\right) }{\left \langle
\xi \right \rangle ^{p+1-\gamma }}\right) \right) +3\left \langle \xi \right
\rangle ^{p}\chi \left( \frac{\delta \left( \left \langle x\right \rangle
-Mt\right) }{\left \langle \xi \right \rangle ^{p+1-\gamma }}\right) \text{.}
\end{equation*}%
\textbf{Weight function $w_{3}$.} Let $\hat{\varepsilon}\geq 0$ be
sufficiently small and $0<p_{1}\leq 2$. Define $w_{3}$ as
\begin{equation}
w_{3}(\xi )=e^{\hat{\varepsilon}\left \langle \xi \right \rangle ^{p_{1}}}%
\text{.}  \label{w0}
\end{equation}%
Here we mention that the coefficients $5$ and $3$ can be replaced
by other combinations of  positive constants $a$ and $b$ with $a\geq b>0$, meeting the desired
requirement $\partial _{t}w_{i}\leq 0$ ($i=1$, $2$). Now, let $%
f_{w_{i}}=w_{i}f$, $i=1$, $2$. Then $f_{w_{i}}$ ($i=1$, $2$) satisfies the
equation
\begin{equation}
\left \{
\begin{array}{l}
\displaystyle \partial _{t}f_{w_{i}}+\xi \cdot \nabla _{x}f_{w_{i}}-\frac{%
\left( \partial _{t}w_{i}+\xi \cdot \nabla _{x}w_{i}\right) }{w_{i}}%
f_{w_{i}}=L_{w_{i}}f_{w_{i}}+\Gamma _{w_{i}}(f_{w_{i}},f)\, \text{,} \\[4mm]
\displaystyle f_{w_{i}}(0,x,\xi )=\eta w_{i}(0,x,\xi )f_{0}(x,\xi )\equiv
\eta f_{w_{i}0}(x,\xi )\text{.}%
\end{array}%
\right.  \label{eq: add x weight of Boltzmann}
\end{equation}%
Here $L_{w_{i}}f_{w_{i}}=\left( w_{i}Lw_{i}^{-1}\right) f_{w_{i}}=\left(
-\nu \left( \xi \right)+K_{w_{i}}\right) f_{w_{i}}$, $\Gamma
_{w_{i}}(f_{w_{i}},f)=w_{i}\Gamma (w_{i}^{-1}f_{w_{i}},f)$.


Therefore, in order to get the spatially asymptotic behavior of the solution $f$ to \eqref{bot.1.d}, the key step of our strategy is to obtain the $L^{\infty }$ bound
for the solution $u$ to the weighted linearized Boltzmann equation with a
source term as below:%
\begin{equation}
\left \{
\begin{array}{l}
\displaystyle \partial _{t}u+\xi \cdot \nabla _{x}u-\frac{\left( \partial
_{t}w_{i}+\xi \cdot \nabla _{x}w_{i}\right) }{w_{i}}u=L_{w_{i}}u+\Gamma
_{w_{i}}(g_{i},h_{i})\, \text{,} \\[4mm]
\displaystyle u(0,x,\xi )=\eta w_{i}(0,x,\xi )f_{0}(x,\xi )\equiv \eta
f_{w_{i}0}(x,\xi )\text{,}%
\end{array}%
\right.   \label{weighted equation with source term}
\end{equation}%
where $g_{i}$ and $h_{i}$ are prescribed, $i=1$, $2$. With the sharp estimate of $f$, a priori estimate of $f_{w_i}$, and substituting $%
	g_{i}=f_{w_{i}}$, $h_{i}=f$, we can obtain the $L^\infty$ bound of $
	f_{w_{i}}$.

The procedure relies on  large time decay of the solution  $f$ to nonlinear problem for
initial data living in $\xi $-weighted space.
 Using compensating function
methods and the wave-remainder decomposition, we first obtain the large time
behavior of the linearized equation in normed spaces $L_{\xi
}^{2}L_{x}^{2}$ and $L_{\xi }^{2}L_{x}^{\infty }$. By applying Ukai's
bootstrap argument to the integral equation,
we improve
the estimates to the weighted  spaces $L_{\xi }^{\infty }\left( e^{%
\hat{\varepsilon}\left \langle \xi \right \rangle ^{p_{1}}}\left \langle \xi
\right \rangle ^{p_{2}}\right) L_{x}^{2}$, $L_{\xi }^{\infty }\left( e^{\hat{%
\varepsilon}\left \langle \xi \right \rangle ^{p_{1}}}\left \langle \xi
\right \rangle ^{p_{2}}\right) L_{x}^{\infty }$, etc. Furthermore, given a
source term $\Gamma \left( h_{1},h_{2}\right) $ with prescribed time decay
(see \eqref{Cond-hi}), we establish the large time behavior for
inhomogeneous equation, through Duhamel principle in
terms of Green's function and damped transport operator, together with
refined estimates for $\Gamma \left( h_{1},h_{2}\right) $. {\ The estimate
for the nonlinear term $\Gamma $ is more exquisite in the soft potential
case ($-2<\gamma <0\,$).
In particular, in Lemma \ref{Lemma-Gamma-sup}, the extra
decay $\left( -1\right) $ in \eqref{Gamma-sup-2} is important in
studying
the linearized equation with a source term $\Gamma \left( h_{1},h_{2}\right) $. With the
help of an extra interpolation inequality (Lemma \ref{lemma: eps2^n}), it
enables us to get the time decay of $\Gamma \left( h_{1},h_{2}\right) $ from
$h_{1}$ and $h_{2}$ through these refined estimates for $\Gamma$. The large time behavior of the nonlinear problem \eqref{bot.1.d} then follows from an iteration scheme.
{
}
Due to the interpolation argument, we only get the large time in the $%
L_{x}^{\infty }$  at the rate of $\left( 1+t\right) ^{-\frac{3}{4}}$ at first glance, then we
recover the rate of $\left( 1+t\right) ^{-\frac{3}{2}}$
by a bootstrap process (see Section \ref{proof: nonlinear}).

Next we turn to the $L^{\infty }$ bound of the solution $u$ to the
equation (\ref{weighted equation with source term}). We combine the
wave-remainder decomposition, the energy estimate, and the regularization
estimates to conclude the proof.
In the sequel, we explain the idea in more details. The wave-remainder
decomposition is based on a Picard-type iteration, which is manipulated to
construct the increasingly regular particle-like waves.
The pointwise estimate for the wave part is obtained from the property of
the time-dependent damped transport operator (defined in \eqref{eq: def of
S(t)} and \eqref{eq: def of S(t;s)}. It is noted damped transport equation
in weighted equation is not an autonomous differential equation, so one
needs to consider the evolution operator rather than simple semi-group.
The energy estimate is used to analyze the remainder term. In the course of this
procedure, the regularization estimate (see Lemma \ref{lemma: H2 estimate of
h6}) plays a crucial role, which allows us to show the remainder becomes
regular, and in turn do the higher order energy estimate. Also thanks to the
regularization estimate, we obtain the pointwise estimate without regularity
assumption on the initial data.
Finally, we bootstrap the remainder part from $L_{\xi }^{2}$ to $L_{\xi
,\beta }^{\infty }$ $\left( \beta >3/2\right) $ so that the velocity norms
of the remainder part and the wave part become consistent.

Here we would like to remark three points in the proof: (1) due to the weaker damping term (i.e., $-2<\gamma<0$), one needs to trade off velocity decay for time decay either to get the decay of $f$ or to control the growth of $u$, so the delicate velocity-weight-gaining properties of $K_{w_i}, \Gamma, \Gamma_{w_i}$ (see Lemmas \ref{lemma: estimate of weighted-kw}, \ref{Lemma-Gamma-sup} and \ref%
{lemma: inf estimate of gamma_w} ) are fully used in the estimates; (2) although the bootstrap from $L^2_\xi$ to $L^\infty_\xi$ is frequently used in the proof, it is not obvious the integral operator $K$ owns this property if $-2<r\leq -3/2$. Thanks to Riesz-Thorin interpolation theorem, $K$ has $L_{\xi
	,1-\gamma }^{4}\text{-}L_{\xi }^{2}$ estimate in the case $-2<\gamma
\leq -3/2$ (see Lemma \ref{L4-L2}). Associated with $L_{\xi ,7/4-\gamma
}^{\infty }\text{-}L_{\xi }^{4}$ estimate, $K$ eventually has $L_{\xi
	,7/4-\gamma }^{\infty }\text{-}L_{\xi }^{4}\text{-}L_{\xi }^{2}$ estimate in
the case $-2<\gamma \leq -3/2$; (3) In the proof of Lemma \ref{lemma: H2 estimate of h6}, it reveals that the mixture of the
two operators $\mathbb{S}_{w_{i}}^{t}$ and $K_{w_{i}}$ can transport the regularity in the
microscopic velocity $\xi $ induced by $K_{w_{i}}$ to the regularity in the space $x$.
It is worth mentioning that $K_{w_{i}}$ is an integral operator from $L_{\xi }^{2}$ to $H_{\xi }^{1}$  only when $\gamma >-2$ (see Lemma \ref{lemma: estimate of weighted-kw}), this is the reason why we restrict ourselves to the case $\gamma >-2$. The removal of this restriction is left to the future.

{Lastly, we want to compare the method in this paper with those in \cite%
{[LiuYu]}, which studied the nonlinear Boltzmann equation with hard
sphere, and gave the only space-time pointwise structure result of the
nonlinear solution so far. There, it is crucial that the estimates of linear
problem can be obtained in the same weighted space as the initial data,
which allows for the nonlinear iteration, then the authors achieve the
estimate of nonlinear problem. However, for hard potential, as well as soft
potential, this methodology does not work since one needs extra weights for
maintaining the space-time structure even for the linear equation (see \cite%
{[LeeLiuYu],[LinWangWu-2],[LinWangWu]}). As a comparison, to obtain the
spatially asymptotic behavior of the nonlinear equation, we circumvent the
difficulty of nonlinear iteration due to mismatch of velocity weight on the
linear level, and directly study the $L_{\xi ,\beta }^{\infty }L_{x}^{\infty
}$ estimate of the solution $f_{w_{i}}$ to the weighted full Boltzmann
equation (\ref{eq: add x weight of Boltzmann}). This is a new idea in the related
studies.

The rest of this paper is organized as follows: We first present some basic
properties concerning the operators $L$, $\Gamma $ and the corresponding
weighted operators $L_{w_{i}}$ ($i=1$, $2$, $3$) and $\Gamma _{w_{i}}$ ($i=1$%
, $2$, $3$) in Section \ref{pre}. After that, we study the weighted
linearized Boltzmann equation with a source term in Section \ref{WR}. With
these preparations and the large time behavior (Theorem \ref{prop: nonlinear}%
), we demonstrate the spatially asymptotic behavior (Theorem \ref{thm:main}) in
Section \ref{Proof: thm:main}, and postpone the proof of Theorem \ref{prop:
nonlinear} until Section \ref{proof: nonlinear}.

\section{Preliminaries}

\label{pre}

As mentioned in the Introduction section, we will study the weighted
equation (\ref{eq: add x weight of Boltzmann}) first. Before preceding, some
basic properties concerning the operators $L$, $\Gamma $ and the
corresponding weighted operators $L_{w_{i}}$ ($i=1$, $2$, $3$) and $\Gamma
_{w_{i}}$ ($i=1$, $2$, $3$), need to be studied. The linearized collision
operator $L$, which was analyzed extensively by Grad \cite{[Grad]}, consists
of a multiplicative operator $\nu (\xi )$ and an integral operator $K$:
\begin{equation}
Lf=-\nu (\xi )f+Kf\text{,}  \label{linearized-collision}
\end{equation}%
where

\begin{equation*}
\nu (\xi )=\int B(\vartheta )|\xi -\xi _{\ast }|^{\gamma }\mathcal{M}(\xi
_{\ast })d\xi _{\ast }d\omega \text{,}
\end{equation*}%
and
\begin{equation}
Kf=-K_{1}f+K_{2}f  \label{defn:K}
\end{equation}%
is defined as \cite{[Grad]}:
\begin{equation*}
K_{1}f=\int B(\vartheta )|\xi -\xi _{\ast }|^{\gamma }\mathcal{M}^{1/2}(\xi )%
\mathcal{M}^{1/2}(\xi _{\ast })f(\xi _{\ast })d\xi _{\ast }d\omega \text{,}
\end{equation*}%
\begin{align*}
K_{2}f& =\int B(\vartheta )|\xi -\xi _{\ast }|^{\gamma }\mathcal{M}%
^{1/2}(\xi _{\ast })\mathcal{M}^{1/2}(\xi ^{\prime })f(\xi _{\ast }^{\prime
})d\xi _{\ast }d\omega \\
& \quad +\int B(\vartheta )|\xi -\xi _{\ast }|^{\gamma }\mathcal{M}%
^{1/2}(\xi _{\ast })\mathcal{M}^{1/2}(\xi _{\ast }^{\prime })f(\xi ^{\prime
})d\xi _{\ast }d\omega \text{.}
\end{align*}%
To begin with, we present a number of properties and estimates of the
operators $L$, $\nu (\xi )$ and $K$, which can be found in \cite{[Caflisch],
[Chen], [GolsePoupand], [Grad], [LinWangWu], [Strain-Guo]}.

\begin{lemma}
\label{basic} Let $-2<\gamma <0$. For any $g\in L_{\sigma }^{2}$, we have
the coercivity of the linearized collision operator $L$, that is, there
exists a positive constant $\nu _{0}$ such that
\begin{equation}
\left \langle g,Lg\right \rangle _{\xi }\leq -\nu _{0}\left \vert \mathrm{P}%
_{1}g\right \vert _{L_{\sigma }^{2}}^{2}\text{.}  \label{coercivity}
\end{equation}%
For the multiplicative operator $\nu (\xi )$, there are positive constants $%
\nu _{0}$ and $\nu _{1}$ such that
\begin{equation}
\nu _{0}\left \langle \xi \right \rangle ^{\gamma }\leq \nu (\xi )\leq \nu
_{1}\left \langle \xi \right \rangle ^{\gamma }\text{,}  \label{nu-gamma}
\end{equation}%
and for each multi-index $\alpha $,
\begin{equation}
|\partial _{\xi }^{\alpha }\nu (\xi )|\lesssim \left \langle \xi \right
\rangle ^{\gamma -|\alpha |}\text{.}
\end{equation}%
For the integral operator $K$,
\begin{equation*}
Kf=-K_{1}f+K_{2}f=\int_{{\mathbb{R}}^{3}}-k_{1}(\xi ,\xi _{\ast })f(\xi
_{\ast })d\xi _{\ast }+\int_{{\mathbb{R}}^{3}}k_{2}(\xi ,\xi _{\ast })f(\xi
_{\ast })d\xi _{\ast }\text{,}
\end{equation*}%
the kernels $k_{1}(\xi ,\xi _{\ast })$ and $k_{2}(\xi ,\xi _{\ast })$
satisfy
\begin{equation*}
k_{1}(\xi ,\xi _{\ast })\lesssim |\xi -\xi _{\ast }|^{\gamma }\exp \left \{ -%
\frac{1}{4}\left( |\xi |^{2}+|\xi _{\ast }|^{2}\right) \right \} \text{,}
\end{equation*}%
and%
\begin{equation*}
k_{2}(\xi ,\xi _{\ast })=a\left( \xi ,\xi _{\ast },\kappa \right) \exp
\left( -\frac{(1-\kappa )}{8}\left[ \frac{\left( \left \vert \xi \right
\vert ^{2}-\left \vert \xi _{\ast }\right \vert ^{2}\right) ^{2}}{\left
\vert \xi -\xi _{\ast }\right \vert ^{2}}+\left \vert \xi -\xi _{\ast
}\right \vert ^{2}\right] \right) \text{,}
\end{equation*}%
for any $0<\kappa <1$, together with
\begin{equation*}
a(\xi ,\xi _{\ast },\kappa )\leq \left \{
\begin{array}{ll}
C_{\kappa }|\xi -\xi _{\ast }|^{-1}(1+|\xi |+|\xi _{\ast }|)^{\gamma -1}%
\text{,}%
\vspace {3mm}
& \text{if }-1<\gamma <0\text{,} \\
C_{\kappa }|\xi -\xi _{\ast }|^{-1}\left \vert \ln |\xi -\xi _{\ast }|\right
\vert (1+|\xi |+|\xi _{\ast }|)^{\gamma -1}\text{,}%
\vspace {3mm}
& \text{if }\gamma =-1\text{,} \\
C_{\kappa }|\xi -\xi _{\ast }|^{\gamma }(1+|\xi |+|\xi _{\ast }|)^{\gamma -1}%
\text{,} & \text{if }-2<\gamma <-1\text{,}%
\end{array}%
\right.
\end{equation*}%
and their derivatives as well have similar estimates, i.e.,
\begin{equation*}
|\nabla _{\xi }k_{1}(\xi ,\xi _{\ast })|\text{, }|\nabla _{\xi _{\ast
}}k_{1}(\xi ,\xi _{\ast })|\lesssim |\xi -\xi _{\ast }|^{\gamma -1}\exp
\left \{ -\frac{1}{4}\left( |\xi |^{2}+|\xi _{\ast }|^{2}\right) \right \}
\text{,}
\end{equation*}%
\begin{equation*}
|\nabla _{\xi }k_{2}(\xi ,\xi _{\ast })|\text{, }|\nabla _{\xi _{\ast
}}k_{2}(\xi ,\xi _{\ast })|\lesssim |\nabla _{\xi }a\left( \xi ,\xi _{\ast
},\kappa \right) |\exp \left( -\frac{(1-\kappa )}{8}\left[ \frac{\left(
\left \vert \xi \right \vert ^{2}-\left \vert \xi _{\ast }\right \vert
^{2}\right) ^{2}}{\left \vert \xi -\xi _{\ast }\right \vert ^{2}}+\left
\vert \xi -\xi _{\ast }\right \vert ^{2}\right] \right) \text{,}
\end{equation*}%
with
\begin{equation*}
|\nabla _{\xi }a\left( \xi ,\xi _{\ast },\kappa \right) |\leq \left \{
\begin{array}{ll}
C_{\kappa }\frac{\left \vert \xi \right \vert }{|\xi -\xi _{\ast }|^{2}}%
(1+|\xi |+|\xi _{\ast }|)^{\gamma -1}\text{,}%
\vspace {3mm}
& \text{if }-1<\gamma <0\text{,} \\
C_{\kappa }\frac{\left \vert \xi \right \vert }{|\xi -\xi _{\ast }|^{2}}%
\left \vert \ln |\xi -\xi _{\ast }|\right \vert (1+|\xi |+|\xi _{\ast
}|)^{\gamma -1}\text{,}%
\vspace {3mm}
& \text{if }\gamma =-1\text{,} \\
C_{\kappa }\frac{\left \vert \xi \right \vert }{|\xi -\xi _{\ast
}|^{1-\gamma }}(1+|\xi |+|\xi _{\ast }|)^{\gamma -1}\text{,} & \text{if }%
-2<\gamma <-1\text{.}%
\end{array}%
\right.
\end{equation*}
\end{lemma}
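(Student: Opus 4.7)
The plan is to treat each of the four assertions separately, drawing on the classical Grad decomposition of the collision operator as it has been refined for the soft-potential regime. None of these bounds is new; my proof proposal is to indicate the standard routes, pointing out where the soft-potential difficulty $-2<\gamma<0$ modifies the argument.

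For the coercivity \eqref{coercivity}, I would begin from the self-adjointness of $L$ on $L_{\xi}^{2}$ and the identification $\mathrm{Ker}(L)=\mathrm{span}\{\chi_{i}\}_{i=0}^{4}$. Since $L=-\nu(\xi)+K$ with $K$ compact from $L_{\sigma}^{2}$ to itself (this uses the kernel bounds on $k_{1},k_{2}$ that appear later in the same lemma), the absence of zero eigenvalues on $(\mathrm{Ker}\,L)^{\perp}$ provides a positive spectral gap in the $|\cdot|_{L_{\sigma}^{2}}$ norm; this is exactly the scheme of Grad completed for soft potentials by Caflisch. The bounds \eqref{nu-gamma} come from direct computation: splitting the defining integral for $\nu(\xi)$ into $|\xi_{\ast}|\leq |\xi|/2$ and its complement, one sees $\nu(\xi)\asymp \int |\xi-\xi_{\ast}|^{\gamma}\mathcal{M}(\xi_{\ast})\,d\xi_{\ast}\asymp \langle \xi\rangle^{\gamma}$, and the derivative bound follows by differentiating under the integral, a derivative on $|\xi-\xi_{\ast}|^{\gamma}$ producing one extra factor of $|\xi-\xi_{\ast}|^{-1}$ which is integrated against the Maxwellian.

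For the kernel representations, I would follow Grad's derivation: carrying out the $\omega$-integration in $K_{2}$ using the change of variables $(\xi_{\ast},\omega)\to \xi'$ (resp.\ $\xi_{\ast}'$), together with the conservation law $|\xi'|^{2}+|\xi_{\ast}'|^{2}=|\xi|^{2}+|\xi_{\ast}|^{2}$, yields integral representations with explicit kernels. The bound on $k_{1}$ is immediate from the product of two Maxwellians and the Cauchy--Schwarz inequality. For $k_{2}$, the geometric computation collapses the two Maxwellians $\mathcal{M}^{1/2}(\xi_{\ast})\mathcal{M}^{1/2}(\xi')$ into the Gaussian factor
\[
\exp\!\Big(-\tfrac{1}{8}\big[(|\xi|^{2}-|\xi_{\ast}|^{2})^{2}/|\xi-\xi_{\ast}|^{2}+|\xi-\xi_{\ast}|^{2}\big]\Big),
\]
and the parameter $\kappa$ is used to pull out a small Gaussian factor that absorbs all polynomial growth in $\xi,\xi_{\ast}$ coming from the prefactor $a$. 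The three-case dependence on $\gamma$ in the bound on $a$ arises from analyzing the remaining $|\xi-\xi_{\ast}|^{\gamma}$ singularity: for $\gamma\in(-1,0)$ the singularity is integrable in $\omega$ and a gain of $|\xi-\xi_{\ast}|^{-1}$ comes from one angular integration; for $\gamma=-1$ the borderline integration produces the logarithm; and for $\gamma\in(-2,-1)$ the singularity is stronger and survives with weight $|\xi-\xi_{\ast}|^{\gamma}$.

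The derivative bounds on $k_{1},k_{2}$ follow by differentiating the kernel representations under the integral sign. A derivative of $|\xi-\xi_{\ast}|^{\gamma}$ costs one factor of $|\xi-\xi_{\ast}|^{-1}$, and a derivative on the Gaussian factor brings down a factor of $|\xi|$ (or $|\xi_{\ast}|$), which produces the prefactor $|\xi|/|\xi-\xi_{\ast}|^{2-\text{(adj.)}}$ listed in each of the three cases. The only genuine obstacle is bookkeeping the $|\xi-\xi_{\ast}|^{\gamma}$ singularity in the strong soft-potential range $\gamma\le -1$ without losing the Gaussian decay in $\xi,\xi_{\ast}$ needed downstream; this is what forces the case split and the log at $\gamma=-1$. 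All four estimates appear in \cite{[Caflisch],[Chen],[GolsePoupand],[Grad],[LinWangWu],[Strain-Guo]}, so the proof is effectively a citation, and no new calculation is required.
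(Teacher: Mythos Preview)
Your proposal is correct and matches the paper's treatment: the paper does not prove Lemma~\ref{basic} at all but simply states that these properties ``can be found in \cite{[Caflisch],[Chen],[GolsePoupand],[Grad],[LinWangWu],[Strain-Guo]}''. Your sketch of the classical Grad--Caflisch arguments is more than the paper itself provides, and your concluding remark that the proof is effectively a citation is exactly right.
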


Immediately from Lemma \ref{basic}, we have the following lemma.

\begin{lemma}
\label{basic2}Let $-2<\gamma <0$ and $\tau \in \mathbb{R}$. Then
\begin{equation}
\int_{\mathbb{R}^{3}}\left \vert k\left( \xi ,\xi _{\ast }\right) \right
\vert \left \langle \xi _{\ast }\right \rangle ^{\tau }d\xi _{\ast }\lesssim
\left \langle \xi \right \rangle ^{\tau +\gamma -2}\, \text{,}\quad \int_{%
\mathbb{R}^{3}}\left \vert k\left( \xi ,\xi _{\ast }\right) \right \vert
\left \langle \xi \right \rangle ^{\tau }d\xi \lesssim \left \langle \xi
_{\ast }\right \rangle ^{\tau +\gamma -2}\text{,}  \label{k-integ}
\end{equation}%
\begin{equation}
\int_{\mathbb{R}^{3}}\left \vert \nabla _{\xi }k\left( \xi ,\xi _{\ast
}\right) \right \vert \left \langle \xi _{\ast }\right \rangle ^{\tau }d\xi
_{\ast }\lesssim \left \langle \xi \right \rangle ^{\tau +\gamma -1}\, \text{%
,}\quad \int_{\mathbb{R}^{3}}\left \vert \nabla _{\xi }k\left( \xi ,\xi
_{\ast }\right) \right \vert \left \langle \xi \right \rangle ^{\tau }d\xi
\lesssim \left \langle \xi _{\ast }\right \rangle ^{\tau +\gamma -1}\text{.}
\label{grad-k-integ 1}
\end{equation}%
Consequently, we have
\begin{equation}
\left \vert Kg\right \vert _{H_{\xi }^{1}}\lesssim \left \vert g\right \vert
_{L_{\xi ,\gamma -1}^{2}}\,,\quad \left \vert K\nabla _{\xi }g\right \vert
_{L_{\xi }^{2}}\lesssim \left \vert g\right \vert _{L_{\xi ,\gamma -1}^{2}}%
\text{,}
\end{equation}%
and
\begin{equation}
|Kg|_{L_{\xi ,\tau +2-\gamma }^{q}}\lesssim |g|_{L_{\xi ,\tau }^{q}}\text{, }%
1\leq q\leq \infty \text{.}  \label{K-Lp}
\end{equation}
\end{lemma}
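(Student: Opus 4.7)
\medskip
\textbf{Proof plan for Lemma \ref{basic2}.}
The strategy is to plug the explicit kernel bounds from Lemma \ref{basic} into the integrals and estimate them by hand; the four stated consequences then follow by Cauchy--Schwarz, duality and Schur's test. Writing $k=-k_{1}+k_{2}$, the piece $k_{1}$ is immediate: the full Gaussian factor $\exp\{-\tfrac14(|\xi|^{2}+|\xi_{\ast}|^{2})\}$ dominates any polynomial $\langle\xi_{\ast}\rangle^{\tau}$ or $\langle\xi\rangle^{\tau}$ and any singular factor $|\xi-\xi_{\ast}|^{\gamma}$ (which is locally integrable since $\gamma>-3$), so the contribution of $k_{1}$ to both integrals in \eqref{k-integ} is bounded by $C\langle\xi\rangle^{-N}$ for arbitrary $N$, and similarly for $\nabla_\xi k_1$. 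Hence the real work is for $k_{2}$.

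For $k_{2}$, I would change variables $u=\xi_{\ast}-\xi$ and split into parallel/perpendicular components $u=u_{\|}\hat\xi + u_{\perp}$, so that $|\xi|^{2}-|\xi_{\ast}|^{2}=-2|\xi|u_{\|}-|u|^{2}$. The Gaussian factor in $k_{2}$ reduces to
\begin{equation*}
\exp\!\left(-\tfrac{1-\kappa}{8}\Bigl[\tfrac{(2|\xi|u_{\|}+|u|^{2})^{2}}{|u|^{2}}+|u|^{2}\Bigr]\right),
\end{equation*}
which gives Gaussian decay in $u_{\perp}$ and forces $u_{\|}$ to stay close to $-|u|^{2}/(2|\xi|)$; in particular for $|\xi|\gtrsim 1$ one gains a factor $\langle\xi\rangle^{-1}$ from the $u_{\|}$-integration. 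Combining this with the pointwise bounds on $a(\xi,\xi_{\ast},\kappa)$ in the three regimes $-1<\gamma<0$, $\gamma=-1$, $-2<\gamma<-1$, and using $\langle\xi_{\ast}\rangle^{\tau}\lesssim\langle\xi\rangle^{\tau}(1+|u|)^{|\tau|}$ to absorb the polynomial weight, the $u$-integral decomposes into (i) the near-diagonal region $|u|\leq 1$, where $|u|^{\gamma}$ (resp.\ $|u|^{-1}|\ln|u||$, $|u|^{-1}$) is integrable in $\mathbb{R}^{3}$ since $\gamma>-2>-3$, and (ii) the far region $|u|\geq 1$, controlled by the $e^{-c|u|^{2}}$ factor. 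Summing the pieces yields the scaling $\langle\xi\rangle^{\tau+\gamma-2}$ claimed in \eqref{k-integ}. The second inequality in \eqref{k-integ} follows by the symmetry $k(\xi,\xi_{\ast})=k(\xi_{\ast},\xi)$ built into Grad's kernel. The derivative bounds \eqref{grad-k-integ 1} proceed identically from the gradient estimates on $k_{1},k_{2}$ in Lemma \ref{basic}, which lose one power of $|u|$ and therefore one factor of $\langle\xi\rangle$ in the final answer.

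Granted \eqref{k-integ}--\eqref{grad-k-integ 1}, the two $H^{1}_{\xi}$-type consequences are a Cauchy--Schwarz--Fubini exercise: for $\nabla_{\xi}Kg=\int\nabla_{\xi}k(\xi,\xi_{\ast})g(\xi_{\ast})\,d\xi_{\ast}$,
\begin{equation*}
|\nabla_{\xi}Kg(\xi)|^{2}\le\Bigl(\!\!\int|\nabla_{\xi}k|\,d\xi_{\ast}\Bigr)\Bigl(\!\!\int|\nabla_{\xi}k||g(\xi_{\ast})|^{2}d\xi_{\ast}\Bigr)\lesssim\langle\xi\rangle^{\gamma-1}\!\!\int|\nabla_{\xi}k||g(\xi_{\ast})|^{2}d\xi_{\ast},
\end{equation*}
and integrating in $\xi$, swapping the order, and applying the second half of \eqref{grad-k-integ 1} with $\tau=\gamma-1$ produces $\int|g(\xi_{\ast})|^{2}\langle\xi_{\ast}\rangle^{2\gamma-2}d\xi_{\ast}=|g|_{L^{2}_{\xi,\gamma-1}}^{2}$. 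The $L^{2}_\xi$ bound on $Kg$ is even stronger. For $K\nabla_{\xi}g$ an integration by parts transfers $\nabla_{\xi_{\ast}}$ onto $k$, and the same argument (using that the $\nabla_{\xi_{\ast}}k$ bound is symmetric to $\nabla_{\xi}k$) closes the estimate.

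Finally, \eqref{K-Lp} is a direct application of Schur's test to the kernel $\widetilde k(\xi,\xi_{\ast}):=|k(\xi,\xi_{\ast})|\langle\xi\rangle^{\tau+2-\gamma}\langle\xi_{\ast}\rangle^{-\tau}$: by \eqref{k-integ} both $\int\widetilde k\,d\xi_{\ast}\lesssim\langle\xi\rangle^{\tau+2-\gamma}\cdot\langle\xi\rangle^{-\tau+\gamma-2}=1$ and $\int\widetilde k\,d\xi\lesssim\langle\xi_{\ast}\rangle^{-\tau}\cdot\langle\xi_{\ast}\rangle^{\tau+2-\gamma+\gamma-2}\cdot\langle\xi_*\rangle^{0}\lesssim 1$ (with the cases $q=1,\infty$ obtained directly from the two Schur conditions and the intermediate $q$ by interpolation or Schur's lemma itself). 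The one step I expect to be the most delicate is the accurate $\langle\xi\rangle^{-1}$ gain from the Gaussian in $u_{\|}$ when $|\xi|$ is large, which is what produces the sharp exponent $\tau+\gamma-2$ (rather than $\tau+\gamma-1$) in \eqref{k-integ}; handling this uniformly across the three subcases of $\gamma$, particularly $-2<\gamma<-1$ where the near-diagonal singularity is strongest, is where care is required.
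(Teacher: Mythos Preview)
Your proposal is correct and is exactly the standard argument the paper has in mind: the paper does not spell out a proof of Lemma~\ref{basic2} at all, merely stating that it follows ``immediately from Lemma~\ref{basic}'', and your plan is precisely the routine fleshing-out of that remark (insert the pointwise kernel bounds, change to $u=\xi_{\ast}-\xi$, extract the $\langle\xi\rangle^{-1}$ from the parallel-direction Gaussian, then apply Schur/Cauchy--Schwarz for the operator consequences). Your identification of the $u_{\|}$-integration as the source of the sharp exponent is the right emphasis.
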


\begin{lemma}
\label{basic22}Let $\tau \in {\mathbb{R}}$. Then if $-3/2<\gamma <0$,%
\begin{equation}
\int_{\mathbb{R}^{3}}(1+|\xi _{\ast }|)^{\tau }k^{2}\left( \xi ,\xi _{\ast
}\right) d\xi _{\ast }\lesssim \left \langle \xi \right \rangle ^{\tau
+2\gamma -3}\text{,}  \label{k^2-integ}
\end{equation}%
and if $-2<\gamma \leq -3/2$,
\begin{equation}
\int_{\mathbb{R}^{3}}(1+|\xi _{\ast }|)^{\tau }k^{q}\left( \xi ,\xi _{\ast
}\right) d\xi _{\ast }\lesssim \left \langle \xi \right \rangle ^{\tau
+q(\gamma -1)-1}  \label{k^p0-integ}
\end{equation}%
provided $1\leq q<\frac{3}{-\gamma }$. Consequently, if $-3/2<\gamma <0$,
\begin{equation}
|Kg|_{L_{\xi ,\tau -\gamma +3/2}^{\infty }}\leq C|g|_{L_{\xi ,\tau }^{2}}%
\text{\thinspace ,}  \label{gain-weight}
\end{equation}%
and if $-2<\gamma \leq -3/2$,
\begin{equation}
|Kg|_{L_{\xi ,\tau +2-\gamma -\frac{1}{q}}^{\infty }}\leq C|g|_{L_{\xi ,\tau
}^{q}}  \label{gain-weight1}
\end{equation}%
provided $q>\frac{3}{3+\gamma }$.
\end{lemma}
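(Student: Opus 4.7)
The proof naturally splits into two parts: establishing the pointwise kernel bounds \eqref{k^2-integ}, \eqref{k^p0-integ} on $\int (1+|\xi_*|)^\tau k^q(\xi,\xi_*)\,d\xi_*$, and then deducing the operator estimates \eqref{gain-weight}, \eqref{gain-weight1} by H\"older's inequality. I would begin from the decomposition $k = -k_1 + k_2$ given by \eqref{defn:K}. The $k_1$ piece is easy to dispose of: by Lemma \ref{basic} it carries the Gaussian factor $\exp(-(|\xi|^2+|\xi_*|^2)/4)$, so for any $q\geq 1$ and $\tau\in\mathbb{R}$ one has $\int (1+|\xi_*|)^\tau k_1^q(\xi,\xi_*)\,d\xi_* \lesssim e^{-cq|\xi|^2/4}$, which decays faster than every polynomial in $\langle\xi\rangle$ and contributes only lower-order terms to the claimed bounds. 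The real analysis therefore reduces to estimating the $k_2$ integral.

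For $k_2$ I would use the pointwise estimate
\begin{equation*}
k_2(\xi,\xi_*) \lesssim a(\xi,\xi_*,\kappa)\exp\left(-\frac{1-\kappa}{8}\left[\frac{(|\xi|^2-|\xi_*|^2)^2}{|\xi-\xi_*|^2} + |\xi-\xi_*|^2\right]\right)
\end{equation*}
from Lemma \ref{basic} and perform the standard Caflisch--Grad change of variables $\eta = \xi_*-\xi$, decomposed as $\eta = \eta_\parallel \hat\xi + \eta_\perp$ with $\eta_\perp \perp \xi$. Since $|\xi|^2-|\xi_*|^2 = -2|\xi|\eta_\parallel - |\eta|^2$, the product $\exp(-c(2|\xi|\eta_\parallel+|\eta|^2)^2/|\eta|^2)\exp(-c|\eta|^2)$ concentrates $\eta$ in a thin pancake: $\eta_\perp$ lies in a bounded disk while $\eta_\parallel$ is forced into an interval of length $O(\langle\xi\rangle^{-1})$. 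On this region $(1+|\xi|+|\xi_*|)^{q(\gamma-1)}\sim \langle\xi\rangle^{q(\gamma-1)}$ and $(1+|\xi_*|)^\tau \sim \langle\xi\rangle^\tau$, while the singular piece is $|\eta|^{-q}$ when $-1<\gamma<0$ (with an extra $|\ln|\eta||^q$ factor when $\gamma=-1$) or $|\eta|^{q\gamma}$ when $-2<\gamma<-1$. Local integrability of this singularity over the two-dimensional $\eta_\perp$-disk is precisely the source of the condition $q<3/(-\gamma)$, which is automatic for $q=2$ under $\gamma>-3/2$, so no restriction appears in \eqref{k^2-integ}. The thinness factor $\langle\xi\rangle^{-1}$ in the $\eta_\parallel$ direction supplies the extra $-1$ in the exponent, and collecting everything produces $\langle\xi\rangle^{\tau+q(\gamma-1)-1}$ in general, which specializes to $\langle\xi\rangle^{\tau+2\gamma-3}$ at $q=2$.

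The operator estimates then follow by H\"older. For \eqref{gain-weight} with $-3/2<\gamma<0$, Cauchy--Schwarz and \eqref{k^2-integ} applied with weight exponent $-2\tau$ give
\begin{equation*}
|Kg(\xi)| \leq \Bigl(\int k^2(\xi,\xi_*)\langle\xi_*\rangle^{-2\tau}\,d\xi_*\Bigr)^{1/2}|g|_{L^2_{\xi,\tau}} \lesssim \langle\xi\rangle^{-\tau+\gamma-3/2}|g|_{L^2_{\xi,\tau}},
\end{equation*}
which after multiplication by $\langle\xi\rangle^{\tau-\gamma+3/2}$ and taking the sup in $\xi$ yields \eqref{gain-weight}. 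For \eqref{gain-weight1} with $-2<\gamma\leq -3/2$, I would apply H\"older with conjugate exponents $q$ and $q'$; the hypothesis $q>3/(3+\gamma)$ is exactly equivalent to $q'<3/(-\gamma)$, so \eqref{k^p0-integ} is applicable with exponent $q'$, producing $|Kg(\xi)| \lesssim \langle\xi\rangle^{-\tau+\gamma-2+1/q}|g|_{L^q_{\xi,\tau}}$, which is \eqref{gain-weight1}.

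The main obstacle is the anisotropic $\eta_\parallel/\eta_\perp$ analysis carried out uniformly in $|\xi|$: one must verify that the $\eta_\perp$-disk integral of $|\eta|^{q\gamma}$ converges near the origin exactly in the regime $q\gamma>-3$, and treat the marginal logarithmic case $\gamma=-1$ separately. Once the localization geometry is in place, the rest is routine bookkeeping of the exponents.
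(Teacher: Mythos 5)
The paper does not actually prove this lemma; it is quoted from the standard literature on soft-potential Boltzmann kernel estimates (Caflisch, Strain--Guo), and your reconstruction follows the same Caflisch--Grad change of variables and H\"older duality that those references use. The H\"older/Cauchy--Schwarz step deriving \eqref{gain-weight} and \eqref{gain-weight1} from the kernel integrals is clean and the exponent bookkeeping $q>3/(3+\gamma)\Longleftrightarrow q'<3/(-\gamma)$ is correct, as is the treatment of $k_1$ and the observation that $\langle\xi_*\rangle\approx\langle\xi\rangle$, $(1+|\xi|+|\xi_*|)\approx\langle\xi\rangle$ on the support of the anisotropic Gaussian.

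There is, however, a genuine inconsistency in your account of the pancake geometry, and as written the reasoning would produce the wrong restriction on $q$. You say $\eta_\parallel$ is forced into an interval of length $O(\langle\xi\rangle^{-1})$, treated as a constant width, and then attribute $q<3/(-\gamma)$ to \emph{two-dimensional} local integrability of $|\eta|^{q\gamma}$ over the $\eta_\perp$-disk. But with a constant width $\langle\xi\rangle^{-1}$, the local integral is
\begin{equation*}
\frac{1}{\langle\xi\rangle}\int_{|\eta_\perp|<1}|\eta_\perp|^{q\gamma}\,d\eta_\perp
\;\approx\;\frac{1}{\langle\xi\rangle}\int_0^1 r^{q\gamma+1}\,dr,
\end{equation*}
which converges only for $q\gamma>-2$, i.e. $q<2/(-\gamma)$ --- strictly worse than the claimed $q<3/(-\gamma)$, and at $q=2$ it would demand $\gamma>-1$ rather than $\gamma>-3/2$. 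The correct mechanism is that the Gaussian $\exp(-c(2|\xi|\eta_\parallel+|\eta|^2)^2/|\eta|^2)$ localizes $\eta_\parallel$ to an interval of length $\approx|\eta|/|\xi|\approx|\eta_\perp|/\langle\xi\rangle$, so the pancake thickness degenerates proportionally to $|\eta_\perp|$ as $\eta_\perp\to0$. This supplies an extra factor $|\eta_\perp|$ in the integrand in addition to the $\langle\xi\rangle^{-1}$, giving $\langle\xi\rangle^{-1}\int_0^1 r^{q\gamma+2}\,dr$, whose convergence is $q\gamma>-3$ --- effectively a three-dimensional integrability condition. Equivalently, observe directly that $|\eta|^{q\gamma}$ is locally integrable in $\mathbb{R}^3$ iff $q\gamma>-3$, then extract the $\langle\xi\rangle^{-1}$ decay separately from the pancake thinness. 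Once the $|\eta_\perp|$-dependence of the width is incorporated, the remaining bookkeeping (including the $\gamma=-1$ logarithmic case, which costs nothing at the strict inequality $q<3$) is indeed routine and the lemma follows.
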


Taking $q=3$ in (\ref{gain-weight1}) and $q=1$ in (\ref{K-Lp}),
respectively, we have,%
\begin{equation*}
|Kg|_{L_{\xi ,1-\gamma }^{\infty }}\leq C|g|_{L_{\xi }^{3}}\text{,}
\end{equation*}%
and
\begin{equation*}
|Kg|_{L_{\xi ,1-\gamma }^{1}}\leq C|g|_{L_{\xi }^{1}}\text{.}
\end{equation*}%
Applying the Riesz-Thorin interpolation theorem to the linear operator $%
\left \langle \xi \right \rangle ^{1-\gamma }K$, we obtain the following
estimate, which is useful in the proofs of Theorems \ref{prop: nonlinear}
and \ref{theorem-linear} whenever $-2<\gamma \leq -3/2$.

\begin{lemma}
\label{L4-L2}For $-2<\gamma \leq -3/2$,
\begin{equation}
|Kg|_{L_{\xi ,1-\gamma }^{4}}\leq C|g|_{L_{\xi }^{2}}\text{.}
\label{gain-weight2}
\end{equation}
\end{lemma}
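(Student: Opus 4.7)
The plan is to apply the Riesz--Thorin interpolation theorem directly to the linear operator
\[
Tg(\xi)\;:=\;\langle\xi\rangle^{1-\gamma}(Kg)(\xi),
\]
so that the desired conclusion $|Kg|_{L^{4}_{\xi,1-\gamma}}\leq C|g|_{L^{2}_{\xi}}$ becomes the statement that $T$ is bounded from $L^{2}_{\xi}$ to $L^{4}_{\xi}$. Since interpolation is the tool, the work reduces to identifying two convenient endpoint estimates for $T$ and then choosing the interpolation parameter that lands on $(L^{2},L^{4})$.

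The two endpoints are already on the page immediately above the statement. First, taking $q=1$ in \eqref{K-Lp} of Lemma \ref{basic2} gives
\[
\|Tg\|_{L^{1}_{\xi}}\;=\;|Kg|_{L^{1}_{\xi,1-\gamma}}\;\lesssim\;|g|_{L^{1}_{\xi}},
\]
so $T:L^{1}_{\xi}\to L^{1}_{\xi}$. Second, taking $q=3$ in \eqref{gain-weight1} of Lemma \ref{basic22} is admissible throughout the range $-2<\gamma\leq -3/2$, because $3>\tfrac{3}{3+\gamma}$ is equivalent to $\gamma>-2$; this gives
\[
\|Tg\|_{L^{\infty}_{\xi}}\;=\;|Kg|_{L^{\infty}_{\xi,1-\gamma}}\;\lesssim\;|g|_{L^{3}_{\xi}},
\]
so $T:L^{3}_{\xi}\to L^{\infty}_{\xi}$.

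With these two endpoints in hand, I would set the Riesz--Thorin parameter to $\theta=3/4\in(0,1)$, so that the interpolated input and output exponents are
\[
\frac{1}{p_{\theta}}\;=\;\frac{1-\theta}{1}+\frac{\theta}{3}\;=\;\frac{1}{4}+\frac{1}{4}\;=\;\frac{1}{2},\qquad
\frac{1}{q_{\theta}}\;=\;\frac{1-\theta}{1}+\frac{\theta}{\infty}\;=\;\frac{1}{4},
\]
giving $p_{\theta}=2$ and $q_{\theta}=4$. The Riesz--Thorin theorem then yields $\|Tg\|_{L^{4}_{\xi}}\lesssim|g|_{L^{2}_{\xi}}$, which is precisely $|Kg|_{L^{4}_{\xi,1-\gamma}}\leq C|g|_{L^{2}_{\xi}}$.

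There is no real obstacle here: the proof is a one-shot application of Riesz--Thorin once the two endpoint bounds are in place, and the only point requiring care is verifying that $q=3$ is permissible in \eqref{gain-weight1} for every $\gamma\in(-2,-3/2]$, which I did above. The lemma should follow in a few lines.
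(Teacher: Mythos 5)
Your proof is correct and is essentially identical to the paper's own argument: the text immediately preceding the lemma records precisely the two endpoint bounds you use, namely $|Kg|_{L^{1}_{\xi,1-\gamma}}\lesssim|g|_{L^{1}_{\xi}}$ from $q=1$ in \eqref{K-Lp} and $|Kg|_{L^{\infty}_{\xi,1-\gamma}}\lesssim|g|_{L^{3}_{\xi}}$ from $q=3$ in \eqref{gain-weight1}, and then applies Riesz--Thorin to $\langle\xi\rangle^{1-\gamma}K$. Your verification of the admissibility of $q=3$ for all $\gamma\in(-2,-3/2]$ and the interpolation arithmetic with $\theta=3/4$ are both correct.
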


To proceed, we need the estimates associated with the weight function $w_{i}$%
, $i=1$, $2$, $3$. By straightforward computation, $w_{1}$ and $w_{2}$ have
the derivative estimates as below.

\begin{lemma}
\label{Derivative of wi}Let $-2<\gamma <0$. For the weight function $w_{1}$,
we have
\begin{equation}
\left \vert w_{1}^{-1}\partial _{t}w_{1}\right \vert \lesssim \delta M\left
\langle \xi \right \rangle _{D}^{\gamma -1}\text{,}\quad \left \vert
w_{1}^{-1}\nabla _{x}w_{1}\right \vert \lesssim \delta \left \langle \xi
\right \rangle _{D}^{\gamma -1}\text{,}\quad \left \vert w_{1}^{-1}\nabla
_{\xi }w_{1}\right \vert \lesssim \left \langle \xi \right \rangle
_{D}^{-2}\left \vert \xi \right \vert \text{,}%
\vspace {3mm}
\label{der-w1-first}
\end{equation}%
\begin{equation}
\left \vert w_{1}^{-1}\nabla _{x}\left( \partial _{t}w_{1}\right) \right
\vert \lesssim \delta ^{2}M\left \langle \xi \right \rangle _{D}^{2\gamma -2}%
\text{,}\quad \left \vert w_{1}^{-1}\nabla _{x}\left( \xi \cdot \nabla
_{x}w_{1}\right) \right \vert \lesssim \delta ^{2}\left \langle \xi \right
\rangle _{D}^{2\gamma -2}\left \vert \xi \right \vert \text{,}%
\vspace {3mm}
\label{der-w1-mixed-1}
\end{equation}%
\begin{equation}
\left \vert w_{1}^{-1}\nabla _{\xi }\left( \partial _{t}w_{1}\right) \right
\vert \lesssim \delta M\left \langle \xi \right \rangle _{D}^{\gamma
-3}\left \vert \xi \right \vert \text{\thinspace ,}\quad \left \vert
w_{1}^{-1}\nabla _{\xi }\left( \xi \cdot \nabla _{x}w_{1}\right) \right
\vert \lesssim \delta \left \langle \xi \right \rangle _{D}^{\gamma -1}\text{%
.}  \label{der-w1-mixed-2}
\end{equation}%
For the weight function $w_{2}$, its exponent $\rho (t,x,\xi )$ satisfies
\begin{equation*}
\left \vert \partial _{t}\rho \right \vert \lesssim \delta M\left \langle
\xi \right \rangle ^{\gamma -1}\text{,}\quad \left \vert \nabla _{x}\rho
\right \vert \lesssim \delta \left \langle \xi \right \rangle ^{\gamma -1}%
\text{,}\quad \left \vert \nabla _{\xi }\rho \right \vert \lesssim \left
\langle \xi \right \rangle ^{p-2}\left \vert \xi \right \vert \text{,}%
\vspace {3mm}%
\end{equation*}%
\begin{equation*}
\left \vert \nabla _{x}\left( \partial _{t}\rho \right) \right \vert
\lesssim \delta ^{2}M\left \langle \xi \right \rangle ^{-p+2\gamma -2}\text{,%
}\quad \left \vert \nabla _{x}\left( \xi \cdot \nabla _{x}\rho \right)
\right \vert \lesssim \delta ^{2}\left \langle \xi \right \rangle
^{-p+2\gamma -2}\left \vert \xi \right \vert \text{,}%
\vspace {3mm}%
\end{equation*}%
\begin{equation*}
\left \vert \nabla _{\xi }\left( \partial _{t}\rho \right) \right \vert
\lesssim \delta M\left \langle \xi \right \rangle ^{\gamma -3}\left \vert
\xi \right \vert \text{,}\quad \left \vert \nabla _{\xi }\left( \xi \cdot
\nabla _{x}\rho \right) \right \vert \lesssim \delta \left \langle \xi
\right \rangle ^{\gamma -1}\text{,}
\end{equation*}%
where $0<p\leq 2$.
\end{lemma}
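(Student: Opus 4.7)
The lemma is a routine but careful computation: the weights $w_1$ and $w_2$ are constructed by patching a ``small $y$'' piece (proportional to $\langle\xi\rangle_D^p$ or $\langle\xi\rangle^p$) onto a ``large $y$'' piece (a power of $y:=\delta(\langle x\rangle-Mt)$) via the cutoff $\chi$, and all bounds come from differentiating region by region. I will treat $w_1$ in detail; the estimates for $\rho$ (and hence $w_2$) are parallel with $p/(1-\gamma)$ replaced by $p/(p+1-\gamma)$ and $\langle\xi\rangle_D^{1-\gamma}$ replaced by $\langle\xi\rangle^{p+1-\gamma}$.

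First I will set $y=\delta(\langle x\rangle-Mt)$ and $z=y/\langle\xi\rangle_D^{1-\gamma}$, and split into three zones determined by the support of $\chi$: the \emph{inner} zone $\{z\le 1\}$ where $w_1=3\langle\xi\rangle_D^p$, the \emph{outer} zone $\{z\ge 2\}$ where $w_1=5\,y^{p/(1-\gamma)}$, and the \emph{transition} zone $\{1<z<2\}$. The elementary chain-rule identities
\begin{equation*}
\pa_t y=-\delta M,\qquad \na_x y=\delta\,\tfrac{x}{\langle x\rangle},\qquad \na_\xi\langle\xi\rangle_D^{s}=s\,\langle\xi\rangle_D^{s-2}\xi
\end{equation*}
together with $\pa_t z,\,\na_x z=O(\delta M/\langle\xi\rangle_D^{1-\gamma})$ and $\na_\xi z=O(|\xi|z/\langle\xi\rangle_D^2)$ will be used repeatedly.

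Next I carry out the case analysis for \eqref{der-w1-first}. In the inner zone, $w_1$ depends only on $\xi$, so $\pa_t w_1=\na_x w_1=0$ and $|\na_\xi w_1/w_1|\lesssim |\xi|\langle\xi\rangle_D^{-2}$ is immediate. In the outer zone, $w_1\sim y^{p/(1-\gamma)}$ and differentiating gives $\pa_t w_1/w_1=-\tfrac{p}{1-\gamma}\tfrac{\delta M}{y}$; since $y\ge\langle\xi\rangle_D^{1-\gamma}$ there, this is $O(\delta M\langle\xi\rangle_D^{\gamma-1})$, and $\na_x$ is analogous without the $M$. The velocity derivative hits $y^{p/(1-\gamma)}$ only through $\chi$, producing at most $|\na_\xi \chi(z)|\cdot y^{p/(1-\gamma)}/w_1 \lesssim \langle\xi\rangle_D^{-2}|\xi|$. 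In the transition zone one has $y\sim\langle\xi\rangle_D^{1-\gamma}$, whence both building blocks $y^{p/(1-\gamma)}$ and $\langle\xi\rangle_D^p$ are comparable to $w_1$, so the same order-of-magnitude bounds survive — and this is the only place where a genuine verification is needed, since derivatives of $\chi$ appear; they are controlled using $|\chi'|\lesssim 1$ supported on $z\in[1,2]$, which yields exactly the scale $\delta M/\langle\xi\rangle_D^{1-\gamma}$ for time derivatives (and $\delta/\langle\xi\rangle_D^{1-\gamma}$ for $\na_x$).

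The second- and mixed-order estimates \eqref{der-w1-mixed-1}--\eqref{der-w1-mixed-2} are obtained by applying the chain rule once more. Each additional $\pa_t$ or $\na_x$ produces another factor of size $\delta M/\langle\xi\rangle_D^{1-\gamma}$ or $\delta/\langle\xi\rangle_D^{1-\gamma}$ (from differentiating either $y^{p/(1-\gamma)-1}$, the argument of $\chi$, or $\chi'(z)$ again), which multiplies the first-order bounds to give $\delta^2 M \langle\xi\rangle_D^{2\gamma-2}$; the mixed $\na_\xi\pa_t w_1$ brings in a single factor $|\xi|\langle\xi\rangle_D^{-2}$ from the velocity derivative of either $\langle\xi\rangle_D^{1-\gamma}$ inside $z$ or of $\langle\xi\rangle_D^p$, whence the stated $\delta M |\xi|\langle\xi\rangle_D^{\gamma-3}$, while $\na_\xi(\xi\cdot\na_x w_1)$ contains the ``extra'' direct $\na_\xi\xi$ contribution, giving back $\delta\langle\xi\rangle_D^{\gamma-1}$ without a $|\xi|$ factor. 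The only genuine obstacle is bookkeeping in the transition zone, where up to three factors of $\chi^{(k)}$ can appear simultaneously; this is handled by noting that on $\{1\le z\le 2\}$ every function of $z$ is bounded, every power $y^{p/(1-\gamma)-k}$ is comparable to $\langle\xi\rangle_D^{p-k(1-\gamma)}$, and $w_1\sim\langle\xi\rangle_D^p$, so all such terms recombine cleanly to the claimed scale.

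Finally, for $w_2=e^{\epsilon\rho}$ the three $\rho$-estimates are proved by the identical three-zone calculation with the exponent $p/(p+1-\gamma)$ and cutoff argument $y/\langle\xi\rangle^{p+1-\gamma}$. The extra velocity factor $\langle\xi\rangle^{p-2}|\xi|$ in $|\na_\xi\rho|$ comes from differentiating $\langle\xi\rangle^p$ (inner zone) and, in the outer/transition zones, from $\na_\xi\bigl(y/\langle\xi\rangle^{p+1-\gamma}\bigr)$ combined with $\rho\sim\langle\xi\rangle^p$; the appearance of $\langle\xi\rangle^{-p+2\gamma-2}$ in the second-order space/time estimates reflects precisely that two cutoff-derivatives in $\rho$ each contribute $\delta/\langle\xi\rangle^{p+1-\gamma}$. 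No further analytic input is required beyond the quantitative estimates on $|\chi^{(k)}|$ and the elementary algebra above.
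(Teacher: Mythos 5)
Your proposal is correct, and it takes exactly the route the paper intends: the paper states this lemma with the remark ``by straightforward computation'' and gives no proof, and your three-zone decomposition with the auxiliary variables $y=\delta(\langle x\rangle-Mt)$, $z=y/\langle\xi\rangle_D^{1-\gamma}$ (resp.\ $z=y/\langle\xi\rangle^{p+1-\gamma}$ for $\rho$) is precisely that computation carried out carefully. The scaling identities you record — $\partial_t y=-\delta M$, $\nabla_x y=\delta x/\langle x\rangle$, $\nabla_\xi z = O(z|\xi|\langle\xi\rangle_D^{-2})$, $w_1\sim\langle\xi\rangle_D^p$ and $y\sim\langle\xi\rangle_D^{1-\gamma}$ in the transition zone — are exactly what is needed, and your accounting that each additional $\partial_t$ or $\nabla_x$ contributes a factor $\delta M\langle\xi\rangle_D^{\gamma-1}$ (resp.\ $\delta\langle\xi\rangle_D^{\gamma-1}$) while each $\nabla_\xi$ hitting a deep factor contributes $|\xi|\langle\xi\rangle_D^{-2}$ reproduces all six estimates. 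The one place where you are slightly loose — bounding $\nabla_x z$ by $O(\delta M/\langle\xi\rangle_D^{1-\gamma})$ before noting in parentheses that it is really $O(\delta/\langle\xi\rangle_D^{1-\gamma})$ — is harmless, and you correct it in the next sentence; similarly, the observation that the type-(b) terms in $\nabla_\xi(\xi\cdot\nabla_x w_1)$ carry $|\xi|^2\langle\xi\rangle_D^{-2}\le 1$ so the $\nabla_\xi\xi$ contribution dominates is the right way to see that bound has no $|\xi|$ factor.
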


Moreover, we have

\begin{lemma}
\label{ratio}For $-2<\gamma <0$,
\begin{equation}
\left \vert \frac{w_{1}\left( t,x,\xi \right) }{w_{1}\left( t,x,\xi _{\ast
}\right) }-1\right \vert \leq CD^{-\left \{ p\wedge 2\right \} }\left[
1+\left \vert \left \vert \xi \right \vert ^{2}-\left \vert \xi _{\ast
}\right \vert ^{2}\right \vert \right] ^{\frac{p}{2}}\text{, }p\geq 1\text{,}
\label{mu-ratio-2}
\end{equation}%
\begin{equation}
\left \vert \frac{w_{2}(t,x,\xi )}{w_{2}(t,x,\xi _{\ast })}-1\right \vert
\leq \epsilon c_{p}\left \vert \left \vert \xi \right \vert ^{2}-\left \vert
\xi _{\ast }\right \vert ^{2}\right \vert ^{\frac{p}{2}}\exp \left( \epsilon
c_{p}\left \vert \left \vert \xi \right \vert ^{2}-\left \vert \xi _{\ast
}\right \vert ^{2}\right \vert ^{\frac{p}{2}}\right) \text{, }0<p\leq 2\text{%
,}\,
\end{equation}%
\begin{equation}
\left \vert \frac{w_{3}\left( t,x,\xi \right) }{w_{3}\left( t,x,\xi _{\ast
}\right) }-1\right \vert \leq \hat{\varepsilon}\left \vert \left \vert \xi
\right \vert ^{2}-\left \vert \xi _{\ast }\right \vert ^{2}\right \vert ^{%
\frac{p_{1}}{2}}\exp \left( \hat{\varepsilon}\left \vert \left \vert \xi
\right \vert ^{2}-\left \vert \xi _{\ast }\right \vert ^{2}\right \vert ^{%
\frac{p_{1}}{2}}\right) \text{, }0<p_{1}\leq 2\text{.}  \label{mu-ratio-w0}
\end{equation}%
Here the constants $c_{p}>0$ and $C>0$ are dependent only upon $p$ and $%
\gamma $.
\end{lemma}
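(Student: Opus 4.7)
The plan is to prove the three inequalities separately, but following the same template: for the exponential weights $w_{2}$ and $w_{3}$ reduce the ratio to $e^{y}-1$ and apply the elementary inequality $|e^{y}-1|\leq |y|e^{|y|}$; then bound $|y|$ by combining subadditivity of concave powers with the explicit structure of the weight. For $w_{1}$, which is not exponential, I would directly estimate $w_{1}(\xi)-w_{1}(\xi_{\ast})$ and divide by $w_{1}(\xi_{\ast})$, using a lower bound for $w_{1}$ on the order of $D^{p\wedge 2}$.

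The cleanest case is \eqref{mu-ratio-w0}: write
\[
\tfrac{w_{3}(\xi)}{w_{3}(\xi_{\ast})}-1=e^{\hat\varepsilon(\langle\xi\rangle^{p_{1}}-\langle\xi_{\ast}\rangle^{p_{1}})}-1,
\]
and since $0<p_{1}/2\leq 1$, the subadditivity inequality $|a^{p_{1}/2}-b^{p_{1}/2}|\leq|a-b|^{p_{1}/2}$ applied to $a=1+|\xi|^{2}$, $b=1+|\xi_{\ast}|^{2}$ gives $|\langle\xi\rangle^{p_{1}}-\langle\xi_{\ast}\rangle^{p_{1}}|\leq||\xi|^{2}-|\xi_{\ast}|^{2}|^{p_{1}/2}$. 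Plugging into $|e^{y}-1|\leq|y|e^{|y|}$ yields the claimed bound immediately.

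For \eqref{eq: w} style weight $w_{2}$, the same scheme applies once we establish the key intermediate inequality
\[
|\rho(t,x,\xi)-\rho(t,x,\xi_{\ast})|\leq c_{p}\,\bigl||\xi|^{2}-|\xi_{\ast}|^{2}\bigr|^{p/2},\qquad 0<p\leq 2.
\]
Here I would split into cases according to the cutoff: where $\chi(\delta(\langle x\rangle-Mt)/\langle\xi\rangle^{p+1-\gamma})$ equals $0$ the function $\rho$ is independent of $\xi$ (so the left-hand side is zero); where it equals $1$ we have $\rho=3\langle\xi\rangle^{p}$ and subadditivity of $t\mapsto t^{p/2}$ for $0<p\leq 2$ finishes the job. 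In the transition region I would write $\rho$ as $5A^{p/(p+1-\gamma)}+\chi(A/\langle\xi\rangle^{p+1-\gamma})[3\langle\xi\rangle^{p}-5A^{p/(p+1-\gamma)}]$, note that on the support of $\chi'$ we have $A\sim\langle\xi\rangle^{p+1-\gamma}$ (so both ``pieces'' are of order $\langle\xi\rangle^{p}$), and conclude via a mean-value-type argument using the boundedness of $\chi'$ together with the derivative estimate $|\nabla_{\xi}\rho|\lesssim\langle\xi\rangle^{p-1}$ from Lemma \ref{Derivative of wi}. The ratio bound then follows from $|e^{y}-1|\leq|y|e^{|y|}$ exactly as for $w_{3}$.

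For \eqref{mu-ratio-2} the structure is similar but more delicate because $w_{1}$ is not exponential. The key observation is the pointwise lower bound $w_{1}\gtrsim\min\{\langle\xi\rangle_{D}^{p},(\delta(\langle x\rangle-Mt))^{p/(1-\gamma)}\}\gtrsim D^{p\wedge 2}\cdot$ (something polynomially small), which is what produces the $D^{-(p\wedge 2)}$ factor. I would bound $|w_{1}(\xi)-w_{1}(\xi_{\ast})|$ by a region-by-region analysis analogous to the one above, this time tracking the $D$-regularized bracket: in the interior of the two flat regions either the difference vanishes or reduces to $|\langle\xi\rangle_{D}^{p}-\langle\xi_{\ast}\rangle_{D}^{p}|$, and I would estimate this via the inequality $|(D^{2}+a)^{p/2}-(D^{2}+b)^{p/2}|\lesssim D^{p-2\wedge 0}\,|a-b|\wedge|a-b|^{p/2}$, which produces the factor $[1+||\xi|^{2}-|\xi_{\ast}|^{2}|]^{p/2}$ after taking the maximum over the two regimes in $|a-b|$; in the transition region the smoothness of $\chi$ plus the derivative estimate $|\nabla_{\xi}w_{1}|/w_{1}\lesssim\langle\xi\rangle_{D}^{-2}|\xi|$ again does the job. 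Dividing by the lower bound on $w_{1}(\xi_{\ast})$ produces the $D^{-(p\wedge 2)}$ prefactor.

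The main obstacle I foresee is the analysis in the transition region for $w_{1}$: one must simultaneously handle the possibly large exponent $p\geq 1$ (forcing the two inequalities $|a-b|^{p/2}$ and $|a-b|$ to compete) and extract the sharp $D$-power in the denominator. The other two inequalities, having exponential weights, convert polynomial growth in $\langle\xi\rangle$ into polynomial growth in $||\xi|^{2}-|\xi_{\ast}|^{2}|^{p/2}$ more transparently via $|e^{y}-1|\leq|y|e^{|y|}$.
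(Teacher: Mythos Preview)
Your treatment of $w_{3}$ is correct and coincides with the paper's. For $w_{1}$ and $w_{2}$, however, your region-by-region plan both overcomplicates things and contains a genuine gap.

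The gap is twofold. First, your case split never addresses the situation where $\xi$ and $\xi_{\ast}$ lie in \emph{different} regions of the cutoff (say $\xi$ in the $\chi=1$ zone and $\xi_{\ast}$ in the transition zone); this mixed case is exactly where a naive piecewise analysis breaks down. Second, in the transition region you propose to use the full gradient bound $|\nabla_{\xi}\rho|\lesssim\langle\xi\rangle^{p-1}$ from Lemma~\ref{Derivative of wi}, but mean-value in $\xi$ produces a bound of the form $\langle\xi\rangle^{p-1}|\xi-\xi_{\ast}|$, which is \emph{not} controlled by $\bigl||\xi|^{2}-|\xi_{\ast}|^{2}\bigr|^{p/2}$ in general (take $|\xi|=|\xi_{\ast}|$ with $\xi\neq\xi_{\ast}$: the target is zero while your bound is not). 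Your lower-bound remark ``$w_{1}\gtrsim D^{p\wedge 2}\cdot(\text{something})$'' is also off; the correct uniform lower bound is simply $w_{1}(\xi_{\ast})\gtrsim\langle\xi_{\ast}\rangle_{D}^{p}$, and the exponent $p\wedge 2$ in $D^{-(p\wedge2)}$ arises from the \emph{numerator} estimate, not the denominator.

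The paper avoids all of this by a single observation you missed: both $w_{1}$ and $\rho$ depend on $\xi$ only through the scalar $s=\langle\xi\rangle_{D}$ (resp.\ $s=\langle\xi\rangle$). Writing $w_{1}=w_{1}(t,x,s)$ one checks directly that $|\partial_{s}w_{1}|\leq C_{1}s^{p-1}$ uniformly (the cutoff contributions combine because on its support the two ``pieces'' are both of order $s^{p}$). The mean-value theorem in the single variable $s$ then gives
\[
|w_{1}(s)-w_{1}(s_{1})|\leq C_{1}|s-s_{1}|\int_{0}^{1}(\theta s+(1-\theta)s_{1})^{p-1}d\theta\leq C_{1}'|s^{p}-s_{1}^{p}|,
\]
and likewise $|\rho(s)-\rho(s_{1})|\leq c_{p}|s^{p}-s_{1}^{p}|$. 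After dividing by $w_{1}(\xi_{\ast})\gtrsim s_{1}^{p}$, the quotient $|s^{p}-s_{1}^{p}|/s_{1}^{p}$ is estimated by subadditivity of $t\mapsto t^{p/2}$ when $1\leq p<2$ (yielding $D^{-p}$) and by a second mean-value step on $t\mapsto t^{p/2}$ when $p\geq 2$ (yielding $D^{-2}$). This one-variable reduction is the missing idea; once you see it, no region splitting is needed.
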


\begin{proof}
Let $s=\left \langle \xi \right \rangle _{D}$ and $s_{1}=\left \langle \xi
_{\ast }\right \rangle _{D}$. Then%
\begin{equation*}
\left \vert \frac{\partial w_{1}}{\partial s}\left( t,x,s\right) \right
\vert \leq C_{1}s^{p-1}\text{,}\
\end{equation*}%
and thus%
\begin{eqnarray*}
\left \vert w_{1}\left( t,x,s\right) -w_{1}\left( t,x,s_{1}\right) \right
\vert &=&\left \vert \left( s-s_{1}\right) \int_{0}^{1}\partial
_{s}w_{1}\left( t,x,\theta s+\left( 1-\theta \right) s_{1}\right) d\theta
\right \vert \\
&\leq &C_{1}\left \vert \left( s-s_{1}\right) \int_{0}^{1}\left( \theta
s+\left( 1-\theta \right) s_{1}\right) ^{p-1}d\theta \right \vert \\
&\leq &C_{1}^{\prime }\left \vert \left( s^{p}-s_{1}^{p}\right) \right \vert
\text{.}
\end{eqnarray*}%
Also, since $w_{1}(t,x,\xi _{\ast })\gtrsim \left \langle \xi _{\ast
}\right
\rangle _{D}^{p}$ and $D\geq 1$, we can deduce that for $1\leq p<2$%
,
\begin{eqnarray*}
\left \vert \frac{w_{1}(t,x,\xi )}{w_{1}(t,x,\xi _{\ast })}-1\right \vert
&=&\left \vert \frac{w_{1}(t,x,\xi )-w_{1}(t,x,\xi _{\ast })}{w_{1}(t,x,\xi
_{\ast })}\right \vert \\
&\lesssim &\frac{\left \vert \left \langle \xi \right \rangle _{D}^{p}-\left
\langle \xi _{\ast }\right \rangle _{D}^{p}\right \vert }{\left \langle \xi
_{\ast }\right \rangle _{D}^{p}}\lesssim \frac{1}{D^{p}}\left \vert \left
\vert \xi \right \vert ^{2}-\left \vert \xi _{\ast }\right \vert ^{2}\right
\vert ^{p/2}\text{,}
\end{eqnarray*}%
and for $p\geq 2$,
\begin{eqnarray*}
\left \vert \frac{w_{1}(t,x,\xi )}{w_{1}(t,x,\xi _{\ast })}-1\right \vert
&\lesssim &\frac{\left \vert \left \langle \xi \right \rangle _{D}^{p}-\left
\langle \xi _{\ast }\right \rangle _{D}^{p}\right \vert }{\left \langle \xi
_{\ast }\right \rangle _{D}^{p}} \\
&\lesssim &\frac{\left \vert \left \vert \xi \right \vert ^{2}-\left \vert
\xi _{\ast }\right \vert ^{2}\right \vert }{\left \langle \xi _{\ast }\right
\rangle _{D}^{p}}\int_{0}^{1}\left( \theta \left \langle \xi \right \rangle
_{D}^{2}+\left( 1-\theta \right) \left \langle \xi _{\ast }\right \rangle
_{D}^{2}\right) ^{\frac{p}{2}-1}d\theta \\
&\lesssim &\frac{\left \vert \left \vert \xi \right \vert ^{2}-\left \vert
\xi _{\ast }\right \vert ^{2}\right \vert }{D^{2}}\int_{0}^{1}\frac{\left(
D^{2}+\left \vert \xi _{\ast }\right \vert ^{2}+\theta \left( \left \vert
\xi \right \vert ^{2}-\left \vert \xi _{\ast }\right \vert ^{2}\right)
\right) ^{\frac{p}{2}-1}}{\left( D^{2}+\left \vert \xi _{\ast }\right \vert
^{2}\right) ^{\frac{p}{2}-1}}d\theta \\
&\lesssim &\frac{1}{D^{2}}\left \vert \left \vert \xi \right \vert
^{2}-\left \vert \xi _{\ast }\right \vert ^{2}\right \vert \left( 1+\left
\vert \left \vert \xi \right \vert ^{2}-\left \vert \xi _{\ast }\right \vert
^{2}\right \vert \right) ^{\frac{p}{2}-1}\text{.}
\end{eqnarray*}%
Combining the above two estimates, we can conclude (\ref{mu-ratio-2}).

Similar to $w_{1}$, we have
\begin{equation*}
\left \vert \rho \left( t,x,\xi \right) -\rho \left( t,x,\xi _{\ast }\right)
\right \vert \leq c_{p}\left \vert \left \langle \xi \right \rangle
^{p}-\left \langle \xi _{\ast }\right \rangle ^{p}\right \vert \leq
c_{p}\left \vert \left \vert \xi \right \vert ^{2}-\left \vert \xi _{\ast
}\right \vert ^{2}\right \vert ^{p/2}
\end{equation*}%
for $0<p\leq 2$, so that
\begin{eqnarray*}
\left \vert \frac{w_{2}(t,x,\xi )}{w_{2}(t,x,\xi _{\ast })}-1\right \vert
&=&\left \vert e^{\epsilon \left( \rho \left( t,x,\xi \right) -\rho \left(
t,x,\xi _{\ast }\right) \right) }-1\right \vert \\
&\leq &\epsilon \left \vert \rho \left( t,x,\xi \right) -\rho \left( t,x,\xi
_{\ast }\right) \right \vert e^{\epsilon \left \vert \rho \left( t,x,\xi
\right) -\rho \left( t,x,\xi _{\ast }\right) \right \vert } \\
&\leq &\epsilon c_{p}\left \vert \left \vert \xi \right \vert ^{2}-\left
\vert \xi _{\ast }\right \vert ^{2}\right \vert ^{\frac{p}{2}}\exp \left(
\epsilon c_{p}\left \vert \left \vert \xi \right \vert ^{2}-\left \vert \xi
_{\ast }\right \vert ^{2}\right \vert ^{\frac{p}{2}}\right) \text{,}
\end{eqnarray*}%
as desired. As for $w_{3}$, the proof is straightforward, that is,%
\begin{eqnarray*}
\left \vert \frac{w_{3}(t,x,\xi )}{w_{3}(t,x,\xi _{\ast })}-1\right \vert
&=&\left \vert e^{\hat{\varepsilon}\left( \left \langle \xi \right \rangle
^{p_{1}}-\left \langle \xi _{\ast }\right \rangle ^{p_{1}}\right) }-1\right
\vert \\
&\leq &\hat{\varepsilon}\left \vert \left \langle \xi \right \rangle
^{p_{1}}-\left \langle \xi _{\ast }\right \rangle ^{p_{1}}\right \vert \exp
\left( \hat{\varepsilon}\left \vert \left \langle \xi \right \rangle
^{p_{1}}-\left \langle \xi _{\ast }\right \rangle ^{p_{1}}\right \vert
\right) \\
&\leq &\hat{\varepsilon}\left \vert \left \vert \xi \right \vert ^{2}-\left
\vert \xi _{\ast }\right \vert ^{2}\right \vert ^{\frac{p_{1}}{2}}\exp
\left( \hat{\varepsilon}\left \vert \left \vert \xi \right \vert ^{2}-\left
\vert \xi _{\ast }\right \vert ^{2}\right \vert ^{\frac{p_{1}}{2}}\right)
\end{eqnarray*}%
for $0<p_{1}\leq 2$. The proof of this lemma is completed.
\end{proof}

With the help of the estimates on the weight functions, we obtain some
useful estimates regarding the integral operator $K$ in the weighted spaces.
For simplicity of notations, we define $K_{w_{i}}=w_{i}\left( t,x,\xi
\right) Kw_{i}^{-1}\left( t,x,\xi _{\ast }\right) $, $i=1$, $2$,$\ 3$.

\begin{lemma}
\label{lemma: estimate of weighted-kw}Let $\tau \in \mathbb{R}$. For $%
-2<\gamma <0$ and $i=1$, $2$,$\ 3$, we have
\begin{equation}
\int_{\mathbb{R}^{3}}\left \vert w_{i}(t,x,\xi )k(\xi ,\xi _{\ast
})w_{i}^{-1}(t,x,\xi _{\ast })\right \vert \left \langle \xi _{\ast }\right
\rangle ^{\tau }d\xi _{\ast }\lesssim \left \langle \xi \right \rangle
^{\tau +\gamma -2}\text{,}  \label{Kw2}
\end{equation}%
\begin{equation}
\int_{\mathbb{R}^{3}}\left \vert w_{i}(t,x,\xi )k(\xi ,\xi _{\ast
})w_{i}^{-1}(t,x,\xi _{\ast })\right \vert \left \langle \xi \right \rangle
^{\tau }d\xi \lesssim \left \langle \xi _{\ast }\right \rangle ^{\tau
+\gamma -2}\text{,}  \label{Kw3}
\end{equation}%
\begin{equation}
\int_{\mathbb{R}^{3}}\left \vert w_{i}(t,x,\xi )\left( \nabla _{\xi }k(\xi
,\xi _{\ast })\right) w_{i}^{-1}(t,x,\xi _{\ast })\right \vert \left \langle
\xi _{\ast }\right \rangle ^{\tau }d\xi _{\ast }\lesssim \left \langle \xi
\right \rangle ^{\tau +\gamma -1}\, \text{,}  \label{Kw1}
\end{equation}%
\begin{equation}
\int_{\mathbb{R}^{3}}\left \vert w_{i}(t,x,\xi )\left( \nabla _{\xi }k(\xi
,\xi _{\ast })\right) w_{i}^{-1}(t,x,\xi _{\ast })\right \vert \left \langle
\xi \right \rangle ^{\tau }d\xi \lesssim \left \langle \xi _{\ast }\right
\rangle ^{\tau +\gamma -1}\text{,}  \label{Kw4}
\end{equation}%
uniformly in $t$ and $x$; consequently, we have
\begin{equation}
\left \vert K_{w_{i}}q(t,x,\xi )\right \vert _{L_{\xi ,\tau +2-\gamma
}^{\infty }}\lesssim \left \vert q(t,x,\cdot )\right \vert _{L_{\xi ,\tau
}^{\infty }}\text{,}  \label{Kw6}
\end{equation}%
\begin{equation}
\left \vert \left \langle \xi \right \rangle ^{\tau }K_{w_{i}}q(t,x,\xi
)\right \vert _{L_{\xi }^{2}}\lesssim \left \vert \left \langle \xi \right
\rangle ^{\tau -2+\gamma }q(t,x,\cdot )\right \vert _{L_{\xi }^{2}}\text{,}\,
\label{Kw7}
\end{equation}%
\begin{equation}
\left \vert \left \langle \xi \right \rangle ^{\tau }\nabla _{\xi
}K_{w_{i}}q(t,x,\xi )\right \vert _{L_{\xi }^{2}}\lesssim \left \vert \left
\langle \xi \right \rangle ^{\tau -1+\gamma }q(t,x,\cdot )\right \vert
_{L_{\xi }^{2}}\text{.}  \label{Kw8}
\end{equation}

Furthermore, if $-3/2<\gamma <0$,%
\begin{equation}
\left \vert K_{w_{i}}q(t,x,\xi )\right \vert _{L_{\xi ,\tau }^{\infty
}}\lesssim \left \vert \left \langle \xi \right \rangle ^{\tau -3/2+\gamma
}q(t,x,\cdot )\right \vert _{L_{\xi }^{2}}\text{;}  \label{eq: bootstrap 1}
\end{equation}%
if $-2<\gamma \leq -3/2$,
\begin{equation}
|K_{w_{i}}q(t,x,\xi )|_{L_{\xi ,\tau }^{\infty }}\lesssim |\left \langle \xi
\right \rangle ^{\tau -2+\gamma +1/s}q(t,x,\cdot )|_{L_{\xi }^{s}}\text{,}\,
\label{eq: bootstrap 2}
\end{equation}%
provided $s>\frac{3}{3+\gamma }$, and%
\begin{equation}
|K_{w_{i}}q(t,x,\xi )|_{L_{\xi ,1-\gamma }^{4}}\lesssim |q(t,x,\cdot
)|_{L_{\xi }^{2}}\text{.}  \label{eq: bootstrap 3}
\end{equation}
\end{lemma}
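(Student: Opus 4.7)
The plan is to reduce every estimate to the unweighted counterparts in Lemmas \ref{basic2}, \ref{basic22}, and \ref{L4-L2} by factoring
\[
w_i(t,x,\xi)\, k(\xi,\xi_*)\, w_i^{-1}(t,x,\xi_*) = k(\xi,\xi_*) \cdot \frac{w_i(t,x,\xi)}{w_i(t,x,\xi_*)},
\]
and controlling the ratio via Lemma \ref{ratio}: for $i=1$ the deviation of the ratio from $1$ is at most $CD^{-(p\wedge 2)}\,||\xi|^2 - |\xi_*|^2|^{p/2}$, while for $i=2,3$ it is at most $c\,\epsilon\,||\xi|^2 - |\xi_*|^2|^{p/2}\exp\bigl(c\,\epsilon\,||\xi|^2 - |\xi_*|^2|^{p/2}\bigr)$ with $p\leq 2$ (or $p_1\leq 2$), where $\epsilon$ (respectively $\hat\varepsilon$) is small.

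The crucial step is absorbing this correction into the Gaussian-type decay of $k_2$; the contribution of $k_1$ is immediate since $k_1$ already carries $\exp(-(|\xi|^2 + |\xi_*|^2)/4)$. Using the AM-GM inequality
\[
||\xi|^2 - |\xi_*|^2| \leq \frac{1}{2}\left[\frac{(|\xi|^2 - |\xi_*|^2)^2}{|\xi - \xi_*|^2} + |\xi - \xi_*|^2\right],
\]
we see that $||\xi|^2 - |\xi_*|^2|^{p/2}$ is bounded (via Young's inequality, exploiting $p/2\leq 1$) by a constant plus a small multiple of the exponent in $k_2$. Hence, for $\epsilon$ sufficiently small (respectively $D$ sufficiently large), the corrected kernel is dominated by a kernel of the same $k_2$-form but with a slightly relaxed $\kappa$, so the kernel integrals \eqref{Kw2}--\eqref{Kw4} follow immediately from Lemmas \ref{basic} and \ref{basic2}.

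The operator consequences are then standard. For \eqref{Kw6} one pulls $|q|_{L^\infty_{\xi,\tau}}$ out of the $\xi_*$-integral and applies \eqref{Kw2} with weight exponent $-\tau$. For \eqref{Kw7}, Schur's test applied to the symmetrized kernel $\langle\xi\rangle^\tau w_i(\xi)\, k\, w_i^{-1}(\xi_*) \langle\xi_*\rangle^{2-\gamma-\tau}$, with the two Schur hypotheses verified by \eqref{Kw2} and \eqref{Kw3}, gives $L^2$-boundedness. For \eqref{Kw8} we split
\[
\nabla_\xi (K_{w_i} q) = \frac{\nabla_\xi w_i}{w_i}\,(K_{w_i} q) + \int w_i(t,x,\xi)\,(\nabla_\xi k)(\xi,\xi_*)\, w_i^{-1}(t,x,\xi_*)\, q(t,x,\xi_*)\, d\xi_*,
\]
where the first term is controlled by \eqref{Kw7} combined with the $\xi$-derivative bounds on $w_i$ in Lemma \ref{Derivative of wi}, and the second by Schur's test based on \eqref{Kw1} and \eqref{Kw4}.

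Finally, the bootstrap estimates \eqref{eq: bootstrap 1}--\eqref{eq: bootstrap 3} follow by Cauchy--Schwarz (or H\"older) in $\xi_*$: once the ratio correction has been absorbed as above, the weighted kernel satisfies the same $L^2_{\xi_*}$, $L^s_{\xi_*}$, or $L^4_{\xi_*}$ bounds as $k$ itself from Lemmas \ref{basic22} and \ref{L4-L2}, and pairing these bounds against $q$ in the appropriate Lebesgue space yields the claimed $L^\infty_{\xi,\tau}$ or $L^4_{\xi,1-\gamma}$ estimates after matching the $\xi$-weights. The main obstacle throughout is the absorption step above: the restrictions $p,p_1\leq 2$ for $w_2, w_3$ are precisely what allow the sub-Gaussian correction factor to be dominated by the Gaussian decay of $k_2$; without these constraints on $p$, the correction would outgrow the decay and the entire scheme would fail.
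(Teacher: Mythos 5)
Your proposal follows essentially the same strategy as the paper: factor $w_i k w_i^{-1}=k\cdot(w_i/w_{i*})$, control the ratio by Lemma \ref{ratio}, and absorb the resulting correction into the Gaussian-type decay of $k_2$ via the Cauchy--Schwarz bound $\frac{(|\xi|^2-|\xi_*|^2)^2}{|\xi-\xi_*|^2}+|\xi-\xi_*|^2\geq 2\,||\xi|^2-|\xi_*|^2|$, so that for $D$ large (respectively $\epsilon,\hat\varepsilon$ small) the weighted kernel is bounded by a kernel of the same $k_2$-form with a relaxed $\kappa$, after which Lemmas \ref{basic2}, \ref{basic22}, and \ref{L4-L2} apply. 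The operator consequences you spell out---pulling out the sup for \eqref{Kw6}, Schur's test for \eqref{Kw7}, splitting $\nabla_\xi K_{w_i}$ into a weight-derivative piece plus a kernel-derivative piece for \eqref{Kw8}, and H\"older pairings for \eqref{eq: bootstrap 1}--\eqref{eq: bootstrap 3}---supply the details the paper leaves implicit, and your identification of $p,p_1\leq2$ as the precise absorption constraint (for $w_2,w_3$; for $w_1$ it is instead the smallness of $D^{-(p\wedge2)}$) is accurate.
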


\begin{proof}
Since $k=-k_{1}+k_{2}$ and the estimate for $k_{1}$ can be obtained easily,
we just prove (\ref{Kw2}) for $k_{2}$ whenever the weight function is $w_{1}$
and then a similar argument can be applied to the estimates (\ref{Kw2})-(\ref%
{Kw4}) for the weight functions $w_{i}$, $i=1$, $2$, $3$. Now, rewrite
\begin{eqnarray*}
&&w_{1}(t,x,\xi )k_{2}(\xi ,\xi _{\ast })w_{1}^{-1}(t,x,\xi _{\ast
})-k_{2}(\xi ,\xi _{\ast }) \\
&=&\left \{ a\left( \xi ,\xi _{\ast },\frac{1}{2}\right) \exp \left( -\frac{1%
}{32}\left[ \frac{\left( \left \vert \xi \right \vert ^{2}-\left \vert \xi
_{\ast }\right \vert ^{2}\right) ^{2}}{\left \vert \xi -\xi _{\ast }\right
\vert ^{2}}+\left \vert \xi -\xi _{\ast }\right \vert ^{2}\right] \right)
\right \} \\
&&\times \left \{ \exp \left( -\frac{1}{32}\left[ \frac{\left( \left \vert
\xi \right \vert ^{2}-\left \vert \xi _{\ast }\right \vert ^{2}\right) ^{2}}{%
\left \vert \xi -\xi _{\ast }\right \vert ^{2}}+\left \vert \xi -\xi _{\ast
}\right \vert ^{2}\right] \right) \times \left( \frac{w_{1}(t,x)}{w_{1\ast
}(t,x)}-1\right) \right \} \\
&\equiv &q_{2}(\xi ,\xi _{\ast })s(D,\xi ,\xi _{\ast })\text{.}\,
\end{eqnarray*}%
By the Cauchy-Schwarz inequality,
\begin{equation*}
\frac{\left( \left \vert \xi \right \vert ^{2}-\left \vert \xi _{\ast
}\right \vert ^{2}\right) ^{2}}{\left \vert \xi -\xi _{\ast }\right \vert
^{2}}+\left \vert \xi -\xi _{\ast }\right \vert ^{2}\geq 2\left \vert \left
\vert \xi \right \vert ^{2}-\left \vert \xi _{\ast }\right \vert ^{2}\right
\vert \text{.}
\end{equation*}%
In view of (\ref{mu-ratio-2}), we obtain
\begin{equation*}
\sup_{\xi ,\xi _{\ast }}\left \vert s(D,\xi ,\xi _{\ast })\right \vert
\rightarrow 0\text{ as }D\rightarrow \infty \text{,}
\end{equation*}%
which implies that $\sup_{\xi ,\xi _{\ast }}\left \vert s(D,\xi ,\xi _{\ast
})\right \vert <1$ for all $D\geq 1$ sufficiently large. Moreover, in view
of Lemma \ref{basic2}, we know
\begin{equation*}
\int_{\mathbb{R}^{3}}\left \vert k_{2}(\xi ,\xi _{\ast })\right \vert \left
\langle \xi _{\ast }\right \rangle ^{\tau }d\xi _{\ast }\lesssim \left
\langle \xi \right \rangle ^{\tau +\gamma -2}\text{,}\,
\end{equation*}%
\begin{equation*}
\int_{\mathbb{R}^{3}}\left \vert q_{2}(\xi ,\xi _{\ast })\right \vert \left
\langle \xi _{\ast }\right \rangle ^{\tau }d\xi _{\ast }\lesssim \left
\langle \xi \right \rangle ^{\tau +\gamma -2}\text{.}\,
\end{equation*}%
Therefore,
\begin{eqnarray*}
&&\int_{\mathbb{R}^{3}}\left \vert w_{1}(t,x,\xi )k_{2}(\xi ,\xi _{\ast
})w_{1}^{-1}(t,x,\xi _{\ast })\right \vert \left \langle \xi _{\ast }\right
\rangle ^{\tau }d\xi _{\ast } \\
&\lesssim &\int_{\mathbb{R}^{3}}\left \vert k_{2}(\xi ,\xi _{\ast })\right
\vert \left \langle \xi _{\ast }\right \rangle ^{\tau }d\xi _{\ast }+\int_{%
\mathbb{R}^{3}}\left \vert q_{2}(\xi ,\xi _{\ast })s(D,\xi ,\xi _{\ast
})\right \vert \left \langle \xi _{\ast }\right \rangle ^{\tau }d\xi _{\ast }
\\
&\lesssim &\left \langle \xi \right \rangle ^{\tau +\gamma -2}\text{.}\,
\end{eqnarray*}%
Combining the above estimates (\ref{Kw2})-(\ref{Kw4}) together with (\ref%
{der-w1-first}), we can deduce (\ref{Kw6})-(\ref{Kw8}).

Mimicking the proof of (\ref{Kw2}), together with (\ref{gain-weight})-(\ref%
{gain-weight2}), we obtain (\ref{eq: bootstrap 1})-(\ref{eq: bootstrap 3}).
The proof of this lemma is completed.
\end{proof}

\begin{remark}
\label{remark-ration} Similar to the proof of Lemma \ref{ratio}, together
with the conservation of energy, we have
\begin{equation}
\left \vert \frac{w_{1}\left( t,x,\xi \right) }{w_{1}\left( t,x,\xi _{\ast
}^{\prime }\right) }-1\right \vert \lesssim D^{-\{2\wedge p\}}\left[ 1+\left
\vert \left \vert \xi ^{\prime }\right \vert ^{2}-\left \vert \xi _{\ast
}\right \vert ^{2}\right \vert \right] ^{\frac{p}{2}}\text{, }p\geq 1\text{,}
\label{mu-ratio-1}
\end{equation}%
\begin{equation}
\left \vert \frac{w_{1}\left( t,x,\xi \right) }{w_{1}\left( t,x,\xi ^{\prime
}\right) }-1\right \vert \lesssim D^{-\{2\wedge p\}}\left[ 1+\left \vert
\left \vert \xi _{\ast }^{\prime }\right \vert ^{2}-\left \vert \xi _{\ast
}\right \vert ^{2}\right \vert \right] ^{\frac{p}{2}}\text{, }p\geq 1\text{,}
\label{mu-ratio-3}
\end{equation}%
\begin{equation}
\left \vert \frac{w_{2}(t,x,\xi )}{w_{2}(t,x,\xi _{\ast }^{\prime })}%
-1\right \vert \lesssim \epsilon c_{p}\left \vert \left \vert \xi ^{\prime
}\right \vert ^{2}-\left \vert \xi _{\ast }\right \vert ^{2}\right \vert ^{%
\frac{p}{2}}\exp \left( \epsilon c_{p}\left \vert \left \vert \xi ^{\prime
}\right \vert ^{2}-\left \vert \xi _{\ast }\right \vert ^{2}\right \vert ^{%
\frac{p}{2}}\right) \, \text{, }0<p\leq 2\text{,}
\end{equation}
\begin{equation}
\left \vert \frac{w_{2}(t,x,\xi )}{w_{2}(t,x,\xi ^{\prime })}-1\right \vert
\lesssim \epsilon c_{p}\left \vert \left \vert \xi _{\ast }^{\prime }\right
\vert ^{2}-\left \vert \xi _{\ast }\right \vert ^{2}\right \vert ^{\frac{p}{2%
}}\exp \left( \epsilon c_{p}\left \vert \left \vert \xi _{\ast }^{\prime
}\right \vert ^{2}-\left \vert \xi _{\ast }\right \vert ^{2}\right \vert ^{%
\frac{p}{2}}\right) \text{, }0<p\leq 2\text{.}
\end{equation}
Here the constant $c_{p}>0$ is the same as in Lemma \ref{ratio}. On the
other hand, for the weight function $w_{3}$, we have%
\begin{eqnarray}
\left \vert \frac{w_{3}\left( t,x,\xi \right) }{w_{3}\left( t,x,\xi _{\ast
}\right) }-1\right \vert &=&\left \vert e^{\hat{\varepsilon}\left( \left
\langle \xi \right \rangle ^{p_{1}}-\left \langle \xi _{\ast }\right \rangle
^{p_{1}}\right) }-1\right \vert \leq \exp \left( \hat{\varepsilon}\left
\vert \left \langle \xi \right \rangle ^{p_{1}}-\left \langle \xi _{\ast
}\right \rangle ^{p_{1}}\right \vert \right)  \notag \\
&=&\exp \left( \hat{\varepsilon}\left \vert \left( 1+\left \vert \xi \right
\vert ^{2}\right) ^{\frac{p_{1}}{2}}-\left( 1+\left \vert \xi _{\ast }\right
\vert ^{2}\right) ^{\frac{p_{1}}{2}}\right \vert \right)  \notag \\
&\leq &\exp \left( \hat{\varepsilon}\left \vert \left \vert \xi \right \vert
^{2}-\left \vert \xi _{\ast }\right \vert ^{2}\right \vert ^{\frac{p_{1}}{2}%
}\right) \text{,}
\end{eqnarray}%
since $0<p_{1}/2\leq 1$. By the conservation of energy,
\begin{equation}
\left \vert \frac{w_{3}\left( t,x,\xi \right) }{w_{3}\left( t,x,\xi _{\ast
}^{\prime }\right) }-1\right \vert \leq \exp \left( \hat{\varepsilon}\left
\vert \left \vert \xi ^{\prime }\right \vert ^{2}-\left \vert \xi _{\ast
}\right \vert ^{2}\right \vert ^{\frac{p_{1}}{2}}\right) \text{, }%
0<p_{1}\leq 2\text{,}  \label{ratio-w01}
\end{equation}%
\begin{equation}
\left \vert \frac{w_{3}\left( t,x,\xi \right) }{w_{3}\left( t,x,\xi ^{\prime
}\right) }-1\right \vert \leq \exp \left( \hat{\varepsilon}\left \vert \left
\vert \xi _{\ast }^{\prime }\right \vert ^{2}-\left \vert \xi _{\ast }\right
\vert ^{2}\right \vert ^{\frac{p_{1}}{2}}\right) \, \text{, }0<p_{1}\leq 2%
\text{.}  \label{ratio-w02}
\end{equation}
\end{remark}

Furthermore, we consider the linear operator $\mathcal{L}_{w_{i}}$, $i=1$, $%
2 $, defined as
\begin{equation*}
\mathcal{L}_{w_{i}}h=-\xi \cdot \nabla _{x}h+(\partial _{t}w_{i}+\xi \cdot
\nabla _{x}w_{i})w_{i}^{-1}h+L_{w_{i}}h\text{.}
\end{equation*}%
By straightforward computation, we obtain the energy estimate for the linear
part as below.

\begin{lemma}[Weighted energy estimate for the linear part]
\label{weighted-linear}Let $-2<\gamma <0$. If $\de>0$ is sufficiently small,
and $D$, $M\geq 1$ are sufficiently large with $\de M$ sufficiently small,
then
\begin{align*}
& \sum_{j=0}^{2}\int_{{\mathbb{R}}^{3}}\int_{{\mathbb{R}}^{3}} \nabla _{x}^{j}h\nabla
_{x}^{j}\mathcal{L}_{w_{1}}h dx d\xi\\
\leq & -\left( \nu _{0}-C_{1}D^{-2}-C_{2}\delta -C_{3}\delta M\right)
\sum_{j=0}^{2}\int_{{\mathbb{R}}^{3}}\int_{{\mathbb{R}}^{3}} \left \langle \xi \right \rangle ^{\gamma} \left(\mathrm{P}_{1}\nabla _{x}^{j}h \right)^2 dx d\xi \\
& -\left( C_{4}\delta M-C_{2}\delta -C_{1}D^{-2}\right)
\sum_{j=0}^{2}\int_{H_{+}^{D}}\left[ \delta \left( \left \langle x\right
\rangle -Mt\right) \right] ^{-1}\left \vert \mathrm{P}_{0}\nabla
_{x}^{j}h\right \vert ^{2}dx d\xi  \\
& +\left( C_{1}D^{-2}+C_{2}\delta +C_{5}\delta M\right)
\sum_{j=0}^{2}\int_{H_{0}^{D}}\left \vert \mathrm{P}_{0}\nabla
_{x}^{j}h\right \vert ^{2}dx d\xi
+C_{1}D^{-2}\sum_{j=0}^{2}\int_{H_{-}^{D}}\left \vert \mathrm{P}_{0}\nabla
_{x}^{j}h\right \vert ^{2}dx d\xi \text{,}
\end{align*}%
where
\begin{eqnarray*}
H_{+}^{D} &=&\{ \left( x,\xi \right) :\delta \left( \left \langle x\right
\rangle -Mt\right) >2\left \langle \xi \right \rangle _{D}^{1-\gamma }\}%
\text{,}%
\vspace {3mm}
\\
H_{0}^{D} &=&\{ \left( x,\xi \right) :\left \langle \xi \right \rangle
_{D}^{1-\gamma }\leq \delta \left( \left \langle x\right \rangle -Mt\right)
\leq 2\left \langle \xi \right \rangle _{D}^{1-\gamma }\} \text{,}%
\vspace {3mm}
\\
H_{-}^{D} &=&\{ \left( x,\xi \right) :\delta \left( \left \langle x\right
\rangle -Mt\right) <\left \langle \xi \right \rangle _{D}^{1-\gamma }\}
\text{.}
\end{eqnarray*}%
If $\epsilon >0$, $\de>0$ are sufficiently small and $M$ is sufficiently
large such that $\de M$ is large but $\de M\ll \epsilon ^{-1}$, then
\begin{eqnarray*}
&&\sum_{j=0}^{2}\int_{{\mathbb{R}}^{3}}\int_{{\mathbb{R}}^{3}} \nabla _{x}^{j}h \nabla
_{x}^{j}\mathcal{L}_{w_{2}}h dx d\xi \\
&\leq &-\left( \nu _{0}-\epsilon \delta C_{2}-\epsilon \delta MC_{3}\right)
\sum_{j=0}^{2}\int_{{\mathbb{R}}^{3}}\int_{{\mathbb{R}}^{3}}\left \langle \xi \right \rangle^{\gamma} \left(\mathrm{P}_{1}\nabla _{x}^{j}h\right )^{2}
dxd\xi\\
&&-\epsilon \left( \delta MC_{4}-\delta C_{2}-C_{1}\right)
\sum_{j=0}^{2}\int_{H_{+}^{1}}\left[ \delta \left( \left \langle x\right
\rangle -Mt\right) \right] ^{\frac{\gamma -1}{p+1-\gamma }}\left \vert
\mathrm{P}_{0}\nabla _{x}^{j}h\right \vert ^{2}dx d\xi \\
&&+\epsilon \left( \delta C_{2}+\delta MC_{5}+C_{1}\right)
\sum_{j=0}^{2}\int_{H_{0}^{1}}\left \vert \mathrm{P}_{0}\nabla
_{x}^{j}h\right \vert ^{2} dx d\xi+\epsilon
C_{1}\sum_{j=0}^{2}\int_{H_{-}^{1}}\left \vert \mathrm{P}_{0}\nabla
_{x}^{j}h\right \vert ^{2}dx d\xi \text{,}
\end{eqnarray*}%
where
\begin{eqnarray*}
H_{+}^{1} &=&\{ \left( x,\xi \right) :\delta \left( \left \langle x\right
\rangle -Mt\right) >2\left \langle \xi \right \rangle ^{p+1-\gamma }\} \text{%
,}%
\vspace {3mm}
\\
H_{0}^{1} &=&\{ \left( x,\xi \right) :\left \langle \xi \right \rangle
^{p+1-\gamma }\leq \delta \left( \left \langle x\right \rangle -Mt\right)
\leq 2\left \langle \xi \right \rangle ^{p+1-\gamma }\} \text{,}%
\vspace {3mm}
\\
H_{-}^{1} &=&\{ \left( x,\xi \right) :\delta \left( \left \langle x\right
\rangle -Mt\right) <\left \langle \xi \right \rangle ^{p+1-\gamma }\} \text{.%
}
\end{eqnarray*}
\end{lemma}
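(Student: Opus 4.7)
The plan is to expand $\nabla_x^j \mathcal{L}_{w_i}h$, pair it with $\nabla_x^j h$, integrate over $\mathbb{R}^3\times\mathbb{R}^3$, and track each of the four pieces (transport, weight multiplier, $-\nu$, $K_{w_i}$) separately, exploiting the region decomposition $H_\pm^D, H_0^D$ (resp.\ $H_\pm^1,H_0^1$) dictated by where the cutoff $\chi(\delta(\langle x\rangle-Mt)/\langle\xi\rangle_D^{1-\gamma})$ is $0$, transitional, or $1$.

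First I would handle the easy pieces. The transport term $-\xi\cdot\nabla_x(\nabla_x^j h)$ is antisymmetric, so its pairing with $\nabla_x^j h$ vanishes after integrating by parts in $x$. The $-\nu(\xi)$ piece contributes exactly $-\int\nu(\xi)(\nabla_x^j h)^2$, which produces the $-\nu_0$-part of the coercivity bound through $\nu(\xi)\gtrsim\langle\xi\rangle^{\gamma}$ in \eqref{nu-gamma}. For $\nabla_x^j(K_{w_i}h)$ the $x$-derivatives may fall on $w_i$ inside $K_{w_i}=w_i K w_i^{-1}$; the resulting commutator kernels are of the form $(\nabla_x^m w_i/w_i)\,k(\xi,\xi_\ast)$ with $m\le 2$, whose integrals are controlled by combining Lemma \ref{Derivative of wi} with Lemma \ref{lemma: estimate of weighted-kw}. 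Writing $K_{w_i}=K+(K_{w_i}-K)$ and using the ratio estimates from Lemma \ref{ratio}, the perturbation $K_{w_i}-K$ is $O(D^{-2})$ (for $w_1$) and $O(\epsilon)$ (for $w_2$) in the norm appearing in coercivity. This, together with the coercivity of $L$ in \eqref{coercivity} applied to $\mathrm{P}_1\nabla_x^j h$, yields the $-(\nu_0-C_1D^{-2}-C_2\delta-C_3\delta M)|\mathrm{P}_1\nabla_x^j h|_{L_\sigma^2}^2$ term (respectively $-(\nu_0-\epsilon\delta C_2-\epsilon\delta MC_3)$ for $w_2$), after absorbing the small lower-order commutator remainders.

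The heart of the argument is the quadratic form generated by $A_i:=(\partial_t w_i+\xi\cdot\nabla_x w_i)\,w_i^{-1}$. Its sign and size are region-dependent:
\begin{itemize}
\item On $H_+^D$ (resp.\ $H_+^1$) the cutoff $\chi$ vanishes, so $w_1\approx 5(\delta(\langle x\rangle-Mt))^{p/(1-\gamma)}$ (resp.\ $w_2=e^{5\epsilon(\delta(\langle x\rangle-Mt))^{p/(p+1-\gamma)}}$) and a direct differentiation gives
\[
A_1\approx\tfrac{p}{1-\gamma}\,\delta\,\bigl(\delta(\langle x\rangle-Mt)\bigr)^{-1}\bigl(\xi\cdot\hat x-M\bigr),
\]
(resp.\ the analogue with exponent $(\gamma-1)/(p+1-\gamma)$). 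Since $|\xi\cdot\hat x|\lesssim \langle\xi\rangle$ is absorbed by $M$ when $\delta M\gg\delta$ (and $\langle\xi\rangle$ is further bounded by $(\delta(\langle x\rangle-Mt))^{1/(1-\gamma)}$ on this region), the $-M$ dominates, yielding the negative $-(C_4\delta M-C_2\delta-C_1D^{-2})$ coefficient in the display. For $w_2$, the extra factor $\epsilon$ appearing in $A_2=\epsilon\,\partial_t\rho+\epsilon\,\xi\cdot\nabla_x\rho$ accounts for the prefactor $\epsilon$ in the stated bound, and the additional constraint $\delta M\ll\epsilon^{-1}$ is precisely what ensures the residual $\mathrm{P}_0$-piece remains a small error controlled by $L^2_\sigma$-coercivity.
\item On $H_0^D$ (resp.\ $H_0^1$) the cutoff transitions, so all bounds on $A_i$ from Lemma \ref{Derivative of wi} apply pointwise and $|A_i|\lesssim\delta+\delta M$ (resp.\ $\epsilon(\delta+\delta M)$), producing the transitional $+$-term on $H_0^{D}$.
\item On $H_-^D$ (resp.\ $H_-^1$), $\chi=1$ makes $w_i$ depend only on $\xi$, so $\partial_t w_i=0$; only $\xi\cdot\nabla_x w_i/w_i$ survives and is $O(D^{-2})$ (resp.\ $O(\epsilon)$), giving the smallest remainder.
\end{itemize}
For $j=1,2$ the same decomposition is applied to $\nabla_x^j h$ after commuting $\nabla_x^j$ past the multiplier $A_i$; the commutator produces $\nabla_x^m A_i$ with $m\le 2$, whose size on each region is again read off from Lemma \ref{Derivative of wi}, slotting into the same estimates with the same coefficients.

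The main obstacle is the sign analysis of $A_i$ on $H_+^D$ (resp.\ $H_+^1$): the transport contribution $\xi\cdot\nabla_x w_i/w_i$ carries only a factor $\delta$, while the decay-producing $\partial_t w_i/w_i$ carries $\delta M$, so one must verify quantitatively on each region that $\delta M$ beats both $\delta|\xi|$ (using the defining inequality of the region to bound $|\xi|$ in terms of $\delta(\langle x\rangle-Mt)$) and the $D^{-2}$ (resp.\ $\epsilon$) error from the $K_{w_i}-K$ perturbation, with no loss in the exponent of $(\delta(\langle x\rangle-Mt))$. Once these algebraic inequalities are pinned down, the required assembled bound follows by summing the four pieces over $j=0,1,2$.
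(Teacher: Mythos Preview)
Your approach is exactly the ``straightforward computation'' the paper intends (the paper gives no further details for this lemma), and the region-by-region sign analysis of $A_i=(\partial_t w_i+\xi\cdot\nabla_x w_i)w_i^{-1}$ is the correct mechanism. There is, however, one concrete slip in your $H_-$ analysis. On $H_-^D$ (resp.\ $H_-^1$) the cutoff satisfies $\chi\equiv 1$, so $w_1=3\langle\xi\rangle_D^p$ (resp.\ $\rho=3\langle\xi\rangle^p$) depends on $\xi$ alone; hence \emph{both} $\partial_t w_i=0$ and $\nabla_x w_i=0$, giving $A_i\equiv 0$ there --- not merely $\partial_t w_i=0$. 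The $C_1D^{-2}$ (resp.\ $\epsilon C_1$) remainder on $H_-^D$ (resp.\ $H_-^1$) therefore cannot come from $A_i$; it is produced by the perturbation $K_{w_i}-K$ acting on $\mathrm{P}_0\nabla_x^j h$, which by Lemma~\ref{ratio} is $O(D^{-\{p\wedge 2\}})$ (resp.\ $O(\epsilon)$) uniformly in $(t,x)$. You have already isolated this perturbation in your treatment of the $\mathrm{P}_1$ coefficient, so the fix is simply to recognize that the same term supplies the $\mathrm{P}_0$ contribution on every region, including $H_-$.

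A smaller point on $H_+^D$: the pointwise claim that ``$\langle\xi\rangle$ is absorbed by $M$'' is not what makes the sign work, since $|\xi|$ can be large on $H_+^D$. What actually controls the $\xi\cdot\hat x$ contribution to $\int_{H_+^D}A_1|\mathrm{P}_0\nabla_x^j h|^2$ is the Gaussian decay in $\xi$ carried by the macroscopic basis functions $\chi_i$, so that after the $\xi$-integration this term is $O(1)$ while the $-M$ term is $O(M)$.
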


The rest of this section is devoted to estimates for the nonlinear operators
$\Gamma $ and $\Gamma _{w_{i}}$. Before going on, we point out an essential
lemma, which is proved by Guo \cite[Lemma 2]{[Yan Guo]} and is used
frequently in the following discussion. In addition, we split $\Gamma $ into
two parts $\Gamma _{gain}$ and $\Gamma _{loss}$ as below:%
\begin{eqnarray*}
\Gamma (g,h) &\equiv &\Gamma _{gain}(g,h)-\Gamma _{loss}(g,h) \\
&=&\frac{1}{2}\int_{\mathbb{R}^{3}\times \mathbb{S}^{2}}B(\vartheta )|\xi
-\xi _{\ast }|^{\gamma }\mathcal{M}_{\ast }^{1/2}\left[ g_{\ast }^{\prime
}h^{\prime }+g^{\prime }h_{\ast }^{\prime }\right] d\xi _{\ast }d\omega \\
&&-\frac{1}{2}\int_{\mathbb{R}^{3}\times \mathbb{S}^{2}}B(\vartheta )|\xi
-\xi _{\ast }|^{\gamma }\mathcal{M}_{\ast }^{1/2}\left[ g_{\ast }h+gh_{\ast }%
\right] d\xi _{\ast }d\omega \text{.}
\end{eqnarray*}

\begin{lemma}
\label{[Guo]}\cite[Lemma 2]{[Yan Guo]} Let $\varsigma >-3$, $l\left( \xi
\right) \in C^{\infty }\left( \mathbb{R}^{3}\right) $ and $g\left( \xi
\right) \in C^{\infty }\left( \mathbb{R}^{3}\backslash \{0\} \right) $.
Assume that for any multi-index $\alpha $, there is $C_{\alpha }>0$ such
that
\begin{equation*}
\left \vert \partial ^{\alpha }g\left( \xi \right) \right \vert \leq
C_{\alpha }\left \vert \xi \right \vert ^{\varsigma -\left \vert \alpha
\right \vert }\text{,}
\end{equation*}%
\begin{equation*}
\left \vert \partial ^{\alpha }l\left( \xi \right) \right \vert \leq
C_{\alpha }e^{-\tau _{\alpha }\left \vert \xi \right \vert ^{2}}\text{,}
\end{equation*}%
with some $\tau _{\alpha }>0$. Then there is $C_{\alpha }^{\ast }>0$ such
that%
\begin{equation*}
\left \vert \partial ^{\alpha }\left( g\ast l\right) \left( \xi \right)
\right \vert \leq C_{\alpha }^{\ast }\left \langle \xi \right \rangle
^{\varsigma -\left \vert \alpha \right \vert }\text{.}
\end{equation*}
\end{lemma}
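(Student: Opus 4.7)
The plan is to split the convolution according to whether the singular point of $g$ is being probed, and to move the derivative onto the smooth factor in each region. Writing
\[
(g\ast l)(\xi)=\int_{|\xi-\eta|\leq 1}g(\xi-\eta)l(\eta)\,d\eta+\int_{|\xi-\eta|>1}g(\xi-\eta)l(\eta)\,d\eta\equiv I_{1}(\xi)+I_{2}(\xi),
\]
I would change variables $\eta'=\xi-\eta$ in $I_{1}$ to rewrite it as $\int_{|\eta'|\leq 1}g(\eta')l(\xi-\eta')\,d\eta'$. The hypothesis $\varsigma>-3$ is precisely what makes $|\eta'|^{\varsigma}$ locally integrable in $\mathbb{R}^{3}$, so $g\in L^{1}(B_{1})$ and differentiation passes onto $l$, giving $\partial^{\alpha}I_{1}(\xi)=\int_{|\eta'|\leq 1}g(\eta')(\partial^{\alpha}l)(\xi-\eta')\,d\eta'$. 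In $I_{2}$ the singularity of $g$ is avoided, so derivatives pass onto $g$: $\partial^{\alpha}I_{2}(\xi)=\int_{|\xi-\eta|>1}(\partial^{\alpha}g)(\xi-\eta)\,l(\eta)\,d\eta$.

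For $|\xi|\leq 2$, the target $\langle\xi\rangle^{\varsigma-|\alpha|}$ is bounded above and below, so it suffices to establish boundedness of each piece; this is immediate from $g\in L^{1}(B_{1})$, $\partial^{\alpha}l\in L^{\infty}$, and the Gaussian decay of $l$, which absorbs any polynomial growth of $\partial^{\alpha}g$ on $\{|\xi-\eta|>1\}$. For $|\xi|\geq 2$, the key dichotomy in $\partial^{\alpha}I_{2}$ is the following: either $|\eta|\leq|\xi|/2$, in which case $|\xi|/2\leq|\xi-\eta|\leq 3|\xi|/2$, so $|(\partial^{\alpha}g)(\xi-\eta)|\lesssim|\xi|^{\varsigma-|\alpha|}$ uniformly in $\eta$ (both signs of $\varsigma-|\alpha|$ are covered by this two-sided comparison), giving a contribution of $O(|\xi|^{\varsigma-|\alpha|}\|l\|_{L^{1}})$; or $|\eta|\geq|\xi|/2$, in which case $|l(\eta)|\lesssim e^{-\tau_{0}|\xi|^{2}/4}$ absorbs the polynomial factor from $\partial^{\alpha}g$ and the resulting contribution is exponentially small in $|\xi|$. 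For $\partial^{\alpha}I_{1}$ with $|\xi|\geq 2$, the constraint $|\eta'|\leq 1$ forces $|\xi-\eta'|\geq|\xi|/2$, so $|(\partial^{\alpha}l)(\xi-\eta')|\lesssim e^{-\tau_{\alpha}|\xi|^{2}/4}$ is exponentially small, and the remaining $g$-integral on $B_{1}$ is finite.

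The main subtlety is to justify the passage of derivatives in $I_{1}$: since $g$ is not classically differentiable at the origin, one cannot differentiate $g(\xi-\eta)$ under the integral on all of $\mathbb{R}^{3}$. The domain split together with the change of variables $\eta'=\xi-\eta$ reduces this to legitimate differentiation under the integral, using only the smoothness and fast decay of $l$. Beyond that point, the proof is a bookkeeping exercise of balancing the polynomial factor $|\xi-\eta|^{\varsigma-|\alpha|}$ against either $\|l\|_{L^{1}}$ or the Gaussian decay of $l$, in whichever subregion each is available.
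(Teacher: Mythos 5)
The paper does not prove this lemma; it is quoted from Guo's Landau paper, so there is no in-text proof to compare against. Evaluating your argument on its own, there is a real gap in the treatment of $I_{2}$. You write
\[
\partial^{\alpha}I_{2}(\xi)=\int_{|\xi-\eta|>1}(\partial^{\alpha}g)(\xi-\eta)\,l(\eta)\,d\eta,
\]
justifying this by noting the singularity of $g$ is avoided on the region. But the domain $\{\eta:|\xi-\eta|>1\}$ moves with $\xi$, so differentiating under the integral produces boundary terms on the sphere $\{|\xi-\eta|=1\}$; equivalently, after the substitution $\eta'=\xi-\eta$ the $\xi$-derivative lands on $l$, and transferring it onto $g$ by integration by parts produces surface integrals over $\{|\eta'|=1\}$, one per derivative. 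Your displayed identity is therefore false as stated, and for $|\alpha|\geq 2$ the omitted boundary contributions proliferate. These terms are in fact harmless (on $\{|\eta'|=1\}$ the function $g$ is bounded, and $l(\xi-\eta')$ has Gaussian decay in $|\xi|$), so the estimate survives, but your proof neither records nor controls them.

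The clean fix, and the standard one, is to split $g$ rather than the domain: take a smooth cutoff $\chi\in C_{c}^{\infty}$ with $\chi\equiv 1$ on $B_{1}$ and $\operatorname{supp}\chi\subset B_{2}$, and write $g\ast l=(g\chi)\ast l+(g(1-\chi))\ast l$. Then $g\chi\in L^{1}$ with compact support (this is where $\varsigma>-3$ is used), so $\partial^{\alpha}\bigl((g\chi)\ast l\bigr)=(g\chi)\ast(\partial^{\alpha}l)$ with no boundary terms; and $g(1-\chi)$ is globally $C^{\infty}$, vanishes on $B_{1}$, and still satisfies $|\partial^{\alpha}(g(1-\chi))|\lesssim|\xi|^{\varsigma-|\alpha|}$ for $|\xi|\geq 2$ (derivatives of $\chi$ live in the annulus $1\leq|\xi|\leq 2$, where everything is bounded), so $\partial^{\alpha}\bigl((g(1-\chi))\ast l\bigr)=(\partial^{\alpha}(g(1-\chi)))\ast l$, again with no boundary terms. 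After this modification, the region-by-region estimates in the second half of your argument (the $|\eta|\lessgtr|\xi|/2$ dichotomy, the Gaussian absorption, the case $|\xi|\leq 2$) go through exactly as you describe and yield the claimed bound.
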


\begin{lemma}
\label{Lemma-Gamma-sup}Let $-2<\gamma <0$, $\hat{\varepsilon}\geq 0$, $%
0<p_{1}\leq 2$ and $\lambda \geq 0$. Then
\begin{equation}
|\Gamma _{loss}(g,h)|_{L_{\xi }^{\infty }\left( \left \langle \xi \right
\rangle ^{\lambda }e^{\hat{\varepsilon}\left \langle \xi \right \rangle
^{p_{1}}}\right) }\lesssim |g|_{L_{\xi }^{\infty }}|h|_{L_{\xi }^{\infty
}\left( \left \langle \xi \right \rangle ^{\lambda +\gamma }e^{\hat{%
\varepsilon}\left \langle \xi \right \rangle ^{p_{1}}}\right) }+|h|_{L_{\xi
}^{\infty }}|g|_{L_{\xi }^{\infty }\left( \left \langle \xi \right \rangle
^{\lambda +\gamma }e^{\hat{\varepsilon}\left \langle \xi \right \rangle
^{p_{1}}}\right) }\text{,}  \label{Gamma-sup-1}
\end{equation}%
\begin{equation}
|\Gamma _{gain}(g,h)|_{L_{\xi }^{\infty }\left( \left \langle \xi \right
\rangle ^{\lambda }e^{\hat{\varepsilon}\left \langle \xi \right \rangle
^{p_{1}}}\right) }\lesssim |g|_{L_{\xi }^{\infty }\left( \left \langle \xi
\right \rangle ^{\lambda +\gamma -1}e^{\hat{\varepsilon}\left \langle \xi
\right \rangle ^{p_{1}}}\right) }|h|_{L_{\xi }^{\infty }\left( \left \langle
\xi \right \rangle ^{\lambda +\gamma -1}e^{\hat{\varepsilon}\left \langle
\xi \right \rangle ^{p_{1}}}\right) }\text{.}  \label{Gamma-sup-2}
\end{equation}%
In particular,
\begin{equation}
\left \vert \nu ^{-1}\Gamma (g,h)\right \vert _{L_{\xi ,\lambda }^{\infty
}}\lesssim |g|_{L_{\xi ,\lambda }^{\infty }}|h|_{L_{\xi ,\lambda }^{\infty }}%
\text{.}  \label{Gamma-sup}
\end{equation}
\end{lemma}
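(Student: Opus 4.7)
The plan is to treat the three inequalities in order.

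For \eqref{Gamma-sup-1}, I split $\Gamma_{loss}(g,h)$ into its two natural pieces, one with $g(\xi_\ast)h(\xi)$ and one with $g(\xi)h(\xi_\ast)$. In either piece the factor evaluated at $\xi$ is pulled outside and the starred factor is bounded by its weighted sup-norm, extracting $\langle\xi_\ast\rangle^{-(\lambda+\gamma)}e^{-\hat\varepsilon\langle\xi_\ast\rangle^{p_1}}$. The remaining integral $\int B(\vartheta)|\xi-\xi_\ast|^\gamma\mathcal{M}_\ast^{1/2}\langle\xi_\ast\rangle^{-(\lambda+\gamma)}e^{-\hat\varepsilon\langle\xi_\ast\rangle^{p_1}}\,d\xi_\ast\,d\omega$ has Gaussian-dominated integrand in $\xi_\ast$, so Lemma \ref{[Guo]} (applied with the admissible power $|\xi|^\gamma$ since $\gamma>-3$) bounds it by $C\langle\xi\rangle^\gamma$; multiplication by the outer weight $\langle\xi\rangle^\lambda e^{\hat\varepsilon\langle\xi\rangle^{p_1}}$ absorbs $\langle\xi\rangle^\gamma$ into the unweighted sup-norm of the remaining function, producing \eqref{Gamma-sup-1}.

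For \eqref{Gamma-sup-2} I apply the same decomposition to $\Gamma_{gain}$, but now both factors sit at the post-collisional velocities $\xi',\xi'_\ast$. Pulling out the weighted sup-norms of $g$ and $h$ with weight $\langle\cdot\rangle^{\lambda+\gamma-1}e^{\hat\varepsilon\langle\cdot\rangle^{p_1}}$ reduces matters to showing
\[
\langle\xi\rangle^\lambda e^{\hat\varepsilon\langle\xi\rangle^{p_1}}\!\int\! B(\vartheta)|\xi-\xi_\ast|^\gamma\mathcal{M}_\ast^{1/2}\langle\xi'\rangle^{1-\lambda-\gamma}\langle\xi'_\ast\rangle^{1-\lambda-\gamma}e^{-\hat\varepsilon(\langle\xi'\rangle^{p_1}+\langle\xi'_\ast\rangle^{p_1})}\,d\xi_\ast\,d\omega\lesssim 1.
\]
Energy conservation $\langle\xi'\rangle^2+\langle\xi'_\ast\rangle^2=\langle\xi\rangle^2+\langle\xi_\ast\rangle^2$ combined with the inequality $\langle\xi'\rangle^{p_1}+\langle\xi'_\ast\rangle^{p_1}\geq\frac{1}{2}(\langle\xi\rangle^{p_1}+\langle\xi_\ast\rangle^{p_1})$, which holds for $0<p_1\leq 2$, lets the outer factor $e^{\hat\varepsilon\langle\xi\rangle^{p_1}}$ be compensated provided $\hat\varepsilon$ is sufficiently small, the residual $\xi_\ast$-decay being dominated by $\mathcal{M}_\ast^{1/2}$. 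The polynomial improvement from $\lambda+\gamma$ in \eqref{Gamma-sup-1} to the sharper $\lambda+\gamma-1$ is extracted by exploiting the Grad cutoff $B\lesssim|\cos\vartheta|$ to write $B(\vartheta)|\xi-\xi_\ast|^\gamma\leq C|\xi'-\xi|\,|\xi-\xi_\ast|^{\gamma-1}$ (since $|\xi'-\xi|=|(\xi-\xi_\ast)\cdot\omega|$), followed by a Carleman-type change of variables $(\xi_\ast,\omega)\mapsto(\xi',\xi'_\ast)$ whose Jacobian absorbs the $|\xi'-\xi|$ factor; applying Lemma \ref{[Guo]} once more with the still-admissible kernel $|\xi|^{\gamma-1}$ (since $\gamma-1>-3$) yields the gain of an extra factor $\langle\xi\rangle^{-1}$ compared to the loss-side analysis, producing exactly the weight $\langle\xi\rangle^{\lambda+\gamma-1}$ on the right.

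Finally \eqref{Gamma-sup} is a corollary: applying \eqref{Gamma-sup-1} and \eqref{Gamma-sup-2} at the shifted weight $\lambda-\gamma$ and using $\nu^{-1}(\xi)\sim\langle\xi\rangle^{-\gamma}$ from \eqref{nu-gamma} to convert $|\nu^{-1}\Gamma|_{L^\infty_{\xi,\lambda}}$ into $|\Gamma|_{L^\infty_{\xi,\lambda-\gamma}}$, together with the inclusions $|g|_{L^\infty_\xi},|g|_{L^\infty_{\xi,\lambda-1}}\leq|g|_{L^\infty_{\xi,\lambda}}$ for $\lambda\geq 0$ (and analogously for $h$), merges all mixed sup-norms into the product $|g|_{L^\infty_{\xi,\lambda}}|h|_{L^\infty_{\xi,\lambda}}$. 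The main technical obstacle is the $-1$ improvement in \eqref{Gamma-sup-2}: a verbatim application of the loss-term strategy only yields weight $\lambda+\gamma$, and extracting the sharper exponent — flagged in the introduction as essential for closing the source-term estimates in the nonlinear iteration — is precisely where the combination of the Grad cutoff and the Carleman-type reparameterization is decisive.
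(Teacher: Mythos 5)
Your overall strategy matches the paper's — use Lemma \ref{[Guo]} for the loss term, energy conservation and a Grad/Caflisch-type kernel argument for the gain term, then take $\hat\varepsilon=0$ and shift $\lambda\to\lambda-\gamma$ for \eqref{Gamma-sup} — but two of the steps are not correct as written.

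For \eqref{Gamma-sup-1} your weight assignment is backwards. You bound the starred factor by its weighted sup-norm (extracting $\langle\xi_\ast\rangle^{-(\lambda+\gamma)}e^{-\hat\varepsilon\langle\xi_\ast\rangle^{p_1}}$) and say the factor at $\xi$ is absorbed into an \emph{unweighted} sup-norm. Running that through Lemma \ref{[Guo]} and multiplying by the outer weight leaves the loose factor $\langle\xi\rangle^{\lambda+\gamma}e^{\hat\varepsilon\langle\xi\rangle^{p_1}}$, which is unbounded. The correct bookkeeping, and what the paper does, is the reverse: bound the starred factor by the \emph{unweighted} norm $|g|_{L^\infty_\xi}$ (so that $\int B|\xi-\xi_\ast|^\gamma\mathcal M_\ast^{1/2}d\xi_\ast d\omega\lesssim\langle\xi\rangle^\gamma$ by Lemma \ref{[Guo]}), and let the factor at $\xi$ absorb the resulting $\langle\xi\rangle^{\lambda+\gamma}e^{\hat\varepsilon\langle\xi\rangle^{p_1}}$ into the weighted norm $|h|_{L^\infty(\langle\cdot\rangle^{\lambda+\gamma}e^{\hat\varepsilon\langle\cdot\rangle^{p_1}})}$. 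As written your derivation does not close, and the intended conclusion \eqref{Gamma-sup-1} is not obtained.

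For \eqref{Gamma-sup-2} the reduction to bounding $\int B(\vartheta)|\xi-\xi_\ast|^\gamma\mathcal M_\ast^{1/2}\langle\xi'\rangle^{1-\gamma}\langle\xi'_\ast\rangle^{1-\gamma}d\xi_\ast d\omega$ is fine (the paper's sharper inequality $\langle\xi\rangle^{p_1}\leq\langle\xi'\rangle^{p_1}+\langle\xi'_\ast\rangle^{p_1}$ avoids your smallness requirement on $\hat\varepsilon$, but that is minor). The key claim — that writing $B(\vartheta)|\xi-\xi_\ast|^\gamma\lesssim|\xi'-\xi||\xi-\xi_\ast|^{\gamma-1}$ followed by a Carleman change of variables whose ``Jacobian absorbs the $|\xi'-\xi|$ factor'' produces the extra $\langle\xi\rangle^{-1}$ — does not hold as stated. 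The Carleman Jacobian carries a factor $|\xi-\xi'|^{-2}$ (and the $\omega\to\xi'$ change on the collision sphere carries $|\xi-\xi_\ast|^{-2}|\cos\vartheta|^{-1}$), so it makes the singularity at $\xi'=\xi$ worse, not better; the $|\xi'-\xi|$ you extracted from the cutoff merely cancels one of these factors. The genuine source of the $-1$ gain is the full Grad computation of the kernel $k_2$ after integrating out the Gaussian $\mathcal M_\ast^{1/2}$, which is what produces the factor $(1+|\xi|+|\xi_\ast|)^{\gamma-1}$ in the estimate for $a(\xi,\xi_\ast,\kappa)$ recorded in Lemma \ref{basic}. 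This is exactly the content of Caflisch's Proposition 5.1, which the paper simply cites. Your sketch identifies the right ingredients but does not actually establish the $-1$ improvement; a correct self-contained proof would have to reproduce the Grad/Caflisch kernel bound.
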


\proof%
It readily follows from Lemma \ref{[Guo]} that%
\begin{eqnarray*}
&&\left \vert \left \langle \xi \right \rangle ^{\lambda }e^{\hat{\varepsilon%
}\left \langle \xi \right \rangle ^{p_{1}}}\Gamma _{loss}(g,h)\right \vert \\
&\lesssim &\frac{1}{2}\left \langle \xi \right \rangle ^{\lambda} e^{\hat{%
\varepsilon}\left \langle \xi \right \rangle ^{p_{1}}}\int_{\mathbb{R}%
^{3}\times \mathbb{S}^{2}}B(\vartheta )|\xi -\xi _{\ast }|^{\gamma }\mathcal{%
M}_{\ast }^{1/2}\left[ \left \vert g_{\ast }\right \vert \left \vert h\right
\vert +\left \vert g\right \vert \left \vert h_{\ast }\right \vert \right]
d\xi _{\ast }d\omega \\
&\lesssim &\left \vert g\right \vert _{L_{\xi }^{\infty }}\left \vert
h\right \vert _{L_{\xi }^{\infty }\left( \left \langle \xi \right \rangle
^{\lambda +\gamma }e^{\hat{\varepsilon}\left \langle \xi \right \rangle
^{p_{1}}}\right) }+\left \vert h\right \vert _{L_{\xi }^{\infty }}\left
\vert g\right \vert _{L_{\xi }^{\infty }\left( \left \langle \xi \right
\rangle ^{\lambda +\gamma }e^{\hat{\varepsilon}\left \langle \xi \right
\rangle ^{p_{1}}}\right) }\text{,}
\end{eqnarray*}%
so that (\ref{Gamma-sup-1}) holds. Since the conservation of energy implies
that $\left \langle \xi \right \rangle \lesssim \left \langle \xi ^{\prime
}\right \rangle \left \langle \xi _{\ast }^{\prime }\right \rangle $ and $%
\left \langle \xi \right \rangle ^{p_{1}}\leq \left \langle \xi ^{\prime
}\right \rangle ^{p_{1}}+\left \langle \xi _{\ast }^{\prime }\right \rangle
^{p_{1}}$, we can obtain (\ref{Gamma-sup-2}) by following the argument as in
\cite[Proposition 5.1]{[CaflischII]}. Finally, (\ref{Gamma-sup}) follows by
taking $\hat{\varepsilon}=0$ and replacing $\lambda $ by $\lambda -\gamma $
simultaneously in (\ref{Gamma-sup-1}) and (\ref{Gamma-sup-2}). The proof of
this lemma is completed.$%
\hfill%
\square $

\begin{lemma}
\begin{equation}
\left \vert \left \langle f,\Gamma (g,h)\right \rangle _{\xi }\right \vert
\lesssim \left \vert f\right \vert _{L_{\sigma }^{2}}\left( \left \vert
g\right \vert _{L_{\sigma }^{2}}\left \vert h\right \vert _{L_{\xi }^{\infty
}}+\left \vert g\right \vert _{L_{\xi }^{\infty }}\left \vert h\right \vert
_{L_{\sigma }^{2}}\right) \text{,}  \label{Gamma-product}
\end{equation}%
\begin{equation}
\left \vert \nu ^{-1}\Gamma (g,h)\right \vert _{L_{\xi }^{2}}\lesssim \left
\vert g\right \vert _{L_{\xi }^{\infty }}\left \vert h\right \vert _{L_{\xi
}^{2}}+\left \vert g\right \vert _{L_{\xi }^{2}}\left \vert h\right \vert
_{L_{\xi }^{\infty }}\text{.}  \label{Gamma-L2-1}
\end{equation}
\end{lemma}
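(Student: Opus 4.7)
The plan is to prove both estimates by splitting $\Gamma = \Gamma_{gain} - \Gamma_{loss}$ and treating each part separately, using Lemma \ref{[Guo]} for the loss contribution and combining Cauchy--Schwarz with the pre/post-collision change of variables for the gain.

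For the loss term, observe that $\Gamma_{loss}(g,h)(\xi) = \tfrac{1}{2}[g(\xi)\mathcal{A}h(\xi) + h(\xi)\mathcal{A}g(\xi)]$, where $\mathcal{A}\phi(\xi) := \int B(\vartheta)|\xi-\xi_*|^\gamma \mathcal{M}_*^{1/2} \phi(\xi_*)\, d\xi_* d\omega$. Pulling $|\phi|_{L^\infty_\xi}$ outside and applying Lemma \ref{[Guo]} (with $l = \mathcal{M}^{1/2}$ and $\varsigma = \gamma > -3$) yields $|\mathcal{A}\phi(\xi)| \lesssim |\phi|_{L^\infty_\xi}\langle\xi\rangle^\gamma$. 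For (\ref{Gamma-L2-1}), the bound $\nu^{-1}(\xi)\langle\xi\rangle^\gamma \lesssim 1$ from (\ref{nu-gamma}) immediately converts $|\nu^{-1}\Gamma_{loss}(g,h)|_{L^2_\xi}$ into the desired product of $L^\infty$ and $L^2$ norms. For (\ref{Gamma-product}), Cauchy--Schwarz in $\xi$ rewrites $\int|f||h|\langle\xi\rangle^\gamma d\xi = \int(\langle\xi\rangle^{\gamma/2}|f|)(\langle\xi\rangle^{\gamma/2}|h|)d\xi \leq |f|_{L^2_\sigma}|h|_{L^2_\sigma}$, and symmetrically the other summand gives $|f|_{L^2_\sigma}|g|_{L^2_\sigma}$.

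For the gain term, consider
$$\langle f, \Gamma_{gain}^{(g'_*h')}(g,h)\rangle = \tfrac{1}{2}\int B|\xi-\xi_*|^\gamma \mathcal{M}_*^{1/2} f(\xi) g(\xi'_*) h(\xi')\, d\xi\, d\xi_*\, d\omega.$$
Pulling $|h|_{L^\infty_\xi}$ from $h(\xi')$, then performing the pre/post-collision change of variables $(\xi, \xi_*) \leftrightarrow (\xi', \xi_*')$ (which has unit Jacobian and leaves $|\xi-\xi_*|$ and $B(\vartheta)$ invariant), the integral is transformed to involve $f(\xi')$ and $g(\xi_*)$ with Maxwellian weight $\mathcal{M}(\xi_*')^{1/2}$. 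Applying Cauchy--Schwarz against $|f'|$ and $|g_*|$ on this measure, then changing variables back in the $|f'|^2$ piece recovers $|f|^2_{L^2_\sigma}$ (via Lemma \ref{[Guo]} again), while the $|g_*|^2$ piece -- after exploiting the energy-conservation identity $\mathcal{M}^{1/2}\mathcal{M}_*^{1/2} = \mathcal{M}'^{1/2}\mathcal{M}_*'^{1/2}$ and the $\xi \leftrightarrow \xi_*$ symmetry of the kernel -- reduces to $|g|^2_{L^2_\sigma}$. The symmetric summand $g'h'_*$ is treated identically with the roles of $g$ and $h$ swapped. For (\ref{Gamma-L2-1}), an entirely parallel computation is carried out: starting from $|\nu^{-1}\Gamma_{gain}|^2_{L^2_\xi}$, the weight $\nu^{-2}(\xi)\langle\xi\rangle^\gamma \lesssim \langle\xi\rangle^{-\gamma}$ produced by Cauchy--Schwarz absorbs the $\langle\xi\rangle^\gamma$ gained from Lemma \ref{[Guo]}, leaving the unweighted norm $|h|_{L^2_\xi}$ (or $|g|_{L^2_\xi}$) rather than $|h|_{L^2_\sigma}$.

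The main technical obstacle is controlling the Maxwellian weight $\mathcal{M}(\xi_*')^{1/2}$ that appears after the collisional change of variables: the naive bound $\mathcal{M}_*'^{1/2} \leq 1$ is insufficient since $\int B|\xi-\xi_*|^\gamma d\xi_* d\omega$ diverges for $\gamma < 0$. The key is to express $\mathcal{M}_*'^{1/2} = \mathcal{M}^{1/2}\mathcal{M}_*^{1/2}/\mathcal{M}'^{1/2}$ and, after coupling this with the Cauchy--Schwarz splitting, reduce the remaining integral back to one of the form $\int B|\xi-\xi_*|^\gamma \mathcal{M}_*^{1/2} d\xi_* d\omega \lesssim \langle\xi\rangle^\gamma$ handled by Lemma \ref{[Guo]}.
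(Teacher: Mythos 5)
Your treatment of $\Gamma_{loss}$ is correct and matches the paper's: factor out the $L^\infty$-norm of one argument, apply Lemma \ref{[Guo]} to get the pointwise bound $|\Gamma_{loss}(g,h)|\lesssim \nu(\xi)(|g|\,|h|_{L^\infty_\xi}+|h|\,|g|_{L^\infty_\xi})$, and conclude. The gap is in the gain term.

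After you pull out $|h|_{L^\infty_\xi}$ and perform the collision change of variables, you must bound
\[
\int B(\vartheta)\,|u-u_*|^{\gamma}\,\mathcal{M}(u_*')^{1/2}\,|f(u')|\,|g(u_*)|\,du\,du_*\,d\omega,
\]
and your Cauchy--Schwarz split requires, for the $g$-factor,
\[
\int B(\vartheta)\,|u-u_*|^{\gamma}\,\mathcal{M}(u_*')^{1/2}\,du\,d\omega \;\lesssim\; \langle u_*\rangle^{\gamma}.
\]
This integral is in general divergent for $-1\le\gamma<0$. Write $w=u-u_*$, $w_\parallel=w\cdot\omega$, $w_\perp=w-w_\parallel\omega$. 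Since $u_*'=u_*+w_\parallel\omega$, the factor $\mathcal{M}(u_*')^{1/2}$ constrains only $w_\parallel$ and provides no decay whatsoever in the two-dimensional transverse variable $w_\perp$. With the cutoff $B(\vartheta)\lesssim|\cos\vartheta|=|w_\parallel|/|w|$, the $w_\perp$-integral reduces to $\int_{\mathbb{R}^2}(w_\parallel^2+|w_\perp|^2)^{(\gamma-1)/2}dw_\perp$, which converges at infinity only when $\gamma<-1$. The energy-conservation identity $\mathcal{M}(u_*')^{1/2}=\mathcal{M}(u)^{1/2}\mathcal{M}(u_*)^{1/2}\mathcal{M}(u')^{-1/2}$ that you invoke does not save this: it re-expresses the Maxwellian but introduces a growing factor $\mathcal{M}(u')^{-1/2}$, leaving the transverse direction still uncontrolled. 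In short, the "main technical obstacle" you identify is real, but the proposed remedy does not resolve it.

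The paper (following \cite[Lemma 3]{[Strain-Guo]}) avoids this by a different ordering of the operations. It first applies Cauchy--Schwarz in the \emph{original} variables, where $\int B|\xi-\xi_*|^\gamma\mathcal{M}(\xi_*)^{1/2}d\xi_*d\omega\lesssim\nu(\xi)$ does converge because $\mathcal{M}(\xi_*)^{1/2}$ decays in all three directions of $\xi_*$; this yields the pointwise bound \eqref{gain^2}. It then integrates in $\xi$ and decomposes the $(\xi,\xi_*)$-space into $I_1$, $I_2$, $I_3$. In $I_1$ ($|\xi_*|\ge|\xi|/2$) the Maxwellian is re-split to give a double Gaussian $e^{-(|\xi|^2+|\xi_*|^2)/32}$, which by energy conservation survives the change of variables; in $I_2$ everything is bounded; in $I_3$ the comparison $|\xi-\xi_*|\gtrsim\langle\xi\rangle$ and $\langle\xi'\rangle,\langle\xi_*'\rangle\lesssim\langle\xi\rangle$ balance the weights. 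This region decomposition is essential for the full soft range $-2<\gamma<0$, and your sketch omits it; without it, the gain estimate does not close.
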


\proof%
The idea of the proof comes from \cite[Lemma 3]{[Strain-Guo]} and we give
the complete proof in the Appendix section.$%
\hfill%
\square $

\begin{lemma}
\label{lemma: inf estimate of gamma_w} Let $\lambda \geq 0$. Then%
\begin{eqnarray}
&&\left \vert \nu ^{-1}\Gamma _{w_{1}}(g,h)\right \vert _{L_{\xi ,\lambda
}^{\infty }}\lesssim |g|_{L_{\xi ,\lambda }^{\infty }}|\left \langle \xi
\right \rangle ^{p}h|_{L_{\xi ,\lambda }^{\infty }}\text{,}%
\vspace {3mm}
\label{Gamma-w1-sup} \\
&&\left \vert \nu ^{-1}\Gamma _{w_{1}}(g,h)\right \vert _{L_{\xi
}^{2}}\lesssim \left \vert g\right \vert _{L_{\xi }^{\infty }}\left \vert
\left \langle \xi \right \rangle ^{p}h\right \vert _{L_{\xi }^{2}}+\left
\vert g\right \vert _{L_{\xi }^{2}}\left \vert \left \langle \xi \right
\rangle ^{p}h\right \vert _{L_{\xi }^{\infty }}\text{,}%
\vspace {3mm}
\label{Gamma-w1-L2} \\
&&\left \vert \left \langle f,\Gamma _{w_{1}}(g,h)\right \rangle _{\xi
}\right \vert \lesssim \left \vert f\right \vert _{L_{\sigma }^{2}}\left(
\left \vert g\right \vert _{L_{\sigma }^{2}}\left \vert \left \langle \xi
\right \rangle ^{p}h\right \vert _{L_{\xi }^{\infty }}+\left \vert g\right
\vert _{L_{\xi }^{\infty }}\left \vert \left \langle \xi \right \rangle
^{p}h\right \vert _{L_{\sigma }^{2}}\right) \text{,}
\label{Gamma-w1-w1-product}
\end{eqnarray}%
where $p\geq 1$;%
\begin{eqnarray}
&&\left \vert \nu ^{-1}\Gamma _{w_{2}}(g,h)\right \vert _{L_{\xi ,\lambda
}^{\infty }}\lesssim |g|_{L_{\xi ,\lambda }^{\infty }}|\left \langle \xi
\right \rangle ^{p}e^{\epsilon c_{p}\left \langle \xi \right \rangle
^{p}}h|_{L_{\xi ,\lambda }^{\infty }}\, \text{,}%
\vspace {3mm}
\label{Gamma-w2-sup} \\
&&\left \vert \nu ^{-1}\Gamma _{w_{2}}(g,h)\right \vert _{L_{\xi
}^{2}}\lesssim \left \vert g\right \vert _{L_{\xi }^{\infty }}\left \vert
\left \langle \xi \right \rangle ^{p}e^{\epsilon c_{p}\left \langle \xi
\right \rangle ^{p}}h\right \vert _{L_{\xi }^{2}}+\left \vert g\right \vert
_{L_{\xi }^{2}}\left \vert \left \langle \xi \right \rangle ^{p}e^{\epsilon
c_{p}\left \langle \xi \right \rangle ^{p}}h\right \vert _{L_{\xi }^{\infty
}}\text{,}%
\vspace {3mm}
\label{Gamma-w2-L2} \\
&&\left \vert \left \langle f,\Gamma _{w_{2}}(g,h)\right \rangle _{\xi
}\right \vert \lesssim \left \vert f\right \vert _{L_{\sigma }^{2}}\left(
\left \vert g\right \vert _{L_{\sigma }^{2}}\left \vert \left \langle \xi
\right \rangle ^{p}e^{\epsilon c_{p}\left \langle \xi \right \rangle
^{p}}h\right \vert _{L_{\xi }^{\infty }}+\left \vert g\right \vert _{L_{\xi
}^{\infty }}\left \vert \left \langle \xi \right \rangle ^{p}e^{\epsilon
c_{p}\left \langle \xi \right \rangle ^{p}}h\right \vert _{L_{\sigma
}^{2}}\right) \text{,}  \label{Gamma-w2-product}
\end{eqnarray}%
where $0<p\leq 2$ and the constant $c_{p}>0$ is the same as in Lemma \ref%
{ratio};
\begin{eqnarray}
\left \vert \nu ^{-1}\Gamma _{w_{3}}(g,h)\right \vert _{L_{\xi ,\lambda
}^{\infty }} &\lesssim &\left \vert g\right \vert _{L_{\xi ,\lambda
}^{\infty }}\left \vert e^{\hat{\varepsilon}\left \langle \xi \right \rangle
^{p_{1}}}h\right \vert _{L_{\xi ,\lambda }^{\infty }}\text{,}%
\vspace {3mm}
\label{Gamma-w3-sup} \\
\left \vert \nu ^{-1}\Gamma _{w_{3}}(g,h)\right \vert _{L_{\xi }^{2}}
&\lesssim &\left \vert g\right \vert _{L_{\xi }^{\infty }}\left \vert e^{%
\hat{\varepsilon}\left \langle \xi \right \rangle ^{p_{1}}}h\right \vert
_{L_{\xi }^{2}}+\left \vert g\right \vert _{L_{\xi }^{2}}\left \vert e^{\hat{%
\varepsilon}\left \langle \xi \right \rangle ^{p_{1}}}h\right \vert _{L_{\xi
}^{\infty }}\text{,}  \label{Gamma-w3-L2}
\end{eqnarray}%
where $0<p_{1}\leq 2$.
\end{lemma}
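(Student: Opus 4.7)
The plan is to decompose
\[
\Gamma_{w_i}(g,h) \;=\; w_i(\xi)\,\Gamma_{gain}(w_i^{-1}g,h) \;-\; w_i(\xi)\,\Gamma_{loss}(w_i^{-1}g,h),
\]
and, inside each collision integral, rewrite the weight as one of the ratios
\[
\frac{w_i(\xi)}{w_i(\xi_\ast)},\qquad \frac{w_i(\xi)}{w_i(\xi')},\qquad \frac{w_i(\xi)}{w_i(\xi'_\ast)}
\]
(the loss term with $g(\xi)h(\xi_\ast)$ needs no ratio since $w_iw_i^{-1}(\xi)\equiv 1$). Each ratio is then split as $1+(\mathrm{ratio}-1)$. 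The ``$1$'' contribution is just $\Gamma(g,h)$ and is immediately controlled by \eqref{Gamma-sup}, \eqref{Gamma-product}, \eqref{Gamma-L2-1}, producing a right-hand side of the requested form (with trivial multiplicative weight on $h$).

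For the remainder $(\mathrm{ratio}-1)$ pieces, I would invoke Lemma \ref{ratio} and Remark \ref{remark-ration}. Take the $w_1$ case first. The ratios are bounded, up to a constant, by $(1+\bigl||\xi|^2-|\xi_\sharp|^2\bigr|)^{p/2}\lesssim \langle\xi\rangle^p+\langle\xi_\ast\rangle^p+\langle\xi'\rangle^p+\langle\xi'_\ast\rangle^p$, where $\xi_\sharp\in\{\xi_\ast,\xi',\xi'_\ast\}$. The $\langle\xi_\ast\rangle^p$ contribution is swallowed by the Gaussian $\mathcal{M}_\ast^{1/2}$ already present in $\Gamma$. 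The post-collisional factors $\langle\xi'\rangle^p,\langle\xi'_\ast\rangle^p$ are absorbed via the conservation-of-energy inequality $\langle\xi\rangle\leq\langle\xi'\rangle\langle\xi'_\ast\rangle$: multiplying by $\langle\xi\rangle^\lambda$ (the outer weight for $L^\infty_{\xi,\lambda}$), distributing as $\langle\xi\rangle^\lambda\leq\langle\xi'\rangle^\lambda\langle\xi'_\ast\rangle^\lambda$, and using the pointwise bounds
\[
|g'_\ast|\leq \langle\xi'_\ast\rangle^{-\lambda}|g|_{L^\infty_{\xi,\lambda}},\qquad
|h'|\leq \langle\xi'\rangle^{-\lambda-p}|\langle\xi\rangle^p h|_{L^\infty_{\xi,\lambda}}
\]
(and the symmetric identities with $g'$ and $h'_\ast$), the spurious $\langle\xi'\rangle^p$ factor cancels. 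What remains inside the integral is at most $C(1+\langle\xi_\ast\rangle^p)\mathcal{M}_\ast^{1/2}|\xi-\xi_\ast|^\gamma B(\vartheta)$, whose $(\xi_\ast,\omega)$-integral is bounded by $\nu(\xi)$ (by Lemma \ref{basic}). Dividing by $\nu$ yields \eqref{Gamma-w1-sup}. The $L^2$ estimate \eqref{Gamma-w1-L2} and the duality estimate \eqref{Gamma-w1-w1-product} follow by the same redistribution: after the ratio-splitting step, they reduce to \eqref{Gamma-L2-1} and \eqref{Gamma-product} applied to the pair $(g,\langle\xi\rangle^p h)$.

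For $w_2$ and $w_3$ the procedure is identical; only the ratio bound changes. Lemma \ref{ratio} now supplies, respectively, $\epsilon c_p\bigl||\xi|^2-|\xi_\sharp|^2\bigr|^{p/2}\exp(\epsilon c_p\bigl||\xi|^2-|\xi_\sharp|^2\bigr|^{p/2})$ and $\exp(\hat\varepsilon\bigl||\xi|^2-|\xi_\sharp|^2\bigr|^{p_1/2})$. Using $0<p\leq 2$ (respectively $0<p_1\leq 2$) together with $|\xi|^2+|\xi_\ast|^2=|\xi'|^2+|\xi'_\ast|^2$ gives $\bigl||\xi|^2-|\xi_\sharp|^2\bigr|^{p/2}\lesssim \langle\xi\rangle^p+\langle\xi_\ast\rangle^p$ and similar forms in which the post-collisional weights split additively; the $\langle\xi_\ast\rangle^p$ terms are again absorbed by $\mathcal{M}_\ast^{1/2}$. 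The surviving factor on the $h$ side is precisely $\langle\xi\rangle^p e^{\epsilon c_p\langle\xi\rangle^p}$ for $w_2$ and $e^{\hat\varepsilon\langle\xi\rangle^{p_1}}$ for $w_3$, matching the right-hand sides of \eqref{Gamma-w2-sup}--\eqref{Gamma-w2-product} and \eqref{Gamma-w3-sup}--\eqref{Gamma-w3-L2}.

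The main technical obstacle I anticipate is the $w_2$ case: the exponential factor $\exp(\epsilon c_p\bigl||\xi|^2-|\xi_\sharp|^2\bigr|^{p/2})$ must be controlled \emph{without} consuming the gain in $\mathcal{M}_\ast^{1/2}$, which forces $\epsilon$ to be small enough that the effective exponent $\epsilon c_p\langle\xi_\ast\rangle^p$ is strictly dominated by the Maxwellian. Once $\epsilon$ is fixed so that this absorption is possible, the conservation-of-energy splitting $\langle\xi\rangle^p\leq\langle\xi'\rangle^p+\langle\xi'_\ast\rangle^p$ (valid for $0<p\leq 2$) produces the precise exponential weight $e^{\epsilon c_p\langle\xi\rangle^p}$ on the right. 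The $w_1$ and $w_3$ remainders are milder and handled by the same scheme with purely algebraic manipulations.
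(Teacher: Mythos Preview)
Your proposal is correct and follows essentially the same route as the paper: write $\Gamma_{w_i}(g,h)-\Gamma(g,h)$ as a collision integral with the three ratio differences $\tfrac{w_i}{w_i'}-1$, $\tfrac{w_i}{w_{i\ast}'}-1$, $\tfrac{w_i}{w_{i\ast}}-1$, bound these via Lemma~\ref{ratio} and Remark~\ref{remark-ration}, absorb the $\langle\xi_\ast\rangle$-factors into $\mathcal{M}_\ast^{1/2}$, use energy conservation to place the residual weight on $h$, and then appeal to \eqref{Gamma-sup}--\eqref{Gamma-L2-1}. The paper's proof is stated in exactly these terms (it is in fact more terse than your write-up), so there is nothing to add.
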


\begin{proof}
Direct calculation shows that for $i=1$, $2$, $3$,
\begin{align*}
& \quad \Gamma _{w_{i}}(g,h)-\Gamma (g,h) \\
& =\int_{{\mathbb{R}}^{3}}\int_{S^{2}}B(\theta )|\xi -\xi _{\ast }|^{\gamma }%
\sqrt{\mathcal{M}_{\ast }}\left[ g^{\prime }h_{\ast }^{\prime }\left( \frac{%
w_{i}}{w_{i}^{\prime }}-1\right) +h^{\prime }g_{\ast }^{\prime }\left( \frac{%
w_{i}}{w_{i\ast }^{\prime }}-1\right) -g_{\ast }h\left( \frac{w_{i}}{%
w_{i\ast }}-1\right) \right] d\omega d\xi _{\ast }\text{.}
\end{align*}%
On the other hand, the conservation of energy implies that $\left \langle
\xi \right \rangle ^{\beta }\lesssim \left \langle \xi _{\ast }^{\prime
}\right \rangle ^{\beta }\left \langle \xi ^{\prime }\right \rangle ^{\beta
} $ for $\beta \geq 0$. Using these facts together with (\ref{Gamma-sup})--(%
\ref{Gamma-L2-1}), Lemma \ref{ratio} and Remark \ref{remark-ration}, we get
the desired estimates.
\end{proof}

\section{Weighted linearized Boltzmann equation with a source term}

\label{WR}

In this section, we are concerned with the following inhomogeneous problem:
\begin{equation}
\left \{
\begin{array}{l}
\displaystyle \pa_{t}u+\xi \cdot \nabla _{x}u-\left[ \partial
_{t}w_{i}(t,x,\xi )+\xi \cdot \nabla _{x}w_{i}(t,x,\xi )\right]
w_{i}^{-1}u=L_{w_{i}}u+\Gamma _{w_{i}}(g_{i},h_{i})\, \text{,} \\[4mm]
\displaystyle u(0,x,\xi )=\eta f_{w_{i}0}\text{,}%
\end{array}%
\right.
\end{equation}%
for $i=1$, $2$, where $g_{i}$ and $h_{i}$ are prescribed. The proofs are
almost the same, so that we focus on the case in which the weight function
is $w_{1}$\ and just state the result for the weight function $w_{2}$\ (see
Theorem \ref{theorem-linear-exp}). Now, let $p\geq 1$. We are concerned with
the following inhomogeneous equation:
\begin{equation}
\left \{
\begin{array}{l}
\displaystyle \partial _{t}u+\xi \cdot \nabla _{x}u+\tilde{\nu}%
u=K_{w_{1}}u+\Gamma _{w_{1}}(g_{1},h_{1})\text{,} \\[4mm]
\displaystyle u(0,x,\xi )=\eta f_{w_{1}0}\text{.}%
\end{array}%
\right.  \label{inhom}
\end{equation}%
After choosing $\de>0$ and $\de M$ small enough, we have
\begin{equation*}
\tilde{\nu}(t,x,\xi )=\nu (\xi )-\left[ \partial _{t}w_{1}(t,x,\xi )+\xi
\cdot \nabla _{x}w_{1}(t,x,\xi )\right] w_{1}^{-1}\geq \frac{\nu (\xi )}{2}%
\text{,}
\end{equation*}%
due to (\ref{der-w1-first}). Let $T>0$ and $\beta >3/2$. Assume that $%
f_{w_{1}0}\in L_{\xi ,\beta }^{\infty }L_{x}^{2}\cap L_{\xi ,\beta }^{\infty
}L_{x}^{\infty }$. Also assume the source term $\Gamma _{w_{1}}(g_{1},h_{1})$
satisfies
\begin{equation}
C_{g_{1},T}^{\infty }=\sup_{0\leq t\leq T}(1+t)^{-A}\left \Vert g_{1}\right
\Vert _{L_{\xi ,\beta }^{\infty }L_{x}^{\infty }}<\infty \, \text{,}\quad
C_{g_{1},T}^{2}=\sup_{0\leq t\leq T}\left \Vert g_{1}\right \Vert _{L_{\xi
,\beta }^{\infty }L_{x}^{2}}<\infty \text{,}  \label{g1}
\end{equation}%
for some constant $A\geq 1/2$, and
\begin{equation}
C_{h_{1},T}^{\infty }=\sup_{0\leq t\leq T}(1+t)^{\frac{3}{2}}\left \Vert
\left \langle \xi \right \rangle ^{p}h_{1}\right \Vert _{L_{\xi ,\beta
}^{\infty }L_{x}^{\infty }}<\infty \text{.}  \label{g2}
\end{equation}

Here we mention that throughout this section we abbreviate \textquotedblleft
$\leq C$\textquotedblright \ to \textquotedblleft $\lesssim $%
\textquotedblright \ in which all the constants $C$\ are independent of $T$.

\begin{theorem}
\label{theorem-linear} Let $\beta >3/2$ and $0<\varsigma \ll 1$. Assume that
$f_{w_{1}0}\in L_{\xi ,\beta }^{\infty }L_{x}^{2}\cap L_{\xi ,\beta
}^{\infty }L_{x}^{\infty }$ and that $g_{1},h_{1}$ satisfy $(\ref{g1})$ and $%
(\ref{g2})$, respectively. Then the solution $u$ to the equation $(\ref%
{inhom})$ satisfies
\begin{align}
& \left \Vert u\right \Vert _{L_{\xi ,\beta }^{\infty }L_{x}^{\infty }}
\label{inhom-estimate} \\
& \lesssim \eta \left \Vert f_{w_{1}0}\right \Vert _{L_{\xi ,\beta }^{\infty
}L_{x}^{\infty }}+(1+t)^{-3/2+A+\varsigma }C_{g_{1,T}}^{\infty
}C_{h_{1,T}}^{\infty }  \notag \\
& \quad +\left[ \left( 1+\delta M\right) \left( \eta \left \Vert
f_{w_{1}0}\right \Vert _{L_{\xi ,\beta }^{\infty
}L_{x}^{2}}+C_{g_{1,T}}^{2}C_{h_{1,T}}^{\infty }\right) \right] \cdot \left
\{
\begin{array}{ll}
(1+t)^{2}\text{,}%
\vspace {3mm}
& \text{if }-1<\gamma <0\text{,} \\
(1+t)^{2+\varsigma }\text{,}%
\vspace {3mm}
& \text{if }\gamma =-1\text{,} \\
(1+t)^{7+\frac{5}{\gamma }}\text{,} & \text{if }-2<\gamma <-1\text{,}%
\end{array}%
\right.  \notag
\end{align}%
$0\leq t\leq T$.
\end{theorem}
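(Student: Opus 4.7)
The plan is to decompose $u = u_W + u_R$ via a Picard-type iteration based on the time-dependent damped transport operator $\mathbb{S}_{w_{1}}^{t;s}$ associated with $\partial_t + \xi \cdot \nabla_x + \tilde{\nu}$. Starting from Duhamel's formula
\[
u(t) = \mathbb{S}_{w_{1}}^{t;0}(\eta f_{w_{1}0}) + \int_0^t \mathbb{S}_{w_{1}}^{t;s}\left[K_{w_{1}}u(s) + \Gamma_{w_{1}}(g_{1},h_{1})(s)\right] ds,
\]
iterate the $K_{w_{1}}u$ term $N$ times for some fixed large $N$, so that the wave part $u_W$ consists of the first $N+1$ Picard iterates together with the $N$ layers of $\Gamma_{w_{1}}$ source terms, while the remainder $u_R$ solves an equation whose forcing is $\mathbb{S}_{w_{1}}^{t;s}\,K_{w_{1}}^N \cdot$ applied to either $u$ itself or to $\Gamma_{w_{1}}(g_{1},h_{1})$. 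The point of taking $N$ large is that, by \eqref{Kw7}–\eqref{Kw8}, each $K_{w_{1}}$ provides $H^{1}_{\xi}$ regularization plus decay in $\xi$, so after enough iterations the remainder source is smooth enough in $\xi$ for energy methods and spatial regularization.

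For the wave part I would use the pointwise boundedness of $\mathbb{S}_{w_{1}}^{t;s}$ on $L^{\infty}_{\xi,\beta}L^{\infty}_{x}$ (a consequence of $\tilde{\nu} \geq \nu/2$), combined at each level with the $L^{\infty}$-to-$L^{\infty}$ weight-gaining property of $K_{w_{1}}$ from \eqref{Kw6}. The initial-data layer produces the $\eta \|f_{w_{1}0}\|_{L^{\infty}_{\xi,\beta}L^{\infty}_{x}}$ term. The source layers use \eqref{Gamma-w1-sup}, the assumed decay \eqref{g1}–\eqref{g2}, and the dominating factor $\tilde{\nu}$ in $\mathbb{S}_{w_{1}}^{t;s}$ to produce the $(1+t)^{-3/2+A+\varsigma}C_{g_{1,T}}^{\infty}C_{h_{1,T}}^{\infty}$ contribution; the small $\varsigma$ arises because one factor $(1+s)^{-3/2}(1+t-s)^{A}$ barely fails to integrate cleanly and one must interpolate.

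For the remainder $u_R$ I would first carry out the weighted energy estimate of Lemma \ref{weighted-linear} up to order $2$ in $x$, using \eqref{Gamma-w1-w1-product} and \eqref{Gamma-w1-L2} to control the $\Gamma_{w_{1}}$ contribution by $C_{g_{1,T}}^{2}C_{h_{1,T}}^{\infty}$ times a time factor. This yields control of $u_R$ in $H^{2}_{x}L^{2}_{\xi}$ after an application of the regularization estimate alluded to in Section 1.4, which transports $\xi$-regularity of the $K_{w_{1}}$-smoothed remainder source into $x$-regularity via the composition $\mathbb{S}_{w_{1}}^{t;s}K_{w_{1}}$. Then Sobolev embedding $H^{2}_{x} \hookrightarrow L^{\infty}_{x}$ gives $L^{\infty}_{x}L^{2}_{\xi}$ control of $u_R$. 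Finally I bootstrap from $L^{2}_{\xi}$ to $L^{\infty}_{\xi,\beta}$: when $-3/2 < \gamma < 0$ this is one application of \eqref{eq: bootstrap 1}; when $-2 < \gamma \leq -3/2$ one first passes through $L^{4}_{\xi}$ using \eqref{eq: bootstrap 3} and then to $L^{\infty}_{\xi,\beta}$ via \eqref{eq: bootstrap 2} (the two-step scheme compelled by Riesz–Thorin). The $\delta M$ factor appears naturally from the $\mathrm{P}_{0}$ boundary terms on $H^{D}_{+}$ in Lemma \ref{weighted-linear}.

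The main obstacle, and where the three different growth rates $(1+t)^{2}$, $(1+t)^{2+\varsigma}$, $(1+t)^{7+5/\gamma}$ originate, is tracking the time integration of the $L^{2}$-to-$L^{\infty}$ bootstrap in the soft-potential regime. The issue is that the dissipation on $\mathrm{P}_{1}$ in Lemma \ref{weighted-linear} is only $\langle\xi\rangle^{\gamma}$, so closing the energy estimate requires trading velocity weight for time decay; this trade-off is cheap for $-1<\gamma<0$ (giving $(1+t)^{2}$ from two time integrations against a $(1+s)^{-3/2}$ source), logarithmically costly at $\gamma=-1$ (absorbed into $\varsigma$), and expensive for $-2<\gamma<-1$ (requiring the extra $L^{4}$ bootstrap layer and an interpolation whose loss compounds to $(1+t)^{7+5/\gamma}$). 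Handling the $H^{D}_{-}$ contribution, where the weighted energy estimate provides no sign, requires using the pointwise wave-part control so that the $H^{D}_{-}$ integral can be re-absorbed through the initial-data-propagated part rather than by dissipation.
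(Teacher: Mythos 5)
Your proposal captures the structural skeleton of the paper's proof: a Picard-type decomposition via $\mathbb{S}_{w_1}(t;s)$ into wave and remainder parts, pointwise estimates of the wave iterates through \eqref{Kw6} and \eqref{S2}, an $H_x^2$ regularization estimate transporting $\xi$-regularity to $x$-regularity via the composition $\mathbb{S}_{w_1}(t;s)K_{w_1}$ and the commutator $\mathcal{D}_t = t\nabla_x + \nabla_\xi$, Sobolev control, and the two-regime $L^2_\xi\to L^4_\xi\to L^\infty_{\xi,\beta}$ bootstrap dictated by Riesz--Thorin when $-2<\gamma\le -3/2$. The wave-part analysis and the final bootstrap are essentially as the paper does them.

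There is, however, a genuine gap in your handling of the remainder energy estimate. You propose that the loss of coercivity on $H^D_-$ is "re-absorbed through the initial-data-propagated part," but this cannot work: the remainder $\mathcal{R}_{w_1}^{(6)}$ has zero initial data by construction, and the wave part cannot absorb an indefinite-sign integral appearing in the energy identity for a different function. The paper instead closes this by a re-weighting trick. The weighted energy estimate of Lemma~\ref{weighted-linear} produces the bad term $\sum_{|\alpha|\le 2}\int_{H_0^D\cup H_-^D}|\mathrm{P}_0\partial_x^\alpha\mathcal{R}_{w_1}^{(6)}|^2$, and one bounds this (using the rapid decay of the macroscopic basis against the polynomial weight $w_1$) by $\|w_1^{-1}\mathcal{R}_{w_1}^{(6)}\|^2_{L^2_\xi H^2_x}$. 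The crucial observation is then that $z := w_1^{-1}\mathcal{R}_{w_1}^{(6)}$ satisfies the \emph{unweighted} equation $\partial_t z + \xi\cdot\nabla_x z = Lz + K(w_1^{-1}u^{(6)})$, for which the full coercivity \eqref{coercivity} of $L$ holds, giving $\frac{d}{dt}\|z\|_{L^2_\xi H_x^2}\lesssim \|u^{(6)}\|_{L^2_\xi H_x^2}$ and hence polynomial-in-time control of $\|z\|$, which in turn closes the Gronwall inequality for $\|\mathcal{R}_{w_1}^{(6)}\|_{L^2_\xi H^2_x}$. Without this auxiliary equation for $z$, the weighted energy estimate alone is not closable in the region $H^D_-$, so you should identify and supply that step.
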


To prove this theorem, we design a Picard-type iteration, treating $%
K_{w_{1}}u$ as source term. Specifically, the zeroth order approximation $%
u^{(0)}$ is defined as
\begin{equation*}
\left \{
\begin{array}{l}
\partial _{t}u^{(0)}+\xi \cdot \nabla _{x}u^{(0)}+\tilde{\nu}u^{(0)}=\Gamma
_{w_{1}}(g_{1},h_{1})\text{,}%
\vspace {3mm}
\\
u^{(0)}(0,x,\xi )=\eta f_{w_{1}0}\text{,}%
\end{array}%
\right.
\end{equation*}%
and the difference $u-u^{(0)}$ satisfies

\begin{equation*}
\left \{
\begin{array}{l}
\partial _{t}(u-u^{(0)})+\xi \cdot \nabla _{x}(u-u^{(0)})+\tilde{\nu}%
(u-u^{(0)})=K_{w_{1}}(u-u^{(0)})+K_{w_{1}}u^{(0)}\text{,}%
\vspace {3mm}
\\
(u-u^{(0)})(0,x,\xi )=0\text{.}%
\end{array}%
\right.
\end{equation*}%
We can define the $i^{\text{th}}$ order approximation $u^{(i)}$, $i\geq 1$,
inductively as
\begin{equation*}
\left \{
\begin{array}{l}
\partial _{t}u^{(i)}+\xi \cdot \nabla _{x}u^{(i)}+\tilde{\nu}%
u^{(i)}=K_{w_{1}}u^{(i-1)}\text{,}%
\vspace {3mm}
\\
u^{(i)}(0,x,\xi )=0\text{.}%
\end{array}%
\right.
\end{equation*}%
Now, the wave part and the remainder part can be defined as follows:
\begin{equation}
W_{w_{1}}^{(m)}=\sum_{i=0}^{m}u^{(i)}\text{,}\quad \mathcal{R}%
_{w_{1}}^{(m)}=u-W^{(m)}\text{,}
\end{equation}%
$\mathcal{R}_{w_{1}}^{(m)}$ solving the equation
\begin{equation}
\left \{
\begin{array}{l}
\partial _{t}\mathcal{R}_{w_{1}}^{(m)}+\xi \cdot \nabla _{x}\mathcal{R}%
_{w_{1}}^{(m)}+\tilde{\nu}\mathcal{R}_{w_{1}}^{(m)}=K_{w_{1}}\mathcal{R}%
_{w_{1}}^{(m)}+K_{w_{1}}u^{(m)}\text{,}%
\vspace {3mm}
\\[4mm]
\mathcal{R}_{w_{1}}^{(m)}(0,x,\xi )=0\text{.}%
\end{array}%
\right.  \label{remainder-eq}
\end{equation}

In the sequel, we shall estimate the wave part and remainder part in order.

\subsection{Estimates on the wave part}

Let us consider the time-related damped transport equation
\begin{equation}
\left \{
\begin{array}{l}
\displaystyle \partial _{t}h+\xi \cdot \nabla _{x}h+\tilde{\nu}h=0\text{,}%
\vspace {3mm}
\\[4mm]
\displaystyle h(0,x,\xi )=h_{0}(x,\xi )\text{,}%
\end{array}%
\right.  \label{eq: time-relative damped transport eq}
\end{equation}%
and denote the solution operator of the time-related damped transport
equation (\ref{eq: time-relative damped transport eq}) by $\mathbb{S}%
_{w_{1}}(t)$, namely,
\begin{equation}
\mathbb{S}_{w_{1}}(t)h_{0}(x,\xi )=h_{0}(x-t\xi ,\xi )\exp \left(
-\int_{0}^{t}\tilde{\nu}(r,x-(t-r)\xi ,\xi )dr\right) \text{.}
\label{eq: def of S(t)}
\end{equation}%
Next, consider the inhomogeneous problem
\begin{equation}
\left \{
\begin{array}{l}
\displaystyle \partial _{t}h+\xi \cdot \nabla _{x}h+\tilde{\nu}(t,x,\xi
)h=q(t,x,\xi )\text{,}%
\vspace {3mm}
\\[4mm]
\displaystyle h(0,x,\xi )=0\text{,}%
\end{array}%
\right.  \label{eq: time-relative damped transport eq with source}
\end{equation}%
and then we have
\begin{equation*}
h(t,x,\xi )=\int_{0}^{t}q(s,x-(t-s)\xi ,\xi )\exp \left( -\int_{s}^{t}\tilde{%
\nu}(r,x-(t-r)\xi ,\xi )dr\right) ds\text{.}
\end{equation*}%
Furthermore, we define the operator $\mathbb{S}_{w_{1}}(t;s)$ as
\begin{equation}
\mathbb{S}_{w_{1}}(t;s)q(s,x,\xi )\equiv q(s,x-(t-s)\xi ,\xi )\exp \left(
-\int_{s}^{t}\tilde{\nu}(r,x-(t-r)\xi ,\xi )dr\right) \text{,}
\label{eq: def of S(t;s)}
\end{equation}%
for $0\leq s\leq t$, so that the solution $h$ to (\ref{eq: time-relative
damped transport eq with source}) can be represented by
\begin{equation*}
h(t,x,\xi )=\int_{0}^{t}\mathbb{S}_{w_{1}}(t;s)q(s,x,\xi )ds\text{.}
\end{equation*}%
Under this notation, we as well have
\begin{equation*}
\mathbb{S}_{w_{1}}(t;0)f_{0}(x,\xi )=\mathbb{S}_{w_{1}}(t)f_{0}(x,\xi )\text{%
.}
\end{equation*}

Thereupon, each item of the wave part $W_{w_{1}}^{\left( m\right)
}=\sum_{i=0}^{m}u^{\left( i\right) }$ can be expressed as
\begin{equation*}
u^{\left( 0\right) }=\eta \mathbb{S}_{w_{1}}(t)f_{w_{1}0}(x,\xi
)+\int_{0}^{t}\mathbb{S}_{w_{1}}(t;s)\Gamma _{w_{1}}(g_{1},h_{1})(s,x,\xi )ds%
\text{,}
\end{equation*}%
\begin{equation*}
u^{\left( i\right) }=\int_{0}^{t}\mathbb{S}_{w_{1}}(t;s)\left[
K_{w_{1}}u^{(i-1)}\right] (s,x,\xi )ds\text{, }i\geq 1\text{,}
\end{equation*}%
in terms of the operator $\mathbb{S}_{w_{1}}(t;s)$.

Through Lemma \ref{decay-Sw1}, it is easy to see some properties of the
operators $\mathbb{S}_{w_{1}}(t)$ and $\mathbb{S}_{w_{1}}(t;s)$ ($0\leq
s\leq t$).

\begin{lemma}
\label{decay-Sw1}\cite[Lemma 12.1]{[Caflisch]}%
\begin{equation*}
\sup_{\xi }e^{-t\left( 1+\left \vert \xi \right \vert \right) ^{-\alpha
}}\left( 1+\left \vert \xi \right \vert \right) ^{-\lambda }\leq C\left(
1+t\right) ^{-\lambda /\alpha }\text{,}
\end{equation*}%
for $t\geq 0$, $\alpha >0$, $\lambda >0$.
\end{lemma}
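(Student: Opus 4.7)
The plan is to reduce the supremum in $\xi$ to a one-variable calculus problem. Substituting $s = (1+|\xi|)^{-\alpha}$, the variable $s$ ranges over $(0,1]$ as $\xi$ ranges over $\mathbb{R}^{3}$, and $(1+|\xi|)^{-\lambda} = s^{\lambda/\alpha}$, so the bound reduces to
\[
\sup_{s \in (0,1]} s^{\lambda/\alpha} e^{-ts} \leq C(1+t)^{-\lambda/\alpha}.
\]
Differentiating the univariate function $\varphi(s) = s^{\lambda/\alpha} e^{-ts}$ produces a unique critical point $s^* = \lambda/(\alpha t)$, which falls inside $(0,1]$ precisely when $t \geq \lambda/\alpha$. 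I split the analysis at this threshold.

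In the regime $t \geq \lambda/\alpha$, the interior maximum is attained at $s^*$ and evaluates to $e^{-\lambda/\alpha}\bigl(\lambda/(\alpha t)\bigr)^{\lambda/\alpha}$; since $t \geq \lambda/\alpha$ forces $1+t \leq (1+\alpha/\lambda)\,t$, this is bounded by $C(1+t)^{-\lambda/\alpha}$ with a constant depending only on $\lambda$ and $\alpha$. In the complementary regime $0 \leq t < \lambda/\alpha$, the function $\varphi$ is monotone increasing on $(0,1]$, so its supremum is $\varphi(1) = e^{-t} \leq 1$; on the other hand $(1+t)^{-\lambda/\alpha} \geq (1+\lambda/\alpha)^{-\lambda/\alpha}$ is bounded below by a positive constant in this range, so the inequality again holds with a numerical constant.

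There is no serious obstacle—this is a classical calculus estimate. The only care required is to choose $C$ large enough to accommodate both regimes simultaneously, which is straightforward since each regime produces a constant depending only on $\lambda/\alpha$. Taking the maximum of the two constants yields a single $C = C(\alpha,\lambda)$ valid for all $t \geq 0$, completing the proof.
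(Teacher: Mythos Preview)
Your proof is correct. The paper does not prove this lemma at all but simply cites it from \cite[Lemma~12.1]{[Caflisch]}; your substitution $s=(1+|\xi|)^{-\alpha}$ and the two-regime calculus argument on $\varphi(s)=s^{\lambda/\alpha}e^{-ts}$ is exactly the standard elementary proof of this classical estimate.
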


\begin{lemma}
\label{lemma pointwise estimate S(t;s)}Let $\tau \geq 0$ and $\lambda \geq 0$%
. Then
\begin{equation}
\left \Vert \mathbb{S}_{w_{1}}(t)h_{0}(x,\xi )\right \Vert _{L_{\xi ,\lambda
}^{\infty }L_{x}^{\infty }}\lesssim (1+s)^{\frac{\tau }{\gamma }}\left \Vert
h_{0}\right \Vert _{L_{\xi ,\lambda +\tau }^{\infty }L_{x}^{\infty }}\,
\text{,}  \label{S1}
\end{equation}%
\begin{equation}
\left \Vert \mathbb{S}_{w_{1}}(t;s)q(s,x,\xi )\right \Vert _{L_{\xi ,\lambda
}^{\infty }L_{x}^{\infty }}\lesssim (1+t-s)^{\frac{\tau }{\gamma }}\left
\Vert q(s,\cdot ,\cdot )\right \Vert _{L_{\xi ,\lambda +\tau }^{\infty
}L_{x}^{\infty }}\text{\thinspace ,}  \label{S2}
\end{equation}%
\begin{equation}
\left \Vert \mathbb{S}_{w_{1}}(t;s)q(s,x,\xi )\right \Vert _{L_{\xi ,\lambda
}^{\infty }L_{x}^{2}}\lesssim (1+t-s)^{\frac{\tau }{\gamma }}\left \Vert
q(s,\cdot ,\cdot )\right \Vert _{L_{\xi ,\lambda +\tau }^{\infty }L_{x}^{2}}%
\text{\thinspace ,}  \label{S5}
\end{equation}%
\begin{equation}
\left \Vert \mathbb{S}_{w_{1}}(s)h_{0}(x,\xi )\right \Vert _{L^{2}}\lesssim
(1+\textcolor{black}{t})^{\frac{\tau }{\gamma }}\left \Vert \left \langle \xi \right \rangle
^{\tau }h_{0}\right \Vert _{L^{2}}\, \text{,}  \label{S3}
\end{equation}%
\begin{equation}
\left \Vert \mathbb{S}_{w_{1}}(t;s)q(s,x,\xi )\right \Vert _{L^{2}}\lesssim
(1+t-s)^{\frac{\tau }{\gamma }}\left \Vert \left \langle \xi \right \rangle
^{\tau }q(s,\cdot ,\cdot )\right \Vert _{L^{2}}\, \text{,}  \label{S4}
\end{equation}%
for $0\leq s\leq t\leq T$.
\end{lemma}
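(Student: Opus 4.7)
The plan is to exploit two key facts. First, since $\delta>0$ and $\delta M$ have been chosen sufficiently small, the derivative bound $|w_1^{-1}(\partial_t w_1 + \xi\cdot\nabla_x w_1)| \lesssim (\delta M+\delta)\langle\xi\rangle_D^{\gamma-1}$ from Lemma \ref{Derivative of wi}, combined with $\nu(\xi) \geq \nu_0\langle\xi\rangle^\gamma$ from Lemma \ref{basic}, yields $\tilde{\nu}(r,x,\xi) \geq \nu(\xi)/2 \geq \tfrac{1}{2}\nu_0\langle\xi\rangle^\gamma$ uniformly in $(r,x)$. Hence for all $0\leq s\leq t$,
\begin{equation*}
\exp\Bigl(-\int_s^t \tilde{\nu}(r,x-(t-r)\xi,\xi)\,dr\Bigr) \leq \exp\bigl(-\tfrac{1}{2}\nu_0(t-s)\langle\xi\rangle^\gamma\bigr).
\end{equation*}
Second, since $\gamma<0$, Lemma \ref{decay-Sw1} applied with $\alpha=-\gamma>0$ and $\lambda=\tau$ gives the velocity–time trade-off
\begin{equation*}
\sup_{\xi\in\mathbb{R}^3}\langle\xi\rangle^{-\tau}\exp\bigl(-\tfrac{1}{2}\nu_0(t-s)\langle\xi\rangle^\gamma\bigr) \lesssim (1+t-s)^{\tau/\gamma}.
\end{equation*}

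With these two ingredients the $L^\infty_x$ estimates \eqref{S1} and \eqref{S2} are essentially immediate. For \eqref{S2}, I write pointwise
\begin{equation*}
\langle\xi\rangle^\lambda|\mathbb{S}_{w_1}(t;s)q(s,x,\xi)| \leq \bigl[\langle\xi\rangle^{-\tau}\exp(-\tfrac{1}{2}\nu_0(t-s)\langle\xi\rangle^\gamma)\bigr]\cdot\langle\xi\rangle^{\lambda+\tau}\bigl|q(s,x-(t-s)\xi,\xi)\bigr|,
\end{equation*}
note that the map $x\mapsto x-(t-s)\xi$ is a bijection of $\mathbb{R}^3$ for fixed $\xi$ and so preserves the $L^\infty_x$ norm, and take the supremum in $(x,\xi)$ after separating the two factors; \eqref{S1} is the case $s=0$. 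For \eqref{S5}, the same pointwise splitting is used, but now one takes the $L^2_x$ norm first, using the translation invariance of Lebesgue measure in $x$, and then takes the supremum in $\xi$ of the weighted product.

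The $L^2_\xi L^2_x$ bounds \eqref{S3}–\eqref{S4} are obtained by an analogous scheme: squaring the pointwise inequality, using translation invariance in $x$, and then writing
\begin{equation*}
\int \exp\bigl(-\nu_0(t-s)\langle\xi\rangle^\gamma\bigr)\,\|q(s,\cdot,\xi)\|_{L^2_x}^2\,d\xi \leq \Bigl(\sup_\xi \langle\xi\rangle^{-2\tau}e^{-\nu_0(t-s)\langle\xi\rangle^\gamma}\Bigr)\|\langle\xi\rangle^\tau q(s,\cdot,\cdot)\|_{L^2}^2,
\end{equation*}
so that one more application of Lemma \ref{decay-Sw1} (with $\lambda=2\tau$, $\alpha=-\gamma$) and a square root give the advertised $(1+t-s)^{\tau/\gamma}$ factor.

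In summary, the argument is structural: once the damping is uniformly bounded below by $\tfrac{1}{2}\nu_0\langle\xi\rangle^\gamma$, the exponential factor is velocity-decaying and can be traded for temporal decay at the cost of $\tau$ powers of $\langle\xi\rangle$, precisely as quantified by Lemma \ref{decay-Sw1}. The only non-routine point is confirming the lower bound on $\tilde\nu$, which is already prepared by the smallness choices made just before \eqref{inhom}; after that, the five inequalities follow from one calculation carried out in the $L^\infty_xL^\infty_\xi$, $L^\infty_\xi L^2_x$, and $L^2_\xi L^2_x$ settings, respectively. I expect no essential obstacle beyond bookkeeping the weights.
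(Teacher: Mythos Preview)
Your proposal is correct and follows exactly the approach the paper intends: the paper does not give a detailed proof but simply remarks that the lemma follows from Lemma~\ref{decay-Sw1}, together with the lower bound $\tilde\nu\geq\nu(\xi)/2$ already established after \eqref{inhom}. Your write-up fills in precisely those routine details.
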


Now we are ready to prove the $L_{\xi ,\beta }^{\infty }L_{x}^{\infty }$
estimate and $L^{2}$ estimate for the wave part $W_{w_{1}}^{(m)}=%
\sum_{i=0}^{m}u^{\left( i\right) }$.

\begin{lemma}[$L_{\protect \xi ,\protect \beta }^{\infty }L_{x}^{\infty }$
estimate of $u^{(i)}$]
\label{pointwise-u}Let $\beta >3/2$, $0<\varsigma \ll 1$. Assume that $%
f_{w_{1}0}\in L_{\xi ,\beta }^{\infty }L_{x}^{\infty }$ and that $g_{1}$ and
$h_{1}$ satisfy $(\ref{g1})$ and $(\ref{g2})$. Then for $i\in \mathbb{N}\cup
\left \{ 0\right \} $,
\begin{equation}
\left \Vert u^{(i)}\right \Vert _{L_{\xi ,\beta }^{\infty }L_{x}^{\infty
}}\lesssim \eta \left \Vert f_{w_{1}0}\right \Vert _{L_{\xi ,\beta }^{\infty
}L_{x}^{\infty }}+(1+t)^{-\frac{3}{2}+A+\varsigma }C_{g_{1},T}^{\infty
}C_{h_{1},T}^{\infty }\text{,}  \label{sup-uj}
\end{equation}%
$0\leq t\leq T$.
\end{lemma}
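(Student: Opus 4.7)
The plan is induction on $i$. For the base case $i=0$, write
$$u^{(0)}(t) = \eta\, \mathbb{S}_{w_1}(t)\, f_{w_1 0} + \int_0^t \mathbb{S}_{w_1}(t;s)\, \Gamma_{w_1}(g_1, h_1)(s)\, ds.$$
The initial-data contribution is controlled by $\eta \|f_{w_1 0}\|_{L_{\xi,\beta}^\infty L_x^\infty}$ via the $\tau = 0$ instance of \eqref{S2}. For the source term, combine the nonlinear estimate \eqref{Gamma-w1-sup} with the assumptions \eqref{g1}--\eqref{g2} on $g_1, h_1$ to obtain the pointwise bound
$$\langle \xi \rangle^\beta\, \bigl|\Gamma_{w_1}(g_1, h_1)(s, x, \xi)\bigr| \lesssim \nu(\xi)\, C_{g_1,T}^\infty\, C_{h_1, T}^\infty\, (1+s)^{A - 3/2}.$$
Since $\tilde\nu \geq \nu/2$, inserting this into the Duhamel formula yields
$$\langle\xi\rangle^\beta\, \bigl|u^{(0)}(t,x,\xi) - \eta\, \mathbb{S}_{w_1}(t) f_{w_1 0}\bigr| \lesssim C_{g_1,T}^\infty C_{h_1,T}^\infty \int_0^t \nu(\xi)\, e^{-\nu(\xi)(t-s)/2}(1+s)^{A-3/2}\, ds.$$

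The core analytic step, which I expect to be the main obstacle, is to bound this time-velocity integral by $(1+t)^{-3/2 + A + \varsigma}$ uniformly in $\xi$. Setting $\mu := \nu(\xi) \sim \langle\xi\rangle^\gamma$, I would split into the regimes $\mu(1+t) \geq 1$ and $\mu(1+t) < 1$. In the first regime, a further split of $\tau := t - s$ at $\tau = (1+t)/2$ separates the exponential gain on the tail from the region near $s = t$ where $(1+s)^{A-3/2} \lesssim (1+t)^{A - 3/2}$, producing the target power directly. In the second regime, bound $e^{-\mu\tau/2} \leq 1$, evaluate the polynomial primitive of $(1+s)^{A-3/2}$, and absorb the $\mu (1+t)$ factor via $\mu(1+t) \leq 1$. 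Logarithmic boundary cases (such as $A = 1/2$) account for the arbitrarily small $\varsigma$-loss; the interplay between the degenerating soft-potential rate $\nu(\xi)$ and the polynomial factor $(1+s)^{A-3/2}$ is precisely what forces $\varsigma > 0$.

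For the inductive step $i \geq 1$, use
$$u^{(i)}(t) = \int_0^t \mathbb{S}_{w_1}(t;s)\, K_{w_1}\, u^{(i-1)}(s)\, ds.$$
By \eqref{Kw6}, $K_{w_1}$ gains $2 - \gamma$ units of velocity weight, so $\|K_{w_1} u^{(i-1)}(s)\|_{L_{\xi, \beta + 2 - \gamma}^\infty L_x^\infty} \lesssim \|u^{(i-1)}(s)\|_{L_{\xi, \beta}^\infty L_x^\infty}$, and then \eqref{S2} with $\tau = 2 - \gamma$ gives
$$\|u^{(i)}(t)\|_{L_{\xi, \beta}^\infty L_x^\infty} \lesssim \int_0^t (1+t-s)^{(2-\gamma)/\gamma}\, \|u^{(i-1)}(s)\|_{L_{\xi, \beta}^\infty L_x^\infty}\, ds.$$
Because $(2 - \gamma)/\gamma < -1$ for $-2 < \gamma < 0$, the kernel is integrable. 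Substituting the inductive hypothesis and splitting the convolution $\int_0^t (1+t-s)^{(2-\gamma)/\gamma}(1+s)^{-3/2+A+\varsigma}\, ds$ at $s = t/2$ (standard two-region argument, noting that the kernel is integrable and the polynomial factor is largest at an endpoint of each sub-interval) reproduces the same functional form $\eta \|f_{w_1 0}\|_{L_{\xi, \beta}^\infty L_x^\infty} + (1+t)^{-3/2 + A + \varsigma} C_{g_1, T}^\infty C_{h_1, T}^\infty$ on the right-hand side, closing the induction with a constant allowed to depend on $i$.
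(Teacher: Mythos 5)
Your proof is correct, and the inductive step is essentially identical to the paper's. The base case $u^{(0)}$, however, follows a somewhat different route, which is worth comparing. The paper first applies estimate (\ref{S2}) with $\tau=-\gamma$, which converts the damped-transport kernel uniformly into the power law $(1+t-s)^{-1}$ while absorbing a factor $\langle\xi\rangle^{-\gamma}$ that exactly cancels the $\nu(\xi)$ produced by (\ref{Gamma-w1-sup}); it then computes the one-variable integral $\int_0^t(1+t-s)^{-1}(1+s)^{-3/2+A}\,ds\lesssim(1+t)^{-3/2+A}\ln(1+t)$, and the log is traded for the $\varsigma$-loss. You instead retain the exact exponential kernel $\nu(\xi)\,e^{-\nu(\xi)(t-s)/2}$ and carry out a regime split in $\mu=\nu(\xi)$ and in $t-s$. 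This is in effect re-deriving a tailored instance of the Caflisch-type decay estimate (Lemma~\ref{decay-Sw1}) rather than invoking the packaged bound (\ref{S2}). Both routes reach the same conclusion; the paper's is more modular and shorter, while your more explicit splitting is tighter in principle (for $A>1/2$ it gives $(1+t)^{-3/2+A}$ with no logarithm, so the $\varsigma$-loss is only needed at the borderline case $A=1/2$), though that sharper information is not used. One small remark on your diagnosis: the $\varsigma$-loss stems from the endpoint $A=1/2$ (the polynomial factor $(1+s)^{-1}$ being at the integrability threshold) rather than from the soft-potential degeneracy of $\nu(\xi)$ per se; the $\langle\xi\rangle^\gamma$ degeneracy is fully absorbed by the $\nu(\xi)$ prefactor coming from $\Gamma_{w_1}$.
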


\begin{proof}
In view of (\ref{Gamma-w1-sup}), (\ref{S1}), (\ref{S2}), together with the
assumptions of (\ref{g1}) and (\ref{g2}), we have
\begin{eqnarray}
&&\left \vert \left \langle \xi \right \rangle ^{\beta }u^{(0)}(t,x,\xi
)\right \vert  \notag \\
&\leq &\eta \left \vert \left \langle \xi \right \rangle ^{\beta }\mathbb{S}%
_{w_{1}}(t)f_{w_{1}0}(x,\xi )\right \vert +\int_{0}^{t}\left \vert \left
\langle \xi \right \rangle ^{\beta }\mathbb{S}_{w_{1}}(t;s)\Gamma
_{w_{1}}(g_{1},h_{1})(s,x,\xi )\right \vert ds  \label{u0-infty} \\
&\lesssim &\eta \left \Vert f_{w_{1}0}\right \Vert _{L_{\xi ,\beta }^{\infty
}L_{x}^{\infty }}+\int_{0}^{t}(1+t-s)^{-\frac{\gamma }{\gamma }}\left \Vert
\nu ^{-1}(\xi )\Gamma _{w_{1}}(g_{1},h_{1})(s,\cdot ,\cdot )\right \Vert
_{L_{\xi ,\beta }^{\infty }L_{x}^{\infty }}ds  \notag \\
&\lesssim &\eta \left \Vert f_{w_{1}0}\right \Vert _{L_{\xi ,\beta }^{\infty
}L_{x}^{\infty }}+\int_{0}^{t}(1+t-s)^{-\frac{\gamma }{\gamma }}(1+s)^{-%
\frac{3}{2}+A}C_{g_{1},T}^{\infty }C_{h_{1},T}^{\infty }ds  \notag \\
&\lesssim &\eta \left \Vert f_{w_{1}0}\right \Vert _{L_{\xi ,\beta }^{\infty
}L_{x}^{\infty }}+\left[ (1+t)^{-\frac{3}{2}+A}\ln \left( 1+t\right) \right]
C_{g_{1},T}^{\infty }C_{h_{1},T}^{\infty }  \notag \\
&\lesssim &\eta \left \Vert f_{w_{1}0}\right \Vert _{L_{\xi ,\beta }^{\infty
}L_{x}^{\infty }}+(1+t)^{-\frac{3}{2}+A+\varsigma }C_{g_{1},T}^{\infty
}C_{h_{1},T}^{\infty }\text{.}  \notag
\end{eqnarray}%
This completes the estimate for $u^{(0)}$.

For $u^{(1)}$, it follows from (\ref{Kw6}), (\ref{S2}) and (\ref{u0-infty})
that
\begin{eqnarray*}
\left \vert \left \langle \xi \right \rangle ^{\beta }u^{(1)}(t,x,\xi
)\right \vert &\leq &\int_{0}^{t}\left \vert \mathbb{S}_{w_{1}}(t;s)\left%
\langle \xi \right \rangle ^{\beta }K_{w_{1}}u^{(0)}(s,x,\xi )\right \vert ds
\\
&\lesssim &\int_{0}^{t}(1+t-s)^{\frac{2-\gamma }{\gamma }}\left \Vert \left
\langle \xi \right \rangle ^{-\gamma +2}K_{w_{1}}u^{(0)}(s,\cdot )\right
\Vert _{L_{\xi ,\beta }^{\infty }L_{x}^{\infty }}ds \\
&\lesssim &\int_{0}^{t}(1+t-s)^{\frac{2-\gamma }{\gamma }}\left \Vert
u^{(0)}(s,\cdot )\right \Vert _{L_{\xi ,\beta }^{\infty }L_{x}^{\infty }}ds
\\
&\lesssim &\eta \left \Vert f_{w_{1}0}\right \Vert _{L_{\xi ,\beta }^{\infty
}L_{x}^{\infty }}+(1+t)^{-\frac{3}{2}+A+\varsigma }C_{g_{1},T}^{\infty
}C_{h_{1},T}^{\infty }\text{.}
\end{eqnarray*}%
Since for $i\geq 2$,
\begin{equation*}
\left \vert \left \langle \xi \right \rangle ^{\beta }u^{(i)}(t,x,\xi
)\right \vert =\left \vert \int_{0}^{t}\mathbb{S}_{w_{1}}(t;s)\left \langle
\xi \right \rangle ^{\beta }K_{w_{1}}u^{(i-1)}(s,x,\xi )ds\right \vert \text{%
, }
\end{equation*}%
we can prove
\begin{equation*}
\left \Vert u^{(i)}\right \Vert _{L_{\xi ,\beta }^{\infty }L_{x}^{\infty
}}\lesssim \eta \left \Vert f_{w_{1}0}\right \Vert _{L_{\xi ,\beta }^{\infty
}L_{x}^{\infty }}+(1+t)^{-\frac{3}{2}+A+\varsigma }C_{g_{1},T}^{\infty
}C_{h_{1},T}^{\infty }\text{,}
\end{equation*}%
by induction on $i\geq 1$.
\end{proof}

\begin{lemma}[$L^{2}$ estimate of $u^{(i)}$, $i\geq 0$]
\label{lemma L2 estimate of u^(j)} \label{L2-u}Let $\beta >3/2$. Assume that
$f_{w_{1}0}\in L_{\xi ,\beta }^{\infty }L_{x}^{2}\cap L_{\xi ,\beta
}^{\infty }L_{x}^{\infty }$ and that $g_{1}$ and $h_{1}$ satisfy $(\ref{g1})$
and $(\ref{g2})$. Then for $i\in \mathbb{N}\cup \left \{ 0\right \} $,%
\begin{equation*}
\left \Vert u^{(i)}\right \Vert _{L^{2}}\lesssim \eta \left \Vert
f_{w_{1}0}\right \Vert _{L_{\xi ,\beta }^{\infty
}L_{x}^{2}}+(1+t)^{-1}C_{g_{1},T}^{2}C_{h_{1},T}^{\infty }\text{,}
\end{equation*}%
$0\leq t\leq T$.
\end{lemma}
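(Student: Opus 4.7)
The plan is to mirror the structure of Lemma \ref{pointwise-u}, but replace the sup-norm semigroup estimate \eqref{S2} by its $L^2$ counterparts \eqref{S3}, \eqref{S4}, the weight-gain \eqref{Kw6} of $K_{w_1}$ by the $L_\xi^2$ weight-gain \eqref{Kw7}, and the $\Gamma_{w_1}$-sup bound \eqref{Gamma-w1-sup} by its $L_\xi^2$ counterpart \eqref{Gamma-w1-L2}. The induction on $i$ then follows the same skeleton as in Lemma \ref{pointwise-u}, with the time-decay factor $(1+t)^{-1}$ arising from convolving an integrable semigroup kernel against $(1+s)^{-3/2}$ (for $u^{(0)}$) or $(1+s)^{-1}$ (for $u^{(i)}$, $i\geq 1$).

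\textbf{Base case $i=0$.} Split
\[
u^{(0)}=\eta \,\mathbb{S}_{w_1}(t)f_{w_1 0}+\int_0^t \mathbb{S}_{w_1}(t;s)\Gamma_{w_1}(g_1,h_1)(s)\,ds.
\]
For the initial-data term, apply \eqref{S3} with $\tau=0$ to obtain $\|\mathbb{S}_{w_1}(t)f_{w_1 0}\|_{L^2}\lesssim \|f_{w_1 0}\|_{L^2}$, and then use the embedding $L_{\xi,\beta}^\infty\hookrightarrow L_\xi^2$ (valid for $\beta>3/2$ by integrating $\langle\xi\rangle^{-2\beta}$) to conclude $\|f_{w_1 0}\|_{L^2}\lesssim \|f_{w_1 0}\|_{L_{\xi,\beta}^\infty L_x^2}$. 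For the source term, apply \eqref{S4} with $\tau=-\gamma>0$, giving the kernel $(1+t-s)^{-1}$, and note that $\nu\gtrsim \langle\xi\rangle^{\gamma}$ implies $\|\langle\xi\rangle^{-\gamma}\Gamma_{w_1}(g_1,h_1)(s)\|_{L^2}\lesssim \|\nu^{-1}\Gamma_{w_1}(g_1,h_1)(s)\|_{L^2}$. Apply \eqref{Gamma-w1-L2} pointwise in $x$ and Hölder in $x$, together with the embeddings $L_{\xi,\beta}^\infty\hookrightarrow L_\xi^\infty\cap L_\xi^2$, to obtain
\[
\|\nu^{-1}\Gamma_{w_1}(g_1,h_1)(s)\|_{L^2}\lesssim \|g_1(s)\|_{L_{\xi,\beta}^\infty L_x^2}\,\|\langle\xi\rangle^p h_1(s)\|_{L_{\xi,\beta}^\infty L_x^\infty}\lesssim C_{g_1,T}^2\,C_{h_1,T}^\infty\,(1+s)^{-3/2}
\]
by \eqref{g1}, \eqref{g2}. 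The time convolution $\int_0^t (1+t-s)^{-1}(1+s)^{-3/2}\,ds\lesssim (1+t)^{-1}$ (splitting at $s=t/2$) closes the base case.

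\textbf{Inductive step $i\geq 1$.} Starting from $u^{(i)}(t)=\int_0^t \mathbb{S}_{w_1}(t;s)K_{w_1}u^{(i-1)}(s)\,ds$, apply \eqref{S4} with $\tau=2-\gamma$ followed by \eqref{Kw7} to obtain
\[
\|\mathbb{S}_{w_1}(t;s)K_{w_1}u^{(i-1)}(s)\|_{L^2}\lesssim (1+t-s)^{(2-\gamma)/\gamma}\,\|u^{(i-1)}(s)\|_{L^2}.
\]
Since $\gamma\in(-2,0)$ forces $(2-\gamma)/\gamma<-2$, the kernel is integrable on $[0,\infty)$, so convolving against the constant-in-time part $\eta\|f_{w_1 0}\|_{L_{\xi,\beta}^\infty L_x^2}$ of the inductive hypothesis yields a uniformly bounded contribution, and convolving against the decaying part $(1+s)^{-1}\,C_{g_1,T}^2 C_{h_1,T}^\infty$ reproduces the factor $(1+t)^{-1}$ by the same splitting argument. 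This closes the induction (the implicit constants may grow with $i$, but $i$ is fixed in the application to the wave part).

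\textbf{Main obstacle.} The delicate point is the base case source term: unlike the inductive step, $\Gamma_{w_1}$ does not come with the $\langle\xi\rangle^{-2+\gamma}$ weight-gain of $K_{w_1}$, so in \eqref{S4} one can only afford $\tau=-\gamma$, yielding the borderline kernel $(1+t-s)^{-1}$. The resulting convolution against $(1+s)^{-3/2}$ is exactly at the level that produces $(1+t)^{-1}$ without logarithmic loss; if $h_1$ decayed any more slowly than $(1+s)^{-3/2}$, a $\log(1+t)$ factor would appear. In the induction, the weight-gain \eqref{Kw7} is what permits the larger choice $\tau=2-\gamma$ and the much stronger integrable kernel $(1+t-s)^{(2-\gamma)/\gamma}$; here the assumption $\gamma>-2$ is crucial, since it is precisely what makes this exponent strictly less than $-1$.
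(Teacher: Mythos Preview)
Your proposal is correct and follows essentially the same approach as the paper: the paper also uses \eqref{S3} with $\tau=0$ and \eqref{S4} with $\tau=-\gamma$ for $u^{(0)}$ (combining \eqref{Gamma-w1-L2} with the embedding $L^\infty_{\xi,\beta}\hookrightarrow L^2_\xi$ to get \eqref{eq: gamma L2 g1 g2}), and then \eqref{S4} with $\tau=2-\gamma$ together with \eqref{Kw7} for the inductive step $i\geq 1$. One minor remark: your final comment that ``$\gamma>-2$ is crucial, since it is precisely what makes this exponent strictly less than $-1$'' is a misattribution, since $(2-\gamma)/\gamma = 2/\gamma - 1 < -1$ holds for \emph{every} $\gamma<0$; the restriction $\gamma>-2$ is needed elsewhere in the paper (for the $H^1_\xi$ mapping property of $K$ in Lemma \ref{lemma: estimate of weighted-kw}), not for this time-integrability step.
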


\begin{proof}
In view of (\ref{Gamma-w1-L2}) and the assumptions of (\ref{g1}) and (\ref%
{g2}),%
\begin{eqnarray}
\left \Vert \left \langle \xi \right \rangle ^{-\gamma }\Gamma
_{w_{1}}(g_{1},h_{1})(s,\cdot )\right \Vert _{L^{2}} &\lesssim &\left \Vert
g_{1}\right \Vert _{L_{\xi ,\beta }^{\infty }L_{x}^{2}}\left \Vert \left
\langle \xi \right \rangle ^{p}h_{1}\right \Vert _{L_{\xi ,\beta }^{\infty
}L_{x}^{\infty }}  \notag \\
&\lesssim &(1+s)^{-\frac{3}{2}}C_{g_{1},T}^{2}C_{h_{1},T}^{\infty }\text{.}
\label{eq: gamma L2 g1 g2}
\end{eqnarray}%
Therefore, using (\ref{Kw7}), (\ref{S3}), and (\ref{S4}) gives
\begin{eqnarray}
\left \Vert u^{(0)}\right \Vert _{L^{2}} &=&\left \Vert \eta \mathbb{S}%
_{w_{1}}(t)f_{w_{1}0}(x,\xi )+\int_{0}^{t}\mathbb{S}_{w_{1}}(t;s)\Gamma
_{w_{1}}(g_{1},h_{1})(s,x,\xi )ds\, \right \Vert _{L^{2}}  \label{u(0)-L2} \\
&\lesssim &\eta \left \Vert f_{w_{1}0}\right \Vert _{L^{2}}+\left(
\int_{0}^{t}\left( 1+t-s\right) ^{-1}(1+s)^{-\frac{3}{2}}ds\right)
C_{g_{1},T}^{2}C_{h_{1},T}^{\infty }  \notag \\
&\lesssim &\eta \left \Vert f_{w_{1}0}\right \Vert _{L_{\xi ,\beta }^{\infty
}L_{x}^{2}}+(1+t)^{-1}C_{g_{1},T}^{2}C_{h_{1},T}^{\infty }\text{.}  \notag
\end{eqnarray}

Note that%
\begin{equation*}
u^{(1)}(t,x,\xi )=\int_{0}^{t}\mathbb{S}_{w_{1}}(t;s)\left[ K_{w_{1}}u^{(0)}%
\right] (s,x,\xi )ds\text{.}
\end{equation*}%
Using (\ref{Kw7}), (\ref{S4}) and (\ref{u(0)-L2}) gives
\begin{eqnarray*}
\left \Vert u^{(1)}\right \Vert _{L^{2}} &\lesssim &\int_{0}^{t}\left(
1+t-s\right) ^{\frac{2-\gamma }{\gamma }}\left \Vert u^{(0)}\left( s,\cdot
,\cdot \right) \right \Vert _{L^{2}}ds \\
&\lesssim &\eta \left \Vert f_{w_{1}0}\right \Vert _{L_{\xi ,\beta }^{\infty
}L_{x}^{2}}+\left( \int_{0}^{t}\left( 1+t-s\right) ^{\frac{2-\gamma }{\gamma
}}(1+s)^{-1}ds\right) C_{g_{1},T}^{2}C_{h_{1},T}^{\infty } \\
&\lesssim &\eta \left \Vert f_{w_{1}0}\right \Vert _{L_{\xi ,\beta }^{\infty
}L_{x}^{2}}+(1+t)^{-1}C_{g_{1},T}^{2}C_{h_{1},T}^{\infty }\text{.}
\end{eqnarray*}%
Similarly, for $i\geq 2$,
\begin{equation*}
u^{(i)}(t,x,\xi )=\int_{0}^{t}\mathbb{S}_{w_{1}}(t;s)\left[
K_{w_{1}}u^{(i-1)}\right] (s,x,\xi )ds\text{, }
\end{equation*}%
and thus we can prove
\begin{equation*}
\left \Vert u^{(i)}\right \Vert _{L^{2}}\lesssim \eta \left \Vert
f_{w_{1}0}\right \Vert _{L_{\xi ,\beta }^{\infty
}L_{x}^{2}}+(1+t)^{-1}C_{g_{1},T}^{2}C_{h_{1},T}^{\infty }\text{,}
\end{equation*}%
inductively for all $i\in \mathbb{N}$, by using (\ref{Kw7}), (\ref{S4}).
\end{proof}

\subsection{Regularization estimate}

In the previous subsection, we have carried out the $L_{\xi ,\beta }^{\infty
}L_{x}^{\infty }$ ($\beta >3/2$) and $L^{2}$ estimates for the wave part $%
W_{w_{1}}^{(m)}$. To obtain the pointwise estimate on $\mathcal{R}%
_{w_{1}}^{(m)}$, we still need the $H^{2}_{x}$ regularization estimate for $%
\mathcal{R}_{w_{1}}^{(m)}$. In light of (\ref{remainder-eq}), we turn to the
$H^{2}_{x}$ regularization estimate for $u^{(m)}$ in advance. To proceed, we
introduce a differential operator:
\begin{equation*}
\mathcal{D}_{t}=t\nabla _{x}+\nabla _{\xi }\text{.}
\end{equation*}%
This operator $\mathcal{D}_{t}$ is important since it commutes with the free
transport operator, i.e.,
\begin{equation*}
\lbrack \mathcal{D}_{t},\partial _{t}+\xi \cdot \nabla _{x}]=0\text{,}
\end{equation*}%
where $[A,B]=AB-BA$ is the commutator.

We remark that the crucial operator $%
\mathcal{D}_{t}$ was firstly introduced in the paper by Gualdani, Mischler
and Mouhot \cite{[Gualdani]}, and Wu \cite{[Wu]} applied it to reprove the
Mixture Lemma used in \cite{[LeeLiuYu], [LiuYu], [LiuYu2], [LiuYu1]}.

The following lemma will be used to prove the regularization estimate:

\begin{lemma}
\label{lemma Regularization estimate need}For any $\tau \in \mathbb{R}$,
\begin{equation}
\left \Vert \mathbb{S}_{w_{1}}(t;s)\left[ \nabla _{x},K_{w_{1}}\right]
q(s,x,\xi )\right \Vert _{L^{2}}\lesssim (1+t-s)^{\frac{2-\gamma }{\gamma }%
}\left \Vert q(s,\cdot ,\cdot )\right \Vert _{L^{2}}\, \text{,}
\label{comm-Kw-x}
\end{equation}%
\begin{equation}
\left \Vert \mathcal{D}_{t-s}\mathbb{S}_{w_{1}}(t;s)q(s,x,\xi )\right \Vert
_{L^{2}}\lesssim \left( (1+t-s)^{\frac{\tau +1}{\gamma }}+\delta M(1+t-s)^{%
\frac{\tau +\gamma }{\gamma }}\right) \left \Vert \left \langle \xi \right
\rangle ^{\tau }q(s,\cdot ,\cdot )\right \Vert _{H_{\xi }^{1}L_{x}^{2}}\text{%
.}  \label{eq: Regularization estimate 3}
\end{equation}%
Consequently,
\begin{equation}
\left \Vert \mathcal{D}_{t-s}\mathbb{S}_{w_{1}}(t;s)K_{w_{1}}q(s,x,\xi
)\right \Vert _{L^{2}}\lesssim \left( (1+t-s)^{\frac{2-\gamma }{\gamma }%
}+\delta M(1+t-s)^{\frac{1}{\gamma }}\right) \left \Vert q(s,\cdot ,\cdot
)\right \Vert _{L^{2}}\text{.}  \label{eq: Regularization estimate 4}
\end{equation}
\end{lemma}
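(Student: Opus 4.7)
The three assertions share a common strategy: unfold $\mathbb{S}_{w_1}(t;s)$ through its explicit representation, extract smallness and weight-losses using Lemmas~\ref{Derivative of wi} and~\ref{lemma: estimate of weighted-kw}, and convert velocity weights into time decay through the damping $e^{-c(t-s)\langle\xi\rangle^\gamma}$ together with the elementary estimate $\sup_\xi\langle\xi\rangle^{-\lambda}e^{-ct\langle\xi\rangle^\gamma}\lesssim(1+t)^{\lambda/\gamma}$ from Lemma~\ref{decay-Sw1}.

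For (\ref{comm-Kw-x}) I would compute the commutator directly: since $\nabla_x$ only sees the $(t,x)$-dependent prefactor $w_1(t,x,\xi)w_1^{-1}(t,x,\xi_\ast)$ in the kernel of $K_{w_1}$, the commutator has kernel $\nabla_x\bigl[w_1(t,x,\xi)w_1^{-1}(t,x,\xi_\ast)\bigr]k(\xi,\xi_\ast)$, which picks up two copies of $w_i^{-1}\nabla_xw_i$. Each is bounded by $\delta\langle\xi\rangle_D^{\gamma-1}$ via Lemma~\ref{Derivative of wi}, and the kernel-integral bounds of Lemma~\ref{lemma: estimate of weighted-kw} then yield $\|\langle\xi\rangle^{2-\gamma}[\nabla_x,K_{w_1}]q\|_{L^2_\xi}\lesssim\|q\|_{L^2_\xi}$. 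Squaring and integrating in $x$, then applying (\ref{S4}) with $\tau=2-\gamma$, delivers the rate $(1+t-s)^{(2-\gamma)/\gamma}$.

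For (\ref{eq: Regularization estimate 3}) the engine is the identity
\[
\mathcal{D}_{t-s}\bigl[g(x-(t-s)\xi,\xi)\bigr]=(\nabla_\xi g)(x-(t-s)\xi,\xi),
\]
which holds because $\mathcal{D}_{t-s}$ annihilates the translated argument. Applied to the representation formula (\ref{eq: def of S(t;s)}), it produces the decomposition
\[
\mathcal{D}_{t-s}\mathbb{S}_{w_1}(t;s)q=\mathbb{S}_{w_1}(t;s)\nabla_\xi q-\bigl[\mathbb{S}_{w_1}(t;s)q\bigr]\cdot J(t,s,x,\xi),
\]
with $J=\int_s^t\bigl[(r-s)\nabla_y\tilde\nu+\nabla_\xi\tilde\nu\bigr](r,x-(t-r)\xi,\xi)\,dr$. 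The first summand is handled by (\ref{S4}), producing the fast branch $(1+t-s)^{(\tau+1)/\gamma}\|\langle\xi\rangle^\tau q\|_{H^1_\xi L^2_x}$. For the second summand, I split $\tilde\nu=\nu-(\partial_tw_1+\xi\cdot\nabla_xw_1)/w_1$: the $\nu$ part gives $|\nabla_\xi\nu|\lesssim\langle\xi\rangle^{\gamma-1}$ (with no spatial derivative and no small prefactor) and, after absorbing velocity powers via the damping, contributes to the same fast branch; the weight-function pieces each carry $\delta$ or $\delta M$ by Lemma~\ref{Derivative of wi}, and a parallel absorption lands them on the slower branch $\delta M(1+t-s)^{(\tau+\gamma)/\gamma}$.

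Finally, (\ref{eq: Regularization estimate 4}) is obtained by specialising (\ref{eq: Regularization estimate 3}) to $q\mapsto K_{w_1}q$ with $\tau=1-\gamma$: Lemma~\ref{lemma: estimate of weighted-kw} yields $\|\langle\xi\rangle^{1-\gamma}K_{w_1}q\|_{L^2}+\|\langle\xi\rangle^{1-\gamma}\nabla_\xi K_{w_1}q\|_{L^2}\lesssim\|q\|_{L^2}$, whence $\|\langle\xi\rangle^{1-\gamma}K_{w_1}q\|_{H^1_\xi L^2_x}\lesssim\|q\|_{L^2}$, and the exponents $(2-\gamma)/\gamma$ and $1/\gamma$ pop out. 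The main technical obstacle will be the bookkeeping for the $J$-term: one must verify that every contribution to the slow branch $(1+t-s)^{(\tau+\gamma)/\gamma}$ carries a prefactor of $\delta$ or $\delta M$, so that the single ``large'' contribution — necessarily coming from $\nabla_\xi\nu$, which is not small but also is untouched by any $\nabla_x$ — lives on the fast branch $(1+t-s)^{(\tau+1)/\gamma}$ and no mass sneaks from one branch to the other.
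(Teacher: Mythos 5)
Your plan reproduces the paper's proof step by step: (\ref{comm-Kw-x}) by differentiating the kernel prefactor $w_1(t,x,\xi)w_1^{-1}(t,x,\xi_\ast)$, invoking (\ref{der-w1-first}) and (\ref{Kw7}), then applying (\ref{S4}) with weight $2-\gamma$; (\ref{eq: Regularization estimate 3}) by the identity $\mathcal{D}_{t-s}\bigl[g(x-(t-s)\xi,\xi)\bigr]=(\nabla_\xi g)(x-(t-s)\xi,\xi)$ applied to (\ref{eq: def of S(t;s)}) and by writing out $J=\int_s^t\bigl[(r-s)\nabla_y\tilde\nu+\nabla_\xi\tilde\nu\bigr]\,dr$; and (\ref{eq: Regularization estimate 4}) by specializing $\tau=1-\gamma$ and closing with (\ref{Kw7})--(\ref{Kw8}). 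Your observation that $\nabla_\xi\nu$ is the one contribution without a $\delta$ or $\delta M$ prefactor, and so must be tracked separately, is exactly the bookkeeping the paper carries out.

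One step as written would not go through. You claim that the first summand $\mathbb{S}_{w_1}(t;s)\nabla_\xi q$ is handled by (\ref{S4}) ``producing the fast branch $(1+t-s)^{(\tau+1)/\gamma}\,\|\langle\xi\rangle^\tau q\|_{H^1_\xi L^2_x}$,'' but (\ref{S4}) applied with the weight $\langle\xi\rangle^{\tau}$ (the weight allowed by the right-hand side) yields only the rate $(1+t-s)^{\tau/\gamma}$; to get the exponent $(\tau+1)/\gamma$ you would need the weight $\langle\xi\rangle^{\tau+1}$ on $\nabla_\xi q$, which the claimed right-hand side does not supply — there is no spare power of $\langle\xi\rangle$ to trade for time decay. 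The paper's own computation records exactly $(1+t-s)^{\tau/\gamma}\,\|\langle\xi\rangle^\tau\nabla_\xi q\|_{L^2}$ at this step; the exponent $(\tau+1)/\gamma$ in the statement of (\ref{eq: Regularization estimate 3}) in fact arises only from the $\nabla_\xi\nu$ piece of the $J$-term. What the argument truly delivers is
\[
\left\Vert \mathcal{D}_{t-s}\mathbb{S}_{w_{1}}(t;s)q \right\Vert_{L^{2}}
\lesssim
\Bigl( (1+t-s)^{\tau/\gamma}+\delta M\,(1+t-s)^{(\tau+\gamma)/\gamma} \Bigr)\,
\left\Vert \langle\xi\rangle^{\tau}q \right\Vert_{H_{\xi}^{1}L_{x}^{2}},
\]
with the first exponent $\tau/\gamma$, not $(\tau+1)/\gamma$; correspondingly, the first exponent in (\ref{eq: Regularization estimate 4}) with $\tau=1-\gamma$ should read $(1-\gamma)/\gamma$ rather than $(2-\gamma)/\gamma$. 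This weaker form is still sufficient for the regularization estimate in Lemma~\ref{lemma: H2 estimate of h6}, so the discrepancy is benign downstream; but you should either prove the weaker bound honestly or state explicitly that the first summand cannot be brought below $(1+t-s)^{\tau/\gamma}$ by this method.
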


\begin{proof}
The estimate of (\ref{comm-Kw-x}) immediately follows from (\ref%
{der-w1-first}), (\ref{Kw7}), (\ref{S3}), and the estimate (\ref{eq:
Regularization estimate 4}) is a consequence of (\ref{Kw7}), (\ref{Kw8}),
and (\ref{eq: Regularization estimate 3}) by picking $\tau =-\gamma +1$.
Thus, it remains to prove (\ref{eq: Regularization estimate 3}). By the
definition of $\mathbb{S}_{w_{1}}(t;s)$,
\begin{eqnarray*}
&&\mathcal{D}_{t-s}\mathbb{S}_{w_{1}}(t;s)q(s,x,\xi ) \\
&=&\mathcal{D}_{t-s}\left( q(s,x-(t-s)\xi ,\xi )\right) \exp \left(
-\int_{s}^{t}\tilde{\nu}(r,x-(t-r)\xi ,\xi )dr\right) \\
&&+\left[ q(s,x-(t-s)\xi ,\xi )\left( \mathcal{D}_{t-s}\left( -\int_{s}^{t}%
\tilde{\nu}(r,x-(t-r)\xi ,\xi )dr\right) \right) \right. \\
&&\left. \cdot \exp \left( -\int_{s}^{t}\tilde{\nu}(r,x-(t-r)\xi ,\xi
)dr\right) \right] \text{.}
\end{eqnarray*}%
Since
\begin{eqnarray*}
&&\nabla _{\xi }\left( q(s,x-(t-s)\xi ,\xi )\right) \\
&=&(s-t)\nabla _{x}q(s,x-(t-s)\xi ,\xi )+\nabla _{\xi }q(s,x-(t-s)\xi ,\xi )%
\text{,}
\end{eqnarray*}%
we have
\begin{equation*}
\mathcal{D}_{t-s}\left( q(s,x-(t-s)\xi ,\xi )\right) =\nabla _{\xi
}q(s,x-(t-s)\xi ,\xi )\text{,}
\end{equation*}%
which implies that
\begin{eqnarray*}
&&\left \Vert \mathcal{D}_{t-s}\left( q(s,x-(t-s)\xi ,\xi )\right) \exp
\left( -\int_{s}^{t}\tilde{\nu}(r,x-(t-r)\xi ,\xi )dr\right) \right \Vert
_{L^{2}} \\
&\lesssim &(1+t-s)^{\frac{\tau }{\gamma }}\left \Vert \left \langle \xi
\right \rangle ^{\tau }\nabla _{\xi }q(s,\cdot ,\cdot )\right \Vert _{L^{2}}%
\text{.}
\end{eqnarray*}%
In view of (\ref{der-w1-first}),
\begin{equation*}
\left \vert \mathcal{D}_{t-s}\tilde{\nu}(r,x-(t-r)\xi ,\xi )\right \vert
\lesssim \left( t-s\right) \delta M\left \langle \xi \right \rangle ^{\gamma
}+(t-r)\delta M\left \langle \xi \right \rangle ^{\gamma }+\left \langle \xi
\right \rangle ^{\gamma -1}\text{,}
\end{equation*}%
so that%
\begin{eqnarray*}
&&\left \vert \exp \left( -\int_{s}^{t}\tilde{\nu}(r,x-(t-r)\xi ,\xi
)dr\right) \left( \mathcal{D}_{t-s}\left( -\int_{s}^{t}\tilde{\nu}%
(r,x-(t-r)\xi ,\xi )dr\right) \right) \right \vert \\
&\lesssim &\left( (t-s)\left \langle \xi \right \rangle ^{\gamma -1}+\delta
M(t-s)^{2}\left \langle \xi \right \rangle ^{\gamma }\right) \exp \left( -%
\frac{\nu (\xi )}{2}\left( t-s\right) \right) \text{.}
\end{eqnarray*}%
It implies that
\begin{eqnarray*}
&&\left \Vert q(s,x-(t-s)\xi ,\xi )\left( \mathcal{D}_{t-s}\left(
-\int_{s}^{t}\tilde{\nu}(r,x-(t-r)\xi ,\xi )dr\right) \right) \right. \\
&&\left. \cdot \exp \left( -\int_{s}^{t}\tilde{\nu}(r,x-(t-r)\xi ,\xi
)dr\right) \right \Vert _{L^{2}} \\
&\lesssim &\left( (1+t-s)^{\frac{\tau +1-\gamma }{\gamma }+1}+\delta
M(1+t-s)^{\frac{\tau -\gamma }{\gamma }+2}\right) \left \Vert \left \langle
\xi \right \rangle ^{\tau }q(s,\cdot ,\cdot )\right \Vert _{L^{2}}\text{.}
\end{eqnarray*}%
Therefore, we can conclude
\begin{eqnarray*}
&&\left \Vert \mathcal{D}_{t-s}\mathbb{S}_{w_{1}}(t;s)q(s,x,\xi )\right
\Vert _{L^{2}} \\
&\lesssim &\left( (1+t-s)^{\frac{\tau +1}{\gamma }}+\delta M(1+t-s)^{\frac{%
\tau +\gamma }{\gamma }}\right) \left \Vert \left \langle \xi \right \rangle
^{\tau }q(s,\cdot ,\cdot )\right \Vert _{H_{\xi }^{1}L_{x}^{2}}\text{.}
\end{eqnarray*}%
The proof of this lemma is completed.
\end{proof}

Now, we are ready to get the $H_{x}^{2}$ regularization estimate. In fact,
we find that it is enough to get the $H_{x}^{2}$ regularization estimate by
taking $m=6$.

\begin{lemma}[$H_{x}^{2}$ regularization estimate on $u^{(6)}$]
\label{lemma: H2 estimate of h6}Let $\varsigma $ be any positive number with
$0<\varsigma \ll 1$. Then there exists a constant $C_{\varsigma ,\gamma
,,p}>0$ such that
\begin{eqnarray*}
&&\left \Vert u^{(6)}(t,x,\xi )\right \Vert _{L_{\xi }^{2}H_{x}^{2}} \\
&\leq &C_{\varsigma ,\gamma ,p}\cdot \left \{
\begin{array}{ll}
\left( 1+\delta M\right) \left[ \eta \left \Vert f_{w_{1}0}\right \Vert
_{L_{\xi ,\beta }^{\infty }L_{x}^{2}}+C_{g_{1},T}^{2}C_{h_{1},T}^{\infty }%
\right] \text{,}%
\vspace {3mm}
& \text{if }-1<\gamma <0\text{,} \\
\left( 1+\delta M\right) \left[ \eta \left \Vert f_{w_{1}0}\right \Vert
_{L_{\xi ,\beta }^{\infty }L_{x}^{2}}+C_{g_{1},T}^{2}C_{h_{1},T}^{\infty }%
\right] \left( 1+t\right) ^{\varsigma }\text{,}%
\vspace {3mm}
& \text{if }\gamma =-1\text{,} \\
\left( 1+\delta M\right) \left[ \eta \left \Vert f_{w_{1}0}\right \Vert
_{L_{\xi ,\beta }^{\infty }L_{x}^{2}}+C_{g_{1},T}^{2}C_{h_{1},T}^{\infty }%
\right] (1+t)^{5+\frac{5}{\gamma }}\text{,} & \text{if }-2<\gamma <-1\text{,}%
\end{array}%
\right.
\end{eqnarray*}%
$0\leq t\leq T$.
\end{lemma}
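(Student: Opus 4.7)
The plan is to bootstrap the $L^2$ bound of Lemma \ref{L2-u} to a full $L^2_\xi H^2_x$ bound on $u^{(6)}$ by exploiting the Picard iteration structure together with a ``Mixture Lemma'' style argument: each composition $\mathbb{S}_{w_1}(\cdot;\cdot)K_{w_1}$ trades the $\xi$-smoothing provided by $K_{w_1}$ (through $\nabla_\xi K_{w_1}$, Lemma \ref{lemma: estimate of weighted-kw}) for $x$-smoothing by means of the operator $\mathcal{D}_{t-s}=(t-s)\nabla_x+\nabla_\xi$, which commutes with the free transport $\partial_t+\xi\cdot\nabla_x$.

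Concretely, I would write $u^{(6)}(t)=\int_0^t \mathbb{S}_{w_1}(t;s)K_{w_1} u^{(5)}(s)\,ds$ and apply $\nabla_x$ through the algebraic identity
\begin{equation*}
(t-s)\nabla_x\bigl[\mathbb{S}_{w_1}(t;s)q\bigr]=\mathcal{D}_{t-s}\bigl[\mathbb{S}_{w_1}(t;s)q\bigr]-\nabla_\xi\bigl[\mathbb{S}_{w_1}(t;s)q\bigr],
\end{equation*}
applied with $q=K_{w_1} u^{(5)}(s)$. The first term on the right is controlled via Lemma \ref{lemma Regularization estimate need}, in particular \eqref{eq: Regularization estimate 4}, which furnishes the polynomial factor $(1+t-s)^{(2-\gamma)/\gamma}+\delta M(1+t-s)^{1/\gamma}$ against the $L^2$ norm of $u^{(5)}(s)$; the second term is handled through \eqref{Kw8} of Lemma \ref{lemma: estimate of weighted-kw}. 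The $1/(t-s)$ singularity at $s=t$ is dealt with by splitting $\int_0^t=\int_0^{t-1}+\int_{t-1}^t$: on $[0,t-1]$ the estimate is immediate, while on $[t-1,t]$ one expands $u^{(5)}(s)=\int_0^s \mathbb{S}_{w_1}(s;s')K_{w_1} u^{(4)}(s')\,ds'$ and applies the same identity to $\mathbb{S}_{w_1}(s;s')$, where the larger time gap $t-s'$ absorbs the singularity. Iterating the procedure once more to obtain the second $\nabla_x$ exhausts the six levels of the Picard iteration: two levels are consumed per $\nabla_x$, and the remaining two supply the base $L^2$ bound together with the buffer required near $s=t$.

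The main difficulty lies in the simultaneous bookkeeping of the $1/(t-s)$ singularities, the polynomial time weights $(1+t-s)^{(2-\gamma)/\gamma}$ from Lemma \ref{lemma Regularization estimate need}, and the $\xi$-weights gained and lost by $K_{w_1}$ and $\mathbb{S}_{w_1}(t;s)$ (which must cancel to leave a clean $L^2_\xi$ bound). The resulting iterated time integrals, schematically $\iint (1+t-s_1)^{\alpha_1}(1+s_1-s_0)^{\alpha_2}(1+s_0)^{-1}\,ds_0\,ds_1$ with $\alpha_i$ depending on $\gamma$, are uniformly bounded when $-1<\gamma<0$ (all exponents strictly below $-1$), yield a borderline logarithm absorbed into an arbitrarily small $(1+t)^\varsigma$ factor at $\gamma=-1$, and grow as $(1+t)^{5+5/\gamma}$ when $-2<\gamma<-1$, exactly producing the three cases of the statement. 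The commutator contributions involving $\nabla_x\tilde\nu$ (bounded by Lemma \ref{Derivative of wi}) produce factors proportional to $\delta M$, which account for the overall $(1+\delta M)$ prefactor, while the dependence on the initial data and the source $\Gamma_{w_1}(g_1,h_1)$ propagates directly from the base $L^2$ estimate of Lemma \ref{L2-u}.
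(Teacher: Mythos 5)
Your high-level plan — trade $\xi$-regularity from $K_{w_1}$ for $x$-regularity via the commuting operator $\mathcal{D}_{t-s}$, and use the Picard iteration depth to accumulate two $x$-derivatives — is the right one and matches the paper's strategy. However, two steps in your mechanism do not hold up as written.

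First, your bounding of ``the second term'' $\nabla_\xi\bigl[\mathbb{S}_{w_1}(t;s)q\bigr]$ via \eqref{Kw8} tacitly assumes $\nabla_\xi$ commutes with $\mathbb{S}_{w_1}(t;s)$, but it does not: expanding the definition gives $\nabla_\xi\bigl[\mathbb{S}_{w_1}(t;s)q\bigr]=-(t-s)\,\mathbb{S}_{w_1}(t;s)[\nabla_x q]+\mathbb{S}_{w_1}(t;s)[\nabla_\xi q]+q\,\nabla_\xi\!\exp(\cdots)$. The first term reintroduces precisely the $\nabla_x q=\nabla_x K_{w_1}u^{(5)}$ that the identity was supposed to eliminate; estimating it requires $\nabla_x u^{(5)}$, so the argument is circular if the two terms on the right are bounded separately. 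The cancellation of the $(t-s)\nabla_x q$ piece happens only \emph{inside} the combination $\mathcal{D}_{t-s}-\nabla_\xi$, which is why the paper applies that combination to a single factor $\mathbb{M}_i$ in the iterated integral, never splitting it.

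Second, and more seriously, the crude cutoff $\int_0^t=\int_0^{t-1}+\int_{t-1}^t$ does not actually remove the non-integrable $1/(t-s)$ singularity. On $[t-1,t]$, pushing $\nabla_x$ past $\mathbb{S}_{w_1}(t;s)K_{w_1}$ and onto $u^{(5)}(s)$ and applying the identity to $\mathbb{S}_{w_1}(s;s')$ produces a factor $1/(s-s')$, not $1/(t-s')$ — the operator $\mathcal{D}_{t-s'}$ does not commute through the intermediate $K_{w_1}$, so ``the larger time gap $t-s'$'' cannot be invoked. Iterating the cutoff pushes the non-integrable $1/(\text{adjacent gap})$ down the Picard chain until it hits $u^{(0)}$, whose $\nabla_x$ is unavailable under the hypotheses on $f_{w_1 0}$. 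What the paper does instead is insert the algebraic partition of unity $1=\frac{s_1-s_2}{s_1-s_3}+\frac{s_2-s_3}{s_1-s_3}$ before differentiating, so that the $\nabla_x$ lands with the factor $(s_i-s_{i+1})\nabla_x=\mathcal{D}_{s_i-s_{i+1}}-\nabla_\xi$ on whichever of the two adjacent slots has the matching weight, leaving an overall singularity $\frac{1}{s_1-s_3}$ rather than $\frac{1}{s_1-s_2}$ or $\frac{1}{s_2-s_3}$. This combined singularity \emph{is} integrable because the middle integration variable $s_2$ ranges over an interval of length exactly $s_1-s_3$, whose Jacobian cancels it. That convex-combination trick is the essential ingredient your argument is missing, and the paper's iterated-integral bookkeeping (with $\mathbb{M}_1\mathbb{M}_2\mathbb{M}_3u^{(3)}$ for one derivative, $\mathbb{M}_1\cdots\mathbb{M}_6u^{(0)}$ for two) is built precisely around it.

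Your discussion of the $\gamma$-dependent time exponents, the $(1+\delta M)$ prefactor from commuting $\nabla_x$ past $\tilde\nu$, and the base $L^2$ bound from Lemma \ref{L2-u} are all consistent with the paper, so once the singularity mechanism is replaced with the partition-of-unity identity the rest of your outline should go through.
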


\begin{proof}
In view of Lemma \ref{lemma L2 estimate of u^(j)},
\begin{equation*}
\left \Vert u^{(6)}(t,x,\xi )\right \Vert _{L^{2}}\lesssim \eta \left \Vert
f_{w_{1}0}\right \Vert _{L_{\xi ,\beta }^{\infty
}L_{x}^{2}}+(1+t)^{-1}C_{g_{1},T}^{2}C_{h_{1},T}^{\infty }\text{.}
\end{equation*}%
Next, we prove the estimate for the first $x$-derivative of $u^{(6)}$. Note
that%
\begin{eqnarray*}
&&\nabla _{x}u^{(6)}(t,x,\xi ) \\
&=&\nabla _{x}\int_{0}^{t}\int_{0}^{s_{1}}\int_{0}^{s_{2}}\mathbb{M}_{1}%
\mathbb{M}_{2}\left( \frac{s_{1}-s_{2}}{s_{1}-s_{3}}\mathbb{M}_{3}\right)
u^{(3)}(s_{3},\cdot ,\cdot )ds_{3}ds_{2}ds_{1} \\
&&+\nabla _{x}\int_{0}^{t}\int_{0}^{s_{1}}\int_{0}^{s_{2}}\mathbb{M}_{1}%
\mathbb{M}_{2}\left( \frac{s_{2}-s_{3}}{s_{1}-s_{3}}\mathbb{M}_{3}\right)
u^{(3)}(s_{3},\cdot ,\cdot )ds_{3}ds_{2}ds_{1} \\
&=&\int_{0}^{t}\int_{0}^{s_{1}}\int_{0}^{s_{2}}\frac{1}{s_{1}-s_{3}}\mathbb{M%
}_{1}\left( \mathcal{D}_{s_{1}-s_{2}}-\nabla _{\xi }\right) \mathbb{M}_{2}%
\mathbb{M}_{3}u^{(3)}\left( s_{3},\cdot ,\cdot \right) ds_{3}ds_{2}ds_{1} \\
&&+\int_{0}^{t}\int_{0}^{s_{1}}\int_{0}^{s_{2}}\frac{1}{s_{1}-s_{3}}\mathbb{M%
}_{1}\mathbb{M}_{2}\left( \mathcal{D}_{s_{2}-s_{3}}-\nabla _{\xi }\right)
\mathbb{M}_{3}u^{(3)}\left( s_{3},\cdot ,\cdot \right) ds_{3}ds_{2}ds_{1} \\
&&+\int_{0}^{t}T\left( s_{1},x,\xi ,t\right) ds_{1}\text{,}
\end{eqnarray*}%
where $\mathbb{M}_{i}=\mathbb{S}_{w_{1}}(s_{i-1};s_{i})[K_{w_{1}}]_{s_{i}}$
, $[K_{w_{1}}]_{s_{i}}=w_{1}\left( s_{i},x,\xi \right) Kw_{1}^{-1}\left(
s_{i},x,\xi _{\ast }\right) $, $s_{0}\equiv t$, and%
\begin{eqnarray*}
&&\int_{0}^{t}T\left( s_{1},x,\xi ,t\right) ds_{1} \\
&=&\int_{0}^{t}\int_{0}^{s_{1}}\int_{0}^{s_{2}}\left[ \nabla _{x},\mathbb{S}%
_{w_{1}}(t;s_{1})\right] [K_{w_{1}}]_{s_{1}}\mathbb{M}_{2}\mathbb{M}%
_{3}u^{(3)}\left( s_{3},\cdot ,\cdot \right) ds_{3}ds_{2}ds_{1} \\
&&+\int_{0}^{t}\int_{0}^{s_{1}}\int_{0}^{s_{2}}\mathbb{S}_{w_{1}}(t;s_{1})%
\left[ \nabla _{x},[K_{w_{1}}]_{s_{1}}\right] \mathbb{M}_{2}\mathbb{M}%
_{3}u^{(3)}\left( s_{3},\cdot ,\cdot \right) ds_{3}ds_{2}ds_{1} \\
&&+\int_{0}^{t}\int_{0}^{s_{1}}\int_{0}^{s_{2}}\frac{s_{2}-s_{3}}{s_{1}-s_{3}%
}\mathbb{M}_{1}\left[ \nabla _{x},\mathbb{S}_{w_{1}}(s_{1};s_{2})\right]
[K_{w_{1}}]_{s_{2}}\mathbb{M}_{3}u^{(3)}\left( s_{3},\cdot ,\cdot \right)
ds_{3}ds_{2}ds_{1} \\
&&+\int_{0}^{t}\int_{0}^{s_{1}}\int_{0}^{s_{2}}\frac{s_{2}-s_{3}}{s_{1}-s_{3}%
}\mathbb{M}_{1}\mathbb{S}_{w_{1}}(s_{1};s_{2})\left[ \nabla
_{x},[K_{w_{1}}]_{s_{2}}\right] \mathbb{M}_{3}u^{(3)}\left( s_{3},\cdot
,\cdot \right) ds_{3}ds_{2}ds_{1}\text{.}
\end{eqnarray*}%
Note that
\begin{equation*}
\left \Vert \left[ \nabla _{x},\mathbb{S}_{w_{1}}(s_{i-1};s_{i})\right]
[K_{w_{1}}]_{s_{i}}q(s_{i},x,\xi )\right \Vert _{L^{2}}\lesssim
(1+s_{i-1}-s_{i})^{\frac{2-\gamma }{\gamma }}\left \Vert q(s_{i},\cdot
,\cdot )\right \Vert _{L^{2}}\,
\end{equation*}%
and
\begin{equation*}
\left \Vert \mathbb{S}_{w_{1}}(s_{i-1};s_{i})\left[ \nabla
_{x},[K_{w_{1}}]_{s_{i}}\right] q(s_{i},x,\xi )\right \Vert _{L^{2}}\lesssim
(1+s_{i-1}-s_{i})^{\frac{2-\gamma }{\gamma }}\left \Vert q(s_{i},\cdot
,\cdot )\right \Vert _{L^{2}}\text{,}
\end{equation*}%
for $i=1$, $2$. By (\ref{Kw7}), (\ref{Kw8}), (\ref{S4}), (\ref{eq:
Regularization estimate 4}) and Lemma \ref{lemma L2 estimate of u^(j)}, we
obtain
\begin{align*}
& \quad \left \Vert \nabla _{x}u^{(6)}(t,x,\xi )\right \Vert _{L^{2}} \\
& \lesssim \left( 1+\delta M\right)
\int_{0}^{t}\int_{0}^{s_{1}}\int_{0}^{s_{2}}\left \{ (1+t-s_{1})^{\frac{%
2-\gamma }{\gamma }}(1+s_{1}-s_{2})^{\frac{1}{\gamma }}(1+s_{2}-s_{3})^{%
\frac{1}{\gamma }}\left( 1+\frac{1}{s_{1}-s_{3}}\right) \right \} \\
& \cdot \left( \eta \left \Vert f_{w_{1}0}\right \Vert _{L_{\xi ,\beta
}^{\infty }L_{x}^{2}}+(1+s_{3})^{-1}C_{g_{1},T}^{2}C_{g_{2},T}^{\infty
}\right) ds_{3}ds_{2}ds_{1} \\
& \lesssim \mathbb{A}\cdot
\int_{0}^{t}\int_{0}^{s_{1}}\int_{0}^{s_{2}}(1+t-s_{1})^{\frac{2-\gamma }{%
\gamma }}(1+s_{1}-s_{2})^{\frac{1}{\gamma }}(1+s_{2}-s_{3})^{\frac{1}{\gamma
}}ds_{3}ds_{2}ds_{1} \\
& +\mathbb{A}\int_{0}^{t}\int_{0}^{s_{1}}\int_{s_{3}}^{s_{1}}(1+t-s_{1})^{%
\frac{2-\gamma }{\gamma }}(1+s_{1}-s_{2})^{\frac{1}{\gamma }%
}(1+s_{2}-s_{3})^{\frac{1}{\gamma }}\frac{1}{s_{1}-s_{3}}ds_{2}ds_{3}ds_{1}
\\
& \lesssim \mathbb{A}\int_{0}^{t}\int_{0}^{s_{1}}%
\int_{0}^{s_{2}}(1+t-s_{1})^{\frac{2-\gamma }{\gamma }}(1+s_{1}-s_{2})^{%
\frac{1}{\gamma }}(1+s_{2}-s_{3})^{\frac{1}{\gamma }}ds_{3}ds_{2}ds_{1} \\
& +\mathbb{A}\int_{0}^{t}\int_{0}^{s_{1}}(1+t-s_{1})^{\frac{2-\gamma }{%
\gamma }}(1+s_{1}-s_{3})^{\frac{1}{\gamma }}ds_{3}ds_{1} \\
& \lesssim \left \{
\begin{array}{ll}
\mathbb{A}\text{,}%
\vspace {3mm}
& \text{if }-1<\gamma <0\text{,} \\
\mathbb{A}\left( 1+t\right) ^{\varsigma }\text{,}%
\vspace {3mm}
& \text{if }\gamma =-1\text{,} \\
\mathbb{A}(1+t)^{\frac{2}{\gamma }+2}\text{,} & \text{if }-2<\gamma <-1\text{%
,}%
\end{array}%
\right.
\end{align*}%
where $\mathbb{A=}\left( 1+\delta M\right) \left( \eta \left \Vert
f_{w_{1}0}\right \Vert _{L_{\xi ,\beta }^{\infty
}L_{x}^{2}}+C_{g_{1},T}^{2}C_{g_{2},T}^{\infty }\right) $. Here, the third
inequality holds since $\gamma <0$ and $(1+s_{1}-s_{2})(1+s_{2}-s_{3})\geq
1+s_{1}-s_{3}$ for $s_{3}\leq s_{2}\leq s_{1}$.

For $\left \Vert \nabla _{x}^{2}u^{(6)}(t,x,\xi )\right \Vert _{L^{2}}$,
rewrite
\begin{eqnarray*}
u^{(6)} &=&\int_{0}^{t}\int_{0}^{s_{1}}\int_{0}^{s_{2}}\mathbb{M}_{1}\mathbb{%
M}_{2}\mathbb{M}_{3}u^{(3)}(s_{3},\cdot ,\cdot )ds_{3}ds_{2}ds_{1} \\
&=&\int_{0}^{t}\int_{0}^{s_{1}}\int_{0}^{s_{2}}\int_{0}^{s_{3}}%
\int_{0}^{s_{4}}\int_{0}^{s_{5}}\mathbb{M}_{1}\mathbb{M}_{2}\mathbb{M}_{3}%
\mathbb{M}_{4}\mathbb{M}_{5}\mathbb{M}_{6}u^{(0)}\left( s_{6},\cdot ,\cdot
\right) ds\text{,}
\end{eqnarray*}%
where $ds=ds_{6}ds_{5}ds_{4}ds_{3}ds_{2}ds_{1}$, and then we can obtain
\begin{eqnarray*}
&&\left \Vert \nabla _{x}^{2}u^{(6)}(t,x,\xi )\right \Vert _{L^{2}} \\
&\lesssim &\mathbb{A\cdot }\int_{0}^{t}\int_{0}^{s_{1}}\int_{0}^{s_{2}}%
\int_{0}^{s_{4}}\int_{0}^{s_{5}}\int_{0}^{s_{6}}(1+t-s_{1})^{\frac{2-\gamma
}{\gamma }}(1+s_{1}-s_{2})^{\frac{1}{\gamma }}\cdots \left(
1+s_{5}-s_{6}\right) ^{\frac{1}{\gamma }} \\
&&\cdot \left( 1+\frac{1}{s_{1}-s_{3}}\right) \left( 1+\frac{1}{s_{4}-s_{6}}%
\right) ds \\
&\lesssim &\left \{
\begin{array}{ll}
\mathbb{A}\text{,}%
\vspace {3mm}
& \text{if }-1<\gamma <0\text{,} \\
\mathbb{A}\left( 1+t\right) ^{\varsigma }\text{,}%
\vspace {3mm}
& \text{if }\gamma =-1\text{,} \\
\mathbb{A}\left( 1+t\right) ^{5+\frac{5}{\gamma }}\text{,} & \text{if }%
-2<\gamma <-1\text{.}%
\end{array}%
\right.
\end{eqnarray*}%
by the same argument. The proof of this lemma is completed.
\end{proof}


\subsection{Estimate of the remainder part}

In this subsection, we return to deal with the remainder part $\mathcal{R}%
_{w_{1}}^{(6)}$.

\begin{proposition}[Regularization estimate on $\mathcal{R}_{w_{1}}^{(6)}$]
\label{Regularization estimate on R(6)}Let $\varsigma $ be any positive
number with $0<\varsigma \ll 1$. Then%
\begin{eqnarray*}
&&\left \Vert \mathcal{R}_{w_{1}}^{(6)}\right \Vert _{L_{\xi }^{2}H_{x}^{2}}
\\
&\lesssim &\left( 1+\delta M\right) \left( \eta \left \Vert f_{w_{1}0}\right
\Vert _{L_{\xi ,\beta }^{\infty
}L_{x}^{2}}+C_{g_{1},T}^{2}C_{h_{1},T}^{\infty }\right) \cdot \left \{
\begin{array}{ll}
\left( 1+t\right) ^{2}\text{,}%
\vspace {3mm}
& \text{if }-1<\gamma <0\text{,} \\
\left( 1+t\right) ^{2+\varsigma }\text{,}%
\vspace {3mm}
& \text{if }\gamma =-1\text{,} \\
\left( 1+t\right) ^{7+\frac{5}{\gamma }}\text{,} & \text{if }-2<\gamma <-1%
\text{.}%
\end{array}%
\right.
\end{eqnarray*}
\end{proposition}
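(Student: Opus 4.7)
My plan is to carry out the weighted $L^2$ energy estimate up to the $H_x^2$-level on the remainder equation \eqref{remainder-eq}, exploiting the coercivity provided by Lemma \ref{weighted-linear} and treating $K_{w_1}u^{(6)}$ as an inhomogeneous forcing whose size has already been controlled in Lemma \ref{lemma: H2 estimate of h6}. Concretely, for each multi-index $\alpha$ with $|\alpha|\leq 2$ I would apply $\partial_x^\alpha$ to \eqref{remainder-eq}, pair the result with $\partial_x^\alpha\mathcal{R}_{w_1}^{(6)}$, integrate over $\mathbb{R}^3\times\mathbb{R}^3$, and sum over $\alpha$. Since $\partial_x^\alpha$ commutes with the free transport and, up to harmless lower-order contributions coming from derivatives of $w_1$ controlled by Lemma \ref{Derivative of wi}, with $\tilde{\nu}$ and $K_{w_1}$, the linear side is exactly the quantity estimated by Lemma \ref{weighted-linear}. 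Taking $\delta$ small, $D$ large and $\delta M$ small as permitted there, this delivers a negative microscopic dissipation $\nu_0\|P_1\partial_x^\alpha \mathcal{R}_{w_1}^{(6)}\|_{L_\sigma^2}^2$ together with a favorable outgoing flux on $H_+^D$, at the cost of positive macroscopic bulk terms supported on $H_0^D\cup H_-^D$.

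The forcing term is handled by Cauchy--Schwarz together with \eqref{Kw7} (taking $\tau=0$), after commuting $\nabla_x$ through $K_{w_1}$ via \eqref{der-w1-first} in the spirit of Lemma \ref{lemma Regularization estimate need}. This produces
\begin{equation*}
\left|\iint (\partial_x^\alpha K_{w_1}u^{(6)})\,\partial_x^\alpha\mathcal{R}_{w_1}^{(6)}\,dx\,d\xi\right| \leq \epsilon_0\,\|\partial_x^\alpha\mathcal{R}_{w_1}^{(6)}\|_{L^2}^2 + C_{\epsilon_0}\|u^{(6)}\|_{L_\xi^2 H_x^2}^2,
\end{equation*}
and choosing $\epsilon_0$ small enough that the first term is absorbed into the microscopic dissipation, Lemma \ref{lemma: H2 estimate of h6} supplies the explicit polynomial-in-$t$ bound for $\|u^{(6)}\|_{L_\xi^2 H_x^2}$ whose exponent depends on the range of $\gamma$.

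The main technical obstacle is the residual positive macroscopic terms from Lemma \ref{weighted-linear}, because coercivity only controls the microscopic projection $P_1$, whereas these terms involve $P_0\partial_x^\alpha\mathcal{R}_{w_1}^{(6)}$. The key point is that their prefactors $C_1D^{-2}+C_2\delta+C_5\delta M$ are strictly smaller than the $H_+^D$ dissipation prefactor $C_4\delta M$ once $\delta$, $D$ and $M$ are tuned within the range allowed by Lemma \ref{weighted-linear}, so they cannot trigger genuine exponential growth; they can only produce polynomial corrections when integrated. Once the resulting differential inequality
\begin{equation*}
\frac{d}{dt}\sum_{|\alpha|\leq 2}\|\partial_x^\alpha\mathcal{R}_{w_1}^{(6)}\|_{L^2}^2 \lesssim \|u^{(6)}\|_{L_\xi^2 H_x^2}^2 + (\text{lower-order macro perturbation})
\end{equation*}
is in hand, integrating from $0$ to $t$ with zero initial data and inserting the bound from Lemma \ref{lemma: H2 estimate of h6} yields the polynomial orders $(1+t)^2$, $(1+t)^{2+\varsigma}$ and $(1+t)^{7+5/\gamma}$ in the three ranges of $\gamma$ claimed in the statement; the extra time loss in the regime $-2<\gamma<-1$ reflects both the growth of the source already present in Lemma \ref{lemma: H2 estimate of h6} and the factor $(1+t)$ incurred by the time integration together with the macroscopic perturbation just discussed.
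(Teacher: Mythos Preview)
Your approach has a genuine gap in the treatment of the macroscopic terms on $H_0^D\cup H_-^D$. The claim that these can be absorbed because ``their prefactors are strictly smaller than the $H_+^D$ dissipation prefactor'' is incorrect: the dissipative term in Lemma \ref{weighted-linear} lives on $H_+^D$, which is \emph{disjoint} from $H_0^D\cup H_-^D$, and moreover carries the weight $[\delta(\langle x\rangle-Mt)]^{-1}$, so it controls neither the full $\|\mathrm{P}_0\partial_x^\alpha\mathcal{R}_{w_1}^{(6)}\|_{L^2}^2$ nor its restriction to $H_0^D\cup H_-^D$. Likewise, your absorption of $\epsilon_0\|\partial_x^\alpha\mathcal{R}_{w_1}^{(6)}\|_{L^2}^2$ into the microscopic dissipation fails for the macroscopic component, since coercivity only damps $\mathrm{P}_1$. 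What remains after Lemma \ref{weighted-linear} is therefore an inequality of the type $\frac{d}{dt}Y\lesssim cY+F(t)$ with $c>0$ however small, and Gr\"onwall then gives exponential, not polynomial, growth.

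The paper closes this gap by a separate step you are missing: on $H_0^D\cup H_-^D$ one has $w_1\sim\langle\xi\rangle_D^p$, so the bad macroscopic terms are bounded by $\|w_1^{-1}\mathcal{R}_{w_1}^{(6)}\|_{L_\xi^2H_x^2}^2$. Setting $z=w_1^{-1}\mathcal{R}_{w_1}^{(6)}$, one observes that $z$ solves the \emph{unweighted} linear equation
\[
\partial_t z+\xi\cdot\nabla_x z=Lz+K\bigl(w_1^{-1}u^{(6)}\bigr),
\]
for which the standard coercivity \eqref{coercivity} holds with \emph{no} positive macroscopic remainder. A clean energy estimate then gives $\frac{d}{dt}\|z\|_{L_\xi^2H_x^2}\lesssim\|u^{(6)}\|_{L_\xi^2H_x^2}$, hence $\|z\|_{L_\xi^2H_x^2}\lesssim\int_0^t\|u^{(6)}(s)\|_{L_\xi^2H_x^2}\,ds$, which is controlled by Lemma \ref{lemma: H2 estimate of h6}. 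Feeding this back into the differential inequality for $\|\mathcal{R}_{w_1}^{(6)}\|_{L_\xi^2H_x^2}^2$ yields the stated polynomial rates. This auxiliary unweighted estimate is the essential missing idea in your proposal.
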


\proof%
In view of Lemma \ref{weighted-linear},
\begin{eqnarray*}
\frac{1}{2}\frac{d}{dt}\left \Vert \mathcal{R}_{w_{1}}^{(6)}\right \Vert
_{L_{\xi }^{2}H_{x}^{2}}^{2} &\lesssim &\int_{H_{0}^{D}\cup
H_{-}^{D}}\sum_{|\alpha |\leq 2}\left \vert \partial _{x}^{\alpha }\mathrm{P}%
_{0}\mathcal{R}_{w_{1}}^{(6)}\right \vert ^{2}d\xi dx+\left \Vert \mathcal{R}%
_{w_{1}}^{(6)}\right \Vert _{L_{\xi }^{2}H_{x}^{2}}\left \Vert
K_{w_{1}}u^{(6)}\right \Vert _{L_{\xi }^{2}H_{x}^{2}} \\
&\lesssim &\sum_{|\alpha |\leq 2}\left \Vert w_{1}^{-1}\partial _{x}^{\alpha
}\mathcal{R}_{w_{1}}^{(6)}\right \Vert _{L^{2}}^{2}+\left \Vert \mathcal{R}%
_{w_{1}}^{(6)}\right \Vert _{L_{\xi }^{2}H_{x}^{2}}\left \Vert u^{(6)}\right
\Vert _{L_{\xi }^{2}H_{x}^{2}} \\
&\lesssim &\left \Vert w_{1}^{-1}\mathcal{R}_{w_{1}}^{(6)}\right \Vert
_{L_{\xi }^{2}H_{x}^{2}}^{2}+\left \Vert \mathcal{R}_{w_{1}}^{(6)}\right
\Vert _{L_{\xi }^{2}H_{x}^{2}}\left \Vert u^{(6)}\right \Vert _{L_{\xi
}^{2}H_{x}^{2}}\text{,}
\end{eqnarray*}%
the last inequality being valid since
\begin{equation*}
\sum_{|\alpha |\leq 2}\left \Vert w_{1}^{-1}\partial _{x}^{\alpha }\mathcal{R%
}_{w_{1}}^{(6)}\right \Vert _{L^{2}}\leq C\left \Vert w_{1}^{-1}\mathcal{R}%
_{w_{1}}^{(6)}\right \Vert _{L_{\xi }^{2}H_{x}^{2}}
\end{equation*}%
for some constant $C>0$.

Now we need to estimate $\left \Vert w_{1}^{-1}\mathcal{R}%
_{w_{1}}^{(6)}\right \Vert _{L_{\xi }^{2}H_{x}^{2}}$. Let $z=w_{1}^{-1}%
\mathcal{R}_{w_{1}}^{(6)}$ and then $z$ solves the equation
\begin{equation}
\partial _{t}z+\xi \cdot \nabla _{x}z=Lz+K\left( w_{1}^{-1}u^{(6)}\right)
\text{.}  \label{eq: equation of w-1Rw}
\end{equation}%
By the energy estimate and (\ref{coercivity}), we have%
\begin{eqnarray*}
\frac{1}{2}\frac{d}{dt}\left \Vert z\right \Vert _{L^{2}}^{2} &\leq &\int_{%
\mathbb{R}^{6}}zLzd\xi dx+\left \Vert zKw_{1}^{-1}u^{(6)}\right \Vert
_{L^{2}} \\
&\lesssim &\left \Vert z\right \Vert _{L^{2}}\left \Vert
w_{1}^{-1}u^{(6)}\right \Vert _{L^{2}}\lesssim \left \Vert z\right \Vert
_{L^{2}}\left \Vert u^{(6)}\right \Vert _{L^{2}}\text{ ,}
\end{eqnarray*}%
since $w_{1}(t,x,\xi )\geq 3\left \langle \xi \right \rangle _{D}^{p}\geq 1$%
. Therefore,
\begin{equation}
\frac{d}{dt}\left \Vert z\right \Vert _{L^{2}}\lesssim \left \Vert
u^{(6)}\right \Vert _{L^{2}}\text{.}
\end{equation}%
Moreover, in view of (\ref{eq: equation of w-1Rw}),%
\begin{equation*}
\partial _{t}\left( \partial _{x_{i}}z\right) +\xi \cdot \nabla _{x}\left(
\partial _{x_{i}}z\right) =L\left( \partial _{x_{i}}z\right) +K\left( \left(
\partial _{x_{i}}w_{1}^{-1}\right) u^{(6)}\right) +K\left(
w_{1}^{-1}\partial _{x_{i}}u^{(6)}\right) \text{,}
\end{equation*}%
\begin{eqnarray*}
\partial _{t}\left( \partial _{x_{i}x_{k}}^{2}z\right) +\xi \cdot \nabla
_{x}\left( \partial _{x_{i}x_{k}}^{2}z\right) &=&L\left( \partial
_{x_{i}x_{k}}^{2}z\right) +K\left( \left( \partial
_{x_{i}x_{k}}^{2}w_{1}^{-1}\right) u^{(6)}\right) +K\left( \left( \partial
_{x_{i}}w_{1}^{-1}\right) \partial _{x_{k}}u^{(6)}\right)
\vspace {3mm}
\\
&&+K\left( \left( \partial _{x_{k}}w_{1}^{-1}\right) \partial
_{x_{i}}u^{(6)}\right) +K\left( w_{1}^{-1}\partial
_{x_{i}x_{k}}^{2}u^{(6)}\right) \text{.}
\end{eqnarray*}%
By the energy estimate and (\ref{coercivity}) again, together with Lemma \ref%
{Derivative of wi}, we deduce that
\begin{equation*}
\frac{d}{dt}\sum_{|\alpha |\leq 2}\left \Vert \partial _{x}^{\alpha }z\right
\Vert _{L^{2}}\lesssim \left \Vert u^{(6)}\right \Vert _{L_{\xi
}^{2}H_{x}^{2}}\text{.}
\end{equation*}%
Since $z\left( 0,x,\xi \right) =0$, it implies that
\begin{equation*}
\sum_{|\alpha |\leq 2}\left \Vert w_{1}^{-1}\partial _{x}^{\alpha }\mathcal{R%
}_{w_{1}}^{(6)}\right \Vert _{L^{2}}\lesssim \sum_{|\alpha |\leq 2}\left
\Vert \partial _{x}^{\alpha }z\right \Vert _{L^{2}}\lesssim
\int_{0}^{t}\left \Vert u^{(6)}(\cdot ,s)\right \Vert _{L_{\xi
}^{2}H_{x}^{2}}ds\text{.}
\end{equation*}

According to Lemma \ref{lemma: H2 estimate of h6}, whenever $-1<\gamma <0$,
we have%
\begin{align*}
& \int_{0}^{t}\left \Vert u^{(6)}(s,\cdot )\right \Vert _{L_{\xi
}^{2}H_{x}^{2}}ds \\
& \lesssim \int_{0}^{t}\left( 1+\delta M\right) \left[ \eta \left \Vert
f_{w_{1}0}\right \Vert _{L_{\xi ,\beta }^{\infty
}L_{x}^{2}}+(1+s)^{-1}C_{g_{1},T}^{2}C_{h_{1},T}^{\infty }\right] ds \\
& \lesssim \left( 1+\delta M\right) \left( 1+t\right) \left( \eta \left
\Vert f_{w_{1}0}\right \Vert _{L_{\xi ,\beta }^{\infty
}L_{x}^{2}}+C_{g_{1},T}^{2}C_{h_{1},T}^{\infty }\right) \text{,}
\end{align*}%
and thus
\begin{eqnarray*}
\frac{d}{dt}\left \Vert \mathcal{R}_{w_{1}}^{(6)}\right \Vert _{L_{\xi
}^{2}H_{x}^{2}}^{2} &\lesssim &\left( 1+t\right) ^{2}\left[ \left( 1+\delta
M\right) \left( \eta \left \Vert f_{w_{1}0}\right \Vert _{L_{\xi ,\beta
}^{\infty }L_{x}^{2}}+C_{g_{1},T}^{2}C_{h_{1},T}^{\infty }\right) \right]
^{2} \\
&&+\left \Vert \mathcal{R}_{w_{1}}^{(6)}\right \Vert _{L_{\xi
}^{2}H_{x}^{2}}\left( 1+t\right) \left( 1+\delta M\right) \left( \eta \left
\Vert f_{w_{1}0}\right \Vert _{L_{\xi ,\beta }^{\infty
}L_{x}^{2}}+C_{g_{1},T}^{2}C_{h_{1},T}^{\infty }\right) \text{.}
\end{eqnarray*}%
As a consequence,
\begin{equation*}
\left \Vert \mathcal{R}_{w_{1}}^{(6)}\right \Vert _{L_{\xi
}^{2}H_{x}^{2}}\lesssim \left( 1+t\right) ^{2}\left( 1+\delta M\right)
\left( \eta \left \Vert f_{w_{1}0}\right \Vert _{L_{\xi ,\beta }^{\infty
}L_{x}^{2}}+C_{g_{1},T}^{2}C_{h_{1},T}^{\infty }\right)
\end{equation*}%
for $-1<\gamma <0$. The other cases $-2<\gamma <-1$ and $\gamma =-1$ can be
obtained by the same argument and the proof of the proposition is completed.$%
\hfill%
\square $

Therefore, in view of Proposition \ref{Regularization estimate on R(6)}, the
Sobolev inequality implies that
\begin{eqnarray}
&&\left \Vert \mathcal{R}_{w_{1}}^{(6)}\right \Vert _{L_{\xi
}^{2}L_{x}^{\infty }}  \notag  \label{R^(6)-L2_xi-L^inf_x} \\
&\lesssim &\left \Vert \mathcal{R}_{w_{1}}^{(6)}\right \Vert _{L_{\xi
}^{2}H_{x}^{2}}^{3/4}\left \Vert \mathcal{R}_{w_{1}}^{(6)}\right \Vert
_{L^{2}}^{1/4} \\
&\lesssim &\left( 1+\delta M\right) \left( \eta \left \Vert f_{w_{1}0}\right
\Vert _{L_{\xi ,\beta }^{\infty
}L_{x}^{2}}+C_{g_{1},T}^{2}C_{h_{1},T}^{\infty }\right) \cdot \left \{
\begin{array}{ll}
\left( 1+t\right) ^{2}\text{,}%
\vspace {3mm}
& \text{if }-1<\gamma <0\text{,} \\
\left( 1+t\right) ^{2+\varsigma }\text{,}%
\vspace {3mm}
& \text{if }\gamma =-1\text{,} \\
\left( 1+t\right) ^{7+\frac{5}{\gamma }}\text{,} & \text{if }-2<\gamma <-1%
\text{,}%
\end{array}%
\right.  \notag
\end{eqnarray}%
for any $0<\varsigma \ll 1$. Together with Lemma \ref{pointwise-u}, we
obtain the estimate of $\left \Vert u\right \Vert _{L_{\xi
}^{2}L_{x}^{\infty }}$. Subsequently, we shall get the estimate for $%
\left
\Vert u\right \Vert _{L_{\xi ,\beta }^{\infty }L_{x}^{\infty }}$ via
the bootstrap argument and the details are given in the proof of Theorem \ref%
{theorem-linear}.

\subsection{Proof of Theorem \protect \ref{theorem-linear}}

Note that if $i\geq 2$, the solution $u$ to (\ref{inhom}) can be represented
as
\begin{equation*}
u(t,x,\xi )=W_{w_{1}}^{(i)}+\int_{0}^{t}\mathbb{S}_{w_{1}}(t,s)K_{w_{1}}%
\mathcal{R}_{w_{1}}^{(i-1)}(s)ds=W_{w_{1}}^{(i)}+\mathcal{R}_{w_{1}}^{(i)}%
\text{.}
\end{equation*}%
In view of Lemma \ref{pointwise-u}, it remains to estimate $\left \Vert
\mathcal{R}_{w_{1}}^{(i)}\right \Vert _{L_{\xi ,\beta }^{\infty
}L_{x}^{\infty }}$ for some $i$ in order to obtain the estimate for $%
\left
\Vert u\right \Vert _{L_{\xi ,\beta }^{\infty }L_{x}^{\infty }}$. To
obtain the estimate for $\left \Vert \mathcal{R}_{w_{1}}^{(i)}\right
\Vert
_{L_{\xi ,\beta }^{\infty }L_{x}^{\infty }}$, we consider $\gamma $ in two
different cases: $-2<\gamma \leq -3/2$ and $-3/2<\gamma <0$.\

In the case $-2<\gamma \leq -3/2$, in view of (\ref{eq: bootstrap 2}), (\ref%
{eq: bootstrap 3}) and (\ref{S2}), together with the fact that%
\begin{equation*}
\left \Vert \mathbb{S}_{w_{1}}(s,s_{1})q(s_{1},x,\xi )\right \Vert _{L_{\xi
}^{4}L_{x}^{\infty }}\lesssim (1+s-s_{1})^{\frac{1-\gamma }{\gamma }}\left
\Vert q(s_{1},\cdot ,\cdot )\right \Vert _{L_{\xi ,1-\gamma
}^{4}L_{x}^{\infty }}\text{,}
\end{equation*}%
we have%
\begin{align*}
\left \Vert \mathcal{R}_{w_{1}}^{(8)}\right \Vert _{L_{\xi }^{\infty
}L_{x}^{\infty }}& =\left \Vert \int_{0}^{t}\int_{0}^{s}\mathbb{S}%
_{w_{1}}(t,s)\left[ K_{w_{1}}\right] _{s}\mathbb{S}_{w_{1}}(s,s_{1})\left[
K_{w_{1}}\right] _{s_{1}}\mathcal{R}_{w_{1}}^{(6)}(s_{1})ds_{1}ds\right
\Vert _{L_{\xi }^{\infty }L_{x}^{\infty }} \\
& \lesssim \int_{0}^{t}\int_{0}^{s}(1+t-s)^{\frac{7/4-\gamma }{\gamma }%
}(1+s-s_{1})^{\frac{1-\gamma }{\gamma }}\left \Vert \mathcal{R}%
_{w_{1}}^{(6)}(s_{1})\right \Vert _{L_{\xi }^{2}L_{x}^{\infty }}ds_{1}ds \\
& \lesssim \int_{0}^{t}\int_{0}^{s}(1+t-s)^{\frac{7/4-\gamma }{\gamma }%
}(1+s-s_{1})^{\frac{1-\gamma }{\gamma }}\left( 1+s_{1}\right) ^{7+\frac{5}{%
\gamma }} \\
& \quad \quad \cdot \left( 1+\delta M\right) \left( \eta \left \Vert
f_{w_{1}0}\right \Vert _{L_{\xi ,\beta }^{\infty
}L_{x}^{2}}+C_{g_{1},T}^{2}C_{h_{1},T}^{\infty }\right) ds_{1}ds \\
& \lesssim \left( 1+t\right) ^{7+\frac{5}{\gamma }}\left( 1+\delta M\right)
\left( \eta \left \Vert f_{w_{1}0}\right \Vert _{L_{\xi ,\beta }^{\infty
}L_{x}^{2}}+C_{g_{1},T}^{2}C_{h_{1},T}^{\infty }\right) \text{.}
\end{align*}%
Combining this with Lemma \ref{pointwise-u}, it follows
\begin{align*}
\left \Vert u\right \Vert _{L_{\xi }^{\infty }L_{x}^{\infty }}& \leq \left
\Vert W_{w_{1}}^{(8)}\right \Vert _{L_{\xi }^{\infty }L_{x}^{\infty }}+\left
\Vert \mathcal{R}_{w_{1}}^{(8)}\right \Vert _{L_{\xi }^{\infty
}L_{x}^{\infty }} \\
& \lesssim \eta \left \Vert f_{w_{1}0}\right \Vert _{L_{\xi ,\beta }^{\infty
}L_{x}^{\infty }}+(1+t)^{-3/2+A+\varsigma }C_{g_{1},T}^{\infty
}C_{h_{1},T}^{\infty }\text{ } \\
& \quad +(1+t)^{7+\frac{5}{\gamma }}\left( 1+\delta M\right) \left( \eta
\left \Vert f_{w_{1}0}\right \Vert _{L_{\xi ,\beta }^{\infty
}L_{x}^{2}}+C_{g_{1},T}^{2}C_{h_{1},T}^{\infty }\right) \text{.}
\end{align*}%
Note that
\begin{equation}
u(t,x,\xi )=u^{\left( 0\right) }\left( t,x,\xi \right) +\int_{0}^{t}\mathbb{S%
}_{w_{1}}(t,s)K_{w_{1}}u(s)ds\text{.}  \label{u-implicit}
\end{equation}%
Hence, through (\ref{Kw6}), (\ref{S2}) and Lemma \ref{pointwise-u}, we infer
\begin{align*}
\left \Vert u\right \Vert _{L_{\xi ,\beta }^{\infty }L_{x}^{\infty }}&
\lesssim \eta \left \Vert f_{w_{1}0}\right \Vert _{L_{\xi ,\beta }^{\infty
}L_{x}^{\infty }}+(1+t)^{-3/2+A+\varsigma }C_{g_{1},T}^{\infty
}C_{h_{1},T}^{\infty }\text{ } \\
& \quad +(1+t)^{7+\frac{5}{\gamma }}\left( 1+\delta M\right) \left( \eta
\left \Vert f_{w_{1}0}\right \Vert _{L_{\xi ,\beta }^{\infty
}L_{x}^{2}}+C_{g_{1},T}^{2}C_{h_{1},T}^{\infty }\right) \text{,}
\end{align*}%
by the bootstrap argument.

In the case $-3/2<\gamma <0$, we decompose $u$ as $u=W_{w_{1}}^{(7)}+%
\mathcal{R}_{w_{1}}^{(7)}$. In view of (\ref{eq: bootstrap 1}) and (\ref{S2}%
),
\begin{eqnarray*}
\left \Vert \mathcal{R}_{w_{1}}^{(7)}\right \Vert _{L_{\xi }^{\infty
}L_{x}^{\infty }} &=&\left \Vert \int_{0}^{t}\mathbb{S}_{w_{1}}(t,s)\left[
K_{w_{1}}\right] _{s}\mathcal{R}_{w_{1}}^{(6)}(s)ds\right \Vert _{L_{\xi
}^{\infty }L_{x}^{\infty }} \\
&\lesssim &\int_{0}^{t}\left( 1+t-s\right) ^{\frac{\frac{3}{2}-\gamma }{%
\gamma }}\left \Vert \mathcal{R}_{w_{1}}^{(6)}\right \Vert _{L_{\xi
}^{2}L_{x}^{\infty }}ds\text{.}
\end{eqnarray*}%
Hence, we obtain the estimate of $\left \Vert u\right \Vert _{L_{\xi
}^{\infty }L_{x}^{\infty }}$ by using (\ref{R^(6)-L2_xi-L^inf_x}) and Lemma %
\ref{pointwise-u}. Again, through (\ref{Kw6}), (\ref{S2}), (\ref{u-implicit}%
), Lemma \ref{pointwise-u}, we can conclude that
\begin{align*}
\left \Vert u\right \Vert _{L_{\xi ,\beta }^{\infty }L_{x}^{\infty }}&
\lesssim \eta \left \Vert f_{w_{1}0}\right \Vert _{L_{\xi ,\beta }^{\infty
}L_{x}^{\infty }}+(1+t)^{-\frac{3}{2}+A+\varsigma }C_{g_{1,}T}^{\infty
}C_{h_{1,}T}^{\infty } \\
& \quad +\left( 1+\delta M\right) \left( \eta \left \Vert f_{w_{1}0}\right
\Vert _{L_{\xi ,\beta }^{\infty
}L_{x}^{2}}+C_{g_{1},T}^{2}C_{h_{1},T}^{\infty }\right) \cdot \left \{
\begin{array}{ll}
(1+t)^{2}\text{,}%
\vspace {3mm}
& \text{if }-1<\gamma <0\text{,} \\
(1+t)^{2+\varsigma }\text{,}%
\vspace {3mm}
& \text{if }\gamma =-1\text{,} \\
(1+t)^{7+\frac{5}{\gamma }}\text{,} & \text{if }\frac{-3}{2}<\gamma <-1\text{%
,}%
\end{array}%
\right.
\end{align*}%
by the bootstrap argument. This completes the proof of Theorem \ref%
{theorem-linear}.$%
\hfill%
\square $

\subsection{The result for the exponential weight function $w_{2}$}

For the exponential weight, we consider the inhomogeneous equation:
\begin{equation}
\left \{
\begin{array}{l}
\displaystyle \pa_{t}v+\xi \cdot \nabla _{x}v-\epsilon \left[ \partial
_{t}\rho +\xi \cdot \nabla _{x}\rho \right] v=L_{w_{2}}v+\Gamma
_{w_{2}}(g_{2},h_{2})\text{,}\, \\[4mm]
\displaystyle v(0,x,\xi )=\eta f_{w_{2}0}\text{.}%
\end{array}%
\right.  \label{inhom-exp}
\end{equation}%
Let $T>0$, $\beta >3/2$, and $0<p\leq 2$. Assume that $f_{w_{2}0}\in L_{\xi
,\beta }^{\infty }L_{x}^{2}\cap L_{\xi ,\beta }^{\infty }L_{x}^{\infty }$.
Also assume that $g_{2}$ and $h_{2}$ satisfy
\begin{equation}
\hat{C}_{g_{2},T}^{\infty }=\sup_{0\leq t\leq T}(1+t)^{-B}\left \Vert
g_{2}\right \Vert _{L_{\xi ,\beta }^{\infty }L_{x}^{\infty }}<\infty
\,,\quad \hat{C}_{g_{2},T}^{2}=\sup_{0\leq t\leq T}\left \Vert g_{2}\right
\Vert _{L_{\xi ,\beta }^{\infty }L_{x}^{2}}<\infty \text{,}  \label{h1}
\end{equation}%
for some constant $B>0$, and
\begin{equation}
\hat{C}_{h_{2},T}^{\infty }=\sup_{0\leq t\leq T}(1+t)^{\frac{3}{2}}\left
\Vert \left \langle \xi \right \rangle ^{p}e^{\epsilon c_{p}\left \langle
\xi \right \rangle ^{p}}h_{2}\right \Vert _{L_{\xi ,\beta }^{\infty
}L_{x}^{\infty }}<\infty \, \text{.}  \label{h2}
\end{equation}%
Here the constant $c_{p}>0$ is the same as in Lemma \ref{ratio}.

Under these assumptions, following a similar argument as in the previous
subsections, one can get the following theorem:

\begin{theorem}
\label{theorem-linear-exp}Let $\beta >3/2$ and $0<\varsigma \ll 1$. Assume
that $f_{w_{2}0}\in L_{\xi ,\beta }^{\infty }L_{x}^{2}\cap L_{\xi ,\beta
}^{\infty }L_{x}^{\infty }$, and that $g_{2},h_{2}$ satisfy $(\ref{h1})$ and
$(\ref{h2})$, respectively. Then the solution $v$ to $(\ref{inhom-exp})$
satisfies
\begin{align}
\left \Vert v\right \Vert _{L_{\xi ,\beta }^{\infty }L_{x}^{\infty }}&
\lesssim \eta \left \Vert f_{w_{2}0}\right \Vert _{L_{\xi ,\beta }^{\infty
}L_{x}^{\infty }}+(1+t)^{-\frac{3}{2}+B+\varsigma }\hat{C}_{g_{2},T}^{\infty
}\hat{C}_{h_{2},T}^{\infty }  \label{inhom-estimate-exp} \\
& \quad +\left( 1+\delta M\right) \left( \eta \left \Vert f_{w_{2}0}\right
\Vert _{L_{\xi ,\beta }^{\infty }L_{x}^{2}}+\hat{C}_{g_{2},T}^{2}\hat{C}%
_{h_{2},T}^{\infty }\right) \cdot \left \{
\begin{array}{ll}
(1+t)^{2}\text{,}%
\vspace {3mm}
& \text{if }-1<\gamma <0\text{,} \\
(1+t)^{2+\varsigma }\text{,}%
\vspace {3mm}
& \text{if }\gamma =-1\text{,} \\
(1+t)^{7+\frac{5}{\gamma }}\text{,} & \text{if }-2<\gamma <-1\text{,}%
\end{array}%
\right.  \notag
\end{align}%
for $0\leq t\leq T$.
\end{theorem}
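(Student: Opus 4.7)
The proof will proceed in exact parallel to that of Theorem \ref{theorem-linear}, with all occurrences of $w_{1}$ replaced by $w_{2}$ and $\widetilde{\nu}$ redefined as $\nu(\xi)-\epsilon(\partial_{t}\rho+\xi\cdot\nabla_{x}\rho)$, which by Lemma \ref{Derivative of wi} still satisfies $\widetilde{\nu}\geq \nu(\xi)/2$ once $\epsilon\delta$ and $\epsilon\delta M$ are chosen sufficiently small. The plan is to set up the Picard-type iteration $v=\sum_{i=0}^{m}v^{(i)}+\mathcal{R}_{w_{2}}^{(m)}$ treating $K_{w_{2}}v$ as a source term, where $v^{(0)}$ solves the damped transport equation with inhomogeneity $\Gamma_{w_{2}}(g_{2},h_{2})$, $v^{(i)}$ ($i\geq 1$) solves the damped transport equation with inhomogeneity $K_{w_{2}}v^{(i-1)}$, and $\mathcal{R}_{w_{2}}^{(m)}$ solves the analogue of \eqref{remainder-eq}. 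The corresponding evolution operator $\mathbb{S}_{w_{2}}(t;s)$ is defined as in \eqref{eq: def of S(t;s)}, and Lemma \ref{lemma pointwise estimate S(t;s)} carries over verbatim since $\widetilde{\nu}\geq \nu(\xi)/2$.

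First I would establish the $L_{\xi,\beta}^{\infty}L_{x}^{\infty}$ and $L^{2}$ bounds on each $v^{(i)}$ by induction, mirroring Lemmas \ref{pointwise-u} and \ref{L2-u}. The only change is in the nonlinear input: instead of \eqref{Gamma-w1-sup}--\eqref{Gamma-w1-w1-product}, one uses \eqref{Gamma-w2-sup}--\eqref{Gamma-w2-product}, so that the bound on $\Gamma_{w_{2}}(g_{2},h_{2})$ involves $|\langle\xi\rangle^{p}e^{\epsilon c_{p}\langle\xi\rangle^{p}}h_{2}|$. By the hypothesis \eqref{h2}, this quantity is under control with time decay $(1+s)^{-3/2}$, so the estimate goes through exactly as before, yielding
\[
\|v^{(i)}\|_{L_{\xi,\beta}^{\infty}L_{x}^{\infty}}\lesssim \eta\|f_{w_{2}0}\|_{L_{\xi,\beta}^{\infty}L_{x}^{\infty}}+(1+t)^{-3/2+B+\varsigma}\widehat{C}_{g_{2},T}^{\infty}\widehat{C}_{h_{2},T}^{\infty},
\]
together with the parallel $L^{2}$ bound.

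Next I would establish the $H_{x}^{2}$ regularization estimate on $v^{(6)}$ by imitating Lemma \ref{lemma: H2 estimate of h6}, using the differential operator $\mathcal{D}_{t}=t\nabla_{x}+\nabla_{\xi}$ which commutes with the transport operator. The required commutator estimate $\|\mathcal{D}_{t-s}\mathbb{S}_{w_{2}}(t;s)K_{w_{2}}q\|_{L^{2}}\lesssim ((1+t-s)^{(2-\gamma)/\gamma}+\delta M(1+t-s)^{1/\gamma})\|q\|_{L^{2}}$ is established in the same way as Lemma \ref{lemma Regularization estimate need}, the only difference being that the derivative estimates on $w_{2}$ from Lemma \ref{Derivative of wi} enter in place of those on $w_{1}$; these have the same structural form $\delta\langle\xi\rangle^{\gamma-1}$, $\delta M\langle\xi\rangle^{\gamma-1}$, etc., so the calculations of the triple and sextuple time integrals produce the same three-case $(1+t)^{2}$, $(1+t)^{2+\varsigma}$, $(1+t)^{7+5/\gamma}$ bounds.

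For the remainder $\mathcal{R}_{w_{2}}^{(6)}$, I would perform an $H_{x}^{2}$ energy estimate using Lemma \ref{weighted-linear} for the $w_{2}$ case. The subtlety here is that the positive boundary terms on $H_{0}^{1}$ and $H_{-}^{1}$ must be absorbed by the transference trick $z=w_{2}^{-1}\mathcal{R}_{w_{2}}^{(6)}$, exactly as in Proposition \ref{Regularization estimate on R(6)}; since $w_{2}\geq 1$, the bound $\sum_{|\alpha|\leq 2}\|w_{2}^{-1}\partial_{x}^{\alpha}\mathcal{R}_{w_{2}}^{(6)}\|_{L^{2}}\leq \int_{0}^{t}\|v^{(6)}(s)\|_{L_{\xi}^{2}H_{x}^{2}}ds$ follows from the coercivity of $L$ and Lemma \ref{Derivative of wi}, yielding the same three-case estimates. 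Sobolev embedding then gives $\|\mathcal{R}_{w_{2}}^{(6)}\|_{L_{\xi}^{2}L_{x}^{\infty}}$ via interpolation $\|\cdot\|_{L_{\xi}^{2}L_{x}^{\infty}}\lesssim \|\cdot\|_{L_{\xi}^{2}H_{x}^{2}}^{3/4}\|\cdot\|_{L^{2}}^{1/4}$. Finally I would bootstrap to $L_{\xi}^{\infty}L_{x}^{\infty}$ (splitting the analysis according to $-2<\gamma\leq -3/2$ or $-3/2<\gamma<0$, using Lemma \ref{L4-L2} and the weight-gain estimates \eqref{eq: bootstrap 1}--\eqref{eq: bootstrap 3} for $K_{w_{2}}$) and then use the implicit representation $v=v^{(0)}+\int_{0}^{t}\mathbb{S}_{w_{2}}(t,s)K_{w_{2}}v(s)\,ds$ with \eqref{Kw6} to promote the velocity weight from $L_{\xi}^{\infty}$ to $L_{\xi,\beta}^{\infty}$, arriving at \eqref{inhom-estimate-exp}.

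The principal technical obstacle is to verify that the extra exponential factor $\exp(\epsilon c_{p}||\xi|^{2}-|\xi_{*}|^{2}|^{p/2})$ arising in Lemma \ref{ratio} for $w_{2}$ does not destroy the uniform-in-$(t,x)$ nature of the $K_{w_{2}}$ kernel estimates \eqref{Kw2}--\eqref{Kw8}; this is precisely handled in Lemma \ref{lemma: estimate of weighted-kw} by exploiting the decaying Gaussian factor in $k_{2}$ and choosing $\epsilon$ small so that $\sup|s(D,\xi,\xi_{*})|<1$. With these kernel bounds in hand, every remaining step is a direct transcription of the $w_{1}$ argument.
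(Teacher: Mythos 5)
Your proposal is correct and matches the approach the paper itself indicates (the paper does not write out a proof for Theorem \ref{theorem-linear-exp}, stating only that it follows "a similar argument as in the previous subsections"). You have correctly identified every point that must be re-verified — the damping $\tilde\nu\geq\nu/2$ under $\epsilon\delta,\epsilon\delta M$ small, the substitution of \eqref{Gamma-w2-sup}--\eqref{Gamma-w2-product} for the $w_1$ nonlinear estimates, the uniform-in-$i$ kernel bounds \eqref{Kw2}--\eqref{Kw8} which already cover $K_{w_2}$, the $w_2$ branch of Lemma \ref{weighted-linear} (with the parameter regime $\delta M$ large but $\delta M\ll\epsilon^{-1}$), and the fact that $w_2\geq1$ so the transference $z=w_2^{-1}\mathcal R_{w_2}^{(6)}$ still works — so the transcription from $w_1$ to $w_2$ goes through as you describe.
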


\section{Proof of Theorem \protect \ref{thm:main}}

\label{Proof: thm:main}

To demonstrate Theorem \ref{thm:main}, we also need to study the large time
behavior of the solution of the Boltzmann equation (\ref{bot.1.d}) in
suitable velocity weight. The result is stated in Theorem \ref{prop:
nonlinear} but we postpone its proof to the next section. With the help of
Theorems \ref{prop: nonlinear}, \ref{theorem-linear}, and \ref%
{theorem-linear-exp}, we are in the position to prove Theorem \ref{thm:main}.

\subsection{Proof of Theorem \protect \ref{thm:main}}

In light of Theorem \ref{theorem-linear}, we need to estimate $\left \Vert
f_{w_{1}}\right \Vert _{L_{\xi ,\beta }^{\infty }L_{x}^{2}}$ and $%
\left
\Vert f_{w_{2}}\right \Vert _{L_{\xi ,\beta }^{\infty }L_{x}^{2}}$.
Let $T>0$. In view of (\ref{Gamma-w1-w1-product}) and $\left \langle \xi
\right
\rangle ^{\gamma /2}\leq 1$, we get
\begin{align*}
& \quad \int_{{\mathbb{R}}^{3}}\int_{{\mathbb{R}}^{3}} f_{w_{1}}\Gamma_{w_{1}}(f_{w_{1}},f)dxdv \\
& \lesssim \left \Vert f_{w_{1}}\right \Vert _{L_{\sigma
}^{2}L_{x}^{2}}\left( \left \Vert f_{w_{1}}\right \Vert _{L_{\sigma
}^{2}L_{x}^{2}}\left \Vert \left \langle \xi \right \rangle ^{p}f\right
\Vert _{L_{\xi }^{\infty }L_{x}^{\infty }}+\left \Vert f_{w_{1}}\right \Vert
_{L_{\xi }^{\infty }L_{x}^{2}}\left \Vert \left \langle \xi \right \rangle
^{p}f\right \Vert _{L_{\sigma }^{2}L_{x}^{\infty }}\right) \\
& \lesssim \left( \left \Vert \mathrm{P}_{1}f_{w_{1}}\right \Vert
_{L_{\sigma }^{2}L_{x}^{2}}^{2}+\left \Vert \mathrm{P}_{0}f_{w_{1}}\right
\Vert _{L_{\sigma }^{2}L_{x}^{2}}^{2}\right) \left \Vert \left \langle \xi
\right \rangle ^{p}f\right \Vert _{L_{\xi }^{\infty }L_{x}^{\infty }}+\left
\Vert f_{w_{1}}\right \Vert _{L_{\sigma }^{2}L_{x}^{2}}\left \Vert
f_{w_{1}}\right \Vert _{L_{\xi }^{\infty }L_{x}^{2}}\left \Vert \left
\langle \xi \right \rangle ^{p}f\right \Vert _{L_{\sigma }^{2}L_{x}^{\infty
}} \\
& \lesssim \left \Vert \left \langle \xi \right \rangle ^{p+\beta +\gamma
/2}f\left( t\right) \right \Vert _{L_{\xi }^{\infty }L_{x}^{\infty }}\left(
\left \Vert \mathrm{P}_{1}f_{w_{1}}\right \Vert _{L_{\sigma
}^{2}L_{x}^{2}}^{2}+\left[ C_{f_{w_{1},}T}^{2}\right] ^{2}\right) \text{,}
\end{align*}%
for $0\leq t\leq T$, where $C_{f_{w_{1},}T}^{2}:=\sup_{0\leq t\leq
T}\left
\Vert f_{w_{1}}\right \Vert _{L_{\xi ,\beta }^{\infty }L_{x}^{2}}$.
By Theorem \ref{prop: nonlinear} with $\hat{\varepsilon}=0$,%
\begin{eqnarray*}
\left \Vert \left \langle \xi \right \rangle ^{p+\beta +\gamma /2}f\left(
t\right) \right \Vert _{L_{\xi }^{\infty }L_{x}^{\infty }} &\leq &\left
\Vert \left \langle \xi \right \rangle ^{p+\beta }f\left( t\right) \right
\Vert _{L_{\xi }^{\infty }L_{x}^{\infty }} \\
&\lesssim &\eta \left( 1+t\right) ^{-\frac{3}{2}}\left( \left \Vert
f_{0}\right \Vert _{L_{\xi ,p+\beta +3j}^{\infty }L_{x}^{1}}+\left \Vert
f_{0}\right \Vert _{L_{\xi ,p+\beta +3j}^{\infty }L_{x}^{\infty }}\right) \\
&\lesssim &\eta \left( 1+t\right) ^{-\frac{3}{2}}\left \Vert f_{0}\right
\Vert _{L_{\xi ,p+\beta +3j}^{\infty }L_{x}^{\infty }}\text{,}
\end{eqnarray*}%
since $f_{0}\left( \cdot ,\xi \right) $ has compact support contained in the
unit ball centered at the origin for all $\xi $. After choosing $\delta $, $%
\eta >0$ sufficiently small, $D$, $M\geq 1$ sufficiently large with $\delta
M $ $\ll \nu _{0}$ and $D^{-1}\ll \delta M$, it follows from Lemma \ref%
{weighted-linear} and Theorem \ref{prop: nonlinear} that
\begin{align*}
\frac{d}{dt}\left \Vert f_{w_{1}}\right \Vert _{L^{2}}^{2}& \leq -\left( \nu
_{0}-C_{1}D^{-2}-C_{2}\delta -C_{3}\delta M-C_{6}\eta \left \Vert
f_{0}\right \Vert _{L_{\xi ,p+\beta +3j}^{\infty }L_{x}^{\infty }}\right)
\int_{{\mathbb{R}}^{3}}\int_{{\mathbb{R}}^{3}} \left \langle \xi \right \rangle ^{\gamma}%
\left( \mathrm{P}_{1}f_{w_{1}} \right)^{2} dxd\xi \\
& \quad -\left( C_{4}\delta M-C_{2}\delta -C_{1}D^{-2}\right)
\int_{H_{+}^{D}}\left[ \delta \left( \left \langle x\right \rangle
-Mt\right) \right] ^{-1}\left \vert \mathrm{P}_{0}f_{w_{1}}\right \vert
^{2} dx d\xi \\
& \quad +\left( C_{1}D^{-2}+C_{2}\delta +C_{5}\delta M\right)
\int_{H_{0}^{D}}\left \vert \mathrm{P}_{0}f_{w_{1}}\right \vert ^{2}
dx d\xi+C_{1}D^{-2}\int_{H_{-}^{D}}\left \vert \mathrm{P}_{0}f_{w_{1}}\right
\vert ^{2}dx d\xi \\
& \quad +C_{6}\eta \left( 1+t\right) ^{-\frac{3}{2}}\left \Vert f_{0}\right
\Vert _{L_{\xi ,p+\beta +3j}^{\infty }L_{x}^{\infty }}\left[
C_{f_{w_{1},}T}^{2}\right] ^{2} \\
& \leq C_{6}\eta \left( 1+t\right) ^{-\frac{3}{2}}\left \Vert f_{0}\right
\Vert _{L_{\xi ,p+\beta +3j}^{\infty }L_{x}^{\infty }}\left[
C_{f_{w_{1},}T}^{2}\right] ^{2}+C_{8}\left \Vert f\right \Vert _{L^{2}}^{2}
\\
& \lesssim \eta \left( 1+t\right) ^{-\frac{3}{2}}\left \Vert f_{0}\right
\Vert _{L_{\xi ,p+\beta +3j}^{\infty }L_{x}^{\infty }}\left[
C_{f_{w_{1},}T}^{2}\right] ^{2}+\eta ^{2}\left( 1+t\right) ^{-3/2}\left
\Vert f_{0}\right \Vert _{L_{\xi ,p+\beta +3j}^{\infty }L_{x}^{\infty }}^{2}%
\text{,}
\end{align*}%
the last inequality being valid since $\beta >\frac{3}{2}$ and $f_{0}\left(
\cdot ,\xi \right) $ has compact support contained in the unit ball centered
at the origin for all $\xi $. Therefore, for $0\leq t\leq T$,
\begin{eqnarray}
\left \Vert f_{w_{1}}\left( t\right) \right \Vert _{L^{2}}\, &\lesssim &\eta
\left \Vert f_{w_{1}0}\right \Vert _{L^{2}}+\eta ^{1/2}\left \Vert
f_{0}\right \Vert _{L_{\xi ,p+\beta +3j}^{\infty }L_{x}^{\infty
}}^{1/2}C_{f_{w_{1},}T}^{2}+\eta \left \Vert f_{0}\right \Vert _{L_{\xi
,p+\beta +3j}^{\infty }L_{x}^{\infty }}  \label{fw1-L2} \\
&\lesssim &\eta \left \Vert f_{0}\right \Vert _{L_{\xi ,p+\beta +3j}^{\infty
}L_{x}^{\infty }}+\eta ^{1/2}\left \Vert f_{0}\right \Vert _{L_{\xi ,p+\beta
+3j}^{\infty }L_{x}^{\infty }}^{1/2}C_{f_{w_{1},}T}^{2}\text{.}  \notag
\end{eqnarray}

Next, In terms of the operator $\mathbb{S}_{w_{1}}(t;s)$, $f_{w_{1}}$ can be
rewritten as
\begin{equation}
f_{w_{1}}\left( t\right) =\eta \mathbb{S}_{w_{1}}(t)f_{w_{1}0}+\int_{0}^{t}%
\mathbb{S}_{w_{1}}(t;s)K_{w_{1}}f_{w_{1}}(s)+\mathbb{S}_{w_{1}}(t;s)\Gamma
_{w_{1}}(f_{w_{1}},f)(s)ds\text{,}  \label{fw1-rep}
\end{equation}%
for $0\leq t\leq T$. In the sequel, we shall utilize this representation to
establish the estimate for $\left \Vert f_{w_{1}}(t)\right \Vert _{L_{\xi
,\beta }^{\infty }L_{x}^{2}}$ in two cases $-3/2<\gamma <0$ and $-2<\gamma
\leq -3/2$ separately.$%
\vspace {3mm}%
$ \newline
\textbf{Case I: }$-3/2<\gamma <0$\textbf{.} By (\ref{eq: N. 1. a}), (\ref%
{eq: bootstrap 1}), (\ref{Gamma-w1-sup}), (\ref{S5}), and (\ref{fw1-L2}),
\begin{eqnarray*}
&&\left \Vert f_{w_{1}}(t)\right \Vert _{L_{\xi }^{\infty }L_{x}^{2}} \\
&\leq &\eta \left \Vert \mathbb{S}_{w_{1}}(t)f_{w_{1}0}\right \Vert _{L_{\xi
}^{\infty }L_{x}^{2}}+\int_{0}^{t}\left \Vert \mathbb{S}%
_{w_{1}}(t;s)K_{w_{1}}f_{w_{1}}(s)\right \Vert _{L_{\xi }^{\infty
}L_{x}^{2}}+\left \Vert \mathbb{S}_{w_{1}}(t;s)\Gamma
_{w_{1}}(f_{w_{1}},f)(s)\right \Vert _{L_{\xi }^{\infty }L_{x}^{2}}ds \\
&\leq &\eta \left \Vert f_{w_{1}0}\right \Vert _{L_{\xi }^{\infty
}L_{x}^{2}}+C_{\gamma ,p}\int_{0}^{t}(1+t-s)^{\frac{3/2-\gamma }{\gamma }%
}\left \Vert f_{w_{1}}(s)\right \Vert _{L^{2}}ds \\
&&+C_{\gamma ,p}\int_{0}^{t}(1+t-s)^{-\frac{\gamma }{\gamma }}\left \Vert
f_{w_{1}}\left( s\right) \right \Vert _{L_{\xi }^{\infty }L_{x}^{2}}\left
\Vert \left \langle \xi \right \rangle ^{p}f\left( s\right) \right \Vert
_{L_{\xi }^{\infty }L_{x}^{\infty }}ds\text{ } \\
&\leq &\eta \left \Vert f_{w_{1}0}\right \Vert _{L_{\xi }^{\infty
}L_{x}^{2}}+C_{1,\gamma ,p,\beta ,j}\left[ \eta \left \Vert f_{0}\right
\Vert _{L_{\xi ,p+\beta +3j}^{\infty }L_{x}^{\infty }}+\eta ^{1/2}\left
\Vert f_{0}\right \Vert _{L_{\xi ,p+\beta +3j}^{\infty }L_{x}^{\infty
}}^{1/2}C_{f_{w_{1},}T}^{2}\right] \\
&&+C_{1,\gamma ,p,\beta ,j}\int_{0}^{t}(1+t-s)^{-\frac{\gamma }{\gamma }%
}\eta (1+s)^{-\frac{3}{2}}\left( \left \Vert f_{0}\right \Vert _{L_{\xi
,p+\beta +3j}^{\infty }L_{x}^{\infty }}\right) ds\cdot \sup_{0\leq s\leq
T}\left \Vert f_{w_{1}}(s)\right \Vert _{L_{\xi }^{\infty }L_{x}^{2}}\text{ }
\\
&\leq &C_{1,\gamma ,p,\beta ,j}^{\prime }\left[ \eta \left \Vert f_{0}\right
\Vert _{L_{\xi ,p+\beta +3j}^{\infty }L_{x}^{\infty }}+\eta ^{1/2}\left
\Vert f_{0}\right \Vert _{L_{\xi ,p+\beta +3j}^{\infty }L_{x}^{\infty
}}^{1/2}C_{f_{w_{1},}T}^{2}\right] \\
&&+C_{1,\gamma ,p,\beta ,j}^{\prime }\eta \left( \left \Vert f_{0}\right
\Vert _{L_{\xi ,p+\beta +3j}^{\infty }L_{x}^{\infty }}\right) \cdot
\sup_{0\leq s\leq T}\left \Vert f_{w_{1}}(s)\right \Vert _{L_{\xi }^{\infty
}L_{x}^{2}}\text{.}
\end{eqnarray*}%
Since $\eta >0$ is sufficiently small such that $C_{1,\gamma ,p,\beta
,j}^{\prime }\eta \left( \left \Vert f_{0}\right \Vert _{L_{\xi
,p+3j}^{\infty }L_{x}^{\infty }}\right) <1/2$, it follows that
\begin{equation*}
\left \Vert f_{w_{1}}(t)\right \Vert _{L_{\xi }^{\infty }L_{x}^{2}}\leq
2C_{1,\gamma ,p,\beta ,j}^{\prime }\left[ \eta \left \Vert f_{0}\right \Vert
_{L_{\xi ,p+\beta +3j}^{\infty }L_{x}^{\infty }}+\eta ^{1/2}\left \Vert
f_{0}\right \Vert _{L_{\xi ,p+\beta +3j}^{\infty }L_{x}^{\infty
}}^{1/2}C_{f_{w_{1},}T}^{2}\right] \text{.}
\end{equation*}%
In view of (\ref{fw1-rep}), we use a bootstrap argument to obtain
\begin{equation*}
\left \Vert f_{w_{1}}(t)\right \Vert _{L_{\xi ,\beta }^{\infty
}L_{x}^{2}}\leq C_{1,\gamma ,p,\beta ,j}^{\prime \prime }\left[ \eta \left
\Vert f_{0}\right \Vert _{L_{\xi ,p+\beta +3j}^{\infty }L_{x}^{\infty
}}+\eta ^{1/2}\left \Vert f_{0}\right \Vert _{L_{\xi ,p+\beta +3j}^{\infty
}L_{x}^{\infty }}^{1/2}C_{f_{w_{1},}T}^{2}\right]
\end{equation*}%
for $0\leq t\leq T$, through (\ref{Kw6}), (\ref{Gamma-w1-sup}) and (\ref{S5}%
). Since $\eta >0$ is sufficiently small, we have
\begin{equation*}
\left \Vert f_{w_{1}}(t)\right \Vert _{L_{\xi ,\beta }^{\infty
}L_{x}^{2}}\leq C_{f_{w_{1},}T}^{2}\leq C_{1,\gamma ,p,\beta ,j}^{\prime
\prime \prime }\eta \left \Vert f_{0}\right \Vert _{L_{\xi ,p+\beta
+3j}^{\infty }L_{x}^{\infty }}\text{.}
\end{equation*}%
$%
\vspace {3mm}%
$\newline
\textbf{Case II:} $-2<\gamma \leq -3/2$. Utilizing (\ref{Gamma-w1-sup}) and (%
\ref{Gamma-w1-L2}) with $\beta >3/2$ gives
\begin{eqnarray}
&&\left \Vert \exp \left( -\nu (\xi )(t-s)\right) \Gamma
_{w_{1}}(f_{w_{1}},f)(s)\right \Vert _{L_{\xi }^{4}L_{x}^{2}}  \label{eq: g1}
\\
&\leq &\left \Vert \exp \left( \frac{-\nu (\xi )(t-s)}{2}\right) \Gamma
_{w_{1}}(f_{w_{1}},f)(s)\right \Vert _{L_{\xi }^{2}L_{x}^{2}}^{1/2}\left
\Vert \exp \left( \frac{-\nu (\xi )(t-s)}{2}\right) \Gamma
_{w_{1}}(f_{w_{1}},f)(s)\right \Vert _{L_{\xi }^{\infty }L_{x}^{2}}^{1/2}
\notag \\
&\leq &C_{\gamma ,p,\beta }(1+t-s)^{-\frac{\gamma }{\gamma }}\left \Vert
f_{w_{1}}\right \Vert _{L_{\xi ,\beta }^{\infty }L_{x}^{2}}\left \Vert \left
\langle \xi \right \rangle ^{p}f\right \Vert _{L_{\xi ,\beta }^{\infty
}L_{x}^{\infty }}\text{.}  \notag
\end{eqnarray}%
Therefore, through (\ref{fw1-rep}), we have%
\begin{eqnarray*}
&&\left \Vert f_{w_{1}}(t)\right \Vert _{L_{\xi }^{4}L_{x}^{2}} \\
&\leq &\eta \left \Vert \mathbb{S}_{w_{1}}\left( t\right) f_{w_{1}0}\right
\Vert _{L_{\xi }^{4}L_{x}^{2}}+\int_{0}^{t}\left \Vert \mathbb{S}%
_{w_{1}}(t;s)K_{w_{1}}f_{w_{1}}(s)\right \Vert _{L_{\xi
}^{4}L_{x}^{2}}+\left \Vert \mathbb{S}_{w_{1}}(t;s)\Gamma
_{w_{1}}(f_{w_{1}},f)(s)\right \Vert _{L_{\xi }^{4}L_{x}^{2}}ds \\
&\leq &\eta \left \Vert f_{w_{1}0}\right \Vert _{L_{\xi
}^{4}L_{x}^{2}}+C_{\gamma ,p}\int_{0}^{t}(1+t-s)^{\frac{1-\gamma }{\gamma }%
}\left \Vert f_{w_{1}}(s)\right \Vert _{L_{\xi }^{2}L_{x}^{2}}ds \\
&&+C_{\gamma ,p}\int_{0}^{t}(1+t-s)^{-\frac{\gamma }{\gamma }}\left \Vert
f_{w_{1}}\right \Vert _{L_{\xi ,\beta }^{\infty }L_{x}^{2}}\left \Vert \left
\langle \xi \right \rangle ^{p}f\right \Vert _{L_{\xi ,\beta }^{\infty
}L_{x}^{\infty }}ds\text{ } \\
&\leq &\eta \left \Vert f_{w_{1}0}\right \Vert _{L_{\xi
}^{4}L_{x}^{2}}+C_{1,\gamma ,p,\beta ,j}\left( \eta \left \Vert f_{0}\right
\Vert _{L_{\xi ,p+\beta +3j}^{\infty }L_{x}^{\infty }}+\eta ^{1/2}\left
\Vert f_{0}\right \Vert _{L_{\xi ,p+\beta +3j}^{\infty }L_{x}^{\infty
}}^{1/2}C_{f_{w_{1},}T}^{2}\right) \\
&&+C_{1,\gamma ,p,\beta ,j}\int_{0}^{t}(1+t-s)^{-\frac{\gamma }{\gamma }%
}(1+s)^{-\frac{3}{2}}ds\cdot \left( \eta \left \Vert f_{0}\right \Vert
_{L_{\xi ,p+\beta +3j}^{\infty }L_{x}^{\infty }}\right) C_{f_{w_{1},}T}^{2}
\\
&\leq &C_{1,\gamma ,p,\beta ,j}^{\prime }\left[ \eta \left \Vert f_{0}\right
\Vert _{L_{\xi ,p+\beta +3j}^{\infty }L_{x}^{\infty }}+\eta ^{1/2}\left
\Vert f_{0}\right \Vert _{L_{\xi ,p+\beta +3j}^{\infty }L_{x}^{\infty
}}^{1/2}C_{f_{w_{1}},T}^{2}+\eta \left( \left \Vert f_{0}\right \Vert
_{L_{\xi ,p+\beta +3j}^{\infty }L_{x}^{\infty }}\right) C_{f_{w_{1},}T}^{2}%
\right] \\
&\leq &C_{1,\gamma ,p,\beta ,j}^{\prime \prime }\left( \eta \left \Vert
f_{0}\right \Vert _{L_{\xi ,p+\beta +3j}^{\infty }L_{x}^{\infty }}+\eta
^{1/2}\left \Vert f_{0}\right \Vert _{L_{\xi ,p+\beta +3j}^{\infty
}L_{x}^{\infty }}^{1/2}C_{f_{w_{1}},T}^{2}\right) \text{,}
\end{eqnarray*}%
due to (\ref{eq: N. 1. a}), (\ref{eq: bootstrap 3}), (\ref{fw1-L2}) and (\ref%
{eq: g1}), whenever $\eta >0\ $is sufficiently small. That is,
\begin{equation}
\left \Vert f_{w_{1}}(t)\right \Vert _{L_{\xi }^{4}L_{x}^{2}}\leq
C_{1,\gamma ,p,\beta ,j}^{\prime \prime }\left( \eta \left \Vert f_{0}\right
\Vert _{L_{\xi ,p+\beta +3j}^{\infty }L_{x}^{\infty }}+\eta ^{1/2}\left
\Vert f_{0}\right \Vert _{L_{\xi ,p+\beta +3j}^{\infty }L_{x}^{\infty
}}^{1/2}C_{f_{w_{1}},T}^{2}\right) \text{.}  \label{fw1-L4}
\end{equation}
Through (\ref{fw1-rep}) again, we infer%
\begin{eqnarray*}
&&\left \Vert f_{w_{1}}(t)\right \Vert _{L_{\xi }^{\infty }L_{x}^{2}} \\
&\leq &\eta \left \Vert f_{w_{1}0}\right \Vert _{L_{\xi }^{\infty
}L_{x}^{2}}+C_{\gamma ,p}\int_{0}^{t}(1+t-s)^{\frac{7/4-\gamma }{\gamma }%
}\left \Vert f_{w_{1}}(s)\right \Vert _{L_{\xi }^{4}L_{x}^{2}}ds \\
&&+C_{\gamma ,p}\int_{0}^{t}(1+t-s)^{-\frac{\gamma }{\gamma }}\left \Vert
f_{w_{1}}\left( s\right) \right \Vert _{L_{\xi }^{\infty }L_{x}^{2}}\left
\Vert \left \langle \xi \right \rangle ^{p}f\left( s\right) \right \Vert
_{L_{\xi }^{\infty }L_{x}^{\infty }}ds \\
&\leq &\eta \left \Vert f_{w_{1}0}\right \Vert _{L_{\xi }^{\infty
}L_{x}^{2}}+C_{\gamma ,p}\int_{0}^{t}(1+t-s)^{\frac{7/4-\gamma }{\gamma }%
}\left \Vert f_{w_{1}}(s)\right \Vert _{L_{\xi }^{4}L_{x}^{2}}ds \\
&&+C_{1,\gamma ,p,\beta ,j}\int_{0}^{t}(1+t-s)^{-\frac{\gamma }{\gamma }%
}(1+s)^{-\frac{3}{2}}ds\cdot \left( \eta \left \Vert f_{0}\right \Vert
_{L_{\xi ,p+\beta +3j}^{\infty }L_{x}^{\infty }}\right) ds\cdot \sup_{0\leq
t\leq T}\left \Vert f_{w_{1}}\left( s\right) \right \Vert _{L_{\xi }^{\infty
}L_{x}^{2}} \\
&\leq &C_{1,\gamma ,p,\beta ,j}^{\prime }\left[ \eta \left \Vert f_{0}\right
\Vert _{L_{\xi ,p+\beta +3j}^{\infty }L_{x}^{\infty }}+\eta ^{1/2}\left
\Vert f_{0}\right \Vert _{L_{\xi ,p+\beta +3j}^{\infty }L_{x}^{\infty
}}^{1/2}C_{f_{w_{1}},T}^{2}\right] \\
&&+C_{1,\gamma ,p,\beta ,j}^{\prime }\left( \eta \left \Vert f_{0}\right
\Vert _{L_{\xi ,p+\beta +3j}^{\infty }L_{x}^{\infty }}\right) \sup_{0\leq
t\leq T}\left \Vert f_{w_{1}}\left( s\right) \right \Vert _{L_{\xi }^{\infty
}L_{x}^{2}}\text{,}
\end{eqnarray*}%
by using (\ref{eq: N. 1. a}), (\ref{eq: bootstrap 2}), and (\ref{fw1-L4}).
Since $\eta >0$ is chosen sufficiently small, we get%
\begin{equation*}
\left \Vert f_{w_{1}}(t)\right \Vert _{L_{\xi }^{\infty }L_{x}^{2}}\leq
2C_{1,\gamma ,p,\beta ,j}^{\prime }\left( \eta \left \Vert f_{0}\right \Vert
_{L_{\xi ,p+\beta +3j}^{\infty }L_{x}^{\infty }}+\eta ^{1/2}\left \Vert
f_{0}\right \Vert _{L_{\xi ,p+\beta +3j}^{\infty }L_{x}^{\infty
}}^{1/2}C_{f_{w_{1},}T}^{2}\right) \text{.}
\end{equation*}%
Using the bootstrap argument, we eventually get
\begin{equation*}
\left \Vert f_{w_{1}}(t)\right \Vert _{L_{\xi ,\beta }^{\infty
}L_{x}^{2}}\leq C_{1,\gamma ,p,\beta ,j}^{\prime \prime }\left( \eta \left
\Vert f_{0}\right \Vert _{L_{\xi ,p+\beta +3j}^{\infty }L_{x}^{\infty
}}+\eta ^{1/2}\left \Vert f_{0}\right \Vert _{L_{\xi ,p+\beta +3j}^{\infty
}L_{x}^{\infty }}^{1/2}C_{f_{w_{1},}T}^{2}\right) \text{.}
\end{equation*}%
Since $\eta >0$ is chosen sufficiently small, we get%
\begin{equation*}
\left \Vert f_{w_{1}}(t)\right \Vert _{L_{\xi ,\beta }^{\infty
}L_{x}^{2}}\leq C_{f_{w_{1},}T}^{2}\leq C_{1,\gamma ,p,\beta ,j}^{\prime
\prime \prime }\eta \left \Vert f_{0}\right \Vert _{L_{\xi ,p+\beta
+3j}^{\infty }L_{x}^{\infty }}\text{,}
\end{equation*}%
for $0\leq t\leq T$.

Gathering \textbf{Case I} and \textbf{Case II}, we obtain
\begin{equation}
\left \Vert f_{w_{1}}(t)\right \Vert _{L_{\xi ,\beta }^{\infty
}L_{x}^{2}}\leq C_{f_{w_{1},}T}^{2}\leq C_{1,\gamma ,p,\beta ,j}^{\prime
\prime \prime }\eta \left \Vert f_{0}\right \Vert _{L_{\xi ,p+\beta
+3j}^{\infty }L_{x}^{\infty }}  \label{fw1-sup}
\end{equation}%
for $-2<\gamma <0$ and $p\geq 1$, where the constant $C_{1,\gamma ,p,\beta
,j}^{\prime \prime \prime }>0$ independent\ of $T$. As for $\left \Vert
f_{w_{2}}(t)\right \Vert _{L_{\xi ,\beta }^{\infty }L_{x}^{2}}$, we choose a
weight function $w_{2}$ satisfying $\epsilon $, $\delta >0$ sufficiently
small, $M>0$ sufficiently large such that $\epsilon c_{p}<\hat{\varepsilon}$%
, $\delta M>0$ large and $\delta M\ll \epsilon ^{-1}$. After that, by the
energy estimate, we deduce that
\begin{equation*}
\left \Vert f_{w_{2}}(t)\right \Vert _{L^{2}}\lesssim \eta \left \Vert
f_{w_{3}0}\right \Vert _{L_{\xi ,p+\beta +3j}^{\infty }L_{x}^{\infty }}+\eta
^{1/2}\left \Vert f_{w_{3}0}\right \Vert _{L_{\xi ,p+\beta +3j}^{\infty
}L_{x}^{\infty }}^{1/2}C_{f_{w_{2}},T}^{2}\text{,}
\end{equation*}%
where $w_{3}=\exp \left( \hat{\varepsilon}\left \langle \xi \right \rangle
^{p}\right) $, $0<p\leq 2$, and $\hat{C}_{f_{w_{2}},T}^{2}=\sup_{0\leq t\leq
T}\left \Vert f_{w_{2}}\right \Vert _{L_{\xi ,\beta }^{\infty }L_{x}^{2}}$.
Following the above bootstrap argument, we as well have
\begin{equation}
\left \Vert f_{w_{2}}(t)\right \Vert _{L_{\xi ,\beta }^{\infty
}L_{x}^{2}}\leq \hat{C}_{f_{w_{2}},T}^{2}\leq C_{2,\gamma ,p,\beta
,j}^{\prime \prime \prime }\eta \left \Vert f_{0}\right \Vert _{L_{\xi
,p+\beta +3j}^{\infty }L_{x}^{\infty }}\text{,}  \label{fw2-sup}
\end{equation}%
for $-2<\gamma <0$ and $0<p\leq 2$, where the constant $C_{1,\gamma ,p,\beta
,j}^{\prime \prime \prime }>0$ is independent\ of $T$.

Now, according to Theorem \ref{theorem-linear}, if $-1<\gamma <0$,
\begin{align*}
\left( 1+t\right) ^{-A}\left \Vert f_{w_{1}}\right \Vert _{L_{\xi ,\beta
}^{\infty }L_{x}^{\infty }}& \lesssim (1+t)^{-A}\left( \eta \left \Vert
f_{w_{1}0}\right \Vert _{L_{\xi ,\beta }^{\infty }L_{x}^{\infty }}+(1+t)^{-%
\frac{3}{2}+A+\varsigma }C_{f_{w_{1},T}}^{\infty }C_{f,T}^{\infty }\right) \\
& \quad +(1+t)^{2-A}\left[ \left( 1+\delta M\right) \left( \eta \left \Vert
f_{w_{1}0}\right \Vert _{L_{\xi ,\beta }^{\infty
}L_{x}^{2}}+C_{f_{w_{1}},T}^{2}C_{f,T}^{\infty }\right) \right] \text{,}
\end{align*}%
$0\leq t\leq T$. Taking $A=2$ and choosing $\eta >0$ sufficiently small,
together with (\ref{fw1-sup}), we get%
\begin{equation*}
C_{f_{w_{1}},T}^{\infty }=\sup_{0\leq t\leq T}\left( 1+t\right) ^{-2}\left
\Vert f_{w_{1}}\right \Vert _{L_{\xi ,\beta }^{\infty }L_{x}^{\infty
}}\lesssim \eta \left \Vert f_{0}\right \Vert _{L_{\xi ,p+\beta +3j}^{\infty
}L_{x}^{\infty }}\left( 1+\eta \left \Vert f_{0}\right \Vert _{L_{\xi
,p+\beta +3j}^{\infty }L_{x}^{\infty }}\right) \text{,}
\end{equation*}%
since $C_{f,T}^{\infty }\lesssim \eta \left( \left \Vert f_{0}\right \Vert
_{L_{\xi ,p+\beta +3j}^{\infty }L_{x}^{\infty }}\right) $ (due to (\ref{eq:
N. 1. a})). It implies that
\begin{equation*}
\left \Vert f_{w_{1}}\right \Vert _{L_{\xi ,\beta }^{\infty }L_{x}^{\infty
}}\lesssim \left( 1+t\right) ^{2}\eta \left \Vert f_{0}\right \Vert _{L_{\xi
,p+\beta +3j}^{\infty }L_{x}^{\infty }}\left( 1+\eta \left \Vert f_{0}\right
\Vert _{L_{\xi ,p+\beta +3j}^{\infty }L_{x}^{\infty }}\right) \text{,}
\end{equation*}%
for $0\leq t\leq T$ and then for $0\leq t<\infty $ since $T$ can be
arbitrarily large. Note that for $\left \langle x\right \rangle >2Mt$,\quad
\begin{equation*}
w_{1}\left( t,x,\xi \right) \gtrsim \left[ \delta \left( \left \langle
x\right \rangle -Mt\right) \right] ^{\frac{p}{1-\gamma }}\text{ and }\left
\langle x\right \rangle -Mt>\frac{\left \langle x\right \rangle }{3}+\frac{Mt%
}{3}\text{,}
\end{equation*}%
so that
\begin{eqnarray*}
&&\left \vert f(t,x)\right \vert _{L_{\xi ,\beta }^{\infty }} \\
&\lesssim &\eta (1+t)^{2}(\left \langle x\right \rangle +Mt)^{\frac{-p}{%
1-\gamma }}\left \Vert \left \langle \xi \right \rangle ^{p+\beta
+3j}f_{0}\right \Vert _{L_{\xi }^{\infty }L_{x}^{\infty }}\left( 1+\eta
\left \Vert \left \langle \xi \right \rangle ^{p+\beta +3j}f_{0}\right \Vert
_{L_{\xi }^{\infty }L_{x}^{\infty }}\right) \text{,}
\end{eqnarray*}%
for $-1<\gamma <0$ and $p\geq 1$. Likewise, we can obtain the estimate of $%
\left \vert f(t,x)\right \vert _{L_{\xi ,\beta }^{\infty }}$ in other cases
by taking $A=2+\varsigma $ whenever $\gamma =-1$ and $A=7+\frac{5}{\gamma }$
whenever $-2<\gamma <-1$ respectively in Theorem \ref{theorem-linear}. This
completes the proof of part (i). Imposing a certain exponential weight on
the initial data $f_{0}$, we also note that for $\left \langle
x\right
\rangle >2Mt$,
\begin{equation*}
\rho \left( t,x,\xi \right) \gtrsim \left[ \delta \left( \left \langle
x\right \rangle -Mt\right) \right] ^{\frac{p}{p+1-\gamma }}\text{ and }\left
\langle x\right \rangle -Mt>\frac{\left \langle x\right \rangle }{3}+\frac{Mt%
}{3}\text{,}
\end{equation*}%
where $0<p\leq 2$. Hence, part (ii) follows by taking $B=2$ whenever $%
-1<\gamma <0$, $B=2+\varsigma $ whenever $\gamma =-1$, and $B=7+\frac{5}{%
\gamma }$ whenever $-2<\gamma <-1$ respectively in Theorem \ref{prop:
nonlinear}, besides choosing $\eta >0$ sufficiently small in each case. The
proof of the theorem is completed.$%
\hfill%
\square $

\section{Proof of Theorem \protect \ref{prop: nonlinear}}

\label{proof: nonlinear}

This section is devoted to the large time behavior of the solution $f$ to (%
\ref{bot.1.d}) in certain weighted normed spaces. Our strategy is to study
the homogeneous/inhomogeneous linearized Boltzmann equation in the first two
subsections, and then to demonstrate the large time behavior via an
iteration scheme.

\subsection{Linear Boltzmann Equation}

Let $\mathbb{G}^{t}$ be the solution operator of the linearized Boltzmann
equation
\begin{equation}
\left \{
\begin{array}{ll}
\displaystyle \pa_{t}g+\xi \cdot \nabla _{x}g=Lg\text{,} & (t,x,\xi )\in {%
\mathbb{R}}^{+}\times {\mathbb{R}}^{3}\times {\mathbb{R}}^{3}\text{,} \\%
[4mm]
\displaystyle g(0,x,\xi )=g_{0}(x,\xi )\text{, } & (x,\xi )\in {\mathbb{R}}%
^{3}\times {\mathbb{R}}^{3}\text{,}%
\end{array}%
\right. \quad  \label{bot.1.e}
\end{equation}%
and let $\mathbb{S}^{t}$ be the solution operator of the damped transport
equation
\begin{equation}
\left \{
\begin{array}{ll}
\displaystyle \pa_{t}h+\xi \cdot \nabla _{x}h+\nu (\xi )h=0\text{,} &
(t,x,\xi )\in {\mathbb{R}}^{+}\times {\mathbb{R}}^{3}\times {\mathbb{R}}^{3}%
\text{,} \\[4mm]
\displaystyle h(0,x,\xi )=h_{0}(x,\xi )\text{,} & (x,\xi )\in {\mathbb{R}}%
^{3}\times {\mathbb{R}}^{3}\text{.}%
\end{array}%
\right.  \label{bot.1.g}
\end{equation}%
We will provide the estimate for the large time behavior of the solution $g$
to (\ref{bot.1.e}).

\begin{proposition}
\label{thm: linear}Let $-2<\gamma <0$, $0<p_{1}\leq 2$, $p_{2}>3/2$, $\hat{%
\eps}\geq 0$ sufficiently small, and let $j>0$ be any sufficiently large
number. Assume that $w_{3}g_{0}\in L_{\xi ,p_{2}+j}^{\infty }L_{x}^{\infty
}\cap L_{\xi ,p_{2}+j}^{\infty }L_{x}^{1}$. Then there are positive
constants $C_{i,\gamma ,\hat{\varepsilon},p_{1},p_{2},j}$ and $\bar{C}%
_{i,\gamma ,\hat{\varepsilon},p_{1},p_{2},j}$, $i=1$, $2$, such that the
solution $g$ to (\ref{bot.1.e}) satisfies%
\begin{eqnarray}
\left \Vert w_{3}g(t)\right \Vert _{L_{\xi ,p_{2}}^{\infty }L_{x}^{2}} &\leq
&C_{1,\gamma ,\hat{\varepsilon},p_{1},p_{2},j}(1+t)^{-\frac{3}{4}}\left(
\left \Vert w_{3}g_{0}\right \Vert _{L_{\xi ,p_{2}+j}^{\infty
}L_{x}^{1}}+\left \Vert w_{3}g_{0}\right \Vert _{L_{\xi ,p_{2}+j}^{\infty
}L_{x}^{\infty }}\right) \text{,}%
\vspace {3mm}
\label{eq: Greenweightepsilon1} \\
\left \Vert w_{3}g(t)\right \Vert _{L_{\xi ,p_{2}}^{\infty }L_{x}^{\infty }}
&\leq &C_{2,\gamma ,\hat{\varepsilon},p_{1},p_{2},j}(1+t)^{-\frac{3}{2}%
}\left( \left \Vert w_{3}g_{0}\right \Vert _{L_{\xi ,p_{2}+j}^{\infty
}L_{x}^{1}}+\left \Vert w_{3}g_{0}\right \Vert _{L_{\xi ,p_{2}+j}^{\infty
}L_{x}^{\infty }}\right) \text{.}  \label{eq: Greenweightepsilon2}
\end{eqnarray}%
Moreover,%
\begin{eqnarray}
\left \Vert w_{3}g(t)\right \Vert _{L_{\xi ,p_{2}+j}^{\infty }L_{x}^{2}}
&\leq &\bar{C}_{1,\gamma ,\hat{\varepsilon},p_{1},p_{2},j}\left( \left \Vert
w_{3}g_{0}\right \Vert _{L_{\xi ,p_{2}+j}^{\infty }L_{x}^{1}}+\left \Vert
w_{3}g_{0}\right \Vert _{L_{\xi ,p_{2}+j}^{\infty }L_{x}^{\infty }}\right)
\text{,}%
\vspace {3mm}
\label{eq: Greenweightalpha1} \\
\left \Vert w_{3}g(t)\right \Vert _{L_{\xi ,p_{2}+j}^{\infty }L_{x}^{\infty
}} &\leq &\bar{C}_{2,\gamma ,\hat{\varepsilon},p_{1},p_{2},j}\left( \left
\Vert w_{3}g_{0}\right \Vert _{L_{\xi ,p_{2}+j}^{\infty }L_{x}^{1}}+\left
\Vert w_{3}g_{0}\right \Vert _{L_{\xi ,p_{2}+j}^{\infty }L_{x}^{\infty
}}\right) \text{.}  \label{eq: Greenweightalpha2}
\end{eqnarray}
\end{proposition}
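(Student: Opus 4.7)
\textbf{Proof plan for Proposition \ref{thm: linear}.} The approach reduces to the unweighted linearized Boltzmann equation via a weighted reformulation, followed by a wave-remainder decomposition and Ukai-type bootstrap. Since $w_{3}(\xi)=e^{\hat{\varepsilon}\langle \xi\rangle^{p_{1}}}$ depends only on $\xi$, conjugation of \eqref{bot.1.e} yields
\begin{equation*}
\partial_{t}(w_{3}g)+\xi\cdot\nabla_{x}(w_{3}g)+\nu(\xi)(w_{3}g)=K_{w_{3}}(w_{3}g),\qquad (w_{3}g)(0)=w_{3}g_{0},
\end{equation*}
and by Lemma \ref{lemma: estimate of weighted-kw} the operator $K_{w_{3}}$ enjoys the same kernel bounds, weight-gaining properties, and $L^{q}\text{-}L^{r}$ mapping properties as $K$ itself. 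Therefore the whole analysis is carried out for $u\equiv w_{3}g$, and the case $\hat{\varepsilon}=0$ is just the special case.

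I will then decompose $u=\sum_{k=0}^{N}u^{(k)}+\mathcal{R}^{(N)}$ as in Section \ref{WR}, with $u^{(0)}(t)=\mathbb{S}^{t}(w_{3}g_{0})$, $u^{(k)}(t)=\int_{0}^{t}\mathbb{S}^{t-s}K_{w_{3}}u^{(k-1)}(s)\,ds$, and $\mathcal{R}^{(N)}$ satisfying the full linearized equation with source $K_{w_{3}}u^{(N)}$ and zero initial data. For the wave part, the pointwise formula $\mathbb{S}^{t}h_{0}=h_{0}(x-t\xi,\xi)e^{-\nu(\xi)t}$ together with Lemma \ref{decay-Sw1} lets each $u^{(k)}$ trade velocity weights for time decay; iterating $k$ times converts an initial weight $\langle\xi\rangle^{p_{2}+j}$ into a factor $(1+t)^{-3/2}$ (respectively $(1+t)^{-3/4}$) in the $L^{\infty}_{\xi,p_{2}}L^{\infty}_{x}$ (resp.\ $L^{\infty}_{\xi,p_{2}}L^{2}_{x}$) norm, losing the weight $j$ that is already built into the right-hand side of \eqref{eq: Greenweightepsilon1}--\eqref{eq: Greenweightepsilon2}.

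For the remainder $\mathcal{R}^{(N)}$, the mixture effect $\mathbb{S}^{t}\circ K_{w_{3}}$ (as exploited in Lemma \ref{lemma: H2 estimate of h6}) transfers the $\xi$-regularity induced by $K_{w_{3}}$ to $x$-regularity, so that after $N$ iterations $u^{(N)}$ lies in $L^{2}_{\xi}H^{2}_{x}$ with quantitative bounds. A standard $L^{2}$ energy estimate combined with the coercivity \eqref{coercivity} and Sobolev embedding then gives $\|\mathcal{R}^{(N)}\|_{L^{2}_{\xi}L^{\infty}_{x}}$ with the required decay, using the linearized-Boltzmann large-time behavior in $L^{2}_{\xi}L^{p}_{x}$ ($p=2,\infty$) from the compensating-function / Green's function results cited in the introduction (\cite{[LinWangWu-2],[LinWangWu]}). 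Finally, Ukai's bootstrap applied to the integral representation $u(t)=\mathbb{S}^{t}u_{0}+\int_{0}^{t}\mathbb{S}^{t-s}K_{w_{3}}u(s)\,ds$ promotes the $L^{2}_{\xi}$ bound to $L^{\infty}_{\xi,p_{2}}$ via the weight-gaining estimates \eqref{eq: bootstrap 1}--\eqref{eq: bootstrap 3} of Lemma \ref{lemma: estimate of weighted-kw}. The uniform estimates \eqref{eq: Greenweightalpha1}--\eqref{eq: Greenweightalpha2} follow by the same integral equation, now keeping the full weight $\langle\xi\rangle^{p_{2}+j}$ throughout and using that $\mathbb{S}^{t}$ preserves this weight while $K_{w_{3}}$ gains weight at each application.

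\textbf{Main obstacle.} The delicate point is the velocity bootstrap in the very soft regime $-2<\gamma\leq -3/2$: because $\nu(\xi)\sim\langle\xi\rangle^{\gamma}$ is weak and Lemma \ref{basic22} does not directly provide an $L^{2}_{\xi}\to L^{\infty}_{\xi}$ gain for $K_{w_{3}}$, one cannot pass in a single step from $L^{2}_{\xi}L^{r}_{x}$ decay to $L^{\infty}_{\xi,p_{2}}L^{r}_{x}$ decay. The fix is to insert an intermediate $L^{4}_{\xi}$ level: by Lemma \ref{L4-L2} (which itself rests on Riesz--Thorin interpolation) $K_{w_{3}}$ maps $L^{2}_{\xi}$ boundedly into $L^{4}_{\xi,1-\gamma}$, and then \eqref{eq: bootstrap 2} with $s=4$ takes us into $L^{\infty}_{\xi,p_{2}}$. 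Balancing the time integrals against the polynomial loss $(1+t-s)^{(7/4-\gamma)/\gamma}$ from Lemma \ref{decay-Sw1} forces the inflation of the initial velocity weight from $p_{2}$ to $p_{2}+j$ with $j$ large, and explains why the bootstrap is the essential technical step rather than a routine one.
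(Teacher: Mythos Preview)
Your proposal is workable in spirit but considerably more elaborate than what the paper actually does, and it conflates two distinct stages of the paper's overall strategy.

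The paper does \emph{not} run a wave--remainder decomposition for this proposition. Instead it takes the $L^{2}_{\xi}L^{2}_{x}$ and $L^{2}_{\xi}L^{\infty}_{x}$ decay estimates \eqref{eq: GreenLx^2}--\eqref{eq: GreenLx^inf} as a black box from \cite{[LinWangWu],[Strain],[Strain-Guo]}, and then upgrades them to the weighted $L^{\infty}_{\xi,p_{2}}(w_{3})$ norms by a single Duhamel bootstrap on the representation $g(t)=\mathbb{S}^{t}g_{0}+\int_{0}^{t}\mathbb{S}^{t-s}Kg(s)\,ds$. The key device is a split $|\xi|\le\tau$ versus $|\xi|>\tau$: on the small-$\xi$ part one uses the cited $L^{2}_{\xi}$ (or $L^{4}_{\xi}$) decay of $g$ through the gain of $K$, while on the large-$\xi$ part one picks up a factor $(1+\tau)^{-1}$ multiplying the unknown $\sup_{s}\|w_{3}g(s)\|_{L^{\infty}_{\xi}L^{2}_{x}}$; choosing $\tau$ large closes the estimate. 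Your identification of the $L^{4}_{\xi}$ intermediate step for $-2<\gamma\le -3/2$ is exactly right and matches the paper.

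The wave--remainder/mixture-lemma machinery you invoke is what the paper reserves for the $x$-dependent weights $w_{1},w_{2}$ in Section~\ref{WR}, where no prior $L^{2}_{\xi}L^{p}_{x}$ decay is available. Importing it here is not wrong, but it is redundant once you cite the $L^{2}_{\xi}L^{p}_{x}$ decay anyway (as you do for the remainder), and by itself the regularization estimate of Lemma~\ref{lemma: H2 estimate of h6} produces polynomial \emph{growth} in $t$, not decay, so it cannot replace the cited input. In short: drop the Picard iteration and $H^{2}_{x}$ regularization; keep the Duhamel bootstrap with the $\tau$-split and the $L^{2}\to L^{4}\to L^{\infty}$ ladder in the very soft case, and you recover the paper's proof.
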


\proof%
By assumption, $g_{0}\in L_{\xi ,p_{2}+j}^{\infty }L_{x}^{\infty }\cap
L_{\xi ,p_{2}+j}^{\infty }L_{x}^{1}$. Then, following a similar argument as
in \cite[Propositions 7 and 15]{[LinWangWu]} (or see \cite{[Strain],
[Strain-Guo]}), we see that
\begin{eqnarray}
\left \Vert g(t)\right \Vert _{L_{\xi }^{2}L_{x}^{2}} &\lesssim &(1+t)^{-%
\frac{3}{4}}\left( \left \Vert \left \langle \xi \right \rangle
^{j}g_{0}\right \Vert _{L_{\xi }^{2}L_{x}^{1}}+\left \Vert \left \langle \xi
\right \rangle ^{j}g_{0}\right \Vert _{L_{\xi }^{2}L_{x}^{2}}\right)
\label{eq: GreenLx^2} \\
&\lesssim &(1+t)^{-\frac{3}{4}}\left( \left \Vert \left \langle \xi \right
\rangle ^{j}g_{0}\right \Vert _{L_{\xi ,p_{2}}^{\infty }L_{x}^{1}}+\left
\Vert \left \langle \xi \right \rangle ^{j}g_{0}\right \Vert _{L_{\xi
,p_{2}}^{\infty }L_{x}^{\infty }}\right) \text{,}  \notag
\end{eqnarray}%
and%
\begin{eqnarray}
\left \Vert g(t)\right \Vert _{L_{\xi }^{2}L_{x}^{\infty }} &\lesssim
&(1+t)^{-\frac{3}{2}}\left( \left \Vert \left \langle \xi \right \rangle
^{j}g_{0}\right \Vert _{L_{\xi }^{2}L_{x}^{2}}+\left \Vert \left \langle \xi
\right \rangle ^{j}g_{0}\right \Vert _{L_{\xi }^{2}L_{x}^{1}}+\left \Vert
\left \langle \xi \right \rangle ^{j}g_{0}\right \Vert _{L_{\xi
,p_{2}}^{\infty }L_{x}^{\infty }}\right) \text{ }  \label{eq: GreenLx^inf} \\
&\lesssim &(1+t)^{-\frac{3}{2}}\left( \left \Vert \left \langle \xi \right
\rangle ^{j}g_{0}\right \Vert _{L_{\xi ,p_{2}}^{\infty }L_{x}^{1}}+\left
\Vert \left \langle \xi \right \rangle ^{j}g_{0}\right \Vert _{L_{\xi
,p_{2}}^{\infty }L_{x}^{\infty }}\right) \text{.}  \notag
\end{eqnarray}

Now we prove (\ref{eq: Greenweightepsilon1}) and the others are similar. In
terms of the damped transport operator $\mathbb{S}^{t}$, $g$ can be written
as
\begin{equation}
g\left( t\right) =\mathbb{S}^{t}g_{0}+\int_{0}^{t}\mathbb{S}^{t-s}Kg(s)ds%
\text{.}  \label{f-rep}
\end{equation}%
Let $T>0$. For any $0\leq t\leq T$,
\begin{equation*}
w_{3}\left \vert g(t)\right \vert _{L_{x}^{2}}\leq w_{3}\left \vert \mathbb{S%
}^{t}g_{0}\right \vert _{L_{x}^{2}}+w_{3}\int_{0}^{t}\left \vert \mathbb{S}%
^{t-s}Kg(s)\right \vert _{L_{x}^{2}}ds=I+II\text{.}
\end{equation*}%
It is easy to see that
\begin{eqnarray}
I &\leq &\sup_{\xi }\left( w_{3}\left \vert \mathbb{S}^{t}g_{0}\right \vert
_{L_{x}^{2}}\right) \leq \left( \sup_{\xi }e^{-\nu (\xi )t}\left \langle \xi
\right \rangle ^{-j}\right) \left \Vert w_{3}g_{0}\right \Vert _{L_{\xi
,j}^{\infty }L_{x}^{2}}  \label{eq: L1} \\
&\leq &C_{\gamma ,j}(1+t)^{\frac{j}{\gamma }}\left \Vert w_{3}g_{0}\right
\Vert _{L_{\xi ,p_{2}+j}^{\infty }L_{x}^{2}}\leq C_{\gamma ,j}(1+t)^{-\frac{3%
}{4}}\left \Vert w_{3}g_{0}\right \Vert _{L_{\xi ,p_{2}+j}^{\infty
}L_{x}^{2}}\, \text{,}  \notag
\end{eqnarray}%
since $j$ is sufficiently large. For $II$, it follows from (\ref{gain-weight}%
), (\ref{gain-weight1}) and (\ref{Kw6}) that
\begin{eqnarray*}
&&w_{3}\left \vert \mathbb{S}^{t-s}Kg(s)\right \vert _{L_{x}^{2}}=e^{\hat{%
\varepsilon}\left \langle \xi \right \rangle ^{p_{1}}}e^{-(t-s)\nu (\xi
)}\left \vert Kg(s)\right \vert _{L_{x}^{2}} \\
&\leq &e^{-(t-s)\nu (\xi )}\left \langle \xi \right \rangle ^{\gamma -1}
\left[ \sup_{\left \vert \xi \right \vert \leq \tau }\left( e^{\hat{%
\varepsilon}\left \langle \xi \right \rangle ^{p_{1}}}\left \langle \xi
\right \rangle ^{-\gamma +1}\left \vert Kg(s)\right \vert
_{L_{x}^{2}}\right) +\sup_{\left \vert \xi \right \vert >\tau }\left( \left
\langle \xi \right \rangle ^{-1}e^{\hat{\varepsilon}\left \langle \xi \right
\rangle ^{p_{1}}}\left \langle \xi \right \rangle ^{2-\gamma }\left \vert
Kg(s)\right \vert _{L_{x}^{2}}\right) \right] \\
&\leq &C_{\gamma }(1+t-s)^{\frac{1-\gamma }{\gamma }}\left( e^{\hat{%
\varepsilon}\left \langle \tau \right \rangle ^{p_{1}}}\left \Vert
Kg(s)\right \Vert _{L_{\xi ,1-\gamma }^{\infty }L_{x}^{2}}+(1+\tau
)^{-1}\left \Vert w_{3}Kg(s)\right \Vert _{L_{\xi ,2-\gamma }^{\infty
}L_{x}^{2}}\right)
\vspace {3mm}
\\
&\leq &C_{\gamma ,\hat{\varepsilon},p_{1}}(1+t-s)^{\frac{1-\gamma }{\gamma }%
}\cdot \left \{
\begin{array}{ll}
\left( e^{\hat{\varepsilon}\tau ^{p_{1}}}\left \Vert g(s)\right \Vert
_{L_{\xi }^{2}L_{x}^{2}}+(1+\tau )^{-1}\left \Vert w_{3}g(s)\right \Vert
_{L_{\xi }^{\infty }L_{x}^{2}}\right) \text{,} & \text{if }\frac{-3}{2}%
<\gamma <0\text{,}%
\vspace {3mm}
\\
\left( e^{\hat{\varepsilon}\tau ^{p_{1}}}\left \Vert g(s)\right \Vert
_{L_{\xi }^{4}L_{x}^{2}}+(1+\tau )^{-1}\left \Vert w_{3}g(s)\right \Vert
_{L_{\xi }^{\infty }L_{x}^{2}}\right) \text{,} & \text{if }-2<\gamma \leq
\frac{-3}{2}\text{,}%
\end{array}%
\right.
\end{eqnarray*}%
for any $\tau >0$. Whenever $-2<\gamma \leq \frac{-3}{2}$, in view of (\ref%
{gain-weight2}), (\ref{eq: GreenLx^2}) and (\ref{f-rep}), we have%
\begin{eqnarray}
\left \Vert g(s)\right \Vert _{L_{\xi }^{4}L_{x}^{2}} &\leq &\left(
\sup_{\xi }e^{-s\nu (\xi )}\left \langle \xi \right \rangle ^{-j}\right)
\left \Vert g_{0}\right \Vert _{L_{\xi ,j}^{4}L_{x}^{2}}  \notag \\
&&+\int_{0}^{s}\sup_{\xi }\left( e^{-\nu \left( \xi \right) \left(
s-s^{\prime }\right) }\left \langle \xi \right \rangle ^{-\left( 1-\gamma
\right) }\right) \left \Vert Kg(s^{\prime })\right \Vert _{L_{\xi ,1-\gamma
}^{4}L_{x}^{2}}ds^{\prime }  \label{eq: L4} \\
&\lesssim &(1+s)^{\frac{j}{\gamma }}\left \Vert g_{0}\right \Vert _{L_{\xi
,j}^{4}L_{x}^{2}}+\int_{0}^{s}\left( 1+s-s^{\prime }\right) ^{\frac{1-\gamma
}{\gamma }}\left \Vert g(s^{\prime })\right \Vert _{L_{\xi
}^{2}L_{x}^{2}}ds^{\prime }  \notag \\
&\lesssim &(1+s)^{\frac{j}{\gamma }}\left \Vert g_{0}\right \Vert _{L_{\xi
,j}^{4}L_{x}^{2}}\notag\\
&&+\int_{0}^{s}(1+s-s^{\prime })^{\frac{1-\gamma }{\gamma }}(1+s^{\prime
})^{-\frac{3}{4}}\left( \left \Vert g_{0}\right \Vert _{L_{\xi
,p_{2}+j}^{\infty }L_{x}^{1}}+\left \Vert g_{0}\right \Vert _{L_{\xi
,p_{2}+j}^{\infty }L_{x}^{\infty }}\right) ds^{\prime }  \notag \\
&\lesssim &(1+s)^{-\frac{3}{4}}\left( \left \Vert g_{0}\right \Vert _{L_{\xi
,p_{2}+j}^{\infty }L_{x}^{1}}+\left \Vert g_{0}\right \Vert _{L_{\xi
,p_{2}+j}^{\infty }L_{x}^{\infty }}\right) \text{,}  \notag
\end{eqnarray}%
the last inequality being valid since $j>0$ is sufficiently large. Using (%
\ref{eq: GreenLx^2}) and (\ref{eq: L4}), we deduce
\begin{eqnarray}
&&II  \notag \\
&\leq &C_{\gamma ,p_{1},p_{2},j}e^{\hat{\varepsilon}\left \langle \tau
\right \rangle ^{p_{1}}}\left( \left \Vert g_{0}\right \Vert _{L_{\xi
,p_{2}+j}^{\infty }L_{x}^{1}}+\left \Vert g_{0}\right \Vert _{L_{\xi
,p_{2}+j}^{\infty }L_{x}^{\infty }}\right) \int_{0}^{t}(1+t-s)^{\frac{%
1-\gamma }{\gamma }}(1+s)^{-\frac{3}{4}}ds  \notag \\
&&+C_{\gamma ,p_{1},p_{2},j}\left( 1+\tau \right) ^{-1}\sup_{0\leq s\leq T}
\left[ \left( 1+s\right) ^{\frac{3}{4}}\left \Vert w_{3}g(s)\right \Vert
_{L_{\xi }^{\infty }L_{x}^{2}}\right] \cdot \int_{0}^{t}(1+t-s)^{\frac{%
1-\gamma }{\gamma }}(1+s)^{-\frac{3}{4}}ds  \label{eq: L2} \\
&\leq &C_{\gamma ,p_{1},p_{2},j}^{\prime }e^{\hat{\varepsilon}\left \langle
\tau \right \rangle ^{p_{1}}}(1+t)^{-\frac{3}{4}}\left( \left \Vert
w_{3}g_{0}\right \Vert _{L_{\xi ,p_{2}+j}^{\infty }L_{x}^{1}}+\left \Vert
w_{3}g_{0}\right \Vert _{L_{\xi ,p_{2}+j}^{\infty }L_{x}^{\infty }}\right)
\notag \\
&&+C_{\gamma ,p_{1},p_{2},j}^{\prime }\left( 1+\tau \right) ^{-1}(1+t)^{-%
\frac{3}{4}}\sup_{0\leq s\leq T}\left[ \left( 1+s\right) ^{\frac{3}{4}}\left
\Vert w_{3}g(s)\right \Vert _{L_{\xi }^{\infty }L_{x}^{2}}\right] \text{.}
\notag
\end{eqnarray}%
After selecting $\tau >0$ sufficiently large with $C_{\gamma
,p_{1},p_{2},j}^{\prime }\left( 1+\tau \right) ^{-1}<1/2$, we obtain
\begin{equation*}
\sup_{0\leq t\leq T}\left[ (1+t)^{\frac{3}{4}}\left \Vert w_{3}g(t)\right
\Vert _{L_{\xi }^{\infty }L_{x}^{2}}\right] \leq C_{1,\gamma ,\hat{%
\varepsilon},p_{1},p_{2},j}\left( \left \Vert w_{3}g_{0}\right \Vert
_{L_{\xi ,p_{2}+j}^{\infty }L_{x}^{1}}+\left \Vert w_{3}g_{0}\right \Vert
_{L_{\xi ,p_{2}+j}^{\infty }L_{x}^{\infty }}\right) \text{,}
\end{equation*}%
due to (\ref{eq: L1}) and (\ref{eq: L2}). It implies that
\begin{equation*}
\left \Vert w_{3}g(t)\right \Vert _{L_{\xi }^{\infty }L_{x}^{2}}\leq
C_{1,\gamma ,\hat{\varepsilon},p_{1},p_{2},j}(1+t)^{-\frac{3}{4}}\left(
\left \Vert w_{3}g_{0}\right \Vert _{L_{\xi ,p_{2}+j}^{\infty
}L_{x}^{1}}+\left \Vert w_{3}g_{0}\right \Vert _{L_{\xi ,p_{2}+j}^{\infty
}L_{x}^{\infty }}\right)
\end{equation*}%
for $0\leq t<\infty $, since $T>0$ is arbitrary.

Finally, through the bootstrap argument, we get%
\begin{equation*}
\left \Vert w_{3}g(t)\right \Vert _{L_{\xi ,p_{2}}^{\infty }L_{x}^{2}}\leq
C_{1,\gamma ,\hat{\varepsilon},p_{1},p_{2},j}(1+t)^{-\frac{3}{4}}\left(
\left \Vert w_{3}g_{0}\right \Vert _{L_{\xi ,p_{2}+j}^{\infty
}L_{x}^{1}}+\left \Vert w_{3}g_{0}\right \Vert _{L_{\xi ,p_{2}+j}^{\infty
}L_{x}^{\infty }}\right) \text{,}
\end{equation*}%
as desired. The proof of this proposition is completed.$%
\hfill%
\square $

\subsection{The Inhomogeneous Boltzmann Equation}

In this section, we further consider the inhomogeneous Boltzmann equation
\begin{equation}
\left \{
\begin{array}{l}
\displaystyle \partial _{t}g+\xi \cdot \nabla _{x}g=Lg+\Gamma (h_{1},h_{2})%
\text{,} \\[4mm]
\displaystyle g(0,x,\xi )=g_{0}(x,\xi )\text{.}%
\end{array}%
\right.  \label{bot.1.r}
\end{equation}

Now, let $0<p_{1}\leq 2$, $p_{2}>3/2$, $\hat{\varepsilon}\geq 0$
sufficiently small, and $j>0$ sufficiently large. We assume that $g_{0}$
satisfies
\begin{equation}
\left \Vert g_{0}\right \Vert _{L_{\xi }^{\infty }\left( \left \langle \xi
\right \rangle ^{p_{2}+2j}e^{\hat{\varepsilon}\left \langle \xi \right
\rangle ^{p_{1}}}\right) L_{x}^{1}}+\left \Vert g_{0}\right \Vert _{L_{\xi
}^{\infty }\left( \left \langle \xi \right \rangle ^{p_{2}+2j}e^{\hat{%
\varepsilon}\left \langle \xi \right \rangle ^{p_{1}}}\right) L_{x}^{\infty
}}\leq b_{0}\text{,}  \label{cond-f0}
\end{equation}%
and $h_{i}$ $\left( i=1\text{, }2\right) $ satisfies
\begin{align}
\sup_{t}& \left \{ (1+t)^{\frac{3}{4}}\left \Vert h_{i}(t)\right \Vert
_{L_{\xi }^{\infty }\left( \left \langle \xi \right \rangle ^{p_{2}}e^{\hat{%
\varepsilon}\left \langle \xi \right \rangle ^{p_{1}}}\right) L_{x}^{2}},%
\text{ }(1+t)^{\frac{3}{4}}\left \Vert h_{i}(t)\right \Vert _{L_{\xi
}^{\infty }\left( \left \langle \xi \right \rangle ^{p_{2}}e^{\hat{%
\varepsilon}\left \langle \xi \right \rangle ^{p_{1}}}\right) L_{x}^{\infty
}},\right.  \notag \\
& \left. \left \Vert h_{i}(t)\right \Vert _{L_{\xi }^{\infty }\left( \left
\langle \xi \right \rangle ^{p_{2}+2j}e^{\hat{\varepsilon}\left \langle \xi
\right \rangle ^{p_{1}}}\right) L_{x}^{2}},\text{ }\left \Vert
h_{i}(t)\right \Vert _{L_{\xi }^{\infty }\left( \left \langle \xi \right
\rangle ^{p_{2}+2j}e^{\hat{\varepsilon}\left \langle \xi \right \rangle
^{p_{1}}}\right) L_{x}^{\infty }}\right \} \leq b_{i}\text{, }
\label{Cond-hi}
\end{align}%
for some $b_{0}$, $b_{1}$, $b_{2}>0$. We will study the large time behavior
of the solution $g$ to (\ref{bot.1.r}) in some suitable norms (see
Proposition \ref{Prop: inhomo}).

Before proving Proposition \ref{Prop: inhomo}, we need some preliminary
results (Lemmas \ref{lemma: estimate of nonlinear term} and \ref{lemma:
eps_n}) regarding the nonlinear term $\Gamma $ under the assumption (\ref%
{Cond-hi}).

\begin{lemma}
\label{lemma: estimate of nonlinear term}Assume that $h_{1}$ and $h_{2}$
satisfy (\ref{Cond-hi}). Then%
\begin{eqnarray*}
\left \Vert \Gamma _{loss}(h_{1},h_{2})(t)\right \Vert _{L_{\xi }^{\infty
}\left( \left \langle \xi \right \rangle ^{p_{2}+2j-\gamma}e^{\hat{\varepsilon}%
\left \langle \xi \right \rangle ^{p_{1}}}\right) L_{x}^{1}} &\leq
&C_{1}(1+t)^{-\frac{3}{4}}b_{1}b_{2}\text{,}%
\vspace {3mm}
\\
\left \Vert \Gamma _{loss}(h_{1},h_{2})(t)\right \Vert _{L_{\xi }^{\infty
}\left( \left \langle \xi \right \rangle ^{p_{2}+2j-\gamma}e^{\hat{\varepsilon}%
\left \langle \xi \right \rangle ^{p_{1}}}\right) L_{x}^{2}} &\leq
&C_{1}(1+t)^{-\frac{3}{4}}b_{1}b_{2}\text{,}%
\vspace {3mm}
\\
\left \Vert \Gamma _{loss}(h_{1},h_{2})(t)\right \Vert _{L_{\xi }^{\infty
}\left( \left \langle \xi \right \rangle ^{p_{2}+2j-\gamma}e^{\hat{\varepsilon}%
\left \langle \xi \right \rangle ^{p_{1}}}\right) L_{x}^{\infty }} &\leq
&C_{1}(1+t)^{-\frac{3}{4}}b_{1}b_{2}\text{,}%
\vspace {3mm}
\\
\left \Vert \Gamma _{gain}(h_{1},h_{2})(t)\right \Vert _{L_{\xi }^{\infty
}\left( \left \langle \xi \right \rangle ^{p_{2}+2j-\gamma +1}e^{\hat{%
\varepsilon}\left \langle \xi \right \rangle ^{p_{1}}}\right) L_{X}} &\leq
&C_{2}b_{1}b_{2}\text{,}
\end{eqnarray*}%
where $L_{X}=L_{x}^{1}$, $L_{x}^{2}$ and $L_{x}^{\infty }$.
\end{lemma}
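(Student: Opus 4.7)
The plan is to reduce both families of bounds to the pointwise-in-$(t,x)$ velocity-sup estimates of Lemma \ref{Lemma-Gamma-sup}, followed by H\"older's inequality in $x$ and the velocity-decay information supplied by \eqref{Cond-hi}. Write $w(\xi)=\langle\xi\rangle^{p_{2}+2j}e^{\hat\varepsilon\langle\xi\rangle^{p_{1}}}$ for the high-weight envelope in \eqref{Cond-hi}, so that $h_{i}$ admits both a decaying low-weight bound $(1+t)^{-3/4}b_{i}\langle\xi\rangle^{-p_{2}}e^{-\hat\varepsilon\langle\xi\rangle^{p_{1}}}$ and a non-decaying high-weight bound $\lesssim b_{i}/w(\xi)$ in each of $L_x^{2}$ and $L_x^{\infty}$.

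For the loss bounds, I would apply \eqref{Gamma-sup-1} with $\lambda=p_{2}+2j-\gamma$, so that $\lambda+\gamma=p_{2}+2j$ matches the high-weight index. Then at each fixed $(t,x)$ one has, pointwise,
\begin{equation*}
\sup_{\xi}\langle\xi\rangle^{p_{2}+2j-\gamma}e^{\hat\varepsilon\langle\xi\rangle^{p_{1}}}|\Gamma_{loss}(h_{1},h_{2})(t,x,\xi)|\;\lesssim\; A_{1}(t,x)B_{2}(t,x)+A_{2}(t,x)B_{1}(t,x),
\end{equation*}
with $A_{i}(t,x):=|h_{i}(t,x,\cdot)|_{L_{\xi}^{\infty}}$ and $B_{i}(t,x):=|h_{i}(t,x,\cdot)|_{L_{\xi}^{\infty}(w)}$. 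I then take the $L_x^{X}$ norm of both sides and split the product via H\"older (Cauchy--Schwarz for $X=1$, the $L_x^{2}\!\cdot\! L_x^{\infty}$ split for $X=2$, and $L_x^{\infty}\!\cdot\! L_x^{\infty}$ for $X=\infty$), placing the decaying low-weight bound on the factor that needs to produce the $(1+t)^{-3/4}$ and the non-decaying high-weight bound on the other factor. Equivalently, working directly from the integral representation, I would invoke Minkowski in $\xi_{*}$ and H\"older in $x$ on $\|h_{1}(\cdot,\xi_{*})h_{2}(\cdot,\xi)\|_{L_x^{X}}$, then use Lemma \ref{[Guo]} on the $\xi_{*}$-integral of $B|\xi-\xi_{*}|^{\gamma}\mathcal{M}_{*}^{1/2}\langle\xi_{*}\rangle^{-(p_{2}+2j)}$ to produce the $\langle\xi\rangle^{\gamma}$ factor needed to absorb the $-\gamma$ in the target weight.

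For the gain bound, I would use \eqref{Gamma-sup-2} with $\lambda=p_{2}+2j-\gamma+1$, so that $\lambda+\gamma-1=p_{2}+2j$. Both factors of the bilinear form then sit naturally in the high-weight norm, and the extra decrement $-1$ in \eqref{Gamma-sup-2} (the regularizing effect advertised in the introduction) is precisely what allows dropping the time decay. Equivalently, the conservation of energy gives $\langle\xi\rangle^{\alpha}e^{\hat\varepsilon\langle\xi\rangle^{p_{1}}}\lesssim\langle\xi'\rangle^{\alpha}e^{\hat\varepsilon\langle\xi'\rangle^{p_{1}}}\langle\xi'_{*}\rangle^{\alpha}e^{\hat\varepsilon\langle\xi'_{*}\rangle^{p_{1}}}$ for $\alpha\geq 0$ and $0<p_{1}\leq 2$, which transfers the target weight onto the post-collisional variables inside the gain integral; then H\"older in $x$ with the non-decaying bounds of \eqref{Cond-hi} closes the estimate by $Cb_{1}b_{2}$ for every $X\in\{1,2,\infty\}$.

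The main obstacle is the arithmetic bookkeeping of the weight $\langle\xi\rangle^{p_{2}+2j-\gamma}$ in the loss estimates against the decay $(1+t)^{-3/4}$, since the decaying bound in \eqref{Cond-hi} carries only weight $\langle\xi\rangle^{-p_{2}}$. The tradeoff is resolved by an asymmetric distribution: the non-decaying high-weight bound is always placed on the $\xi_{*}$-factor inside the kernel integral so that $\mathcal{M}_{*}^{1/2}\langle\xi_{*}\rangle^{-(p_{2}+2j)}e^{-\hat\varepsilon\langle\xi_{*}\rangle^{p_{1}}}$ integrates via Lemma \ref{[Guo]} to a factor $\lesssim\langle\xi\rangle^{\gamma}$, while the decaying low-weight bound is placed on the $\xi$-factor outside the integral to supply the $(1+t)^{-3/4}$; any residual mismatch between the $\langle\xi\rangle^{-p_{2}}$ coming from the decaying bound and the target weight $\langle\xi\rangle^{-(p_{2}+2j)+\gamma}$ at large $|\xi|$ is absorbed by interpolating between the two bounds of \eqref{Cond-hi} (parametrized by $\theta\in[0,1]$, trading decay $(1+t)^{-3(1-\theta)/4}$ against the weight exponent $p_{2}+2j\theta$), so that the final weighted sup in $\xi$ remains bounded.
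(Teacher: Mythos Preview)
Your primary approach is correct and is exactly what the paper does: apply \eqref{Gamma-sup-1} with $\lambda=p_{2}+2j-\gamma$ for the loss term and \eqref{Gamma-sup-2} with $\lambda=p_{2}+2j-\gamma+1$ for the gain term, then use H\"older in $x$ with the bounds from \eqref{Cond-hi}. The paper's proof is literally two lines after invoking Lemma~\ref{Lemma-Gamma-sup}.

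However, your final paragraph manufactures a difficulty that does not exist. After applying \eqref{Gamma-sup-1}, the right-hand side is $A_{1}B_{2}+A_{2}B_{1}$ where $A_{i}=|h_{i}(t,x,\cdot)|_{L_{\xi}^{\infty}}$ carries \emph{no} velocity weight at all and $B_{i}=|h_{i}(t,x,\cdot)|_{L_{\xi}^{\infty}(w)}$ is already the high-weight norm. Since $\langle\xi\rangle^{p_{2}}e^{\hat\varepsilon\langle\xi\rangle^{p_{1}}}\geq 1$ (this is the ``$p_{2}>0$'' in the paper's proof), one has trivially $A_{i}\leq |h_{i}(t,x,\cdot)|_{L_{\xi}^{\infty}(\langle\xi\rangle^{p_{2}}e^{\hat\varepsilon\langle\xi\rangle^{p_{1}}})}$, which is controlled by the decaying norm in \eqref{Cond-hi}. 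There is no residual weight mismatch, no need for interpolation, and no $\theta$-parametrized trade-off. Your ``equivalently'' route through the integral representation and Lemma~\ref{[Guo]} is also unnecessary here; it reproves \eqref{Gamma-sup-1} rather than using it. Drop the last paragraph and the alternative routes, and the proof is complete and identical to the paper's.
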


\proof%
By Lemma \ref{Lemma-Gamma-sup},%
\begin{eqnarray*}
&&\left \vert \Gamma _{loss}(h_{1},h_{2})\left( t\right) \right \vert
_{L_{\xi }^{\infty }\left( \left \langle \xi \right \rangle ^{p_{2}+2j-\gamma}e^{%
\hat{\varepsilon}\left \langle \xi \right \rangle ^{p_{1}}}\right) } \\
&\lesssim &|h_{1}|_{L_{\xi }^{\infty }}|h_{2}|_{L_{\xi }^{\infty }\left(
\left \langle \xi \right \rangle ^{p_{2}+2j}e^{\hat{\varepsilon}%
\left \langle \xi \right \rangle ^{p_{1}}}\right) }+|h_{2}|_{L_{\xi
}^{\infty }}|h_{1}|_{L_{\xi }^{\infty }\left( \left \langle \xi \right
\rangle ^{p_{2}+2j}e^{\hat{\varepsilon}\left \langle \xi \right
\rangle ^{p_{1}}}\right) }\text{,}
\end{eqnarray*}%
and%
\begin{eqnarray*}
&&\left \vert \Gamma _{gain}(h_{1},h_{2})(t)\right \vert _{L_{\xi }^{\infty
}\left( \left \langle \xi \right \rangle ^{p_{2}+2j-\gamma +1}e^{\hat{%
\varepsilon}\left \langle \xi \right \rangle ^{p_{1}}}\right) } \\
&\lesssim &|h_{1}|_{L_{\xi }^{\infty }\left( \left \langle \xi \right
\rangle ^{p_{2}+2j}e^{\hat{\varepsilon}\left \langle \xi \right \rangle
^{p_{1}}}\right) }|h_{2}|_{L_{\xi }^{\infty }\left( \left \langle \xi \right
\rangle ^{p_{2}+2j}e^{\hat{\varepsilon}\left \langle \xi \right \rangle
^{p_{1}}}\right) }\text{.}
\end{eqnarray*}%
Therefore, according to the assumption (\ref{Cond-hi}) and that $p_{2}>0$,
we obtain the desired estimates.$%
\hfill%
\square $

To prove Lemma \ref{lemma: eps_n}, we need an interpolation inequality:

\begin{lemma}
\label{lemma: eps2^n}
\begin{eqnarray*}
&&\left \Vert H(t,x,\xi )\right \Vert _{L_{\xi }^{\infty }\left( \left
\langle \xi \right \rangle ^{p_{2}+j}e^{\hat{\varepsilon}\left \langle \xi
\right \rangle ^{p_{1}}}\right) L_{X}} \\
&\leq &2\left \Vert H(t,x,\xi )\right \Vert _{L_{\xi }^{\infty }\left( \left
\langle \xi \right \rangle ^{p_{2}+2j}e^{\hat{\varepsilon}\left \langle \xi
\right \rangle ^{p_{1}}}\right) L_{X}}^{\frac{1}{2}}\left \Vert H(t,x,\xi
)\right \Vert _{L_{\xi }^{\infty }\left( \left \langle \xi \right \rangle
^{p_{2}}e^{\hat{\varepsilon}\left \langle \xi \right \rangle
^{p_{1}}}\right) L_{X}}^{\frac{1}{2}}\text{,}
\end{eqnarray*}%
where $L_{X}=L_{x}^{1}$ and $L_{x}^{\infty }$.
\end{lemma}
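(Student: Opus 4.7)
The plan is to exploit the multiplicative structure of both the weight and the norm. The crucial algebraic observation is that the middle weight is exactly the geometric mean of the two endpoint weights: for every $\xi$,
\begin{equation*}
\left\langle \xi\right\rangle^{p_2+j}\,e^{\hat{\varepsilon}\left\langle \xi\right\rangle^{p_1}}
=\Bigl(\left\langle \xi\right\rangle^{p_2+2j}\,e^{\hat{\varepsilon}\left\langle \xi\right\rangle^{p_1}}\Bigr)^{1/2}\!\cdot\!\Bigl(\left\langle \xi\right\rangle^{p_2}\,e^{\hat{\varepsilon}\left\langle \xi\right\rangle^{p_1}}\Bigr)^{1/2}.
\end{equation*}
I would pair this with the trivial factorization $|H(t,x,\xi)|=|H(t,x,\xi)|^{1/2}\cdot|H(t,x,\xi)|^{1/2}$ so that the pointwise integrand splits into a product of two pieces, one carrying the heavier weight $\left\langle \xi\right\rangle^{p_2+2j}$ and the other carrying the lighter weight $\left\langle \xi\right\rangle^{p_2}$.

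For the case $L_X=L^\infty_x$, take the supremum over $x$ of this pointwise product using submultiplicativity of the sup; then take the supremum over $\xi$ using submultiplicativity once more. Each step preserves the constant $1$, so the resulting bound holds even with constant $1$ (hence certainly with $2$). For $L_X=L^1_x$, after the pointwise factorization inside the $x$-integral, apply the Cauchy--Schwarz inequality in $x$ to pull the square roots out:
\begin{equation*}
\int_{\mathbb{R}^3}|H|^{1/2}|H|^{1/2}\,dx\leq \Bigl(\int_{\mathbb{R}^3}|H|\,dx\Bigr)^{1/2}\Bigl(\int_{\mathbb{R}^3}|H|\,dx\Bigr)^{1/2}.
\end{equation*}
Then multiply by the (constant in $x$) weight factors, which distributes as the geometric mean above, and finally take $\sup_\xi$ with submultiplicativity to distribute the $L^\infty_\xi$ norm across the two square roots.

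In both cases the resulting constant is $1$, so the bound with constant $2$ is immediate. There is no real obstacle here; the only mildly delicate point to record carefully is that the Cauchy--Schwarz step in the $L^1_x$ case is used \emph{before} the $\sup_\xi$ is taken, so that the weighting factor (which depends only on $\xi$) commutes past the $x$-integral. Since $p_2$, $j$, $\hat\varepsilon$, $p_1$ are fixed non-negative parameters and all quantities are non-negative, no sign or integrability issues arise, and the lemma follows from these two lines.
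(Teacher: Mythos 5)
Your argument is correct but proceeds by a different route than the paper. The paper's proof decomposes the supremum over $\xi$ into a low-velocity region $|\xi|\leq\xi_0$ and a high-velocity region $|\xi|>\xi_0$, bounds the middle weight on each piece by the appropriate endpoint weight times $\langle\xi_0\rangle^{\pm j}$, and then optimizes the cutoff parameter $\xi_0$; after optimization one obtains the sum $2\sqrt{AB}$, which is exactly where the constant $2$ comes from. You instead observe that $\langle\xi\rangle^{p_2+j}e^{\hat\varepsilon\langle\xi\rangle^{p_1}}$ is \emph{exactly} the geometric mean of the two endpoint weights, and then you only need the elementary submultiplicativity $\sup_\xi(fg)\leq(\sup_\xi f)(\sup_\xi g)$ for nonnegative $f,g$. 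Writing $\phi(\xi)=\bigl|H(t,\cdot,\xi)\bigr|_{L_X}$, your observation gives directly
\begin{equation*}
\sup_\xi\, w_{\mathrm{mid}}(\xi)\phi(\xi)
=\sup_\xi\,\bigl(w_A(\xi)\phi(\xi)\bigr)^{1/2}\bigl(w_B(\xi)\phi(\xi)\bigr)^{1/2}
\leq\Bigl(\sup_\xi w_A\phi\Bigr)^{1/2}\Bigl(\sup_\xi w_B\phi\Bigr)^{1/2},
\end{equation*}
so in fact your argument yields the bound with constant $1$, strictly better than the stated $2$, and holds uniformly for \emph{any} choice of $L_X$ since $\phi$ just enters as a fixed nonnegative function of $\xi$. (Your case distinction between $L^1_x$ and $L^\infty_x$, and the Cauchy--Schwarz step in the $L^1_x$ case, is therefore unnecessary: that inequality is trivially an equality, and the $x$-norm never needs to be split.) In short, the paper's frequency-split-and-optimize argument is more robust in settings where the middle weight is not exactly the geometric mean of the endpoint weights, whereas your argument exploits the exact multiplicative structure present here and is shorter, more general in $L_X$, and yields a better constant.
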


\begin{proof}
For any $\xi _{0}>0$,
\begin{eqnarray*}
&&\left \Vert H(t,x,\xi )\right \Vert _{L_{\xi }^{\infty }\left( \left
\langle \xi \right \rangle ^{p_{2}+j}e^{\hat{\varepsilon}\left \langle \xi
\right \rangle ^{p_{1}}}\right) L_{X}} \\
&\leq &\left \langle \xi _{0}\right \rangle ^{j}\sup_{|\xi |\leq \xi
_{0}}\left \vert \left \langle \xi \right \rangle ^{p_{2}}e^{\hat{\varepsilon%
}\left \langle \xi \right \rangle ^{p_{1}}}H(t,x,\xi )\right \vert
_{L_{X}}+\left \langle \xi _{0}\right \rangle ^{-j}\sup_{|\xi |>\xi
_{0}}\left \vert \left \langle \xi \right \rangle ^{p_{2}+2j}e^{\hat{%
\varepsilon}\left \langle \xi \right \rangle ^{p_{1}}}H(t,x,\xi )\right
\vert _{L_{X}} \\
&\leq &\left \langle \xi _{0}\right \rangle ^{j}\left \Vert H(t,x,\xi
)\right \Vert _{L_{\xi }^{\infty }\left( \left \langle \xi \right \rangle
^{p_{2}}e^{\hat{\varepsilon}\left \langle \xi \right \rangle
^{p_{1}}}\right) L_{X}}+\left \langle \xi _{0}\right \rangle ^{-j}\left
\Vert H(t,x,\xi )\right \Vert _{L_{\xi }^{\infty }\left( \left \langle \xi
\right \rangle ^{p_{2}+2j}e^{\hat{\varepsilon}\left \langle \xi \right
\rangle ^{p_{1}}}\right) L_{X}}\text{.}
\end{eqnarray*}%
We can get the desired result by choosing $\xi _{0}>0$ such that
\begin{equation*}
\left \langle \xi _{0}\right \rangle ^{j}=\left \Vert H(t,x,\xi )\right
\Vert _{L_{\xi }^{\infty }\left( \left \langle \xi \right \rangle
^{p_{2}+2j}e^{\hat{\varepsilon}\left \langle \xi \right \rangle
^{p_{1}}}\right) L_{X}}^{\frac{1}{2}}\left \Vert H(t,x,\xi )\right \Vert
_{L_{\xi }^{\infty }\left( \left \langle \xi \right \rangle ^{p_{2}}e^{\hat{%
\varepsilon}\left \langle \xi \right \rangle ^{p_{1}}}\right) L_{X}}^{-\frac{%
1}{2}}\text{.}
\end{equation*}
\end{proof}

\begin{lemma}
\label{lemma: eps_n}Assume that $h_{1}$ and $h_{2}$ satisfy (\ref{Cond-hi}).
Then there exists a positive constant $C_{\gamma ,\hat{\varepsilon}%
,p_{1},p_{2},j}$ such that
\begin{eqnarray*}
\left \Vert \Gamma (h_{1},h_{2})(t)\right \Vert _{L_{\xi }^{\infty }\left(
\left \langle \xi \right \rangle ^{p_{2}+j}e^{\hat{\varepsilon}\left \langle
\xi \right \rangle ^{p_{1}}}\right) L_{x}^{1}} &\leq &C_{\gamma ,\hat{%
\varepsilon},p_{1},p_{2},j}(1+t)^{-\frac{3}{4}}b_{1}b_{2}\text{,}%
\vspace {3mm}
\\
\left \Vert \Gamma (h_{1},h_{2})(t)\right \Vert _{L_{\xi }^{\infty }\left(
\left \langle \xi \right \rangle ^{p_{2}+j}e^{\hat{\varepsilon}\left \langle
\xi \right \rangle ^{p_{1}}}\right) L_{x}^{\infty }} &\leq &C_{\gamma ,\hat{%
\varepsilon},p_{1},p_{2},j}(1+t)^{-\frac{3}{4}}b_{1}b_{2}\text{.}
\end{eqnarray*}
\end{lemma}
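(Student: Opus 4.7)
The plan is to split $\Gamma = \Gamma_{gain} - \Gamma_{loss}$ and bound each piece separately, combining the quadratic estimates in Lemma \ref{lemma: estimate of nonlinear term} with the interpolation inequality of Lemma \ref{lemma: eps2^n}. Both $L_X = L_x^1$ and $L_X = L_x^\infty$ cases will be handled simultaneously since the arguments are identical.

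For $\Gamma_{loss}$, Lemma \ref{lemma: estimate of nonlinear term} already provides decay, namely
$$ \|\Gamma_{loss}(h_1,h_2)(t)\|_{L^\infty_\xi(\langle \xi\rangle^{p_2+2j-\gamma} e^{\hat\varepsilon \langle \xi\rangle^{p_1}}) L_X} \lesssim (1+t)^{-3/4} b_1 b_2. $$
Since $j$ is taken sufficiently large (in particular $j \geq -\gamma$), one has $\langle \xi\rangle^{p_2+j} \leq \langle \xi\rangle^{p_2+2j-\gamma}$ pointwise, so this transfers to the target weight $\langle \xi\rangle^{p_2+j}$ with no loss in the time decay.

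For $\Gamma_{gain}$, Lemma \ref{lemma: estimate of nonlinear term} only gives the time-undecayed bound
$$ \|\Gamma_{gain}(h_1,h_2)(t)\|_{L^\infty_\xi(\langle \xi\rangle^{p_2+2j-\gamma+1} e^{\hat\varepsilon \langle \xi\rangle^{p_1}}) L_X} \lesssim b_1 b_2, $$
which implies the same bound at the lower weight $\langle \xi\rangle^{p_2+2j}$ because $1-\gamma > 0$. The idea is then to apply Lemma \ref{lemma: eps2^n} to $H = \Gamma_{gain}(h_1,h_2)$ between the high weight $p_2+2j$ (giving the constant $b_1 b_2$) and the low weight $p_2$, where we can squeeze out time decay from the quadratic structure. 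For the latter, Lemma \ref{Lemma-Gamma-sup} yields the pointwise-in-$(t,x)$ estimate
$$ |\Gamma_{gain}(h_1,h_2)(t,x,\cdot)|_{L^\infty_\xi(\langle \xi\rangle^{p_2} e^{\hat\varepsilon\langle \xi\rangle^{p_1}})} \lesssim \prod_{i=1}^{2} |h_i(t,x,\cdot)|_{L^\infty_\xi(\langle \xi\rangle^{p_2+\gamma-1} e^{\hat\varepsilon\langle \xi\rangle^{p_1}})} \leq \prod_{i=1}^{2} |h_i(t,x,\cdot)|_{L^\infty_\xi(\langle \xi\rangle^{p_2} e^{\hat\varepsilon\langle \xi\rangle^{p_1}})}, $$
using $\gamma-1 \leq 0$ in the last inequality. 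Taking $L_x^1$ of the product via Cauchy--Schwarz (bounded by the product of $L_x^2$ norms), or $L_x^\infty$ via the pointwise product rule, and invoking the $(1+t)^{-3/4}$ decay of each $h_i$ from \eqref{Cond-hi}, one arrives at
$$ \|\Gamma_{gain}(h_1,h_2)(t)\|_{L^\infty_\xi(\langle \xi\rangle^{p_2} e^{\hat\varepsilon\langle \xi\rangle^{p_1}}) L_X} \lesssim b_1 b_2 (1+t)^{-3/2}. $$
Substituting the two endpoint estimates into Lemma \ref{lemma: eps2^n} produces the geometric mean
$$ \|\Gamma_{gain}(h_1,h_2)(t)\|_{L^\infty_\xi(\langle \xi\rangle^{p_2+j} e^{\hat\varepsilon\langle \xi\rangle^{p_1}}) L_X} \lesssim b_1 b_2 (1+t)^{-3/4}, $$
and combining with the $\Gamma_{loss}$ bound completes the lemma.

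There is no serious obstacle: the work is essentially bookkeeping of the velocity weights, ensuring that $j$ is large enough to absorb the losses $-\gamma$ and $-\gamma+1$ coming from Lemma \ref{lemma: estimate of nonlinear term}, and that the low-end weight $p_2$ in the interpolation is compatible with Lemma \ref{Lemma-Gamma-sup} (which relies on $\gamma-1 \leq 0$). The conceptual point is that the interpolation trades half of the available excess velocity weight $j$ for half of the available time decay $3/2$, producing exactly the advertised $(1+t)^{-3/4}$ rate.
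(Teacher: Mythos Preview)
Your proof is correct and follows essentially the same approach as the paper: the low-weight $(1+t)^{-3/2}$ estimates come from Lemma \ref{Lemma-Gamma-sup}, the high-weight bounds from Lemma \ref{lemma: estimate of nonlinear term}, and the interpolation Lemma \ref{lemma: eps2^n} converts these into the desired $(1+t)^{-3/4}$ rate at weight $p_2+j$. The only cosmetic difference is that the paper applies the interpolation Lemma \ref{lemma: eps2^n} directly to the full $\Gamma$, whereas you handle $\Gamma_{loss}$ by a simple weight embedding (using $p_2+j\le p_2+2j-\gamma$) and reserve the interpolation for $\Gamma_{gain}$; both routes are equivalent.
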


\proof%
It readily follows from Lemma \ref{Lemma-Gamma-sup} that
\begin{eqnarray*}
\left \Vert \Gamma _{loss}(h_{1},h_{2})(t)\right \Vert _{L_{\xi }^{\infty
}\left( \left \langle \xi \right \rangle ^{p_{2}}e^{\hat{\varepsilon}\left
\langle \xi \right \rangle ^{p_{1}}}\right) L_{x}^{1}} &\leq &C_{1}(1+t)^{-%
\frac{3}{2}}b_{1}b_{2}\text{,}%
\vspace {3mm}
\\
\left \Vert \Gamma _{loss}(h_{1},h_{2})(t)\right \Vert _{L_{\xi }^{\infty
}\left( \left \langle \xi \right \rangle ^{p_{2}}e^{\hat{\varepsilon}\left
\langle \xi \right \rangle ^{p_{1}}}\right) L_{x}^{\infty }} &\leq
&C_{1}(1+t)^{-\frac{3}{2}}b_{1}b_{2}\text{,}%
\vspace {3mm}
\\
\left \Vert \Gamma _{gain}(h_{1},h_{2})(t)\right \Vert _{L_{\xi }^{\infty
}\left( \left \langle \xi \right \rangle ^{p_{2}-\gamma +1}e^{\hat{%
\varepsilon}\left \langle \xi \right \rangle ^{p_{1}}}\right) L_{x}^{1}}
&\leq &C_{2}(1+t)^{-\frac{3}{2}}b_{1}b_{2}\text{,}%
\vspace {3mm}
\\
\left \Vert \Gamma _{gain}(h_{1},h_{2})(t)\right \Vert _{L_{\xi }^{\infty
}\left( \left \langle \xi \right \rangle ^{p_{2}-\gamma +1}e^{\hat{%
\varepsilon}\left \langle \xi \right \rangle ^{p_{1}}}\right) L_{x}^{\infty
}} &\leq &C_{2}(1+t)^{-\frac{3}{2}}b_{1}b_{2}\text{.}
\end{eqnarray*}%
Combining this with Lemmas \ref{lemma: estimate of nonlinear term} and \ref%
{lemma: eps2^n}, we obtain
\begin{eqnarray*}
&&\left \Vert \Gamma (h_{1},h_{2})(t)\right \Vert _{L_{\xi }^{\infty }\left(
\left \langle \xi \right \rangle ^{p_{2}+j}e^{\hat{\varepsilon}\left \langle
\xi \right \rangle ^{p_{1}}}\right) L_{x}^{1}} \\
&\leq &2\left \Vert \Gamma (h_{1},h_{2})(t)\right \Vert _{L_{\xi }^{\infty
}\left( \left \langle \xi \right \rangle ^{p_{2}+2j}e^{\hat{\varepsilon}%
\left \langle \xi \right \rangle ^{p_{1}}}\right) L_{x}^{1}}^{\frac{1}{2}%
}\left \Vert \Gamma (h_{1},h_{2})(t)\right \Vert _{L_{\xi }^{\infty }\left(
\left \langle \xi \right \rangle ^{p_{2}}e^{\hat{\varepsilon}\left \langle
\xi \right \rangle ^{p_{1}}}\right) L_{x}^{1}}^{\frac{1}{2}} \\
&\leq &C_{\gamma ,\hat{\varepsilon},p_{1},p_{2},j}(1+t)^{-\frac{3}{4}%
}b_{1}b_{2}\text{,}
\end{eqnarray*}%
and%
\begin{eqnarray*}
&&\left \Vert \Gamma (h_{1},h_{2})(t)\right \Vert _{L_{\xi }^{\infty }\left(
\left \langle \xi \right \rangle ^{p_{2}+j}e^{\hat{\varepsilon}\left \langle
\xi \right \rangle ^{p_{1}}}\right) L_{x}^{\infty }} \\
&\leq &2\left \Vert \Gamma (h_{1},h_{2})(t)\right \Vert _{L_{\xi }^{\infty
}\left( \left \langle \xi \right \rangle ^{p_{2}+2j}e^{\hat{\varepsilon}%
\left \langle \xi \right \rangle ^{p_{1}}}\right) L_{x}^{\infty }}^{\frac{1}{%
2}}\left \Vert \Gamma (h_{1},h_{2})(t)\right \Vert _{L_{\xi }^{\infty
}\left( \left \langle \xi \right \rangle ^{p_{2}}e^{\hat{\varepsilon}\left
\langle \xi \right \rangle ^{p_{1}}}\right) L_{x}^{\infty }}^{\frac{1}{2}} \\
&\leq &C_{\gamma ,\hat{\varepsilon},p_{1},p_{2},j}(1+t)^{-\frac{3}{4}%
}b_{1}b_{2}\text{.}
\end{eqnarray*}%
$%
\hfill%
\square $

\begin{proposition}
\label{Prop: inhomo}Assume that $g_{0}$ satisfies (\ref{cond-f0}) and that $%
h_{1}$ and $h_{2}$ satisfy (\ref{Cond-hi}). Then there exists a number $%
C_{\gamma ,\hat{\varepsilon},p_{1},p_{2},j}>0$ such that the solution $g$ to
(\ref{bot.1.r}) satisfies%
\begin{eqnarray*}
&&\max \left \{ (1+t)^{\frac{3}{4}}\left \Vert g(t)\right \Vert _{L_{\xi
}^{\infty }\left( \left \langle \xi \right \rangle ^{p_{2}}e^{\hat{%
\varepsilon}\left \langle \xi \right \rangle ^{p_{1}}}\right)
L_{x}^{2}},(1+t)^{\frac{3}{4}}\left \Vert g(t)\right \Vert _{L_{\xi
}^{\infty }\left( \left \langle \xi \right \rangle ^{p_{2}}e^{\hat{%
\varepsilon}\left \langle \xi \right \rangle ^{p_{1}}}\right) L_{x}^{\infty
}},\right. \\
&&\left. \left \Vert g(t)\right \Vert _{L_{\xi }^{\infty }\left( \left
\langle \xi \right \rangle ^{p_{2}+2j}e^{\hat{\varepsilon}\left \langle \xi
\right \rangle ^{p_{1}}}\right) L_{x}^{2}},\left \Vert g(t)\right \Vert
_{L_{\xi }^{\infty }\left( \left \langle \xi \right \rangle ^{p_{2}+2j}e^{%
\hat{\varepsilon}\left \langle \xi \right \rangle ^{p_{1}}}\right)
L_{x}^{\infty }}\right \} \leq C_{\gamma ,\hat{\varepsilon}%
,p_{1},p_{2},j}(b_{0}+b_{1}b_{2})\text{.}
\end{eqnarray*}
\end{proposition}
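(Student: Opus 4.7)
The plan is to use Duhamel's principle, $g(t) = \mathbb{G}^{t} g_0 + \int_0^t \mathbb{G}^{t-s} \Gamma(h_1, h_2)(s)\, ds$, apply Proposition \ref{thm: linear} to the homogeneous part, and control the source integral by combining the linear Green's function bounds with the $\Gamma$-estimates supplied by Lemmas \ref{lemma: estimate of nonlinear term} and \ref{lemma: eps_n}. The homogeneous contribution is immediate: \eqref{eq: Greenweightepsilon1}--\eqref{eq: Greenweightepsilon2} yield the two time-decay targets and the shifted \eqref{eq: Greenweightalpha1}--\eqref{eq: Greenweightalpha2} (applied with $p_2 \mapsto p_2+j$) yield the two uniform high-weight targets, all with constant proportional to $b_0$.

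For the Duhamel integral I would handle the three less delicate targets first. The $L^\infty_x$ decay in weight $p_2$ follows by applying \eqref{eq: Greenweightepsilon2} at each time $s$ to the ``initial datum'' $\Gamma(s)$ and using Lemma \ref{lemma: eps_n} to get $\|\Gamma(s)\|_{L^\infty_\xi(\langle\xi\rangle^{p_2+j} e^{\hat{\eps} \langle\xi\rangle^{p_1}})L^1_x \cap L^\infty_\xi(\langle\xi\rangle^{p_2+j} e^{\hat{\eps} \langle\xi\rangle^{p_1}})L^\infty_x} \lesssim (1+s)^{-3/4} b_1 b_2$; then $\int_0^t (1+t-s)^{-3/2}(1+s)^{-3/4}\,ds \lesssim (1+t)^{-3/4}$ closes the estimate. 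For the uniform high-weight bounds I would instead pass through the damped transport representation $g = \mathbb{S}^t g_0 + \int_0^t \mathbb{S}^{t-s}[Kg + \Gamma](s)\,ds$ and exploit Lemma \ref{lemma: estimate of nonlinear term}: $\Gamma_{gain}$ is uniformly bounded at weight $p_2+2j-\gamma+1$ and $\Gamma_{loss}$ decays at weight $p_2+2j-\gamma$, both strictly higher than $p_2+2j$. The standard shift estimate $\|\mathbb{S}^{t-s}q\|_{L^\infty_\xi(\langle\xi\rangle^\alpha)L^?_x} \lesssim (1+t-s)^{\tau/\gamma} \|q\|_{L^\infty_\xi(\langle\xi\rangle^{\alpha+\tau})L^?_x}$ converts the excess weight into integrable time factors $(1+t-s)^{(1-\gamma)/\gamma}$ (with $(1-\gamma)/\gamma<-3/2$) and $(1+t-s)^{-1}$, respectively, while the $Kg$ contribution is absorbed via a Volterra bootstrap after sufficiently many $K$-iterations (each gaining $2-\gamma$ units of $\xi$-weight by Lemma \ref{lemma: estimate of weighted-kw}).

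The delicate estimate is the sharp $(1+t)^{-3/4}$ decay in the weight-$p_2$, $L^2_x$ norm: a direct application of \eqref{eq: Greenweightepsilon1} with Lemma \ref{lemma: eps_n} gives only $(1+t-s)^{-3/4} \ast (1+s)^{-3/4} \sim (1+t)^{-1/2}$, which is insufficient. I would circumvent this by first proving $\|g\|_{L^2_\xi L^2_x} \lesssim (1+t)^{-3/4}(b_0+b_1 b_2)$ at the unweighted $L^2_\xi L^2_x$ level: the bilinear estimate \eqref{Gamma-L2-1}, combined with the embedding $L^\infty_\xi(\langle\xi\rangle^{p_2}) \hookrightarrow L^2_\xi$ available since $p_2>3/2$, yields $\|\Gamma(h_1,h_2)(s)\|_{L^2_\xi L^1_x \cap L^2_\xi L^2_x} \lesssim (1+s)^{-3/2} b_1 b_2$; paired with the underlying $(1+t-s)^{-3/4}$ decay of $\mathbb{G}$ in $L^2_\xi L^2_x$ (from \cite{[Strain],[Strain-Guo]}, exactly the rate used inside the proof of Proposition \ref{thm: linear}), the convolution $(1+t-s)^{-3/4} \ast (1+s)^{-3/2} \lesssim (1+t)^{-3/4}$ closes cleanly. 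The bootstrap from $L^2_\xi L^2_x$ up to $L^\infty_\xi(\langle\xi\rangle^{p_2}) L^2_x$ and thence to $L^\infty_\xi(\langle\xi\rangle^{p_2}) L^\infty_x$ then mirrors the chain inside the proof of Proposition \ref{thm: linear}: the regularizing property $|Kg|_{L^\infty_\xi(\langle\xi\rangle^{3/2-\gamma})} \lesssim |g|_{L^2_\xi}$ from Lemma \ref{basic22} (and Lemma \ref{L4-L2} when $-2<\gamma\leq-3/2$) trades the $L^2_\xi$ decay of $g$ for an $L^\infty_\xi$ decay at the cost of some $\xi$-weight, which the $\mathbb{S}^{t-s}$ factor then absorbs.

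The main obstacle is precisely this sharpening in the $L^2_x$ norm: because $\Gamma_{gain}$ carries no intrinsic time decay in any high-weight $L^\infty_\xi$ norm provided by Lemma \ref{lemma: estimate of nonlinear term}, a purely $L^\infty_\xi$-level computation cannot reach the target $(1+t)^{-3/4}$ rate, and the argument is forced to detour through $L^2_\xi L^2_x$ where the full bilinear cancellation inside $\Gamma$ supplies the stronger $(1+s)^{-3/2}$ source decay before being lifted back by the $K$-smoothing. The role of Lemma \ref{lemma: eps_n}, built on the interpolation Lemma \ref{lemma: eps2^n}, is to supply the medium-weight $(1+s)^{-3/4}$ source bound that both drives the $L^\infty_x$ estimate and serves as the bridging step in the $L^\infty_\xi L^2_x$ bootstrap.
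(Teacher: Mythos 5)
Your architecture mirrors the paper's: Duhamel with the semigroup $\mathbb{G}^t$ plus Proposition \ref{thm: linear} and Lemma \ref{lemma: eps_n} for the weight-$p_2$ decays, and the damped transport representation $g=\mathbb{S}^t g_0+\int_0^t\mathbb{S}^{t-s}[Kg+\Gamma]\,ds$ with a Volterra-type closure for the uniform high-weight bounds. You also correctly identify the one genuinely delicate point. The paper carries out only the weight-$p_2$, $L^\infty_x$ estimate in display (the convolution $(1+t-s)^{-3/2}\ast(1+s)^{-3/4}$ in \eqref{eq: N1}) and then asserts that the weight-$p_2$, $L^2_x$ rate "can be obtained via the same argument"; but, as you observe, the naive analogue $(1+t-s)^{-3/4}\ast(1+s)^{-3/4}\sim(1+t)^{-1/2}$ falls short of $(1+t)^{-3/4}$, so the paper is glossing over a real step there. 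Your detour—first establish the $(1+t)^{-3/4}$ decay at the $L^2_\xi L^2_x$ level using the full bilinear cancellation \eqref{Gamma-L2-1}, then bootstrap via the $K$-smoothing as in the proof of Proposition \ref{thm: linear}—is the right idea and effectively completes the argument the paper leaves implicit.

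There is one genuine gap in your write-up, though. The source rate you claim, $\|\Gamma(h_1,h_2)(s)\|_{L^2_\xi L^1_x\cap L^2_\xi L^2_x}\lesssim(1+s)^{-3/2}$, is correct as stated, but it is the wrong object: the $L^2_\xi L^2_x$ Green's function bound \eqref{eq: GreenLx^2} that you invoke carries a polynomial weight on the data—it controls $\|g(t)\|_{L^2_\xi L^2_x}$ by $\|\langle\xi\rangle^j g_0\|_{L^2_\xi L^1_x}+\|\langle\xi\rangle^j g_0\|_{L^2_\xi L^2_x}$—so what must be estimated is $\|\langle\xi\rangle^j\Gamma(s)\|_{L^2_\xi L^1_x\cap L^2_\xi L^2_x}$. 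Once the $\langle\xi\rangle^j$ is distributed across the factors (using $\langle\xi\rangle^j\lesssim\langle\xi'\rangle^j+\langle\xi_*'\rangle^j$ in the gain term), one of $h_1,h_2$ inherits a weight near $p_2+j$, for which \eqref{Cond-hi} gives decay only after interpolation (Lemma \ref{lemma: eps2^n}) between the decaying $p_2$-weighted norm and the merely bounded $p_2+2j$-weighted norm, yielding roughly $(1+s)^{-3/8}$; the product then decays like $(1+s)^{-9/8}$ rather than $(1+s)^{-3/2}$. This is not fatal—since $9/8>1$, the convolution $\int_0^t(1+t-s)^{-3/4}(1+s)^{-9/8}\,ds\lesssim(1+t)^{-3/4}$ still closes—but the quoted rate should be corrected and the interpolation step stated explicitly, or else the argument as written does not quite cohere with the estimate it cites.
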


\proof%
By Duhamel's principle, $g$ can be written as
\begin{equation*}
g(t)=\mathbb{G}^{t}g_{0}+\int_{0}^{t}\mathbb{G}^{t-s}\Gamma
(h_{1},h_{2})(s)ds\text{.}
\end{equation*}%
Hence, in view of Proposition \ref{thm: linear} and Lemma \ref{lemma: eps_n}%
,
\begin{eqnarray}\label{eq: N1}
&&\left \Vert g(t)\right \Vert _{L_{\xi }^{\infty }\left( \left \langle \xi
\right \rangle ^{p_{2}}e^{\hat{\varepsilon}\left \langle \xi \right \rangle
^{p_{1}}}\right) L_{x}^{\infty }}\notag \\
&\leq &\left \Vert \mathbb{G}^{t}g_{0}\right \Vert _{L_{\xi }^{\infty
}\left( \left \langle \xi \right \rangle ^{p_{2}}e^{\hat{\varepsilon}\left
\langle \xi \right \rangle ^{p_{1}}}\right) L_{x}^{\infty
}}+\int_{0}^{t}\left \Vert \mathbb{G}^{t-s}\Gamma (h_{1},h_{2})(s)\right
\Vert _{L_{\xi }^{\infty }\left( \left \langle \xi \right \rangle ^{p_{2}}e^{%
\hat{\varepsilon}\left \langle \xi \right \rangle ^{p_{1}}}\right)
L_{x}^{\infty }}ds \notag\\
&\lesssim &(1+t)^{-\frac{3}{2}}\left( \left \Vert g_{0}\right \Vert _{L_{\xi
}^{\infty }\left( \left \langle \xi \right \rangle ^{p_{2}+j}e^{\hat{%
\varepsilon}\left \langle \xi \right \rangle ^{p_{1}}}\right)
L_{x}^{1}}+\left \Vert g_{0}\right \Vert _{L_{\xi }^{\infty }\left( \left
\langle \xi \right \rangle ^{p_{2}+j}e^{\hat{\varepsilon}\left \langle \xi
\right \rangle ^{p_{1}}}\right) L_{x}^{\infty }}\right) \\
&&+\int_{0}^{t}(1+t-s)^{-\frac{3}{2}}\left \Vert \Gamma
(h_{1},h_{2})(s)\right \Vert _{L_{\xi }^{\infty }\left( \left \langle \xi
\right \rangle ^{p_{2}+j}e^{\hat{\varepsilon}\left \langle \xi \right
\rangle ^{p_{1}}}\right) L_{x}^{1}} ds\notag\\
&&+\int_{0}^{t}(1+t-s)^{-\frac{3}{2}}\left \Vert \Gamma
(h_{1},h_{2})(s)\right \Vert _{L_{\xi }^{\infty }\left( \left \langle \xi
\right \rangle ^{p_{2}+j}e^{\hat{\varepsilon}\left \langle \xi \right
\rangle ^{p_{1}}}\right) L_{x}^{\infty}} ds\notag  \\
&\lesssim &(1+t)^{-\frac{3}{2}}b_{0}+\int_{0}^{t}(1+t-s)^{-\frac{3}{2}%
}\left( (1+s)^{-\frac{3}{4}}+(1+s)^{-\frac{3}{4}}\right) b_{1}b_{2}ds \notag\\
&\lesssim &(1+t)^{-\frac{3}{4}}\left( b_{0}+b_{1}b_{2}\right) \text{,}\notag
\end{eqnarray}%
i.e.,
\begin{equation*}
\left \Vert g(t)\right \Vert _{L_{\xi }^{\infty }\left( \left \langle \xi
\right \rangle ^{p_{2}}e^{\hat{\varepsilon}\left \langle \xi \right \rangle
^{p_{1}}}\right) L_{x}^{\infty }}\lesssim (1+t)^{-\frac{3}{4}}\left(
b_{0}+b_{1}b_{2}\right) \text{.}
\end{equation*}%
This completes the estimate for $\left \Vert g(t)\right \Vert _{L_{\xi
}^{\infty }\left( \left \langle \xi \right \rangle ^{p_{2}}e^{\hat{%
\varepsilon}\left \langle \xi \right \rangle ^{p_{1}}}\right) L_{x}^{\infty
}}$, and the estimate for $(1+t)^{\frac{3}{4}}\left \Vert g(t)\right \Vert
_{L_{\xi }^{\infty }\left( \left \langle \xi \right \rangle ^{p_{2}}e^{\hat{%
\varepsilon}\left \langle \xi \right \rangle ^{p_{1}}}\right) L_{x}^{2}}$
can be obtained via the same argument.

On the other hand, regarding (\ref{bot.1.r}) as the damped transport
equation with the source term $Kg+\Gamma (h_{1},h_{2})$, $g$ can be
rewritten as
\begin{equation*}
g\left( t\right) =\mathbb{S}^{t}g_{0}+\int_{0}^{t}\mathbb{S}^{t-s}\left[
Kg(s)+\Gamma (h_{1},h_{2})(s)\right] ds\text{.}
\end{equation*}%
Let $T>0$. Hence, for any $0\leq t\leq T$,
\begin{eqnarray*}
\left \vert g(t)\right \vert _{L_{x}^{\infty }} &\leq &\left \vert \mathbb{S}%
^{t}g_{0}\right \vert _{L_{x}^{\infty }}+\int_{0}^{t}\left \vert \mathbb{S}%
^{t-s}\left( Kg(s)+\Gamma (h_{1},h_{2})(s)\right) \right \vert
_{L_{x}^{\infty }}ds \\
&\leq &e^{-\nu (\xi )t}\left \vert g_{0}\right \vert _{L_{x}^{\infty
}}+\int_{0}^{t}e^{-\nu (\xi )(t-s)}\left \vert Kg(s)+\Gamma
(h_{1},h_{2})(s)\right \vert _{L_{x}^{\infty }}ds\text{.}
\end{eqnarray*}%
By assumption (\ref{cond-f0}), it immediately follows that%
\begin{equation}
e^{\hat{\varepsilon}\left \langle \xi \right \rangle ^{p_{1}}}\left \langle
\xi \right \rangle ^{p_{2}+2j}e^{-\nu (\xi )t}\left \vert g_{0}\right \vert
_{L_{x}^{\infty }}\leq b_{0}\text{.}  \label{eq: N2}
\end{equation}%
Next, in view of (\ref{K-Lp}) and (\ref{Kw6}),
\begin{eqnarray*}
&&e^{\hat{\varepsilon}\left \langle \xi \right \rangle ^{p_{1}}}\left
\langle \xi \right \rangle ^{p_{2}+2j}e^{-\nu (\xi )(t-s)}\left \vert
Kg(s)\right \vert _{L_{x}^{\infty }} \\
&\leq &\sup_{\left \vert \xi \right \vert \leq \tau }\left[ e^{\hat{%
\varepsilon}\left \langle \xi \right \rangle ^{p_{1}}}e^{-\nu (\xi
)(t-s)}\left \langle \xi \right \rangle ^{\gamma -1}\left \langle \xi \right
\rangle ^{1-\gamma }\left \langle \xi \right \rangle ^{p_{2}+2j}\left \vert
Kg(s)\right \vert _{L_{x}^{\infty }}\right] \\
&&+\sup_{\left \vert \xi \right \vert >\tau }\left[ e^{-\nu (\xi
)(t-s)}\left \langle \xi \right \rangle ^{\gamma -1}\left \langle \xi \right
\rangle ^{-1}\left( \left \langle \xi \right \rangle ^{2-\gamma }\left
\langle \xi \right \rangle ^{p_{2}+2j}e^{\hat{\varepsilon}\left \langle \xi
\right \rangle ^{p_{1}}}\left \vert Kg(s)\right \vert _{L_{x}^{\infty
}}\right) \right] \\
&\leq &C_{1}(1+t-s)^{\frac{1-\gamma }{\gamma }}\left( \left \langle \tau
\right \rangle ^{2j}e^{\hat{\varepsilon}\left \langle \tau \right \rangle
^{p_{1}}}\left \Vert Kg(s)\right \Vert _{L_{\xi ,p_{2}+1-\gamma }^{\infty
}L_{x}^{\infty }}+(1+\tau )^{-1}\left \Vert Kg(s)\right \Vert _{L_{\xi
}^{\infty }\left( \left \langle \xi \right \rangle ^{p_{2}+2j+2-\gamma }e^{%
\hat{\varepsilon}\left \langle \xi \right \rangle ^{p_{1}}}\right)
L_{x}^{\infty }}\right) \\
&\leq &C_{1}^{\prime }(1+t-s)^{\frac{1-\gamma }{\gamma }}\left( \left
\langle \tau \right \rangle ^{2j}e^{\hat{\varepsilon}\left \langle \tau
\right \rangle ^{p_{1}}}\left \Vert g(s)\right \Vert _{L_{\xi
,p_{2}}^{\infty }L_{x}^{\infty }}+(1+\tau )^{-1}\left \Vert g(s)\right \Vert
_{L_{\xi }^{\infty }\left( \left \langle \xi \right \rangle ^{p_{2}+2j}e^{%
\hat{\varepsilon}\left \langle \xi \right \rangle ^{p_{1}}}\right)
L_{x}^{\infty }}\right)
\end{eqnarray*}%
for any $\tau >0$. Picking $\tau >0$ such that $(1+\tau )^{-1}C_{1}^{\prime
}<\frac{1}{4}$, we get
\begin{eqnarray}
&&\int_{0}^{t}e^{\hat{\varepsilon}\left \langle \xi \right \rangle
^{p_{1}}}\left \langle \xi \right \rangle ^{p_{2}+2j}e^{-\nu (\xi
)(t-s)}\left \vert Kg(s)\right \vert _{L_{x}^{\infty }}ds  \label{eq: N3} \\
&\leq &\int_{0}^{t}(1+t-s)^{\frac{1-\gamma }{\gamma }}\left( C_{2}\left
\Vert g(s)\right \Vert _{L_{\xi ,p_{2}}^{\infty }L_{x}^{\infty }}+\frac{1}{4}%
\sup_{0\leq s\leq T}\left \Vert g(s)\right \Vert _{L_{\xi }^{\infty }\left(
\left \langle \xi \right \rangle ^{p_{2}+2j}e^{\hat{\varepsilon}\left
\langle \xi \right \rangle ^{p_{1}}}\right) L_{x}^{\infty }}\right) ds
\notag \\
&\leq &C_{3}\left( b_{0}+b_{1}b_{2}\right) +\frac{1}{2}\sup_{0\leq s\leq
T}\left \Vert g(s)\right \Vert _{L_{\xi }^{\infty }\left( \left \langle \xi
\right \rangle ^{p_{2}+2j}e^{\hat{\varepsilon}\left \langle \xi \right
\rangle ^{p_{1}}}\right) L_{x}^{\infty }}\text{,}  \notag
\end{eqnarray}%
here the estimate for $\left \Vert g(s)\right \Vert _{L_{\xi ,p_{2}}^{\infty
}L_{x}^{\infty }}$ following the same argument as (\ref{eq: N1}). Finally,
we split $\Gamma $ into two parts $\Gamma _{loss}$ and $\Gamma _{gain}$.
Then, it follows from Lemma \ref{lemma: estimate of nonlinear term} that
\begin{eqnarray}
&&\int_{0}^{t}e^{\hat{\varepsilon}\left \langle \xi \right \rangle
^{p_{1}}}\left \langle \xi \right \rangle ^{p_{2}+2j}e^{-\nu (\xi
)(t-s)}\left \vert \Gamma _{gain}(h_{1},h_{2})(s)\right \vert
_{L_{x}^{\infty }}ds  \label{eq: N4} \\
&\leq &\int_{0}^{t}C_{4}(1+t-s)^{\frac{1-\gamma }{\gamma }}\left \Vert
\Gamma _{gain}(h_{1},h_{2})(s)\right \Vert _{L_{\xi }^{\infty }\left( \left
\langle \xi \right \rangle ^{p_{2}+2j+1-\gamma }e^{\hat{\varepsilon}\left
\langle \xi \right \rangle ^{p_{1}}}\right) L_{x}^{\infty }}ds  \notag \\
&\leq &\int_{0}^{t}C_{5}(1+t-s)^{\frac{1-\gamma }{\gamma }}b_{1}b_{2}ds\leq
C_{5}b_{1}b_{2}  \notag
\end{eqnarray}%
and%
\begin{eqnarray}
&&\int_{0}^{t}e^{\hat{\varepsilon}\left \langle \xi \right \rangle
^{p_{1}}}\left \langle \xi \right \rangle ^{p_{2}+2j}e^{-\nu (\xi
)(t-s)}\left \vert \Gamma _{loss}(h_{1},h_{2})(s)\right \vert
_{L_{x}^{\infty }}ds  \label{eq: N5} \\
&\leq &\int_{0}^{t}(1+t-s)^{-1}\left \Vert \Gamma _{loss}(h_{1},h_{2})(s)\right \Vert
_{L_{\xi }^{\infty }\left( \left \langle \xi \right \rangle ^{p_{2}+2j-\gamma}e^{%
\hat{\varepsilon}\left \langle \xi \right \rangle ^{p_{1}}}\right)
L_{x}^{\infty }}ds  \notag \\
&\leq &C_{6}b_{1}b_{2}\int_{0}^{t}(1+t-s)^{-1}(1+s)^{-\frac{3}{4}}ds\leq C_{6}b_{1}b_{2}%
\text{.}  \notag
\end{eqnarray}%
Combining (\ref{eq: N2})-(\ref{eq: N5}) gives%
\begin{equation*}
\sup_{0\leq t\leq T}\left \Vert g(t)\right \Vert _{L_{\xi }^{\infty }\left(
\left \langle \xi \right \rangle ^{p_{2}+2j}e^{\hat{\varepsilon}\left
\langle \xi \right \rangle ^{p_{1}}}\right) L_{x}^{\infty }}\leq C_{\gamma ,%
\hat{\varepsilon},p_{1},p_{2},j}(b_{0}+b_{1}b_{2})\text{,}
\end{equation*}%
which implies that%
\begin{equation*}
\left \Vert g(t)\right \Vert _{L_{\xi }^{\infty }\left( \left \langle \xi
\right \rangle ^{p_{2}+2j}e^{\hat{\varepsilon}\left \langle \xi \right
\rangle ^{p_{1}}}\right) L_{x}^{\infty }}\leq C_{\gamma ,\hat{\varepsilon}%
,p_{1},p_{2},j}(b_{0}+b_{1}b_{2})
\end{equation*}%
for $0\leq t<\infty $, since $T>0$ is arbitrary. The estimate for $%
\left
\Vert g(t)\right \Vert _{L_{\xi }^{\infty }\left( \left \langle \xi
\right
\rangle ^{p_{2}+2j}e^{\hat{\varepsilon}\left \langle \xi
\right
\rangle ^{p_{1}}}\right) L_{x}^{2}}$ can be obtained via the same
argument as well. The proof of this proposition is completed.$%
\hfill%
\square $

\subsection{Proof of Theorem \protect \ref{prop: nonlinear}.}

Define a norm as
\begin{align*}
|||h|||\equiv \sup_{t}& \left \{ (1+t)^{\frac{3}{4}}\left \Vert h(t)\right
\Vert _{L_{\xi }^{\infty }\left( \left \langle \xi \right \rangle ^{p_{2}}e^{%
\hat{\varepsilon}\left \langle \xi \right \rangle ^{p_{1}}}\right)
L_{x}^{2}},(1+t)^{\frac{3}{4}}\left \Vert h(t)\right \Vert _{L_{\xi
}^{\infty }\left( \left \langle \xi \right \rangle ^{p_{2}}e^{\hat{%
\varepsilon}\left \langle \xi \right \rangle ^{p_{1}}}\right) L_{x}^{\infty
}},\right. \text{ } \\
& \left. \left \Vert h(t)\right \Vert _{L_{\xi }^{\infty }\left( \left
\langle \xi \right \rangle ^{p_{2}+2j}e^{\hat{\varepsilon}\left \langle \xi
\right \rangle ^{p_{1}}}\right) L_{x}^{2}},\left \Vert h(t)\right \Vert
_{L_{\xi }^{\infty }\left( \left \langle \xi \right \rangle ^{p_{2}+2j}e^{%
\hat{\varepsilon}\left \langle \xi \right \rangle ^{p_{1}}}\right)
L_{x}^{\infty }}\right \} \text{.}
\end{align*}

Now, we consider the iteration $\left \{ f^{(i)}\right \} $ for which $%
f^{(0)}\left( t,x,\xi \right) \equiv 0$ and $f^{\left( i+1\right) }$, $i\in
\mathbb{N\cup \{}0\mathbb{\}}$, is a solution to the equation
\begin{equation}
\left \{
\begin{array}{l}
\displaystyle \partial _{t}f^{(i+1)}+\xi \cdot \nabla
_{x}f^{(i+1)}=Lf^{(i+1)}+\Gamma (f^{(i)},f^{(i)})\text{,} \\[4mm]
\displaystyle f^{(i+1)}(0,x,\xi )=\eta f_{0}(x,\xi )\text{,}%
\end{array}%
\right.  \label{inducion-1}
\end{equation}%
where $\eta >0$ is sufficiently small such that%
\begin{equation*}
\left( 1+C_{\gamma ,\hat{\varepsilon},p_{1},p_{2},j}\right) ^{2}\eta \left(
\left \Vert f_{0}\right \Vert _{L_{\xi }^{\infty }\left( \left \langle \xi
\right \rangle ^{p_{2}+2j}e^{\hat{\varepsilon}\left \langle \xi \right
\rangle ^{p_{1}}}\right) L_{x}^{1}}+\left \Vert f_{0}\right \Vert _{L_{\xi
}^{\infty }\left( \left \langle \xi \right \rangle ^{p_{2}+2j}e^{\hat{%
\varepsilon}\left \langle \xi \right \rangle ^{p_{1}}}\right) L_{x}^{\infty
}}\right) <1/8\text{.}
\end{equation*}%
Denote%
\begin{equation*}
b_{0}:=\eta \left( \left \Vert f_{0}\right \Vert _{L_{\xi }^{\infty }\left(
\left \langle \xi \right \rangle ^{p_{2}+2j}e^{\hat{\varepsilon}\left
\langle \xi \right \rangle ^{p_{1}}}\right) L_{x}^{1}}+\left \Vert
f_{0}\right \Vert _{L_{\xi }^{\infty }\left( \left \langle \xi \right
\rangle ^{p_{2}+2j}e^{\hat{\varepsilon}\left \langle \xi \right \rangle
^{p_{1}}}\right) L_{x}^{\infty }}\right) \text{,}
\end{equation*}%
According to Proposition \ref{Prop: inhomo},
\begin{equation*}
|||f^{(1)}|||\leq C_{\gamma ,\hat{\varepsilon},p_{1},p_{2},j}b_{0}\leq
2C_{\gamma ,\hat{\varepsilon},p_{1},p_{2},j}b_{0}\text{,}
\end{equation*}%
and then
\begin{eqnarray*}
|||f^{(2)}||| &\leq &C_{\gamma ,\hat{\varepsilon},p_{1},p_{2},j}\left[
b_{0}+\left( 2C_{\gamma ,\hat{\varepsilon},p_{1},p_{2},j}b_{0}\right) ^{2}%
\right] \\
&\leq &2C_{\gamma ,\hat{\varepsilon},p_{1},p_{2},j}b_{0}\left[ \frac{1}{2}%
+2C_{\gamma ,\hat{\varepsilon},p_{1},p_{2},j}b_{0}\right] \\
&\leq &2C_{\gamma ,\hat{\varepsilon},p_{1},p_{2},j}b_{0}\text{.}
\end{eqnarray*}%
By induction on $i$, we get
\begin{equation*}
|||f^{(i+1)}|||\leq 2C_{\gamma ,\hat{\varepsilon},p_{1},p_{2},j}b_{0}\text{,}
\end{equation*}%
through Proposition \ref{Prop: inhomo}. Hence, we get the boundedness of $%
\left \{ f^{(i)}\right \} $ in the norm $|||\cdot |||$.

Next, we demonstrate the convergence of $\left \{ f^{(i)}\right \} $ and
uniqueness of its limit. Set $h^{(i+1)}=f^{(i+1)}-f^{(i)}$ and then $%
h^{(i+1)}$ satisfies the equation
\begin{equation}
\left \{
\begin{array}{l}
\displaystyle \partial _{t}h^{(i+1)}+\xi \cdot \nabla
_{x}h^{(i+1)}=Lh^{(i+1)}+\Gamma (h^{(i)},f^{(i)})+\Gamma (f^{(i-1)},h^{(i)})%
\text{,}%
\vspace {3mm}
\\[4mm]
\displaystyle h^{(i+1)}(0,x,\xi )=0\text{.}%
\end{array}%
\right.  \label{inducion-2}
\end{equation}%
According to Proposition \ref{Prop: inhomo}, we get
\begin{equation*}
|||h^{(i+1)}|||\leq 4\left( C_{\gamma ,\hat{\varepsilon},p_{1},p_{2},j}%
\right) ^{2}b_{0}|||h^{(i)}|||
\end{equation*}%
for all $i\in \mathbb{N}$. Since $4\left( C_{\gamma ,\hat{\varepsilon}%
,p_{1},p_{2},j}\right) ^{2}b_{0}<1/2$, $\left \{ f^{(i)}\right \} $ is a
Cauchy sequence in the norm $|||\cdot |||$, so that it converges and its
limit $f$ will satisfy
\begin{eqnarray}
\left \Vert w_{3}f(t)\right \Vert _{L_{\xi ,p_{2}}^{\infty }L_{x}^{2}} &\leq
&\eta C_{1}(1+t)^{-\frac{3}{4}}\left( \left \Vert w_{3}f_{0}\right \Vert
_{L_{\xi ,p_{2}+2j}^{\infty }L_{x}^{1}}+\left \Vert w_{3}f_{0}\right \Vert
_{L_{\xi ,p_{2}+2j}^{\infty }L_{x}^{\infty }}\right) \text{,}%
\vspace {3mm}
\label{eq: N. 1. a-1} \\
\left \Vert w_{3}f(t)\right \Vert _{L_{\xi ,p_{2}}^{\infty }L_{x}^{\infty }}
&\leq &\eta C_{2}(1+t)^{-\frac{3}{4}}\left( \left \Vert w_{3}f_{0}\right
\Vert _{L_{\xi ,p_{2}+2j}^{\infty }L_{x}^{1}}+\left \Vert w_{3}f_{0}\right
\Vert _{L_{\xi ,p_{2}+2j}^{\infty }L_{x}^{\infty }}\right) \text{,}
\label{eq: N. 1. aa-1}
\end{eqnarray}
\begin{eqnarray}
\left \Vert w_{3}f(t)\right \Vert _{L_{\xi ,p_{2}+2j}^{\infty }L_{x}^{2}}
&\leq &\eta \bar{C}_{1}\left( \left \Vert w_{3}f_{0}\right \Vert _{L_{\xi
,p_{2}+2j}^{\infty }L_{x}^{1}}+\left \Vert w_{3}f_{0}\right \Vert _{L_{\xi
,p_{2}+2j}^{\infty }L_{x}^{\infty }}\right) \text{,}%
\vspace {3mm}
\label{eq: N. 1. b-1} \\
\left \Vert w_{3}f(t)\right \Vert _{L_{\xi ,p_{2}+2j}^{\infty }L_{x}^{\infty
}} &\leq &\eta \bar{C}_{2}\left( \left \Vert w_{3}f_{0}\right \Vert _{L_{\xi
,p_{2}+2j}^{\infty }L_{x}^{1}}+\left \Vert w_{3}f_{0}\right \Vert _{L_{\xi
,p_{2}+2j}^{\infty }L_{x}^{\infty }}\right) \text{.}  \label{eq: N. 1. bb-1}
\end{eqnarray}%
Therefore, (\ref{eq: N. 1. a}), (\ref{eq: N. 1. b}), (\ref{eq: N. 1. bb})
are obtained.

Finally, we will use a bootstrap argument to improve the estimate (\ref{eq:
N. 1. aa-1}). Write $f$ as
\begin{equation*}
f=\eta \mathbb{G}^{t}f_{0}+\int_{0}^{t}\mathbb{G}^{t-s}\Gamma (f,f)(s)ds%
\text{,}
\end{equation*}%
and then we have%
\begin{eqnarray*}
&&\left \Vert w_{3}f(t)\right \Vert _{L_{\xi ,p_{2}}^{\infty }L_{x}^{\infty
}} \\
&\leq &\eta \left \Vert w_{3}\mathbb{G}^{t}f_{0}\right \Vert _{L_{\xi
,p_{2}}^{\infty }L_{x}^{\infty }}+\int_{0}^{t}\left \Vert w_{3}\mathbb{G}%
^{t-s}\Gamma (f,f)(s)\right \Vert _{L_{\xi ,p_{2}}^{\infty }L_{x}^{\infty
}}ds \\
&\lesssim &\eta \left( 1+t\right) ^{-\frac{3}{2}}\left( \left \Vert
w_{3}f_{0}\right \Vert _{L_{\xi ,p_{2}+j}^{\infty }L_{x}^{1}}+\left \Vert
w_{3}f_{0}\right \Vert _{L_{\xi ,p_{2}+j}^{\infty }L_{x}^{\infty }}\right) \\
&&+\int_{0}^{t}\left( 1+t-s\right) ^{-\frac{3}{2}}\left( \left \Vert
w_{3}\Gamma (f,f)(s)\right \Vert _{L_{\xi ,p_{2}+j}^{\infty
}L_{x}^{1}}+\left \Vert w_{3}\Gamma (f,f)(s)\right \Vert _{L_{\xi
,p_{2}+j}^{\infty }L_{x}^{\infty }}\right) ds \\
&\lesssim &\eta \left( 1+t\right) ^{-\frac{3}{2}}\left( \left \Vert
w_{3}f_{0}\right \Vert _{L_{\xi ,p_{2}+j}^{\infty }L_{x}^{1}}+\left \Vert
w_{3}f_{0}\right \Vert _{L_{\xi ,p_{2}+j}^{\infty }L_{x}^{\infty }}\right) \\
&&+\int_{0}^{t}\left( 1+t-s\right) ^{-\frac{3}{2}}\left( \left \Vert
w_{3}f\right \Vert _{L_{\xi ,p_{2}+j}^{\infty }L_{x}^{2}}^{2}+\left \Vert
w_{3}f\right \Vert _{L_{\xi ,p_{2}+j}^{\infty }L_{x}^{\infty }}^{2}\right) ds
\\
&\lesssim &\eta \left( 1+t\right) ^{-\frac{3}{2}}\left( \left \Vert
w_{3}f_{0}\right \Vert _{L_{\xi ,p_{2}+j}^{\infty }L_{x}^{1}}+\left \Vert
w_{3}f_{0}\right \Vert _{L_{\xi ,p_{2}+j}^{\infty }L_{x}^{\infty }}\right) \\
&&+\eta \int_{0}^{t}\left( 1+t-s\right) ^{-\frac{3}{2}}\left( 1+s\right) ^{-%
\frac{3}{2}}ds\cdot \left( \left \Vert w_{3}f_{0}\right \Vert _{L_{\xi
,p_{2}+3j}^{\infty }L_{x}^{1}}+\left \Vert w_{3}f_{0}\right \Vert _{L_{\xi
,p_{2}+3j}^{\infty }L_{x}^{\infty }}\right) \\
&\lesssim &\eta \left( 1+t\right) ^{-\frac{3}{2}}\left( \left \Vert
w_{3}f_{0}\right \Vert _{L_{\xi ,p_{2}+3j}^{\infty }L_{x}^{1}}+\left \Vert
w_{3}f_{0}\right \Vert _{L_{\xi ,p_{2}+3j}^{\infty }L_{x}^{\infty }}\right)
\text{,}
\end{eqnarray*}%
by using (\ref{Gamma-w3-sup}), Proposition \ref{thm: linear}, and (\ref{eq:
N. 1. aa-1}). This completes the proof.%

\section{Appendix}

\subsection{\textbf{Proof of (\protect \ref{Gamma-product}) and (\protect \ref%
{Gamma-L2-1}). }}

We claim that
\begin{equation}
\left( \int \left \vert \nu ^{-1/2}(\xi )\Gamma (g,h)(\xi )\right \vert
^{2}d\xi \right) ^{1/2}\lesssim \left \vert g\right \vert _{L_{\xi }^{\infty
}}\left \vert h\right \vert _{L_{\sigma }^{2}}+\left \vert g\right \vert
_{L_{\sigma }^{2}}\left \vert h\right \vert _{L_{\xi }^{\infty }}\text{,}
\label{Gamma-product-1}
\end{equation}%
\begin{equation}
\left( \int \left \vert \nu ^{-1}(\xi )\Gamma (g,h)(\xi )\right \vert
^{2}d\xi \right) ^{1/2}\lesssim \left \vert g\right \vert _{L_{\xi }^{\infty
}}\left \vert h\right \vert _{L_{\xi }^{2}}+\left \vert g\right \vert
_{L_{\xi }^{2}}\left \vert h\right \vert _{L_{\xi }^{\infty }}\text{.}
\label{Gamma-L2-2}
\end{equation}%
Recall that we split $\Gamma $ into two parts $\Gamma _{gain}$ and $\Gamma
_{loss}$ as below:%
\begin{eqnarray*}
\Gamma (g,h) &\equiv &\Gamma _{gain}(g,h)-\Gamma _{loss}(g,h) \\
&=&\frac{1}{2}\int_{\mathbb{R}^{3}\times \mathbb{S}^{2}}B(\vartheta )|\xi
-\xi _{\ast }|^{\gamma }\mathcal{M}_{\ast }^{1/2}\left[ g_{\ast }^{\prime
}h^{\prime }+g^{\prime }h_{\ast }^{\prime }\right] d\xi _{\ast }d\omega \\
&&-\frac{1}{2}\int_{\mathbb{R}^{3}\times \mathbb{S}^{2}}B(\vartheta )|\xi
-\xi _{\ast }|^{\gamma }\mathcal{M}_{\ast }^{1/2}\left[ g_{\ast }h+gh_{\ast }%
\right] d\xi _{\ast }d\omega \text{.}
\end{eqnarray*}%
In the sequel, we shall estimate $\Gamma _{gain}$ and $\Gamma _{loss}$
individually. $%
\vspace {3mm}%
$\newline
\textbf{Estimate on }$\Gamma _{loss}(g,h)$\textbf{.} It readily follows from
Lemma \ref{[Guo]} that
\begin{equation}
\left \vert \Gamma _{loss}(g,h)\right \vert \lesssim \nu \left( \xi \right)
\left( \left \vert g\right \vert _{L_{\xi }^{\infty }}\left \vert h\right
\vert +\left \vert g\right \vert \left \vert h\right \vert _{L_{\xi
}^{\infty }}\right) \text{.}  \label{loss-sup}
\end{equation}%
Therefore, we have%
\begin{equation}
\left( \int \left \vert \nu ^{-1/2}(\xi )\Gamma _{loss}(g,h)(\xi )\right
\vert ^{2}d\xi \right) ^{1/2}\lesssim \left \vert g\right \vert _{L_{\xi
}^{\infty }}\left \vert h\right \vert _{L_{\sigma }^{2}}+\left \vert g\right
\vert _{L_{\sigma }^{2}}\left \vert h\right \vert _{L_{\xi }^{\infty }}\text{%
,}  \label{loss-product}
\end{equation}%
\begin{equation}
\left( \int \left \vert \nu ^{-1}(\xi )\Gamma _{loss}(g,h)(\xi )\right \vert
^{2}d\xi \right) ^{1/2}\lesssim \left \vert g\right \vert _{L_{\xi }^{\infty
}}\left \vert h\right \vert _{L_{\xi }^{2}}+\left \vert g\right \vert
_{L_{\xi }^{2}}\left \vert h\right \vert _{L_{\xi }^{\infty }}\text{.}
\label{loss-L2}
\end{equation}%
$%
\vspace {3mm}%
$\newline
\textbf{Estimate on }$\Gamma _{gain}(g,h)$\textbf{. }By the Cauchy-Schwartz
inequality and Lemma \ref{[Guo]},
\begin{eqnarray}
&&\left \vert \Gamma _{gain}(g,h)(\xi )\right \vert ^{2}  \notag \\
&\lesssim &\nu \left( \xi \right) \left( \int_{\mathbb{R}^{3}\times \mathbb{S%
}^{2}}\left \vert B(\vartheta )\right \vert |\xi -\xi _{\ast }|^{\gamma
}\exp \left( -\frac{\left \vert \xi _{\ast }\right \vert ^{2}}{4}\right) %
\left[ \left \vert g_{\ast }^{\prime }\right \vert ^{2}\left \vert h^{\prime
}\right \vert ^{2}+\left \vert g^{\prime }\right \vert ^{2}\left \vert
h_{\ast }^{\prime }\right \vert ^{2}\right] d\xi _{\ast }d\omega \right)
\text{,}  \label{gain^2}
\end{eqnarray}%
so that
\begin{eqnarray*}
&&\int_{\mathbb{R}^{3}}\left \vert \nu ^{-1}(\xi )\Gamma _{gain}(g,h)(\xi
)\right \vert ^{2}d\xi \\
&\lesssim &\int_{\mathbb{S}^{2}}B(\vartheta )\left( \int_{\mathbb{R}%
^{3}\times \mathbb{R}^{3}}\nu \left( \xi \right) ^{-1}|\xi -\xi _{\ast
}|^{\gamma }\exp (-\frac{\left \vert \xi _{\ast }\right \vert ^{2}}{4})\left[
\left \vert g_{\ast }^{\prime }\right \vert ^{2}\left \vert h^{\prime
}\right \vert ^{2}+\left \vert g^{\prime }\right \vert ^{2}\left \vert
h_{\ast }^{\prime }\right \vert ^{2}\right] d\xi _{\ast }d\xi \right)
d\omega \text{.}
\end{eqnarray*}%
We split the $(\xi _{\ast },\xi )$-space into three regions: $I_{1}=\left \{
\left \vert \xi _{\ast }\right \vert \geq \frac{\left \vert \xi \right \vert
}{2}\right \} $, $I_{2}=\left \{ \left \vert \xi _{\ast }\right \vert <\frac{%
\left \vert \xi \right \vert }{2},\left \vert \xi \right \vert \leq
1\right
\} $, and $I_{3}=\left \{ \left \vert \xi _{\ast }\right \vert <%
\frac{\left
\vert \xi \right \vert }{2},\left \vert \xi \right \vert
>1\right \} $.$%
\vspace {3mm}%
$\newline
\textbf{Case 1}: On $I_{1}\equiv \left \{ \left \vert \xi _{\ast
}\right
\vert \geq \left \vert \xi \right \vert /2\right \} $. Since $|\xi
-\xi _{\ast }|=|\xi ^{\prime }-\xi _{\ast }^{\prime }|$, $\left \vert \xi
\right
\vert ^{2}+\left \vert \xi _{\ast }\right \vert ^{2}=\left \vert \xi
^{\prime }\right \vert ^{2}+\left \vert \xi _{\ast }^{\prime }\right \vert
^{2}$, and $\nu ^{-1}\approx \left \langle \xi \right \rangle ^{-\gamma
}\lesssim \left \langle \xi ^{\prime }\right \rangle ^{-\gamma
}\left
\langle \xi _{\ast }^{\prime }\right \rangle ^{-\gamma }$, we have
\begin{eqnarray}
&&\int_{\mathbb{S}^{2}}B(\vartheta )\left( \int_{I_{1}}\nu \left( \xi
\right) ^{-1}|\xi -\xi _{\ast }|^{\gamma }\exp (-\frac{\left \vert \xi
_{\ast }\right \vert ^{2}}{4})\left[ \left \vert g_{\ast }^{\prime }\right
\vert ^{2}\left \vert h^{\prime }\right \vert ^{2}+\left \vert g^{\prime
}\right \vert ^{2}\left \vert h_{\ast }^{\prime }\right \vert ^{2}\right]
d\xi _{\ast }d\xi \right) d\omega  \label{gain-1} \\
&\lesssim &\int_{\mathbb{S}^{2}}B(\vartheta )\left( \int_{I_{1}}\nu \left(
\xi \right) ^{-1}|\xi -\xi _{\ast }|^{\gamma }\exp (-\frac{\left \vert \xi
_{\ast }\right \vert ^{2}}{8}-\frac{\left \vert \xi \right \vert ^{2}}{32})%
\left[ \left \vert g_{\ast }^{\prime }\right \vert ^{2}\left \vert h^{\prime
}\right \vert ^{2}+\left \vert g^{\prime }\right \vert ^{2}\left \vert
h_{\ast }^{\prime }\right \vert ^{2}\right] d\xi _{\ast }d\xi \right) d\omega
\notag \\
&\lesssim &\int_{\mathbb{S}^{2}}B(\vartheta )\left( \int_{\psi (I_{1})}|\xi
^{\prime }-\xi _{\ast }^{\prime }|^{\gamma }\exp \left( -\left( \frac{\left
\vert \xi _{\ast }^{\prime }\right \vert ^{2}}{32}+\frac{\left \vert \xi
^{\prime }\right \vert ^{2}}{32}\right) \right) \left[ \left \vert g_{\ast
}^{\prime }\right \vert ^{2}\left \vert h^{\prime }\right \vert ^{2}+\left
\vert g^{\prime }\right \vert ^{2}\left \vert h_{\ast }^{\prime }\right
\vert ^{2}\right] d\xi _{\ast }^{\prime }d\xi ^{\prime }\right) d\omega
\notag \\
&\lesssim &\int_{\mathbb{S}^{2}}B(\vartheta )\left( \int_{I_{1}}|\xi -\xi
_{\ast }|^{\gamma }\exp \left( -\left( \frac{\left \vert \xi _{\ast }\right
\vert ^{2}}{32}+\frac{\left \vert \xi \right \vert ^{2}}{32}\right) \right) %
\left[ \left \vert g_{\ast }\right \vert ^{2}\left \vert h\right \vert
^{2}+\left \vert g\right \vert ^{2}\left \vert h_{\ast }\right \vert ^{2}%
\right] d\xi _{\ast }d\xi \right) d\omega  \notag \\
&\lesssim &\left \vert g\right \vert _{L_{\xi }^{\infty }}^{2}\left \vert
h\right \vert _{L_{\sigma }^{2}}^{2}+\left \vert g\right \vert _{L_{\sigma
}^{2}}^{2}\left \vert h\right \vert _{L_{\xi }^{\infty }}^{2}\text{,}  \notag
\end{eqnarray}%
by change of the variables $(\xi _{\ast },\xi )\overset{\psi }{%
\longrightarrow }(\xi _{\ast }^{\prime },\xi ^{\prime })$ and Lemma \ref%
{[Guo]}.$%
\vspace {3mm}%
$ \newline
\noindent \textbf{Case 2}: On $I_{2}=\left \{ \left \vert \xi _{\ast
}\right
\vert <\left \vert \xi \right \vert /2,\left \vert \xi \right \vert
\leq 1\right \} $. In this region, $\left \vert \xi -\xi _{\ast
}\right
\vert >\left \vert \xi \right \vert /2$, $1\leq \nu ^{-1}(\xi )\leq
2$, and
\begin{equation*}
\left \vert \xi _{\ast }^{\prime }\right \vert ,\left \vert \xi ^{\prime
}\right \vert \leq \sqrt{2}\left( \left \vert \xi ^{\prime }\right \vert
^{2}+\left \vert \xi _{\ast }^{\prime }\right \vert ^{2}\right) ^{1/2}=\sqrt{%
2}\left( \left \vert \xi \right \vert ^{2}+\left \vert \xi _{\ast }\right
\vert ^{2}\right) ^{1/2}\leq 2\left \vert \xi \right \vert <2\text{.}
\end{equation*}%
Hence,
\begin{eqnarray}
&&\int_{\mathbb{S}^{2}}B(\vartheta )\left( \int_{I_{2}}\nu \left( \xi
\right) ^{-1}|\xi -\xi _{\ast }|^{\gamma }\exp (-\frac{\left \vert \xi
_{\ast }\right \vert ^{2}}{4})\left[ \left \vert g_{\ast }^{\prime }\right
\vert ^{2}\left \vert h^{\prime }\right \vert ^{2}+\left \vert g^{\prime
}\right \vert ^{2}\left \vert h_{\ast }^{\prime }\right \vert ^{2}\right]
d\xi _{\ast }d\xi \right) d\omega  \label{gain-2} \\
&\lesssim &\int_{\mathbb{S}^{2}}B(\vartheta )\left( \int_{I_{2}}|\xi
|^{\gamma }\exp (-\frac{\left \vert \xi _{\ast }\right \vert ^{2}}{4})\left[
\left \vert g_{\ast }^{\prime }\right \vert ^{2}\left \vert h^{\prime
}\right \vert ^{2}+\left \vert g^{\prime }\right \vert ^{2}\left \vert
h_{\ast }^{\prime }\right \vert ^{2}\right] d\xi _{\ast }d\xi \right) d\omega
\notag \\
&\lesssim &\int_{\mathbb{S}^{2}}B(\vartheta )\left( \int_{I_{2}}|\xi _{\ast
}^{\prime }|^{\gamma }\left[ \left \vert g_{\ast }^{\prime }\right \vert
^{2}\left \vert h^{\prime }\right \vert ^{2}+\left \vert g^{\prime }\right
\vert ^{2}\left \vert h_{\ast }^{\prime }\right \vert ^{2}\right] d\xi
_{\ast }d\xi \right) d\omega  \notag \\
&\lesssim &\int_{\mathbb{S}^{2}}B(\vartheta )\left( \int_{\psi \left(
I_{2}\right) }|\xi _{\ast }^{\prime }|^{\gamma }\left[ \left \vert g_{\ast
}^{\prime }\right \vert ^{2}\left \vert h^{\prime }\right \vert ^{2}+\left
\vert g^{\prime }\right \vert ^{2}\left \vert h_{\ast }^{\prime }\right
\vert ^{2}\right] d\xi _{\ast }^{\prime }d\xi ^{\prime }\right) d\omega
\notag \\
&\lesssim &\int_{\mathbb{S}^{2}}B(\vartheta )\left( \int_{I_{2}}|\xi _{\ast
}|^{\gamma }\left[ \left \vert g_{\ast }\right \vert ^{2}\left \vert h\right
\vert ^{2}+\left \vert g\right \vert ^{2}\left \vert h_{\ast }\right \vert
^{2}\right] d\xi _{\ast }d\xi \right) d\omega  \notag \\
&\lesssim &\left \vert g\right \vert _{L_{\xi }^{\infty }}^{2}\left \vert
h\right \vert _{L_{\sigma }^{2}}^{2}+\left \vert g\right \vert _{L_{\sigma
}^{2}}^{2}\left \vert h\right \vert _{L_{\xi }^{\infty }}^{2}\text{.}  \notag
\end{eqnarray}%
The last inequality is valid since $\int_{\left \vert \xi _{\ast
}\right
\vert <1/2}|\xi _{\ast }|^{\gamma }d\xi _{\ast }<\infty $,
\begin{equation*}
\int_{\left \vert \xi \right \vert \leq 1}\left \vert h\right \vert ^{2}d\xi
=\int_{\left \vert \xi \right \vert \leq 1}\nu ^{-1}\left( \xi \right) \nu
\left( \xi \right) \left \vert h\right \vert ^{2}d\xi \leq C_{\gamma
}\int_{\left \vert \xi \right \vert \leq 1}\nu \left( \xi \right) \left
\vert h\right \vert ^{2}d\xi \leq C_{r}\left \vert h\right \vert _{L_{\sigma
}^{2}}^{2}\text{,}
\end{equation*}%
\begin{equation*}
\int_{\left \vert \xi \right \vert \leq 1}\left \vert g\right \vert ^{2}d\xi
=\int_{\left \vert \xi \right \vert \leq 1}\nu ^{-1}\left( \xi \right) \nu
\left( \xi \right) \left \vert g\right \vert ^{2}d\xi \leq C_{\gamma
}\int_{\left \vert \xi \right \vert \leq 1}\nu \left( \xi \right) \left
\vert h\right \vert ^{2}d\xi \leq C_{r}\left \vert g\right \vert \text{,}
\end{equation*}%
for some $C_{\gamma }>0$.$%
\vspace {3mm}%
$\newline
\textbf{Case 3}: On $I_{3}=\left \{ \left \vert \xi _{\ast }\right \vert
<\left \vert \xi \right \vert /2,\left \vert \xi \right \vert >1\right \} $.
In this region, $\left \vert \xi -\xi _{\ast }\right \vert >\left \vert \xi
\right \vert /2>\left( 1+\left \vert \xi \right \vert \right) /4$; moreover,
\begin{equation*}
\frac{\left \langle \xi ^{\prime }\right \rangle }{\sqrt{2}}\text{, }\frac{%
\left \langle \xi _{\ast }^{\prime }\right \rangle }{\sqrt{2}}\leq \left(
\frac{1+\left \vert \xi ^{\prime }\right \vert ^{2}}{2}+\frac{1+\left \vert
\xi _{\ast }^{\prime }\right \vert ^{2}}{2}\right) ^{1/2}=\left( 1+\frac{%
\left \vert \xi \right \vert ^{2}+\left \vert \xi _{\ast }\right \vert ^{2}}{%
2}\right) ^{1/2}\leq \left( 1+\left \vert \xi \right \vert ^{2}\right) ^{1/2}%
\text{,}
\end{equation*}%
which implies that $\nu \left( \xi \right) \lesssim \left \langle \xi
^{\prime }\right \rangle ^{\gamma }$, $\left \langle \xi _{\ast }^{\prime
}\right \rangle ^{\gamma }$ (used in the proof of (\ref{gain-product}) ).
Hence,%
\begin{eqnarray}
&&\int_{\mathbb{S}^{2}}B(\vartheta )\left( \int_{I_{3}}\nu \left( \xi
\right) ^{-1}|\xi -\xi _{\ast }|^{\gamma }\exp (-\frac{\left \vert \xi
_{\ast }\right \vert ^{2}}{4})\left[ \left \vert g_{\ast }^{\prime }\right
\vert ^{2}\left \vert h^{\prime }\right \vert ^{2}+\left \vert g^{\prime
}\right \vert ^{2}\left \vert h_{\ast }^{\prime }\right \vert ^{2}\right]
d\xi _{\ast }d\xi \right) d\omega  \label{gain-3} \\
&\lesssim &\int_{\mathbb{S}^{2}}B(\vartheta )\left( \int_{I_{3}}\left[ \left
\vert g_{\ast }^{\prime }\right \vert ^{2}\left \vert h^{\prime }\right
\vert ^{2}+\left \vert g^{\prime }\right \vert ^{2}\left \vert h_{\ast
}^{\prime }\right \vert ^{2}\right] d\xi _{\ast }d\xi \right) d\omega  \notag
\\
&\lesssim &\int_{\mathbb{S}^{2}}B(\vartheta )\left( \int_{\psi \left(
I_{3}\right) }\left[ \left \vert g_{\ast }^{\prime }\right \vert ^{2}\left
\vert h^{\prime }\right \vert ^{2}+\left \vert g^{\prime }\right \vert
^{2}\left \vert h_{\ast }^{\prime }\right \vert ^{2}\right] d\xi _{\ast
}^{\prime }d\xi ^{\prime }\right) d\omega  \notag \\
&\lesssim &\int_{\mathbb{S}^{2}}B(\vartheta )\left( \int_{I_{3}}\left[ \left
\vert g_{\ast }\right \vert ^{2}\left \vert h\right \vert ^{2}+\left \vert
g\right \vert ^{2}\left \vert h_{\ast }\right \vert ^{2}\right] d\xi _{\ast
}d\xi \right) d\omega  \notag \\
&\lesssim &\left \vert g\right \vert _{L_{\xi }^{\infty }}^{2}\left \vert
h\right \vert _{L_{\xi }^{2}}^{2}+\left \vert g\right \vert _{L_{\xi
}^{2}}^{2}\left \vert h\right \vert _{L_{\xi }^{\infty }}^{2}\text{.}
\end{eqnarray}

Gathering (\ref{gain-1})-(\ref{gain-3}) yields%
\begin{equation}
\left( \int \left \vert \nu ^{-1}(\xi )\Gamma _{gain}(g,h)(\xi )\right \vert
^{2}d\xi \right) ^{1/2}\lesssim \left \vert g\right \vert _{L_{\xi }^{\infty
}}\left \vert h\right \vert _{L_{\xi }^{2}}+\left \vert g\right \vert
_{L_{\xi }^{2}}\left \vert h\right \vert _{L_{\xi }^{\infty }}\text{.}
\label{gain-L2}
\end{equation}%
Similarly, we have%
\begin{eqnarray}
&&\left( \int \left \vert \nu ^{-1/2}(\xi )\Gamma _{gain}(g,h)(\xi )\right
\vert ^{2}d\xi \right) ^{1/2}  \label{gain-product} \\
&\lesssim &\left( \int_{\mathbb{S}^{2}}B(\vartheta )\int_{I_{1}\cup
I_{2}\cup I_{3}}|\xi -\xi _{\ast }|^{\gamma }\exp (-\frac{\left \vert \xi
_{\ast }\right \vert ^{2}}{4})\left[ \left \vert g_{\ast }^{\prime }\right
\vert ^{2}\left \vert h^{\prime }\right \vert ^{2}+\left \vert g^{\prime
}\right \vert ^{2}\left \vert h_{\ast }^{\prime }\right \vert ^{2}\right]
d\xi _{\ast }d\xi d\omega \right) ^{1/2}  \notag \\
&\lesssim &\left \vert g\right \vert _{L_{\xi }^{\infty }}\left \vert
h\right \vert _{L_{\sigma }^{2}}+\left \vert g\right \vert _{L_{\sigma
}^{2}}\left \vert h\right \vert _{L_{\xi }^{\infty }}\text{,}  \notag
\end{eqnarray}%
by following the same argument.

As a consequence, combining (\ref{loss-L2}) and (\ref{gain-L2}), we obtain (%
\ref{Gamma-L2-2}). Combining (\ref{loss-product}) and (\ref{gain-product}),
we obtain (\ref{Gamma-product-1}) and thus
\begin{eqnarray*}
\left \vert \left \langle f,\Gamma (g,h)\right \rangle _{\xi }\right \vert
&\leq &\left( \int_{\mathbb{R}^{3}}\nu \left( \xi \right) \left \vert
f\right \vert ^{2}d\xi \right) ^{1/2}\left( \int_{\mathbb{R}^{3}}\nu
^{-1}\left( \xi \right) \left \vert \Gamma (g,h)\right \vert ^{2}d\xi
\right) ^{1/2} \\
&\lesssim &\left \vert f\right \vert _{L_{\sigma }^{2}}\left( \left \vert
g\right \vert _{L_{\xi }^{\infty }}\left \vert h\right \vert _{L_{\sigma
}^{2}}+\left \vert g\right \vert _{L_{\sigma }^{2}}\left \vert h\right \vert
_{L_{\xi }^{\infty }}\right) \text{.}
\end{eqnarray*}%
The proof is completed.$%
\hfill%
\square $

\textbf{Acknowledgment:}

Y.-C. Lin is supported by the Ministry of Science and Technology under the
grant MOST 110-2115-M-006-004-. H.T. Wang is supported by National Nature Science Foundation of China under Grant No. 11901386 and 12031013, the Strategic Priority Research Program of Chinese Academy of Sciences under Grant No. XDA25010403. K.-C. Wu is supported by the Ministry of
Science and Technology under the grant 110-2636-M-006-005- and National
Center for Theoretical Sciences.


\begin{thebibliography}{99}
\bibitem{[ALW]} K. Aoki, Y.-C. Lin and K.-C. Wu, Milne problem for the
linear and linearized Boltzmann equations relevant to a binary gas mixture,
J. Differential Equations, 269 (2020), 257-287.

\bibitem{[BCN]} C. Bardos, R. E. Caflish and B. Nicolaenko, The Milne and
Kramers problems for the Boltzmann equation of a hard sphere gas, Commun.
Pure Appl. Math. 49 (1986), 323-52.

\bibitem{[Caflisch]} R. Caflisch, The Boltzmann equation with a soft
potential. I. Linear, spatially homogeneous, Comm. Math. Phys., 74 (1980),
71-95.

\bibitem{[CaflischII]} R. Caflisch, The Boltzmann equation with a soft
potential. II. Nonlinear, spatially-Periodic, Comm. Math. Phys. 74 (1980),
77-109.

\bibitem{[Chai]} X. Chai, The Boltzmann equation near Maxwellian in the
whole space, Communications on Pure Applied Analysis, 10 (2011), 435-458.

\bibitem{[Chen]} I.-K. Chen, Regularity of stationary solutions to the
linearized Boltzmann equations, SIAM J. Math. Anal., 50 (2018), No. 1,
138-161.

%
%

\bibitem{[EGKM]} R. Esposito, Y. Guo, C. Kim, R. Marra, Non-isothermal boundary in the Boltzmann
theory and Fourier law, Commun. Math. Phys. 323 (2013), 177-239.

\bibitem{[GolsePoupand]} F. Golse and F. Poupaud, Stationary solutions of
the linearized Boltzmann equation in a half-space, Math. Methods Appl. Sci.
11 (1989), 483-502.

\bibitem{[cc]} C.-C. Chen, T.-P. Liu and T. Yang, Existence of boundary
layer solutions to the Boltzmann equation, Anal. Appl.(Singap.), 2 (2004),
337-363.

\bibitem{[Duan-Ukai--]} R.-J. Duan, S. Ukai, T. Yang and H.-J. Zhao, Optimal
decay estimates on the linearized Boltzmann equation with time dependent
force and their applications, Comm. Math. Phys., 277 (2008), 189-236.

\bibitem{[Ellis]} R. Ellis and M. Pinsky, The first and second fluid
approximations to the linearized Boltzmann equation, J. Math. Pure. App., 54
(1975), 125--156.

\bibitem{[Yan Guo]} Y. Guo, The Landau equation in a period box, Comm. Math.
Phys., 231 (2002), 391-434.

\bibitem{[Guo]} Y. Guo, The Boltzmann equation in the whole space, Indiana
Univ. Math. J., 53 (2004), 1081-1094.

\bibitem{[Guo1]} Y. Guo, Decay and continuity of the Boltzmann equation in bounded domains. Arch.
Ration. Mech. Anal. 197 (2010), 713-809.

\bibitem{[GKTT]} Y. Guo, C. Kim, D. Tonon and A. Trescases, Regularity of the Boltzmann equation in convex domains. Invent. Math. 207 (2017), no. 1, 115–290.

\bibitem{[Grad]} H. Grad, Asymptotic theory of the Boltzmann equation,
Rarefied Gas Dynamics, J. A. Laurmann, Ed. 1, 26, pp.26--59 Academic Press,
New York, 1963.

\bibitem{[Gualdani]} M.P. Gualdani, S. Mischler and C. Mouhot, Factorization of non-symmetric operators and exponential H-theorem, M\'{e}m. Soc. Math. Fr. 153 (2017), 137 pp.

\bibitem{[Kawashima]} S. Kawashima, The Boltzmann equation and thirteen
moments, Japan J. Appl. Math., 7 (1990), 301-320.

\bibitem{[KimLee]} C. Kim and D. Lee, The Boltzmann equation with specular boundary condition in convex domains. Comm. Pure Appl. Math. 71 (2018), no. 3, 411–504.

\bibitem{[LeeLiuYu]} M.-Y. Lee, T.-P. Liu and S.-H. Yu, Large time behavier
of solutions for the Boltzmann equation with hard potentials, Commun. Math.
Phys., 269 (2007), 17--37.

\bibitem{[LinWangWu-2]} Y.-C. Lin, H.T. Wang and K.-C. Wu, Spatial behavior
of the solution to the linearized Boltzmann equation with hard potentials,
J. Math. Phys., 61 (2020), https://doi.org/10.1063/1.5124930

\bibitem{[LinWangWu]} Y.-C. Lin, H.T. Wang and K.-C. Wu, Quantitative
Pointwise Estimate of the Solution of the Linearized Boltzmann Equation, J.
Stat. Phys., 171 (2018), 927-964.

\bibitem{[Liu-Yang]} S.-Q. Liu, X.-F. Yang, The initial boundary value problem for the Boltzmann equation with soft potential, Arch. Ration. Mech. Anal., 223 (2017), 463-541.

\bibitem{[LiuYu3]} T.-P. Liu, S.-H. Yu, Boltzmann equation: micro-macro
decompositions and positivity of shock profiles, Comm. Math. Phys., 246
(2004), 133-179.

\bibitem{[LiuYu]} T.-P. Liu and S.-H. Yu, The Green function and large time
behavier of solutions for the one-dimensional Boltzmann equation, Commun.
Pure App. Math., 57 (2004), 1543--1608.

\bibitem{[LiuYangYu]} T.-P. Liu, T. Yang, S.-H. Yu, Energy method for the
Boltzmann equation, Phys. D, 188 (2004), 178-192.

\bibitem{[LiuYu2]} T.-P. Liu and S.-H. Yu, Green's function of Boltzmann
equation, 3-D waves, Bull. Inst. Math. Acad. Sin. (N.S.), 1 (2006), 1-78.

\bibitem{[LiuYu1]} T.-P. Liu and S.-H. Yu, Solving Boltzmann equation, Part
I : Green's function, Bull. Inst. Math. Acad. Sin. (N.S.), 6 (2011), 151-243.

\bibitem{[Strain]} R. M. Strain, Optimal time decay of the non cut-off
Boltzmann equation in the whole space. Kinet. Relat. Models, 5 (2012),
583-613.

\bibitem{[Strain-Guo]} R. M. Strain and Y. Guo, Exponential decay for soft
potentials near Maxwellian, Arch. Ration. Mech. Anal., 187 (2008), 287-339.


\bibitem{[Ukai]} S. Ukai, On the existence of global solutions of mixed
problem for non-linear Boltzmann equation, Proc. Japan Acad., 50 (1974),
179-184.

\bibitem{[Yu]} H.-J. Yu, Convergence rate for the Boltzmann and Landau
equations with soft potentials, Proceedings of the Royal Society of
Edinburgh, 139A (2009), 393-416.

\bibitem{[Wu]} K.-C. Wu, Pointwise Behavior of the Linearized Boltzmann
Equation on a Torus, SIAM J. Math. Anal., 46 (2014), 639--656.
\end{thebibliography}
\end{document}